\documentclass[12pt]{amsart}
\usepackage{tikz-cd}
\usepackage{verbatim}
\usepackage{soul}
\usepackage{xcolor}
\usepackage{url}
\usepackage{hyperref}
\numberwithin{equation}{section}
\usepackage{amssymb}
\usepackage{enumitem}
\usepackage[margin=3.3cm]{geometry}
\usepackage{mathtools}
\usepackage{setspace}
\usepackage{mathabx}
\usetikzlibrary{calc,intersections,arrows.meta,positioning,decorations.markings}
\usepackage{pgfplots}
\pgfplotsset{compat=1.17}
\numberwithin{figure}{section}
\let\cal\mathcal

\def\Cscr{{\cal C}}
\def\Dscr{{\cal D}}
\def\Escr{{\cal E}}
\def\Fscr{{\cal F}}

\def\Jscr{{\cal J}}

\def\Lscr{{\cal L}}
\def\Mscr{{\cal M}}

\def\Oscr{{\cal O}}

\def\Tscr{{\cal T}}

\let\blb\mathbb
\def\CC{{\blb C}} 
 
\def\EE{{\blb E}} 
 
\def\FF{{\blb F}} 
\def\QQ{{\blb Q}}

\def \PP{{\blb P}}

\def \ZZ{{\blb Z}}

\def \NN{{\blb N}}
\def \RR{{\blb R}}
\def \HH{{\blb H}}

\def \pp{{\mathfrak p}}
\def\num{\operatorname{num}}

\let\oldmarginpar\marginpar
\def\marginpar#1{\setlength{\marginparwidth}{0.15\textwidth}\oldmarginpar{\tiny\def\tiny{} \baselineskip 0pt
  \raggedright #1}}

\def\Bl{\operatorname{Bl}}

\def\Id{\operatorname{id}}

\def\mod{\operatorname{mod}}
\def\Gr{\operatorname{Gr}}

\def\Gr{\operatorname{Gr}}
\def\gr{\operatorname{gr}}

\def\coh{\mathop{\text{\upshape{coh}}}}

\def\gr{\operatorname {gr}}
\def\Spec{\operatorname {Spec}}

\def\depth{\operatorname {depth}}

\def\Ext{\operatorname {Ext}}
\def\Hom{\operatorname {Hom}}

\def\End{\operatorname {End}}
\def\RHom{\operatorname {RHom}}

\def\coker{\operatorname {coker}}
\def\ker{\operatorname {ker}}

\def\End{\operatorname {End}}

\def\add{\operatorname {add}}
\def\rk{\operatorname {rk}}

\def\Tot{\operatorname {Tot}}
\def\reff{\operatorname {ref}}

\def\Pic{\operatorname {Pic}}

\def\r{\rightarrow}

\DeclareMathOperator{\HHom}{\mathcal{H}\mathit{om}}

\DeclareMathOperator{\Perf}{Perf}
\def\QCoh{\operatorname{QCoh}}
\def\conv{\operatorname{conv}}
\def\area{\operatorname{\Delta}}
\def\red{\operatorname{/}}
\def\redd{\operatorname{red}}
\def\ee{\mathfrak{e}}

\newtheorem{lemma}{Lemma}[section]
\newtheorem{proposition}[lemma]{Proposition}
\newtheorem{theorem}[lemma]{Theorem}
\newtheorem{corollary}[lemma]{Corollary}

\newtheorem{convention}[lemma]{Convention}
\newtheorem{lemmas}{Lemma}[subsection]

\newtheorem{conventions}[lemmas]{Convention}

\theoremstyle{definition}

\newtheorem{example}[lemma]{Example}
\newtheorem{definition}[lemma]{Definition}

{

}

\theoremstyle{remark}

\newtheorem{remark}[lemma]{Remark}

\newdimen\uboxsep \uboxsep=1ex
\def\uboxn#1{\vtop to 0pt{\hrule height 0pt depth 0pt\vskip\uboxsep
\hbox to 0pt{\hss #1\hss}\vss}}

\def\uboxs#1{\vbox to 0pt{\vss\hbox to 0pt{\hss #1\hss}
\vskip\uboxsep\hrule height 0pt depth 0pt}}

\newtheoremstyle{named}{}{}{\itshape}{}{\bfseries}{.}{.5em}{\thmnote{#3's }#1}
\theoremstyle{named}

\definecolor{mygray}{gray}{0.9}
\sethlcolor{mygray}

\makeatletter
\@namedef{subjclassname@2020}{\textup{2020} Mathematics Subject Classification}
\makeatother

\title{NCCRs of cones over del Pezzo surfaces}

\subjclass[2020]{16E35, 14F08, 14A22, 14E30}
\keywords{Geometrix helix, non-commutative crepant resolution, del Pezzo surface}
\author{Anya Nordskova}
\author{Michel Van den Bergh}
\thanks{The first author is supported by World Premier International Research Center Initiative (WPI),  MEXT,  Japan.  The second author is a senior researcher at the Research Foundation Flanders (FWO).  This project has received funding from the European Research Council (ERC) under the European Union's Horizon 2020 research and innovation programme (grant agreement No 885203).}
\email{anya.nordskova@ipmu.jp}
\email{michel.vandenbergh@uhasselt.be}
\address[Michel Van den Bergh]{Vakgroep Wiskunde en Data Science, Vrije Universiteit Brussel, Pleinlaan 2, 1050 Brussel} 
\email{michel.van.den.bergh@vub.be}
\address[Michel Van den Bergh]{Vakgroep Wiskunde, Universiteit Hasselt, Universitaire Campus \\ B-3590 Diepenbeek}
\address[Anya Nordskova]{Kavli IPMU (WPI), UTIAS, The University of Tokyo, Kashiwa, Chiba 277-8583, Japan}
\begin{document}
\begin{abstract}
  Non-commutative crepant resolutions (NCCRs) are non-commutative versions of classical crepant resolutions in algebraic geometry. 
  For 3-dimensional terminal Gorenstein singularities Iyama and Wemyss proved that all NCCRs are connected by mutations,
  which may be viewed as a non-commutative analogue of Kawamata's result that all crepant resolutions are connected by flops.
  In this paper we prove the corresponding result for a class of canonical Gorenstein singularities
  which are not terminal, namely anticanonical cones over del Pezzo surfaces.  More precisely,  we first obtain a classification of NCCRs of anticanonical del Pezzo cones, showing that every NCCR arises from a geometric helix on the corresponding del Pezzo surface.  We then prove that all such geometric helices are connected to each other by mutations,  up to simple operations which include tensoring by line bundles and shifts. 

A crucial ingredient in our proofs is the polygons that can be associated to exceptional collections on del Pezzo surfaces following the works of Hille and Perling.  We obtain some interesting observations about these polygons which may be of independent interest.
\end{abstract}
\maketitle
 \tableofcontents 
\section{Introduction} 
\subsection{Main results}
Let $Z$ be a normal algebraic variety with Gorenstein
singularities.  A resolution of singularities (that is,  a proper birational map from a non-singular variety) ${\pi: Y \to Z}$ is said to be \emph{crepant} if ${\pi^*\omega_Z = \omega_Y}$. Such a $Y$ is often considered to be a nicest possible, ``minimal'' desingularisation of $Z$.  A non-commutative analogue of this classical notion was introduced in \cite{VdB} (see also \cite{VdBICM} for an overview).  More precisely,  if $R$ is a normal Gorenstein domain, then its \emph{non-commutative crepant resolution} (\emph{NCCR}) is an $R$-algebra $\Lambda = \End_R(M)$ with $M$ a finitely generated reflexive $R$-module, such that $\Lambda$ is Cohen-Macaulay as an $R$-module and has finite global dimension. 

\medskip

Now let $X$ be a smooth del Pezzo surface over $\CC$ and consider the graded ring of sections $R_X :=  \bigoplus_{k \geq 0} \Gamma(X, \omega_X^{-k})$.
The affine variety $Z := \Spec(R_X)$ is a cone over $X$ with the unique singularity at the origin which is a classical example of a canonical Gorenstein singularity that is not terminal.  The standard crepant resolution of $\Spec R_X$ is given by the non-compact Calabi-Yau variety $Y :=\Tot(\omega_X)$ and the map ${p: Y \to Z}$ contracting the zero section of the canonical bundle.  It was established in \cite[Proposition 7.3]{VdB} that cones over del Pezzo surfaces also have non-commutative crepant resolutions.  More precisely,  an NCCR of $R_X$ can be constructed as follows.  One shows that on every del Pezzo surface $X$ there exists a full exceptional collection
of vector bundles $\EE =(E_0, \dots,E_{n-1})$  such that $\Ext^l_X(E_i,E_j \otimes \omega_X^{-k}) = 0$  for all $k \geq 0$,  $l \neq 0$ and any $i,j$ (we refer to exceptional collections satisfying this condition as being \emph{very strong},  see Definition \ref{def:verystrong}).  Let $\pi: Y \to X$ be the canonical projection and $\Escr := \oplus_{i=0}^{n-1} E_0$.  If $\EE$ is a very strong exceptional collection on $X$,  then $\pi^*\Escr$ is a tilting bundle in $\Dscr^b(Y)$ and $B(\EE) := \End_{\Dscr^b(Y)}(\Escr)$ is an NCCR of $R_X$.  Our first result shows that all NCCRs of the completion\footnote{For simplicity we state our results in the introduction for the completion~$\widehat{R_X}$. However these results have uncompleted versions, valid for $R_X$,
provided we restrict to graded NCCRs.} $\widehat{R_X}$ in fact arise in this way:
\begin{theorem}[Theorem \ref{thm:nccrfromFEC}, Proposition \ref{prop:completion}]\label{thm:intro:classification}
Let $\Lambda$ be an NCCR of $\widehat{R_X}$.  Then there exists a very strong exceptional collection $\EE = (E_0, \dots, E_{n-1})$ on $X$ such that $\Lambda$ is Morita equivalent to the completion of $\End_{\Dscr^b(Y)}(\pi^*\Escr)$, where $\Escr = \oplus_{i=0}^{n-1} E_i$. 
\end{theorem}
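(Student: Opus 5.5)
The plan is to pass from an arbitrary NCCR to a tilting object on the crepant resolution $\widehat{Y}$ (the formal completion of $Y=\Tot(\omega_X)$ along the zero section), and then to show that this tilting object comes from a very strong exceptional collection on $X$.

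\emph{Step 1: from the NCCR to a tilting bundle.} Let $\Lambda=\End_{\widehat{R_X}}(M)$ be an NCCR. Crepant resolutions and NCCRs of a complete Gorenstein ring of dimension three are derived equivalent: $\Dscr^b(\widehat Y)\cong\Dscr^b(\mod\Lambda)$. This is a result of Iyama--Wemyss type (alternatively, of Van den Bergh type via moduli of $\Lambda$-modules), and it uses that $p:\widehat Y\to\Spec\widehat{R_X}$ has fibres of dimension at most two. We want the equivalence to be given by a tilting bundle $\Tscr$ on $\widehat Y$ with $\End(\Tscr)\cong\Lambda$ up to Morita equivalence. Concretely, take $\Tscr$ to be the image of $\Lambda$. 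Since $\Lambda$ is Cohen--Macaulay and $\widehat{Y}$ is crepant, $\Tscr$ can be arranged to be a vector bundle: a tilting complex whose endomorphism ring is CM is concentrated in degree zero. We may assume $M$ is basic, so $\Tscr=\bigoplus_i \Tscr_i$ with the $\Tscr_i$ indecomposable and pairwise non-isomorphic.

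\emph{Step 2: restricting to the zero section.} Let $E_i:=\Tscr_i|_X$. Indecomposable bundles on the formal neighbourhood deform uniquely from their restriction once the restriction is rigid. So I would show that $\bigoplus E_i$ is rigid and that the $\Tscr_i$ are determined by the $E_i$. Pushing forward along $\pi$, $\pi_*\pi^*E=\bigoplus_{k\ge0}E\otimes\omega_X^{-k}$, so
\[
\Ext^l_{\widehat Y}(\pi^*E_i,\pi^*E_j)=\prod_{k\ge0}\Ext^l_X(E_i,E_j\otimes\omega_X^{-k}).
\]
Vanishing for $l\neq 0$ is exactly the very strong condition. To get $\Tscr\cong\pi^*\Escr$, filter $\Tscr$ by powers of the ideal of $X$; the graded pieces are $E\otimes\omega_X^{k}$. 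Then show that the obstruction classes vanish, using that $\Ext^1(\Tscr,\Tscr)=0$ forces $\Ext^1_X(E,E\otimes\omega^{-k})=0$. This last deduction is the delicate point, and one would likely first use a $\mathbb{G}_m$-action/grading argument in the graded setting and then pass to the completion. Here Proposition~\ref{prop:completion} enters: it says NCCRs of $\widehat{R_X}$ are completions of graded ones up to Morita equivalence. So the natural route is to prove the graded statement first, where $\Tscr$ can be taken $\mathbb{G}_m$-equivariant, hence $\Tscr\cong\pi^*\Escr$ with $\Escr$ a graded summand description on $X$.

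\emph{Step 3: exceptionality.} It remains to see that $(E_0,\dots,E_{n-1})$, suitably ordered, is a full exceptional collection. Fullness follows from generation: $\pi^*\Escr$ generates $\Dscr(\widehat Y)$, hence $\Escr$ generates $\Dscr(X)$ via $\pi_*$ and the adjunction. For exceptionality I would use the Calabi--Yau property of $Y$ with support on $X$, namely
\[
\Ext^l_{Y}(\pi^*E_i,\iota_*F)\cong\Ext^{3-l}(\iota_*F,\pi^*E_i)^*,
\]
together with the sequence $0\to\pi^*E\otimes\omega^{-1}\to\pi^*E\to\iota_*E\to0$. These give an Euler form relation $\chi_Y(\iota_*E_i,\iota_*E_j)=\chi(E_i,E_j)-\chi(E_j,E_i)$ computed from the algebra. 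One then shows that the quiver of $\Lambda$ modulo the Calabi--Yau superpotential has no oriented cycles in degree zero on $X$. Equivalently, the $E_i$ form a strong collection on $X$ whose ordering is given by a partial order from $\Hom$'s, using Bondal's result that a full strong collection of bundles with rigid summands on a del Pezzo surface is exceptional. This also uses Kuleshov--Orlov: rigid bundles on del Pezzo surfaces are direct sums of exceptional ones.

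I expect the main obstacle to be Step 2, and specifically the conclusion that every tilting bundle on $\widehat{Y}$ is a pullback from $X$, which needs the grading.
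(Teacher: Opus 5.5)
There is a genuine gap, and it sits where the content of the theorem lies: you never actually produce a tilting \emph{bundle} on $\widehat Y$ with endomorphism ring $\Lambda$.

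Your justification, that ``a tilting complex whose endomorphism ring is CM is concentrated in degree zero'', is false. Take a tilting bundle $\pi^*\Escr$ and the spherical object $S=\iota_*\Oscr_X$ of $\Dscr^b(Y)$. The spherical twist $T_S(\pi^*\Escr)$ is again a tilting complex with the same $R$-algebra of endomorphisms, hence an NCCR. Yet $T_S(\Oscr_Y)$ has cohomology $\Oscr_Y$ in degree $0$ and $\iota_*\Oscr_X$ in degree $2$.

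The results matching NCCRs with tilting bundles on a crepant resolution (Van den Bergh on three-dimensional flops, Iyama--Wemyss) also do not help. They need fibres of dimension at most one, which fails here because the central fibre is the surface $X$. The general moduli-space construction only gives \emph{some} crepant resolution, possibly a flop of $Y$ along $(-1)$-curves, and \emph{some} tilting complex.

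Step 2 inherits the problem. You assume a $\mathbb{G}_m$-equivariant $\Tscr$ that you have no way of obtaining. Even then, an equivariant bundle on $Y$ is a pullback from $X$ only if certain extension classes in $\Ext^1_X(F,F'\otimes\omega_X^{-k})$, $k>0$, vanish, and that is exactly what you were trying to prove. Step 3 relies on a ``Bondal result'' about full strong collections; I don't believe that exists in the form you state.

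The missing idea is to read the bundle off from $M$ directly on $X$, and only afterwards pass to $Y$.
\begin{itemize}
\item First reduce to the graded case. By Iyama--Reiten $M$ is rigid, and by Keller--Murfet--Van den Bergh a rigid $\widehat R$-module is the completion of a graded one.
\item Put $\Mscr=\widetilde M$, a vector bundle on $X$. The CM property of $\End_R(M)$ gives $\bigoplus_{k\in\ZZ}H^1(X,\HHom(\Mscr,\Mscr)\otimes\omega_X^{-k})=H^2_{R_+}(\End_R(M))=0$. So $\Mscr$ is rigid against every twist.
\item Kuleshov--Orlov splits $\Mscr$ into exceptional bundles. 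Twisting each summand by a power of $\omega_X$ only regrades $M$, and it puts all slopes in $[0,K_X^2)$.
\item Ordering the summands by slope gives a very strong exceptional collection. Backward $\Hom$'s vanish by stability, $\Ext^1$'s by the twisted rigidity, and $\Ext^2$'s by Serre duality together with the slope window.
\end{itemize}
Only then go to $Y$. The pullback $\pi^*\Mscr'$ has no higher self-extensions. $\End_Y(\pi^*\Mscr')$ agrees with $\Lambda$ up to regrading, since both are reflexive and coincide in codimension one. Generation follows because $\Dscr^b(Y)$ has no $R$-linear semiorthogonal decompositions (its relative Serre functor is trivial). Fullness then follows from the adjunction argument you sketch.
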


Using the terminology introduced by Bridgeland and Stern in \cite{BridgelandStern},  a very strong exceptional collection $\EE = (E_0, \dots, E_{n-1})$ is the same as a \emph{thread} of a \emph{geometric helix} 
$ \HH = \{E_i \}_{i \in \ZZ}$,  where $E_{i-n} = E_i \otimes \omega_X$,  and an NCCR $B(\EE)$ is the \emph{rolled-up helix algebra}\footnote{The helix algebra was in fact already introduced in  \cite{BondalPolishchuk} as a $\ZZ$-algebra. The rolled-up helix algebra
  as defined in \cite{BridgelandStern} is a special case of a general construction which associates a graded algebra to a periodic $\ZZ$-algebra.} of $\HH$.
As we explain in \S \ref{sec:mutnccr},  Theorem \ref{thm:intro:classification} recovers in particular the main result of \cite{BridgelandStern}.

\medskip

As in the classical commutative setting, non-commutative crepant
resolutions are in general far from unique.  Any NCCR can be modified
to obtain different NCCRs using so-called \emph{mutations}
defined by Iyama-Reiten, Iyama-Wemyss
\cite{IyamaReiten,IyamaWemyss14_2}. While mutations of NCCRs may be viewed
as non-commutative counterparts of flops, the nature of our understanding
of them is not the same.
Whereas the
conjecture of Bondal and Orlov \cite{BondalOrlov95}, predicting derived
equivalence for varieties related by a flop, remains open in general,
mutations of NCCRs naturally induce derived equivalences.  On the
other hand, projective crepant resolutions are known to be connected
by sequences of flops \cite{Kawamata}, but for NCCRs of Gorenstein
rings the analogous statement is at most a conjecture.

However for 3-dimensional \emph{terminal}
Gorenstein (equivalently, compound Du Val singularities) Iyama and
Wemyss \cite[Theorem 9.6]{IW23}, \cite[Corollary 4.5]{Wemyss} showed
that all NCCRs are indeed related by sequences of mutations.
Our second main result extends this to a natural class of canonical Gorenstein singularities,
namely the completed anti-canonical del Pezzo
cones
\begin{theorem}[Theorem \ref{thm:nccrsrelated}]\label{thm:intro:main} All NCCRs of $\widehat{R_X}$ for $X$ a del Pezzo surface are related by sequences of mutations.  \end{theorem}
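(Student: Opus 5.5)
By Theorem \ref{thm:intro:classification}, every NCCR of $\widehat{R_X}$ is Morita equivalent to the completion of some $B(\EE)$, where $\EE$ is a very strong exceptional collection, that is, a thread of a geometric helix. The plan is to reduce the statement about NCCRs to a combinatorial statement about geometric helices, and then prove that statement.

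First I will set up a dictionary between operations on helices and operations on NCCRs. The NCCR $B(\EE)$ is $\End_{\widehat{R_X}}(M)$ with $M=\bigoplus_i \widehat{p_*\pi^*E_i}$, whose indecomposable summands are the $M_i$ attached to the $E_i$. I will check three things.
\begin{itemize}
\item Tensoring $\EE$ by a line bundle and shifting do not change the algebra up to isomorphism, hence not up to Morita equivalence.
\item Moving the thread along the helix, replacing $E_0$ by $E_n=E_0\otimes\omega_X^{-1}$, does not change the module $M$ up to isomorphism. The reason is that $\widehat{R_X}$ is the completion of $\bigoplus_k\Gamma(\omega_X^{-k})$, so $p_*\pi^*E$ is the graded module $\bigoplus_k\Gamma(E\otimes\omega_X^{-k})$, and twisting by $\omega_X^{-1}$ only shifts the grading.
\item A left or right mutation of a helix which again gives a \emph{geometric} helix corresponds to an Iyama--Wemyss mutation of $M$ at the summand $M_i$.
\end{itemize}
For the third point, the helix mutation $L_{E_{i-1}}E_i$ is defined by the cone of $\Hom(E_{i-1},E_i)\otimes E_{i-1}\to E_i$. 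For very strong collections this is a surjection of bundles (or an injection, in the dual case), so it gives a short exact sequence $0\to E'\to \Hom\otimes E_{i-1}\to E_i\to 0$. Applying $p_*\pi^*$ and completing, I expect this to yield the exchange sequence $0\to M_i'\to V\to M_i$, where $V\to M_i$ is a right $\add(M/M_i)$-approximation. The approximation property is the step that needs care, since it requires controlling maps from all the other $M_j$ into $M_i$. It should follow from the vanishing $\Ext^l(E_j,E_k\otimes\omega^{-m})=0$ for $l>0$, together with the fact that morphisms in $\add M$ are computed on $Y$. I would verify it by applying $\Hom(E_j\otimes\omega^{m},-)$ to the sequence.

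With this dictionary in place, Theorem \ref{thm:intro:main} reduces to the following combinatorial statement: any two geometric helices on $X$ are connected by mutations through geometric helices, up to twisting by line bundles, shifts and moving the thread. This is the main obstacle. Plain exceptional collections are connected by braid group mutations together with shifts (Kuleshov--Orlov), but the intermediate steps of such a path need not be geometric.

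To control this I would use the Hille--Perling polygons. An exceptional collection of line bundles, or more generally toric-type numerical data $(\rk,c_1,\chi)$ of the thread, gives a polygon, and geometricity should amount to a convexity/positivity condition. Mutations act on the polygon by moving a single vertex. The strategy is an induction on a measure of complexity, such as the area of the polygon or the sum of the ranks of the $E_i$. I would show that any geometric helix that is not minimal admits a geometric mutation that strictly decreases this measure. For that I would pick a vertex at which the polygon is locally non-convex in a controlled way, or the exceptional object of maximal rank, and mutate it across a neighbour, using very-strongness to check that the new thread is again geometric. This reduces every helix to ones built from line bundles. For those there are finitely many cases for each degree $9-n+3$, and I would connect them by hand via elementary mutations, which amounts to blowing down or up and using the toric case. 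Proving the existence of a decreasing geometric mutation is where I expect most of the difficulty.
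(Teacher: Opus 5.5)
Your dictionary covers only single braid mutations that stay geometric, and with that set of moves the statement you reduce to is false. By Bridgeland--Stern (Propositions \ref{prop:clusterVSregularmut} and \ref{prop:rem:cluster}), an Iyama--Wemyss mutation at $E_0$ is the composite $\tilde{\sigma}_j^{-1}\cdots\tilde{\sigma}_1^{-1}$, which moves $E_0$ past $E_1,\dots,E_j$, where $j$ is the first index at which the thread is very strong again. The intermediate threads are typically not very strong. Suppose $E_0$ is followed by an orthogonal block $E_1,E_2,\dots$ of size $\ge 2$. Then $\widetilde{R_{E_1}}E_0$ has class $\pm([E_0]-h[E_1])$, so it is torsion or has slope $\neq\mu(E_1)=\mu(E_2)$. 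Hence one of the pairs $(E_1,\widetilde{R_{E_1}}E_0)$, $(\widetilde{R_{E_1}}E_0,E_2)$ has decreasing slope, and the same argument applies to left mutations. On the cubic surface, the line-bundle helix with blocks $(3,3,3)$ (label $(3,8)$ in \S\ref{sec:minimal_collections}) therefore has no geometric single braid mutation other than swaps inside blocks. Yet $X_6$ also carries geometric helices containing rank $2$ bundles (labels $(3,9)$, $(3,10)$), so these are not connected by your moves. Correspondingly, the sequence built from $\Hom(E_{i-1},E_i)\otimes E_{i-1}$ is not an $\add(M/M_i)$-approximation in general. A genuine approximation must involve every quiver neighbour of $i$, with its $\omega$-twists, i.e.\ the whole blocks being passed. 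You need the identification of IW mutations with these composite quiver mutations (Theorem \ref{thm:BS} and Proposition \ref{prop:rem:cluster}, or Theorem \ref{thm:nccrfromFEC} with Theorems \ref{thm:IWmutations} and \ref{thm:KYIW=DWZ}). On the Hille--Perling polygon such a mutation acts by a shear on all vertices up to the earliest opposing vertex, not by moving a single vertex.

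The endgame also fails. A descent in total rank cannot end at line bundles. For a very strong collection of line bundles, $\omega(l_{i-1,i},l_{i,i+1})=r_i^2=1$ for all $i$. So $|P|$ is a convex lattice polygon whose only interior lattice point is the origin, i.e.\ a reflexive polygon, with exactly $n=12-K_X^2$ boundary lattice points. Reflexive polygons have at most $9$ boundary points, so there is no geometric helix of line bundles in degree $1$ or $2$. Even on $X_4$, where line-bundle helices exist, the helix $(3,5)$ contains a rank $2$ bundle and admits no rank-decreasing quiver mutation. Nor is the existence of a decreasing mutation the hard part: by Corollary \ref{cor:forbidden}, non-minimality just means that the origin lies outside the forbidden region. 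What is needed is a classification of the local minima and a way to connect them. In the paper this consists of three steps:
\begin{enumerate}
\item Reduce to block-complete collections (Corollary \ref{cor:block_complete}).
\item Use the narrowing edges, the three admissible vertices, area inequalities and integrality to show there are at most $4$ blocks and a finite list of Gram matrices (Theorem \ref{thm:minimal_list}).
\item Give explicit, non-monotone mutation paths between the minimal Gram matrices. In addition, show that all realisations of a fixed Gram matrix are reachable. By Proposition \ref{prop:symmetry} these realisations differ by line bundle twists and elements of $S(X)$, so generators of the Weyl group must be realised by mutations.
\end{enumerate}
Blow-ups and ``the toric case'' do not handle the third step: $X_n$ is not toric for $n\ge 4$, and the $S(X)$-ambiguity is exactly the issue there.
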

NCCRs of $\widehat{R_X}$,  or,  equivalently,  by Theorem \ref{thm:intro:classification},  rolled-up helix algebras of geometric helices, are $3$-Calabi-Yau,  and hence admit presentations as Jacobian algebras associated to quivers with potentials.  Accordingly,  mutations of NCCRs may be interpreted as mutations of quivers with potential in the sense of Derksen-Weyman-Zelevinsky \cite{DWZ}.

\medskip

Now recall that there is yet another kind of ``mutation'' operation which  induces a braid group action on the set of full exceptional collections (\cite{MR992977},  see also Proposition \ref{pr:braidaction}). 
These ``braid'' mutations do not generally preserve the property of an exceptional collection being very strong.  However,  in \cite{BridgelandStern} it was shown that mutations of rolled-up helix algebras can nevertheless be realised as compositions of successive braid mutations of the underlying exceptional collections
(see Proposition \ref{prop:rem:cluster} for the precise recipe).
We will thus use the term \emph{quiver mutation} for three distinct but closely related concepts: (1)
mutations of rolled-up helix algebras in the sense of NCCRs; (2) DWZ mutations of quivers with potential and (3) the corresponding sequences of braid mutations of underlying very strong exceptional collections or geometric helices.

In light of Theorem \ref{thm:intro:classification} and the fact that mutations of NCCRs can be performed on the level of their underlying geometric helices,  we prove Theorem \ref{thm:intro:main} by showing that all such helices are related by mutations,  modulo elementary operations that do not affect the corresponding NCCR: 
\begin{theorem}[Theorem \ref{thm:helixrelated}, see also
  \cite{Bousseau}]\label{thm:intro:helices} All geometric helices on a
  del Pezzo surface $X$ are related by quiver mutations, simultaneous
  shifts in the derived category and tensoring by line bundles,
  reordering of objects in orthogonal blocks and rotations (shifting
  the labels of objects). \end{theorem}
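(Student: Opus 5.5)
Any two geometric helices on $X$ are connected by quiver mutations modulo the elementary operations, and the plan is to prove this by reducing to a normal form. To a very strong exceptional collection $\EE=(E_0,\dots,E_{n-1})$ (a thread of $\HH$) we attach the Hille--Perling toric system, i.e. the classes $A_i=c_1(E_{i+1})-c_1(E_i)$ indexed cyclically with $A_{n-1}$ closing up via $\omega_X^{-1}$. The associated polygon $P(\EE)$ has edges built from the $A_i$ with self-intersections $A_i^2$. Only the slopes of the $E_i$ enter here: exceptional objects on a del Pezzo surface are determined by their class up to shift, and a very strong collection is determined by the classes of its members. Since tensoring by a line bundle and shifting do not change the toric system, the polygon is a genuine invariant of $\HH$ modulo the elementary operations.

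The first step is to show that geometric helices correspond to polygons of a restricted kind. Concretely, I would show that the polygon is convex, and that its vertices, read off in the right lattice, are lattice points enclosing a region whose area is controlled by $K_X^2$. The main input is the very-strong vanishing $\Ext^l(E_i,E_j\otimes\omega_X^{-k})=0$. I expect it forces the "local" Euler form conditions $\chi(E_i,E_{i+1})\ge 0$ together with the cyclic condition across the helix period, and these translate into convexity, i.e. sign conditions on $A_i\cdot A_{i+1}$ and on $A_i^2+2$.

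The second step is to analyse how a single quiver mutation (the composition of braid mutations given in Proposition \ref{prop:rem:cluster}) acts on the polygon. Mutating at a vertex $i$ that is a sink or source of the thread should replace $A_{i-1},A_i$ by new classes through a local lattice move, much like a Markov-type mutation of a Fano polygon. I would define a complexity measure, such as the sum of the ranks $\sum\rk E_i$ (or equivalently the maximal $|A_i^2|$ or the number of lattice points), and show the following: if $\HH$ is not of minimal complexity, then some vertex admits a quiver mutation that strictly decreases the measure while preserving geometricity. Reordering within orthogonal blocks and rotation are needed here, to bring the relevant vertex to a position where the mutation is defined.

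The third step is to classify the minimal-complexity helices. These should be exactly those built from line bundles, or the standard toric-type collections, for each degree $K_X^2$. Using the fact that the Weyl group $W(E_{9-d})$ acts transitively via line bundle twists and relabelings, all such minimal helices are related by the elementary operations together with finitely many explicit mutations. These mutations are checked case by case, for instance $\PP^2$ versus $\PP^1\times\PP^1$-type, and the two families in some degrees. The main obstacle is the descent step: proving that a non-minimal geometric helix always has a complexity-decreasing mutation that keeps it geometric. I would attempt this by taking a vertex of the polygon where the rank is maximal and showing, via convexity and the Riemann--Roch computation of $\chi(E_{i-1},E_{i+1})$, that mutating there produces a smaller object while the very-strong vanishing is preserved by the convexity of the new polygon.
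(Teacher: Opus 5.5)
Your plan breaks down at its core, in steps 2 and 3. It assumes that a non-minimal helix can always be pushed down by rank-decreasing quiver mutations to a line-bundle (or toric-type) normal form, and this is false. Collections admitting no rank-decreasing quiver mutation need not consist of line bundles. A given surface usually has several inequivalent ones, of different total rank, which are connected only through mutations that \emph{increase} rank. Some examples from the paper's list:
\begin{itemize}
\item On $X_8$ there is a minimal $3$-block collection with block sizes $(1,5,5)$ and ranks $(5,1,2)$, so $\sum_i\alpha_ir_i^2=50$. Another minimal one, with block sizes $(9,1,1)$ and ranks $(1,3,3)$, has $27$.
\item Every minimal collection on $X_7$ or $X_8$ contains a bundle of rank $\ge 2$.
\item On $X_4$ the collection with block sizes $(1,1,5)$ and ranks $(2,1,1)$ cannot be reduced, even though collections of line bundles exist there.
\end{itemize}
So the heuristic ``mutate at a vertex of maximal rank, and convexity plus Riemann--Roch make that object smaller'' cannot be proved. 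The descent step you single out is either tautological (if ``minimal'' means ``no rank-decreasing mutation'') or false (if it means globally minimal). The real content is the classification of these local minima, which your proposal does not address.

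The paper does this as follows. Local minimality is shown to be equivalent to the origin lying in the forbidden region $\bigcap_i {}^{1/2}H_i^+$ of the polygon. One then reduces to the block-complete case (no parallel long edges). Next, the narrowing-edge lemma, the fact that there are exactly three admissible vertices, Lemma~\ref{lem:prelim} and a series of area inequalities show there are at most four blocks and finitely many Gram matrices. The three-block list comes from Karpov--Nogin and the four-block list from a computer search. Finally, explicit mutation sequences connect the finitely many minimal Gram matrices on each surface.

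Two further steps also need repair.

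First, the polygon you build from the integral classes $c_1(E_{i+1})-c_1(E_i)$ and their self-intersections is the Hille--Perling picture for line bundles only. Minimal collections contain bundles of rank up to $5$, so you need the general version. This is the toric system $T_{i,i+1}=s(E_{i+1})-s(E_i)\in\Pic(X)_{\QQ}$ and its Gale dual $(l_{i,i+1})_i$. In it the edge $m_i$ is $r_i$ times a primitive lattice vector, and the area satisfies $2\Delta(P)=\sum_i r_i^2$, so it is not controlled by $K_X^2$. Convexity is equivalent to being very strong by Perling's theorem, not by sign conditions on $\chi(E_i,E_{i+1})$.

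Second, the symmetry group $S(X)$ (the Weyl group) does not act through line-bundle twists and relabellings. By Proposition~\ref{prop:symmetry}, two collections with the same Gram matrix differ by a twist and an element of $S(X)$. However, the $S(X)$-action (for example the reflection in $H-E_1-E_2-E_3$) is not one of the elementary operations. So for one collection on each surface you must show that every generator of $S(X)$ is realised by quiver mutations up to equivalence. The paper does this with computer-generated certificates; alternatively, see Mizuno's appendix or Bousseau's geometric argument.
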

See \S \ref{sec:intocontext} for some context on this result and the relation with \cite{Bousseau}.

\subsection{Method of proof}
\label{sec:method}
We say that a very strong exceptional collection is \emph{minimal} if
it does not admit a quiver mutation reducing the sum of the ranks of
its objects. To prove Theorem \ref{thm:intro:helices} we obtain a
complete classification of (Gram matrices of) minimal very strong
exceptional collections. After this it remains to relate (a small
finite number of) minimal collections by sequences of mutations for
every del Pezzo surface, which we do by an explicit computer search.

\medskip

A key ingredient which goes into the classification of minimal exceptional
collections is the beautiful toric framework introduced by Hille and
Perling \cite{HillePerling,Perling} (see also \cite{KuznetsovToric}), which we recall and develop further
in \S \ref{sec:toric}.  This machinery associates a star-shaped polygon in a $2$-dimensional lattice to an exceptional collection on a del
Pezzo surface~$X$ in such a way that very strong exceptional collections
correspond to convex polygons. In this paper we obtain some results about the HP-polygon which we think are new.
\begin{itemize}
\item
In Theorem \ref{th:gale}, 
we show that the HP-polygon is in fact closely related to the dual exceptional collection.
\item
Leveraging this, we also deduce that the quiver associated to the rolled-up helix algebra of
a very strong exceptional collection can be obtained very directly from the HP-polygon (see Theorem \ref{thm:achievement}).
\item If the HP-polygon does not have parallel edges (which is not a restriction for us as discussed in \S\ref{sec:block_complete})
  then the shape of the polygon and the associated quiver is very constrained. In fact in Proposition \ref{prop:herzog}
  we verify that an informal conjecture by Herzog \cite{Herzog} about the quiver shape holds in this case.
\end{itemize}

The
key point is now that quiver mutations of very strong exceptional collections can be interpreted as certain
simple geometric operations (see Figure \ref{fig:quiver_mutation}) on these convex polygons.
The minimal
very strong exceptional collections are then characterised
by the fact that the area of the polygon cannot be reduced by such operations.
This is equivalent to 
 the origin being  inside a certain explicit ``forbidden region'' inside the polygon (see Corollary \ref{cor:forbidden} and also Figure \ref{fig:forbidden} for some examples).
It is this geometric constraint that we leverage to obtain our classification.

\medskip Some of our results depend on computer computations. The accompanying scripts can be found at \cite{GHrepo}.

\subsection{Context for this paper}\label{sec:intocontext}
Theorem \ref{thm:intro:main} was first formulated as a conjecture in
the first named author's PhD thesis, where it was also proved in two
special cases using the method described in \S\ref{sec:method}.  Shorty
after we obtained a proof in the general case, using the same method, we were informed that
Pierrick Bousseau had also independently proved the conjecture, and later this
proof was published as the preprint~\cite{Bousseau}. We did prefer however to continue writing down our own proof, as it was at that stage, without consulting~\cite{Bousseau}, and this has resulted in the current paper. But now we
make a comparison of this paper with \cite{Bousseau}.

  As outlined in \S\ref{sec:method} the main issue in the proof of Theorem \ref{thm:intro:main} is the classification of the mutation classes of HP-polygons. We were unaware however that this classification was in fact already known
  under the guise of the classification of so-called ``T-polygons'', up to mutation (see \cite[Theorem 1.2]{KNP} for a case by case proof and \cite[Theorem 4.8]{Lutz} for an intrinsic geometric proof, inspired by mirror symmetry).
  So the material of \S\ref{sec:minimal_collections} and \S\ref{sec:minimal} is in fact an independent reproof of this classification.
  
  The fact that HP-polygons are T-polygons was already noted
 in \cite[Theorem 11.3]{Perling}. In \cite[Definition 2.13]{Bousseau}
  a  T-polygon is constructed directly from the dual of a very strong exceptional collection. We obtain the same construction for the HP-polygon in
  \S\ref{sec:dual_ex}. So the two polygons are in fact the same.

  The classification of HP-polygons amounts to the classification of Gram matrices of very strong exceptional collections. Two exceptional collection with the same Gram matrix are related by the affine Weyl group. So it roughly remains to realise generators
  of the Weyl group as compositions of mutations. In this paper we simply use an explicit
computer search for this because we again overlooked that this part is known as well (see \cite[Appendix A]{Mizuno}).
The treatment in loc.\ cit.\ depends on a case by case analysis but
Bousseau replaces it by a very elegant geometric argument in the proof of \cite[Theorem 2.12]{Bousseau}. 

\section{Acknowledgement} 
The authors are very grateful to Sasha Kuznetsov and Tom Bridgeland for useful discussions and for their numerous helpful comments on the relevant part of the first-named author's PhD thesis.

All computer computations in this paper were carried out using SageMath \cite{sagemath}, a free open-source mathematics software system. The authors warmly thank the SageMath community and its contributors for developing and maintaining this resource.
\section{Notation and Conventions}
\label{sec:notation}
Unless otherwise specified modules are left modules.

\medskip

In principle our ground field is $\CC$ although we sometimes formulate algebraic statements over a ground field $k$, which we assume
to be algebraically closed.

\medskip
A quiver $Q$ with be a 4-tuple $(Q_0,Q_1,s,t)$ where the $Q_0$ is the set of vertices, $Q_1$ is the set of arrows and $s,t:Q_1\r Q_0$ are
the maps which relate an arrow to its source a target. We use functional composition in the path algebra $kQ$. In other words representations of $Q$
correspond to left $kQ$-modules.  
Sometimes we use the notation $Q(i,j)$ for the set of arrows starting in vertex $i$ and ending in vertex $j$.
Unless otherwise specified, a quiver is assumed to be finite.

\medskip

A \emph{polygon} is  defined as a non-self-intersecting cycle of a finite number of non-degenerate intervals which we call \emph{edges}.
The endpoints of the intervals are the \emph{vertices} of the
polygon. A polygon $P$ is a Jordan curve and we write $|P|$ for the closure of the bounded connected component of its complement.
If the adjacent edges of a vertex are collinear then we call the vertex \emph{straight}, otherwise we call it \emph{non-straight}.
Sometimes we will refer to the line segments connecting the non-straight edges as \emph{long edges}. The long edges are chains of collinear edges.
The vertices of $|P|$ (in the usual sense) are the non-straight vertices of $P$ and similarly the edges of $|P|$ (also in the usual sense)
correspond to the long edges of $P$.
\emph{Deleting a vertex} $y$ from a polygon is defined as replacing the adjacent edges $[xy]$ and $[yz]$ by a single edge $[xz]$.

\medskip

We denote the area of a region in a plane, equipped with a volume form, by $\area(A)$. If $A=\conv(a_1,\ldots,a_n)$ then we will also write $\Delta(a_1\cdots a_n)$
for $\Delta(A)$.

\medskip

If $A$ is a graded algebra and the $(e_i)_{i=1}^n$ is a complete system of idempotents of degree zero then a \emph{regrading} of $A$ is a graded
algebra of the form $\tilde{A}=\End_A(\oplus_{i=1}^n (e_iA)(n_i))$ for $n_i\in \ZZ$. It is obvious that $\Gr(A)$ and $\Gr(\tilde{A})$ are equivalent where $\Gr(?)$ denotes
the category of graded modules. In fact regrading is a particular case of \emph{graded Morita equivalence} which is defined in a similar way as ordinary Morita equivalence.

We have $A=\oplus_{i,j} e_j A e_i$ and $\tilde{A}=\oplus_{i,j} (e_j A e_i) (n_j-n_i)$. In other words, regrading amounts to shifting the pieces $e_jAe_i$ by $n_j-n_i$ degrees
to the left.
If $A$ has a graded presentation $kQ/(R)$  and $(e_i)_i$ are the standard idempotents corresponding to vertices then regrading amounts to changing the
degree of an arrow $e:i\r j$ to $\deg e-n_i+n_j$.
\section{Preliminaries on exceptional collections}
\subsection{Exceptional collections}
\label{sec:ex_col}
Let $X$ be a smooth projective variety.
\begin{definition} \label{def:exceptional} 
An object $E \in \Dscr^b(X)$ is \emph{exceptional} if $\Hom^i(E,E) = 0$ for any $i \neq 0$ and $\Hom^0(E,E) \cong \CC$.  An ordered collection $\EE = (E_0, \dots,E_{n-1})$ of objects in $\Dscr^b(X)$ is called an \emph{exceptional collection} if each $E_i$ is exceptional and $\Hom^\bullet(E_j,E_k) = 0$ for all $j>k$.  An exceptional collection $\EE$ is \emph{full} if $\Dscr^b(X)$ is the smallest triangulated subcategory of $\Dscr^b(X)$ containing $E_0, \dots,E_{n-1}$. 
\end{definition} 

\begin{definition} Let $(E,F)$ be an exceptional pair.  We define the left \emph{mutation} of $(E,F)$ as $(L_E F,  E)$,  where the object $L_E F$ is given by the distinguished triangle
\begin{align*}
\Hom^\bullet(E,F) \otimes E \to F\to L_E F.  
\end{align*}
 Dually,  the right mutation of $(E,F)$ is defined to be $(F, R_F E)$, where $R_F E$ is given by the distinguished triangle
 \begin{align*}
 R_F E \to E \to \Hom^\bullet(E,F)^\ast \otimes F.
\end{align*}

\end{definition} 
\begin{proposition}[{Bondal \cite[Assertion 2.3]{MR992977}}]\label{pr:braidaction} The braid group $B_n$ on $n$ strands acts on the set of exceptional collections in $\Dscr^b(X)$ of length $n$ by mutations.  More precisely,  the standard generator $\sigma_i \in B_n$, $1 \leq i \leq n-1$ acts by
\begin{align*} \sigma_i(E_0, \dots,E_{n-1}) = (E_0, \dots, E_{i-2}, L_{E_{i-1}}E_{i}, E_{i-1},E_{i+1}, \dots, E_{n-1})
\end{align*}
\end{proposition}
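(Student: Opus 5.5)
To prove Proposition \ref{pr:braidaction} there are two things to check. First, each $\sigma_i$ sends exceptional collections of length $n$ to exceptional collections of length $n$, and it has an inverse given by right mutation. Second, these operators satisfy the braid relations $\sigma_i\sigma_{i+1}\sigma_i=\sigma_{i+1}\sigma_i\sigma_{i+1}$ and $\sigma_i\sigma_j=\sigma_j\sigma_i$ for $|i-j|\ge 2$. Throughout I will use the reformulation of left mutation as a projection. For an exceptional object $E$, the functor $L_E$, given by the cone of the evaluation map $\RHom(E,-)\otimes E\to(-)$, is the left adjoint to the inclusion of ${}^\perp E=\{G:\RHom(E,G)=0\}$, composed with the inclusion. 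It kills $\langle E\rangle$ and restricts to the identity on ${}^\perp E$. This holds because $\RHom(E,E)=\CC$.

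\emph{Step 1 (the result is exceptional).} Put $F'=L_{E_{i-1}}E_i$. Apply $\RHom(E_{i-1},-)$ to the defining triangle; the evaluation map induces an isomorphism, so $\RHom(E_{i-1},F')=0$. To see that $F'$ is exceptional, compute $\RHom(F',F')$. Since $F'$ lies in $E_{i-1}^{\perp}$ in the appropriate sense, $\RHom(\RHom(E_{i-1},E_i)\otimes E_{i-1},F')$ vanishes. Hence $\RHom(F',F')\cong \RHom(E_i,F')$. Applying $\RHom(E_i,-)$ to the triangle, and using $\RHom(E_i,E_{i-1})=0$, gives $\RHom(E_i,F')\cong\RHom(E_i,E_i)=\CC$.

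For the semiorthogonality with the other members:
\begin{itemize}
\item For $k<i-1$, we have $\RHom(E_{i-1},E_k)=0$ and $\RHom(E_i,E_k)=0$, so the triangle gives $\RHom(F',E_k)=0$.
\item For $k>i$, we need $\RHom(E_k,F')=0$; this follows because $\RHom(E_k,E_{i-1})=0$ and $\RHom(E_k,E_i)=0$.
\end{itemize}
Thus the new sequence is exceptional and generates the same subcategory $\langle E_{i-1},E_i\rangle$. Fullness is therefore also preserved.

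\emph{Step 2 (invertibility).} The dual computation shows that $R_{E_{i-1}}$ restricts to an inverse of $L_{E_{i-1}}$ between ${}^\perp E_{i-1}\cap\langle E_{i-1},E_i\rangle$ and $E_{i-1}^\perp\cap\langle E_{i-1},E_i\rangle$. Concretely, $R_{E_{i-1}}L_{E_{i-1}}E_i\cong E_i$. One checks this by observing that both objects lie in $E_{i-1}^\perp$ inside the two-object subcategory, and that they are canonically identified modulo $\langle E_{i-1}\rangle$. It follows that $\sigma_i^{-1}$, which acts by right mutation of the pair at positions $i-1,i$, is a two-sided inverse.

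\emph{Step 3 (braid relations).} This is the main point. The far commutativity $\sigma_i\sigma_j=\sigma_j\sigma_i$ for $|i-j|\ge2$ is clear, since the two mutations involve disjoint pairs and each mutation depends only on the pair it acts on. For $\sigma_i\sigma_{i+1}\sigma_i=\sigma_{i+1}\sigma_i\sigma_{i+1}$, restrict attention to a triple $(A,B,C)$. The left side gives $(L_AL_BC,\,L_AB,\,A)$. The right side gives $(L_{L_AB}L_AC,\,L_AB,\,A)$. So it suffices to show $L_AL_BC\cong L_{L_AB}L_AC$.

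Both sides lie in ${}^\perp\langle A,B\rangle\cap\langle A,B,C\rangle$. I will identify each side with the image of $C$ under the left adjoint projection $\langle A,B,C\rangle\to{}^\perp\langle A,B\rangle$, namely the cone of $\mathrm{ev}:\RHom(\ldots)\to C$ relative to $\langle A,B\rangle$. The key fact is that, for a semiorthogonal pair $\langle A,B\rangle$, this projection $L_{\langle A,B\rangle}$ equals $L_A\circ L_B$ and also equals $L_{L_AB}\circ L_A$, because $\langle A,B\rangle=\langle L_AB,A\rangle$ by Step 1. To establish the key fact, note that projection functors onto left orthogonals compose: $L_{\langle U,V\rangle}=L_U L_V$ whenever $(U,V)$ is semiorthogonal. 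This follows from uniqueness of adjoints, together with the observation that $L_U$ preserves ${}^\perp V$'s image in the needed way. Verifying this compatibility carefully is where I expect the only real work. Once it is done, the isomorphism follows and the action is well defined.
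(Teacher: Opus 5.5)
The paper gives no proof of this statement and simply cites Bondal; your argument is correct and is the standard proof, in which both sides of the braid relation equal the left projection of $C$ onto the right orthogonal of $\langle A,B\rangle=\langle L_AB,A\rangle$, i.e.\ $L_AL_BC\cong L_{\langle A,B\rangle}C\cong L_{L_AB}L_AC$ (notationally, your ${}^\perp E$ is what is usually written $E^\perp$, and Step~1 silently uses the usual convention). The compatibility you flag as the remaining work is a one-line check: if $(U,V)$ is an exceptional pair and $\RHom(V,G)=0$, then $\RHom(V,U)=0$ and the triangle defining $L_UG$ give $\RHom(V,L_UG)=0$, so $L_UL_VC$ lies in the right orthogonal of $\langle U,V\rangle$; for any $H$ there the bijections $\Hom(L_UL_VC,H)\cong\Hom(L_VC,H)\cong\Hom(C,H)$ compose, which determines $L_UL_VC$ up to isomorphism by Yoneda, so only this object-level universal property is used and the non-functoriality of cones plays no role.
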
 

\begin{proposition}[Bondal {\cite[Lemma 5.6]{MR992977}}] \label{def:dualcollection} Let $\EE = (E_0, \dots, E_{n-1})$ be a full exceptional collection in $\Dscr^b(X)$.  Define 
\[
\widecheck{E_i} := R_{E_{n-1}} \dots R_{E_{i+1}}(E_i)
\]
for all $0 \leq i \leq n-1$.  Then $\widecheck{\EE}= (\widecheck{E_{n-1}}, \dots, \widecheck{E_0})$ is a full exceptional collection such that $\Hom_{\Dscr^b(X)}(\widecheck{E_j},E_i[l]) \cong \CC$ if $l = 0,  i = j$ and $0$ otherwise.  We say that the collection $\widecheck{\EE}$ is \emph{right dual} to $\EE$.  Likewise,  the  \emph{left dual} exceptional collection of $\EE$ is defined as $\widecheck{\EE}' = (\widecheck{\EE_{n-1}}',\ldots, \widecheck{\EE_0}')$ with $\widecheck{\EE_i}' := L_{\EE_{0}} \dots L_{\EE_{i-1}}(\EE_i)$.  One has $\Hom_{\Dscr^b(X)}(E_i,\widecheck{E_j}'[l]) \cong \CC$ if $l = 0,  i = j$ and $0$ otherwise.
\end{proposition}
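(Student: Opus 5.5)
The plan is to prove everything by induction on the number of mutations, using two elementary facts about a single mutation. Let $(A,B)$ be an exceptional pair and set $V=\Hom^\bullet(A,B)$.

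\emph{Facts about one mutation.} First, for any object $C$ with $\Hom^\bullet(C,A)=0$, applying $\Hom^\bullet(C,-)$ to the triangle $R_B A\to A\to V^*\otimes B$ gives
\[
\Hom^\bullet(C,R_BA)[1]\cong V^*\otimes\Hom^\bullet(C,B).
\]
Second, for $C$ with $\Hom^\bullet(C,B)=0$, the same triangle gives $\Hom^\bullet(C,R_BA)\cong\Hom^\bullet(C,A)$. In particular $R_BA$ lies in ${}^\perp B$ and, together with $B$, generates the same triangulated subcategory as $A$ and $B$ do. Third, $(B,R_BA)$ is again exceptional. This is standard: applying $\Hom^\bullet(R_BA,-)$ and $\Hom^\bullet(-,B)$ to the triangle and using $\Hom^\bullet(B,A)=0$ shows $\Hom^\bullet(R_BA,R_BA)\cong\Hom^\bullet(A,A)=\CC$. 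The dual statements hold for $L_A B$.

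\emph{Exceptionality and fullness.} Applying the one-step facts to adjacent pairs gives Proposition \ref{pr:braidaction}, that mutations preserve exceptional collections and generation. Now fix $i$ and perform the right mutations of $E_i$ past $E_{i+1},\dots,E_{n-1}$. This replaces the tail $(E_i,\dots,E_{n-1})$ by $(E_{i+1},\dots,E_{n-1},\widecheck{E_i})$ inside a new exceptional collection generating the same category. Hence each $\widecheck{E_i}$ is exceptional and lies in $\langle E_{i+1},\dots,E_{n-1}\rangle^\perp$, the left orthogonal in the sense that $\Hom^\bullet(\widecheck{E_i},E_k)=0$ for $k>i$.

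\emph{Computing $\Hom^\bullet(\widecheck{E_j},E_i)$.} By the left orthogonality just established, the pairing vanishes for $i>j$. For $i<j$, I argue by induction on the number of mutations. Each intermediate object $R_{E_k}\cdots R_{E_{j+1}}E_j$ is built from $E_j,\dots,E_k$. Since $\Hom^\bullet(E_m,E_i)=0$ for all $m>i$, each of these intermediate objects $X$ satisfies $\Hom^\bullet(X,E_i)=0$. (The paper's convention $\Hom(\widecheck{E_j},E_i)$ is the contravariant pairing, so the argument is run with $\Hom^\bullet(-,E_i)$ applied to the triangles.) For $i=j$, apply $\Hom^\bullet(-,E_i)$ to each triangle $R_{E_k}X\to X\to V^*\otimes E_k$ with $k>i$. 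The third term contributes $V\otimes\Hom^\bullet(E_k,E_i)=0$, so $\Hom^\bullet(\widecheck{E_i},E_i)\cong\Hom^\bullet(E_i,E_i)=\CC$ in degree $0$.

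\emph{Mutual exceptionality of the dual collection.} It remains to show that $(\widecheck{E_{n-1}},\dots,\widecheck{E_0})$ is exceptional and full. Fullness follows because each $E_i$ lies in the triangulated subcategory generated by $\widecheck{E_i},E_{i+1},\dots,E_{n-1}$; downward induction on $i$ then shows that the $\widecheck{E}$'s generate all the $E$'s. For the vanishing, consider $j<k$, so that $\widecheck{E_j}$ comes after $\widecheck{E_k}$ in the dual collection, and we need $\Hom^\bullet(\widecheck{E_j},\widecheck{E_k})=0$. The cleanest route is a Yoneda-type argument. The objects $\widecheck{E_k}$ are characterised, up to isomorphism, by the duality relations $\Hom^\bullet(\widecheck{E_k},E_i)=\delta_{ik}\CC$, because these relations determine the functor $\Hom^\bullet(\widecheck{E_k},-)$ on a generating collection. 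Dévissage then gives $\Hom^\bullet(\widecheck{E_j},\widecheck{E_k})$ from the filtration of $\widecheck{E_k}$ by $E_k,\dots,E_{n-1}$, since $\widecheck{E_k}\in\langle E_k,\dots,E_{n-1}\rangle$. Each graded piece contributes $\Hom^\bullet(\widecheck{E_j},E_m)$ with $m\ge k>j$, and this vanishes by the duality relations. This dévissage is the main technical point.

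\emph{The left dual.} This is entirely symmetric. Use $L$ in place of $R$, apply $\Hom^\bullet(E_i,-)$, and run the induction from the left end of the collection.
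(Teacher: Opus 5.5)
Your argument is correct and is the standard proof of Bondal's lemma. The paper gives no proof of its own here and simply cites \cite[Lemma 5.6]{MR992977}.

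The one thing to trim is the ``Yoneda-type'' characterisation of $\widecheck{E_k}$: its stated justification is not an argument by itself, and you never use it. The vanishing $\Hom^\bullet(\widecheck{E_j},\widecheck{E_k})=0$ for $j<k$ follows at once from $\widecheck{E_j}\in{}^\perp\langle E_{j+1},\dots,E_{n-1}\rangle$, which you already established via the mutation, together with $\widecheck{E_k}\in\langle E_k,\dots,E_{n-1}\rangle$, exactly as in your d\'evissage.
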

\begin{remark} \label{rem:left_right_dual} It is an elementary exercise in Serre duality that the left and right dual of an exceptional collection
  are related by the Serre functor.
  \end{remark}
\begin{remark} \label{rem:extreme_dual}
  Let the notations be as in Proposition \ref{def:dualcollection}. Then one may check that $\widecheck{E_{n-1}}=E_{n-1}$ and $\widecheck{E_0}=\omega_X^{-1}\otimes E_0[-\dim X]$.  Likewise,  for the left dual exceptional collection one has $\widecheck{E_{0}}' = E_0$ and $\widecheck{E_{n-1}}'=\omega_X \otimes E_{n-1}[\dim X]$. 
\end{remark}
For use below we make the following definition.
\begin{definition} \label{def:gram_matrix}
  The \emph{Gram matrix} $M$ of an exceptional collection $\EE$ is the $n\times n$ matrix such that $M_{ij}=\chi_X(E_i,E_j)$.
  \end{definition}
\subsection{Exceptional collections on del Pezzo surfaces}\label{sec:delPezzoprelim}
Here we recall some classical results on exceptional objects and exceptional collections on del Pezzo surfaces, mostly following the works of Gorodentsev and Kuleshov-Orlov \cite{KuleshovOrlov}.  Throughout $X$ denotes a smooth del Pezzo surface. 

\begin{definition} Let $\Fscr$ be a coherent sheaf.  The \emph{degree} of $\Fscr$ is defined by 
\[
d(\Fscr) = c_1(\Fscr)\cdot (-K_X)
\]
For a torsion-free sheaf $\Fscr \in \coh(X)$ we define the \emph{slope} of $\Fscr$ as 
\[
\mu(\Fscr) = \frac{d(\Fscr)}{r(\Fscr)}
\] and denote 
\[
s(\Fscr) = \frac{c_1(\Fscr)}{r(\Fscr)} \in \Pic(X) \otimes \QQ
\]
For $\Fscr$ a torsion sheaf we set $\mu(\Fscr) = +\infty$.  
 \end{definition}

For two torsion-free sheaves $\Escr, \Fscr$ we will also use the notations 
\begin{align*}
s(\Escr,\Fscr) := s(\Fscr) - s(\Escr)  \\
\mu(\Escr,\Fscr) := (-K_X)\cdot s(\Escr,\Fscr) = \mu(\Fscr) - \mu(\Escr).
\end{align*}

The following can be derived from the Riemann-Roch formula: 

\begin{proposition}\label{prop:chi} Let $\Escr, \Fscr$ be exceptional torsion-free sheaves on $X$.  Then 
  \begin{equation}
    \label{eq:riemann_roch}
\chi(\Escr, \Fscr) = \frac{ r(\Fscr) r(\Escr)}{2} \bigg(s(\Escr,\Fscr)^2 + \mu(\Escr,\Fscr) + \frac{1}{r(\Escr)^2} + \frac{1}{r(\Fscr)^2}  \bigg).
\end{equation}
 If in addition $(\Escr, \Fscr)$ form an exceptional pair, then 
 \begin{equation}
\label{eq:chi_slope}
\chi(\Escr, \Fscr) = r(\Escr)d(\Fscr) - r(\Fscr)d(\Escr) = r(\Escr)r(\Fscr)\mu(\Escr,\Fscr).
\end{equation}
\end{proposition}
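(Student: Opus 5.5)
We apply Hirzebruch--Riemann--Roch on the surface $X$. For vector bundles (or torsion-free sheaves, which have locally free resolutions) $\Escr,\Fscr$ we have
\[
\chi(\Escr,\Fscr)=\int_X \operatorname{ch}(\Escr^\vee)\operatorname{ch}(\Fscr)\operatorname{td}(X),\qquad \operatorname{td}(X)=1-\tfrac12K_X+\chi(\Oscr_X)[\mathrm{pt}],
\]
with $\chi(\Oscr_X)=1$ since $X$ is rational.

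\textbf{Step 1: expand the integrand.} Write $\operatorname{ch}(\Escr)=r_E+c_1(\Escr)+\operatorname{ch}_2(\Escr)$, and similarly for $\Fscr$. The degree-two part of the product gives
\[
\chi(\Escr,\Fscr)=r_Er_F+\tfrac12\bigl(r_Ec_1(\Fscr)-r_Fc_1(\Escr)\bigr)\cdot(-K_X)+r_E\operatorname{ch}_2(\Fscr)+r_F\operatorname{ch}_2(\Escr)-c_1(\Escr)c_1(\Fscr).
\]
The second term equals $\tfrac12 r_Er_F\,\mu(\Escr,\Fscr)$.

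\textbf{Step 2: eliminate $\operatorname{ch}_2$ using exceptionality.} Apply the same formula with $\Fscr=\Escr$. Since $\Escr$ is exceptional, $\chi(\Escr,\Escr)=1$, so
\[
1=r_E^2+2r_E\operatorname{ch}_2(\Escr)-c_1(\Escr)^2 .
\]
Hence
\[
\operatorname{ch}_2(\Escr)=\frac{1-r_E^2+c_1(\Escr)^2}{2r_E},
\]
and the same holds for $\Fscr$. Substituting into the expression from Step 1, the constant terms combine as
\[
r_Er_F+\frac{r_F}{2r_E}-\frac{r_Fr_E}{2}+\frac{r_E}{2r_F}-\frac{r_Er_F}{2}=\frac{r_Er_F}{2}\Bigl(\frac1{r_E^2}+\frac1{r_F^2}\Bigr).
\]
The quadratic terms are
\[
\frac{r_F}{2r_E}c_1(\Escr)^2+\frac{r_E}{2r_F}c_1(\Fscr)^2-c_1(\Escr)c_1(\Fscr)=\frac{r_Er_F}{2}\bigl(s(\Fscr)-s(\Escr)\bigr)^2 .
\]
Together with the linear term from Step 1 this gives \eqref{eq:riemann_roch}.

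\textbf{Step 3: the exceptional pair case.} If $(\Escr,\Fscr)$ is an exceptional pair, then $\chi(\Fscr,\Escr)=0$. Formula \eqref{eq:riemann_roch} is symmetric in $\Escr,\Fscr$ except for the linear term, which changes sign under the swap, since $\mu(\Fscr,\Escr)=-\mu(\Escr,\Fscr)$. Writing $S$ for the symmetric part, we get
\[
\chi(\Escr,\Fscr)=S+\tfrac12 r_Er_F\,\mu(\Escr,\Fscr),\qquad 0=\chi(\Fscr,\Escr)=S-\tfrac12 r_Er_F\,\mu(\Escr,\Fscr).
\]
Subtracting the second equation from the first yields
\[
\chi(\Escr,\Fscr)=r_Er_F\,\mu(\Escr,\Fscr)=r_Ed(\Fscr)-r_Fd(\Escr).
\]

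The only real obstacle is bookkeeping: getting the signs right in $\operatorname{ch}(\Escr^\vee)$ and the Todd class. The key idea is to use $\chi(\Escr,\Escr)=1$ to eliminate the second Chern character.
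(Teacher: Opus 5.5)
Your proof is correct and follows the route the paper indicates: the paper only says the formula ``can be derived from the Riemann--Roch formula'', and your computation supplies that derivation. You apply Hirzebruch--Riemann--Roch, use $\chi(\Escr,\Escr)=\chi(\Fscr,\Fscr)=1$ to eliminate $\operatorname{ch}_2$, and use $\chi(\Fscr,\Escr)=0$ together with the antisymmetry of the slope term for the exceptional-pair case; all the algebra checks out.
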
 

The following proposition claims that the only exceptional objects in derived categories of del Pezzo surfaces are shifts of exceptional vector bundles and torsion sheaves supported on $(-1)$-curves.  

\begin{proposition}[Kuleshov-Orlov, {\cite[Propositions 2.9, 2.10]{KuleshovOrlov}}]\label{pr:KOexobjs} Let $X$ be a del Pezzo surface.  
\begin{enumerate}
\item  An object $A \in \Dscr^b(X)$ is exceptional if and only if it is isomorphic to a shift $\Fscr[i]$ of some exceptional sheaf $\Fscr$ on $X$,  $i \in \ZZ$.
\item Let $\Fscr$ be an exceptional sheaf on $X$.  Then $\Fscr$ is either a vector bundle or a torsion sheaf of the form $\Oscr_C(d)$, where $C$ is a $(-1)$-curve, i.e. an irreducible rational curve $C$ with $C^2 = C\cdot K_X = -1$. 
\end{enumerate}
\end{proposition}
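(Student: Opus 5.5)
The plan is to prove the two parts in order, using only the Riemann–Roch formula (Proposition \ref{prop:chi}) and the special geometry of a del Pezzo surface, namely that $-K_X$ is ample and Serre duality gives $\Ext^2(A,B)\cong\Hom(B,A\otimes\omega_X)^*$.

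For part (1), take an exceptional object $A$ and consider its cohomology sheaves $\mathcal H^i(A)$. The goal is to show that $A$ has only one nonzero cohomology sheaf. Since $\coh(X)$ has homological dimension $2$, the spectral sequence $E_2^{p,q}=\bigoplus_i\Ext^p(\mathcal H^i(A),\mathcal H^{i+q}(A))\Rightarrow\Hom^{p+q}(A,A)$ has only three nonzero columns. Let $i$ and $j$ be the minimal and maximal degrees with nonzero cohomology, and suppose $j>i$. We would first try to show that $\Hom(\mathcal H^j(A),\mathcal H^i(A))$ or $\Ext^2(\mathcal H^i(A),\mathcal H^j(A))$ survives and contributes to a negative Ext group $\Hom^{<0}(A,A)$, which is impossible. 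The main obstacle is arranging this survival, and we expect Serre duality with the ampleness of $-K_X$ to supply it. By Serre duality, $\Ext^2(\mathcal H^i,\mathcal H^j)\cong\Hom(\mathcal H^j,\mathcal H^i\otimes\omega_X)^*$. The alternative, which is the route of Kuleshov–Orlov, is to decompose $A$ by the canonical filtration and prove the lemma that for exceptional $A$, every sheaf $\mathcal H^i(A)$ is rigid with $\Ext^2(\mathcal H^i,\mathcal H^i)=0$. That would follow from $\Hom(F,F\otimes\omega_X)=0$ for sheaves $F$ that are direct sums of rigid pieces. From there, a rigidity and Euler characteristic argument would give $\chi(A,A)=1$ only if a single term contributes. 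Once we reduce to the case of a sheaf, we know $\Ext^1(\mathcal F,\mathcal F)=\Ext^2(\mathcal F,\mathcal F)=0$ and $\Hom(\mathcal F,\mathcal F)=\CC$.

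For part (2), let $\mathcal F$ be an exceptional sheaf and $T\subset\mathcal F$ its torsion subsheaf. First we show that $\mathcal F$ is either torsion-free or pure torsion. The idea is to use the sequence $0\to T\to\mathcal F\to\mathcal F/T\to0$, together with $\Hom(\mathcal F,\mathcal F)=\CC$ and $\Ext^2(\mathcal F/T,T)=\Hom(T,\mathcal F/T\otimes\omega_X)^*=0$, which holds because a torsion sheaf has no maps to a torsion-free one. This should show that a mixed $\mathcal F$ has a nontrivial endomorphism or a nonzero $\Ext^1$.

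In the torsion-free case, exceptional torsion-free sheaves on surfaces with $\Ext^1=0$ are locally free: the double dual gives $\mathcal F\hookrightarrow\mathcal F^{**}$ with a quotient of finite length. A dimension count using $\chi(\mathcal F,\mathcal F)=1$, Riemann–Roch, and the Bogomolov-type identity coming from formula \eqref{eq:riemann_roch} forces that quotient to be zero.

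In the torsion case, $c_1(\mathcal F)=C$ is effective, and $\chi(\mathcal F,\mathcal F)=-C^2=1$ gives $C^2=-1$. Purity with $\Hom=\CC$ makes the support connected, and genus considerations via adjunction, $C^2+CK_X=2p_a-2$ with $-CK_X>0$, force $C$ to be an irreducible $(-1)$-curve. Then $\mathcal F$ is a line bundle $\Oscr_C(d)$ on $C\cong\PP^1$, because rank one is forced by $\chi=1$.
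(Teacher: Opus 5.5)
\textbf{There are genuine gaps at the decisive step of each part.} The paper gives no proof of this statement; it quotes it from Kuleshov--Orlov. Judged on its own, your proposal does not yet prove it.

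\emph{Part (1).} The corner terms $\Hom(\mathcal H^j(A),\mathcal H^i(A))$ and $\Ext^2(\mathcal H^i(A),\mathcal H^j(A))$ do survive to $E_\infty$. The second lands in degree $2+j-i>0$, not a negative degree. But survival only shows that both \emph{vanish}, and nothing forces either to be nonzero: for $\mathcal H^i=\Oscr_X$ and $\mathcal H^j=\Oscr_X(D)$ with $D$ ample, both are zero. Your fallback lemma is false. The sheaf $F=\Oscr_X\oplus\omega_X$ is rigid and a sum of exceptional pieces, yet $\Hom(F,F\otimes\omega_X)\supseteq\Hom(\omega_X,\omega_X)\neq 0$. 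The step ``$\chi(A,A)=1$ only if a single term contributes'' is not an argument. What \emph{is} correct is that every $\mathcal H^i(A)$ is rigid, since $d_2$ never touches the column $p=1$.

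\emph{Part (2).}
\begin{itemize}
\item \emph{Mixed case.} This is only asserted. Mukai's lemma with $\Hom(T,\mathcal F/T)=\Ext^2(\mathcal F/T,T)=0$ makes $T$ and $\mathcal F/T$ rigid, but that is not a contradiction by itself.
\item \emph{Torsion-free case.} Riemann--Roch gives exactly $\chi(\mathcal F^{**},\mathcal F^{**})=1+2r\ell$, where $\ell$ is the length of $\mathcal F^{**}/\mathcal F$. This contradicts nothing unless you know $\dim\Hom(\mathcal F^{**},\mathcal F^{**})+\dim\Ext^2(\mathcal F^{**},\mathcal F^{**})\le 1$, which is stability-type information you have not established. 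Formula \eqref{eq:riemann_roch} is just Riemann--Roch rewritten for classes with $\chi=1$ and says nothing about $\mathcal F^{**}$.
\item \emph{Torsion case.} $C^2=-1$ plus adjunction does not produce a $(-1)$-curve. On $X_2$ the connected divisor $C=2E_1+(H-E_1-E_2)$ has $C^2=-1$, $-K_X\cdot C=3$ and $p_a(C)=-1$; adjunction constrains nothing for non-reduced $C$. What is missing is $-K_X\cdot c_1(\mathcal F)=1$.
\end{itemize}

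\textbf{The missing idea: restrict to a general smooth anticanonical (elliptic) curve $j\colon Y\hookrightarrow X$.} The triangle $A\otimes\omega_X\to A\to j_*Lj^*A$ and Serre duality give $\Hom^\bullet_Y(Lj^*A,Lj^*A)\cong\CC\oplus\CC[-1]$. So $Lj^*A$ is indecomposable, and since $\Dscr^b(Y)$ is hereditary it is a shift of a stable bundle or of a skyscraper $\Oscr_y$. Each of your gaps then closes as follows.
\begin{itemize}
\item \emph{Part (1).} The exact sequences $0\to\mathcal H^k(A)|_Y\to\mathcal H^k(Lj^*A)\to\mathcal{T}or_1(\mathcal H^{k+1}(A),\Oscr_Y)\to0$ show that all but one $\mathcal H^k(A)$ miss the ample curve $Y$. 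Hence they are $0$-dimensional. A nonzero $0$-dimensional sheaf $T$ is never rigid, since $\chi(T,T)=0$ while $\dim\Hom(T,T)=\dim\Ext^2(T,T)>0$.
\item \emph{Mixed case.} $T|_Y$ is a torsion subsheaf of the simple positive-rank sheaf $\mathcal F|_Y$. So $T$ is $0$-dimensional and, by your Mukai argument, rigid; hence $T=0$.
\item \emph{Torsion case.} Here $\mathcal F|_Y\cong\Oscr_y$, so $-K_X\cdot C=\chi(Lj^*\mathcal F)=1$. Together with $C^2=-1$ and ampleness of $-K_X$, this forces $C$ to be a reduced irreducible $(-1)$-curve. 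Purity follows again from Mukai's lemma.
\item \emph{Torsion-free case.} Stability of $\mathcal F|_Y$ implies $\mu$-stability of $\mathcal F$, hence of $\mathcal F^{**}$. That is exactly the bound $\chi(\mathcal F^{**},\mathcal F^{**})\le 1$ your count needs, and it forces $\ell=0$.
\end{itemize}
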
 

Exceptional pairs of sheaves are classified into three types and the type can be determined by the order of their slopes. 

\begin{proposition}[Gorodentsev, {\cite[Propositions 5.3.1, 5.3.3]{Gorodentsev}}, see also Kuleshov-Orlov {\cite[Corollary 2.11]{KuleshovOrlov}}]\label{pr:slopes}
Let $(\Escr, \Fscr)$ be an exceptional pair of sheaves on a del Pezzo surface.  Then $\Ext^2(\Escr, \Fscr) = 0$ and at most one of the spaces $\Ext^i(\Escr, \Fscr)$ with $i=0,1$ is non-zero. Moreover, 
\begin{enumerate}
\item 
$\Hom(\Escr, \Fscr) \neq 0$ if and only if $\mu(\Escr) < \mu(\Fscr)$ (in this case we say that $(\Escr, \Fscr)$ is a  \emph{$\Hom$-pair}),
\item $\Ext^1(\Escr, \Fscr) \neq 0$ if and only if $\mu(\Escr) > \mu(\Fscr)$ (in this case we say that $(\Escr, \Fscr)$ is a \emph{$\Ext$-pair}),
\item $\Hom(\Escr, \Fscr) = \Ext^1(\Escr, \Fscr) = 0$ if and only if $\mu(\Escr) =  \mu(\Fscr)$ (a \emph{null-pair} or an \emph{orthogonal pair}).
\end{enumerate}
\end{proposition}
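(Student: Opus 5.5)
Since $X$ is a smooth projective surface with $\omega_X^{-1}$ ample, I would prove the statement in three steps: vanish $\Ext^2$ by Serre duality, show $\Hom$ and $\Ext^1$ cannot both be nonzero, and then read off the type from the sign of $\chi$, which Proposition \ref{prop:chi} expresses through slopes.

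\textbf{Vanishing of $\Ext^2$.} By Serre duality, $\Ext^2(\Escr,\Fscr)\cong \Hom(\Fscr,\Escr\otimes\omega_X)^\ast$. For an exceptional pair we have $\Hom(\Fscr,\Escr)=0$. I would first treat the case of vector bundles (torsion sheaves on $(-1)$-curves, from Proposition \ref{pr:KOexobjs}, are handled separately and directly). Exceptional bundles on a del Pezzo surface are $(-K_X)$-slope stable; this is a classical result of Gorodentsev/Kuleshov–Orlov. Since $\omega_X^{-1}$ is ample, $\mu(\Escr\otimes\omega_X)=\mu(\Escr)-K_X^2<\mu(\Escr)$. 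Suppose $\mu(\Fscr)>\mu(\Escr\otimes\omega_X)$; then stability kills $\Hom(\Fscr,\Escr\otimes\omega_X)$. Otherwise, a nonzero map $\Fscr\to\Escr\otimes\omega_X$ composed with a section $\Escr\otimes\omega_X\to\Escr$ of $\omega_X^{-1}$ gives a nonzero map $\Fscr\to\Escr$, because $\Escr$ is torsion free. This contradicts $\Hom(\Fscr,\Escr)=0$. So $\Ext^2=0$.

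\textbf{Rigidity.} To show that at most one of $\Hom$ and $\Ext^1$ is nonzero, I would use the standard argument via mutations (Gorodentsev). The left mutation $L_\Escr\Fscr$ is again exceptional, hence by Proposition \ref{pr:KOexobjs} a shifted sheaf. Its cohomology sheaves are built from $\ker$ and $\coker$ of the evaluation map $\Hom(\Escr,\Fscr)\otimes\Escr\to\Fscr$ together with an $\Ext^1(\Escr,\Fscr)\otimes\Escr[-1]$ term. If both spaces were nonzero, $L_\Escr\Fscr$ would have cohomology in two degrees, a contradiction. Making this precise is the main obstacle. I would argue that the evaluation map is either injective or surjective, using stability and the rigidity of $\Fscr$, and that this forces the cone to be concentrated in a single degree.

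\textbf{Slopes.} Given the first two steps, $\chi(\Escr,\Fscr)$ equals $\dim\Hom$ or $-\dim\Ext^1$. By \eqref{eq:chi_slope}, $\chi(\Escr,\Fscr)=r(\Escr)r(\Fscr)\mu(\Escr,\Fscr)$, so its sign is the sign of $\mu(\Fscr)-\mu(\Escr)$. This gives (1), (2) and (3) at once, with the torsion-sheaf cases (slope $+\infty$) checked directly from the definitions.
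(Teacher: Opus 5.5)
Your Steps 1 and 3 are fine. In Step 1 the slope case distinction is not even needed: multiplication by a nonzero section of $\omega_X^{-1}$ embeds $\Escr\otimes\omega_X$ into the locally free sheaf $\Escr$, so $\Hom(\Fscr,\Escr)=0$ alone forces $\Hom(\Fscr,\Escr\otimes\omega_X)=0$.

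The genuine gap is Step 2, and it is not a matter of making things precise: the contradiction you are aiming for does not exist. Put $\mathrm{ev}\colon\Hom(\Escr,\Fscr)\otimes\Escr\to\Fscr$. Since $\Ext^2(\Escr,\Fscr)=0$, the triangle $\RHom(\Escr,\Fscr)\otimes\Escr\to\Fscr\to L_\Escr\Fscr$ shows that $L_\Escr\Fscr$ has at most two nonzero cohomology sheaves. These are $\mathcal{H}^{-1}=\ker(\mathrm{ev})$ and $\mathcal{H}^0$, where $\mathcal{H}^0$ sits in an exact sequence $0\to\coker(\mathrm{ev})\to\mathcal{H}^0\to\Ext^1(\Escr,\Fscr)\otimes\Escr\to 0$. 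Hence if $\mathrm{ev}$ is injective, $L_\Escr\Fscr$ is a sheaf even when $\Hom(\Escr,\Fscr)$ and $\Ext^1(\Escr,\Fscr)$ are both nonzero. In the situation you want to exclude, all the left mutation tells you is that $\mathrm{ev}$ must be injective. So proving that $\mathrm{ev}$ is injective or surjective cannot finish the argument: the injective case is exactly the one that survives.

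You need a second, independent constraint. Within your approach, the natural one is the right mutation, which is also exceptional. From $R_\Fscr\Escr\to\Escr\to\RHom(\Escr,\Fscr)^*\otimes\Fscr$ one finds $\Ext^1(\Escr,\Fscr)^*\otimes\Fscr\subseteq\mathcal{H}^0(R_\Fscr\Escr)$ and $\mathcal{H}^1(R_\Fscr\Escr)=\coker(\Escr\to\Hom(\Escr,\Fscr)^*\otimes\Fscr)$. Suppose $\Ext^1(\Escr,\Fscr)\neq0$ and $h=\dim\Hom(\Escr,\Fscr)\ge 1$. Applying Proposition \ref{pr:KOexobjs} to both mutations makes $\mathrm{ev}$ injective and the coevaluation surjective. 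For locally free $\Escr,\Fscr$ this gives $h\,r(\Escr)\le r(\Fscr)$ and $r(\Escr)\ge h\,r(\Fscr)$, so $h=1$ and $r(\Escr)=r(\Fscr)$. The unique map $f\colon\Escr\to\Fscr$ is then both $\mathrm{ev}$ and the coevaluation, hence an isomorphism, contradicting $\Hom(\Fscr,\Escr)=0$. The torsion cases you treat directly.

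Alternatively, in the spirit of the anticanonical restriction argument in Remark \ref{rem:verystrongslopes}, restrict to a smooth $Y\in|-K_X|$. The sequence $0\to\Escr\otimes\omega_X\to\Escr\to\Escr|_Y\to0$, together with $\RHom(\Fscr,\Escr)=0$ and Serre duality, gives $\Ext^i_X(\Escr,\Fscr)^*\cong\Ext^{1-i}_Y(\Fscr|_Y,\Escr|_Y)$. The analogous computation for $(\Escr,\Escr)$ shows that $\Escr|_Y$ is simple, hence stable on the elliptic curve $Y$, of slope $\mu(\Escr)$. This yields $\Ext^2=0$ and the dichotomy directly from slopes on $Y$. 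The equal-slope case $\Escr|_Y\cong\Fscr|_Y$ is ruled out by Theorem \ref{thm:mustable}.
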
 
\begin{theorem}[Gorodentsev \cite{Gorodentsev},  2.4]\label{thm:mustable} An exceptional vector bundle $\Fscr$ on a del Pezzo surface $X$ is $\mu$-stable (Mumford-Takemoto stable with respect to $-K_X$), i.e. $\mu(\Fscr') < \mu(\Fscr)$ for any proper non-zero subbundle $\Fscr' \subset \Fscr$.
\end{theorem}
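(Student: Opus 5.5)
The plan is to argue by contradiction. Suppose there is a subsheaf that destabilises $\Fscr$, and choose it canonically: let $\Fscr'\subset\Fscr$ be the first term of the Harder--Narasimhan filtration of $\Fscr$ with respect to $-K_X$, and put $\Fscr''=\Fscr/\Fscr'$. This gives a short exact sequence $0\to\Fscr'\to\Fscr\to\Fscr''\to 0$ with $\mu(\Fscr')\geq\mu(\Fscr)$. For the strictly semistable case ($\mu(\Fscr')=\mu(\Fscr)$) I would instead take $\Fscr'$ to be a saturated subsheaf of maximal slope and maximal rank. Choosing $\Fscr'$ this way gives $\mu_{\min}(\Fscr')\geq\mu_{\max}(\Fscr'')$, which implies $\Hom(\Fscr',\Fscr'')=0$ (in the strictly semistable case this needs a short separate argument). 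Since $-K_X$ is ample, twisting by $\omega_X$ strictly lowers every slope, so $\Hom(\Fscr',\Fscr''\otimes\omega_X)=0$ as well. By Serre duality this means $\Ext^2(\Fscr'',\Fscr')=0$.

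The first key step is Mukai's lemma. For an extension $0\to\Fscr'\to\Fscr\to\Fscr''\to0$ with $\Hom(\Fscr',\Fscr'')=0$ and $\Ext^2(\Fscr'',\Fscr')=0$, one has
\[
\dim\Ext^1(\Fscr',\Fscr')+\dim\Ext^1(\Fscr'',\Fscr'')\leq\dim\Ext^1(\Fscr,\Fscr).
\]
I would prove it by chasing the long exact sequences of $\Hom(-,-)$ applied to the filtration. Since $\Fscr$ is exceptional the right-hand side is $0$, so $\Fscr'$ and $\Fscr''$ are both rigid. The same slope argument gives $\Ext^2(\Fscr',\Fscr')=\Ext^2(\Fscr'',\Fscr'')=0$, at least for the HN-factors, so $\chi(\Fscr',\Fscr')>0$ and $\chi(\Fscr'',\Fscr'')>0$.

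Next I would count Euler characteristics. We have
\[
1=\chi(\Fscr,\Fscr)=\chi(\Fscr',\Fscr')+\chi(\Fscr'',\Fscr'')+\chi(\Fscr',\Fscr'')+\chi(\Fscr'',\Fscr').
\]
From $\Hom(\Fscr',\Fscr'')=0$ and $\Ext^2(\Fscr',\Fscr'')=\Hom(\Fscr'',\Fscr'\otimes\omega_X)^*$ we get $\chi(\Fscr',\Fscr'')\leq 0$. The skew part $\chi(\Fscr',\Fscr'')-\chi(\Fscr'',\Fscr')=r'd''-r''d'\leq 0$ is controlled by Riemann--Roch (Proposition \ref{prop:chi}). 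The aim is to show that the two mixed terms cannot compensate the two positive diagonal terms enough to make the total equal to $1$. The cleanest route is to show that the mixed terms together are $\le 0$ while the diagonal terms are each $\geq 1$. The alternative is to induct on rank: by the induction hypothesis the rigid pieces split into exceptional stable summands, and then one applies Proposition \ref{pr:slopes} to the resulting pairs to reach a contradiction.

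I expect the Euler-characteristic contradiction to be the main obstacle, in particular showing $\Ext^2(\Fscr'',\Fscr'')=0$ and bounding $\chi(\Fscr'',\Fscr')$. The strictly semistable case $\mu(\Fscr')=\mu(\Fscr)$ is a second difficulty, since there $\Hom(\Fscr',\Fscr'')=0$ does not come for free. If the direct estimate fails, I would first try induction on the rank using the Jordan--Hölder filtration. The stable factors would be exceptional by induction, and distinct factors of equal slope are orthogonal by Proposition \ref{pr:slopes}(3), which forces $\Ext^1$ between them to vanish. Then $\Fscr$ would split, contradicting $\End(\Fscr)=\CC$.
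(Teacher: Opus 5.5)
The paper gives no proof of this statement; it only cites Gorodentsev, so your argument has to stand on its own. It breaks at the decisive step.

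\textbf{The unstable case.} Mukai's lemma correctly makes $\Fscr'$ and $\Fscr''$ rigid. After that, the identity $1=\chi(\Fscr',\Fscr')+\chi(\Fscr'',\Fscr'')+\chi(\Fscr',\Fscr'')+\chi(\Fscr'',\Fscr')$ cannot give a contradiction. Your ``cleanest route'' has the inequality backwards: diagonal terms $\ge 1$ and mixed terms $\le 0$ only show that the mixed terms are $\le -1$, which contradicts nothing.

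In fact the mixed terms are always quite negative in this situation:
\begin{itemize}
\item The composite $\Fscr\twoheadrightarrow\Fscr''\to\Fscr'\hookrightarrow\Fscr$ squares to zero, so it vanishes because $\End(\Fscr)=\CC$. Hence $\Hom(\Fscr'',\Fscr')=0$.
\item The extension is non-split because $\Fscr$ is simple, so $\Ext^1(\Fscr'',\Fscr')\neq 0$.
\item Together with $\Ext^2(\Fscr'',\Fscr')=0$ this gives $\chi(\Fscr'',\Fscr')\le -1$. Your skew-symmetry inequality then gives $\chi(\Fscr',\Fscr'')\le -1$ as well.
\end{itemize}
So $\chi(\Fscr',\Fscr')+\chi(\Fscr'',\Fscr'')\ge 3$. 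This is perfectly compatible with rigidity; for instance $\chi(A^{\oplus k},A^{\oplus k})=k^2$ for an exceptional $A$. Riemann--Roch bookkeeping plus Mukai's lemma does not see the obstruction.

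A smaller point: $\chi(\Fscr',\Fscr'')\le 0$ needs $\Ext^2(\Fscr',\Fscr'')=0$, which you assert without proof. It does hold, because $\Ext^2$ is right exact on a surface: $0=\Ext^2(\Fscr,\Fscr)\twoheadrightarrow\Ext^2(\Fscr,\Fscr'')\twoheadrightarrow\Ext^2(\Fscr',\Fscr'')$.

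\textbf{The strictly semistable case and the fallback.} When $\mu(\Fscr')=\mu(\Fscr)$, $\Hom(\Fscr',\Fscr'')$ can genuinely be non-zero, for example for an iterated self-extension of a single stable sheaf. So Mukai's lemma is unavailable, and nothing makes the Jordan--H\"older factors rigid, let alone exceptional. Your induction therefore has no input.

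The fallback through Proposition~\ref{pr:slopes} is circular. That proposition comes later than, and relies on, the stability theorem: its $\Hom$-criterion is exactly what stability of exceptional bundles provides. Even granting it, you would need the relevant pairs to be exceptional pairs, which requires vanishing of $\Ext$'s in the other direction that you cannot check.

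\textbf{The missing idea.} Restrict to a smooth curve $Y\in|{-K_X}|$, which is elliptic.
\begin{itemize}
\item Tensor $0\to\omega_X\to\Oscr_X\to\Oscr_Y\to0$ with $\HHom(\Fscr,\Fscr)$. Serre duality gives $H^i(\HHom(\Fscr,\Fscr)\otimes\omega_X)\cong\Ext^{2-i}(\Fscr,\Fscr)^*$, which vanishes for $i=0,1$. Hence $\End(\Fscr|_Y)=\CC$.
\item So $\Fscr|_Y$ is simple, and therefore stable by Atiyah's classification of bundles on an elliptic curve.
\item If $\Fscr'\subset\Fscr$ is saturated, then $\Fscr/\Fscr'$ is torsion free, so $\Fscr'|_Y\to\Fscr|_Y$ stays injective.
\item Moreover $\deg(\Fscr'|_Y)=c_1(\Fscr')\cdot(-K_X)=d(\Fscr')$. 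Hence $\mu(\Fscr')=\mu(\Fscr'|_Y)<\mu(\Fscr|_Y)=\mu(\Fscr)$.
\end{itemize}
This handles the unstable and strictly semistable cases at once.
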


\begin{theorem}[Kuleshov-Orlov \cite{KuleshovOrlov}, Theorem 5.2]\label{thm:KOrigidsumex} Any rigid bundle $\Tscr$ on a del Pezzo surface $X$ (i.e. $\Ext_X^1(\Tscr, \Tscr) = 0$) is a direct sum of exceptional bundles. \end{theorem}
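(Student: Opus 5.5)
Theorem~\ref{thm:KOrigidsumex} asserts that every rigid bundle $\Tscr$ on a del Pezzo surface $X$ splits as a direct sum of exceptional bundles. I would argue by induction on the rank of $\Tscr$, following the strategy of Kuleshov--Orlov. The two ingredients are Theorem~\ref{thm:mustable}, which says exceptional bundles are $\mu$-stable, and the vanishing $\Ext^2(\Fscr,\Fscr)=\Hom(\Fscr,\Fscr\otimes\omega_X)^*=0$ for $\mu$-semistable $\Fscr$. This vanishing holds because $\mu(\Fscr\otimes\omega_X)=\mu(\Fscr)-K_X^2<\mu(\Fscr)$.

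The first step treats the $\mu$-semistable case. Assume $\Tscr$ is rigid and $\mu$-semistable. Then $\chi(\Tscr,\Tscr)=\dim\End(\Tscr)>0$. Take a stable factor $\Fscr$ of the Jordan--Hölder filtration of $\Tscr$ of minimal slope-type. I would then use the Riemann--Roch formula \eqref{eq:riemann_roch} together with a deformation argument (Mukai's lemma) to show that every Jordan--Hölder factor is rigid. Mukai's lemma says: if $0\to A\to\Tscr\to B\to 0$ with $\Hom(A,B)=0$, then $\dim\Ext^1(A,A)+\dim\Ext^1(B,B)\le\dim\Ext^1(\Tscr,\Tscr)$. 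Every Jordan--Hölder factor is then a rigid stable bundle. Such a bundle has $\End=\CC$ and vanishing $\Ext^1$ and $\Ext^2$, so it is exceptional. Distinct exceptional factors of the same slope are mutually orthogonal by Proposition~\ref{pr:slopes}(3), so the extensions split and $\Tscr$ is a direct sum of copies of exceptional bundles.

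The second step treats the general case via the Harder--Narasimhan filtration $0\subset\Tscr_1\subset\cdots\subset\Tscr$ with $\mu$-semistable quotients $G_i$ of strictly decreasing slopes. I would cut off the first term and write $0\to A\to\Tscr\to B\to 0$ with $A=\Tscr_1$. Then $\Hom(A,B)=0$ by semistability and slope ordering, so Mukai's lemma gives that $A$ and $B$ are rigid. We also need $\Ext^2(B,A)=\Hom(A,B\otimes\omega_X)^*=0$, again by slopes. By induction on rank, $A$ and $B$ are sums of exceptional bundles. The class of the extension lies in $\Ext^1(B,A)$, and rigidity of $\Tscr$ should force it to be "generic" rather than zero. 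So the remaining problem is not splitting but showing that $\Tscr$ itself is a direct sum.

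This is the main obstacle. I would handle it by passing to the graded Kronecker-type description: for $A=\oplus E_i^{a_i}$ and $B=\oplus F_j^{b_j}$ the extension is a representation of a quiver with $\Ext^1(F_j,E_i)$ arrows, and rigidity of $\Tscr$ means this representation is rigid. Rigid representations of such acyclic quivers decompose into exceptional representations. Each indecomposable summand then corresponds to an indecomposable rigid bundle $\Tscr'$ that is a nontrivial universal extension. A universal extension of exceptional objects forming an exceptional pair is itself exceptional: it is a mutation, and Proposition~\ref{pr:KOexobjs} identifies it as a sheaf. If the $E_i,F_j$ do not form exceptional collections, I would first apply Theorem~\ref{thm:mustable} and Proposition~\ref{pr:slopes} to show that the relevant pairs are exceptional; this is where I expect the real technical work, as in Kuleshov--Orlov, to lie. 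Concluding, every indecomposable rigid summand is exceptional.
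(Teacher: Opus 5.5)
The paper does not prove this statement; it quotes it from Kuleshov--Orlov. Judged on its own, your sketch has two genuine gaps.

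\emph{The semistable step.} Mukai's lemma needs $\Hom(A,B)=0$ (and $\Ext^2(B,A)=0$), and a Jordan--H\"older filtration does not give this. For $\Tscr=E\oplus E$ with $E$ exceptional, you get $A=B=E$ and $\Hom(A,B)=\CC$. So you have not shown that the factors are rigid; they are also a priori only torsion-free. Your splitting argument then misapplies Proposition~\ref{pr:slopes}(3), which concerns exceptional \emph{pairs}. Two exceptional bundles of the same slope need not form an exceptional pair in either order. On $\PP^1\times\PP^1$ the line bundles $\Oscr$ and $\Oscr(2,-2)$ (bidegree notation) both have $-K_X$-degree $0$, yet $\Ext^1$ between them is $3$-dimensional in both directions. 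So the vanishing of $\Ext^1$ between factors must come from rigidity of $\Tscr$. A repair for Gieseker-semistable $\Tscr$: let $A$ be the largest subsheaf all of whose Jordan--H\"older factors are isomorphic to one fixed stable factor $E$. Then $\Hom(A,\Tscr/A)=0$ genuinely holds, and $0<\chi(A,A)=n^2\chi(E,E)=n^2(1-\dim\Ext^1(E,E))$ forces $E$ to be exceptional and $A\cong E^{\oplus n}$. Rigidity of $\Tscr$ gives $\Ext^1(E,\Tscr/A)=0$. Since slopes are equal, $\chi(\Tscr/A,E)=\chi(E,\Tscr/A)$. Together with $\Hom(\Tscr/A,E)=0$ (by induction), this kills $\Ext^1(\Tscr/A,E)$, so the sequence splits.

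\emph{The gluing step is aimed at the wrong target.} In the Harder--Narasimhan sequence every summand of $A$ has strictly larger slope than every summand of $B$. An exceptional bundle is $\mu$-stable (Theorem~\ref{thm:mustable}), so it can never be a non-trivial extension of a piece of $B$ by a piece of $A$. Likewise, if some pair $(F_j,E_i)$ with $\mu(F_j)<\mu(E_i)$ were exceptional, Proposition~\ref{pr:slopes} would already give $\Ext^1(F_j,E_i)=0$. So your ``universal extension $=$ mutation'' mechanism can never occur here. In fact the theorem implies the HN filtration of a rigid bundle \emph{splits}, since a direct sum of stable bundles has a split HN filtration. Contrary to your remark, what must be proved is exactly that the extension class vanishes, and you leave this open.

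This vanishing does not follow from slopes alone. For example, $\Ext^1(\Oscr,\Oscr(3,-2))\cong\CC^4$ on $\PP^1\times\PP^1$ although $\Oscr(3,-2)$ has the larger slope. Terms like $\Ext^2(B,\Tscr)\cong\Hom(\Tscr,B\otimes\omega_X)^*$ also need not vanish once the slopes spread by more than $K_X^2$. The quiver reduction is unjustified as well: summands of $B$ from different HN factors can have non-zero $\Hom$s, so extensions are not representations of a bipartite quiver. Nothing identifies $\Ext^1_X(\Tscr,\Tscr)$ with $\Ext^1$ of such a representation.

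A workable replacement is to restrict to a smooth $Y\in|{-K_X}|$.
\begin{itemize}
\item Since $\Ext^1(\Tscr,\Tscr\otimes\omega_X)\cong\Ext^1(\Tscr,\Tscr)^*=0$, the map $\End(\Tscr)\to\End_Y(\Tscr|_Y)$ is surjective with nilpotent kernel coming from $\Hom(\Tscr,\Tscr\otimes\omega_X)$.
\item Restrictions of exceptional bundles to $Y$ are simple, hence stable.
\item So once $A$ and $B$ are known to be sums of exceptional bundles, the restricted sequence splits on the elliptic curve $Y$ by Serre duality.
\item Lifting the corresponding idempotent splits $\Tscr$ non-trivially, and induction on rank finishes. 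Run the induction over torsion-free sheaves.
\end{itemize}
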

\begin{conventions} \label{conv:tilde}
  We adopt the following notations. 
  If $(E,F)$ is an exceptional pair of sheaves on a del Pezzo surface $X$,  then $L_E F = F'[a]$ and $R_F E = E'[b]$, where $F', E'$ are sheaves and $a, b \in \ZZ$.  We denote $\widetilde{L_E}F = F'$ and $\widetilde{R_F} E = E'$.
  Hence if we define
\begin{align*}
\tilde{\sigma}_{i}(E_0, \dots,E_{n-1}) = (E_0, \dots, \widetilde{L_{E_{i-1}}}E_i,E_{i-1} \dots, E_{n-1}), \\ 
\tilde{\sigma}_{i}^{-1}(E_0, \dots,E_{n-1}) = (E_0, \dots, E_i, \widetilde{R_{E_i}}E_{i-1},\dots, E_{n-1}). 
\end{align*}
then we obtain a braid group action on the
set of exceptional collections of sheaves on $X$.
In addition, we define 
\begin{align*}
\tilde{\sigma}_n(E_0, \dots, E_{n-1}) = (E_{n-1} \otimes \omega_X,E_1, \dots, E_{n-2}, \widetilde{L_{E_{n-1}}} (E_0 \otimes \omega_X^{-1})), \\
\tilde{\sigma}_n^{-1}(E_0, \dots, E_{n-1}) = ((\widetilde{R_{E_0\otimes \omega_X^{-1}}} E_{n-1}) \otimes \omega_X, E_1,\dots, E_{n-2},  E_0 \otimes \omega_X^{-1}).
\end{align*}
Here $\tilde{\sigma}_n^{\pm 1}$ is the composition of the right rotation (see Definition \ref{def:rotation}), followed by $\tilde{\sigma}_{n-1}^{\pm 1}$ and the left rotation. 
\end{conventions}
\subsection{The symmetry group of a del Pezzo surface}
\label{sec:symmetrygroup}
It is natural to ask to what extent an exceptional collection is determined by its Gram matrix (see Definition \ref{def:gram_matrix}). In Proposition \ref{prop:symmetry}
below we answer this question for del Pezzo surfaces.
\begin{definition}For $X$ a del Pezzo surface,  denote the stabiliser of the canonical class $K_X$ in $O(\Pic(X))$ by 
\[ S(X) := \{ f \in O(\Pic(X) \, | \, f(K_X) = K_X \}. 
\]  A \emph{root} is an element $\rho$ of $\Pic(X)$ such that $K_X\cdot \rho=0$ and $\rho\cdot \rho=-2$. \end{definition} 

For $X_n=\Bl_{p_1,\dots,p_n}\PP^2$ with $0 \leq n \leq 8$ we use the standard basis
\begin{align}\label{eq:picbasis}
(H,E_1,\dots,E_n)
\end{align}
of $\Pic(X_n)$, where $H$ is the pullback of the line class $\Oscr_{\PP^2}(1)$ and $E_i$ are the classes of the exceptional divisors.  In this basis the intersection form is given by
\[
H^2=1,\qquad E_i^2=-1,\qquad H\cdot E_i=0,\qquad E_i\cdot E_j=0 \ \ (i\neq j),
\]
and the canonical class is
\[
K_X=-3H+E_1+\cdots+E_n,
\]
so that $K_X^2=9-n$.

For $X=\PP^1\times \PP^1$  we use the basis $(F_1,F_2)$ given by the classes of the two rulings.  In this basis one has $
F_1^2=F_2^2=0, F_1\cdot F_2=1$ and $K_X=-2F_1-2F_2$.

\begin{theorem}[{\cite[Theorem 23.9]{ManinCubicForms}}]\label{thm:manin} For $X = \Bl_{r}(\PP^2)$ with $3 \leq r \leq 8$ the set of roots $R_r$ forms a root system equal to $A_1 \times A_2,A_4,D_5,E_6,E_7,E_8$ respectively.
For $n\ge 3$, the simple roots may be taken to be
\[
\alpha_1=H-E_1-E_2-E_3,\qquad
\alpha_i=E_{i-1}-E_i \ \ (2\leq i\leq n).
\]Moreover  the group $S(X)$ coincides with the corresponding Weyl group $W(R_ r)$.
\end{theorem}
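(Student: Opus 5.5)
The statement has three parts: the roots form the listed root systems, the given elements are simple roots, and $S(X)=W(R_r)$. I would treat $\Pic(X)$ as the odd unimodular lattice $I_{1,r}$ and work in the orthogonal complement
\[
K_X^\perp=\{D\in\Pic(X)\mid D\cdot K_X=0\}.
\]
Since $K_X^2=9-r>0$, the Hodge index theorem shows that $K_X^\perp$ is negative definite of rank $r$. After changing sign it is a positive definite even lattice. Evenness comes from adjunction: $D^2+D\cdot K_X$ is even for every $D$.

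\textbf{Roots and simple roots.} The roots are the vectors of norm $2$ in $-K_X^\perp$. Each $\alpha_i$ listed in the statement is orthogonal to $K_X$ and has self-intersection $-2$, so it is a root. The pairwise products are computed directly:
\begin{itemize}
\item $\alpha_i\cdot\alpha_{i+1}=1$ for $2\le i<r$;
\item $\alpha_1\cdot\alpha_4=1$;
\item $\alpha_1\cdot\alpha_i=0$ for all other $i\ge 2$;
\item $\alpha_i\cdot\alpha_j=0$ for non-adjacent $i,j\ge 2$.
\end{itemize}
So the Dynkin diagram is a chain $\alpha_2-\cdots-\alpha_r$ with $\alpha_1$ attached to $\alpha_4$. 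This is $E_r$ in Bourbaki's convention, which reads $A_1\times A_2$, $A_4$, $D_5$ for $r=3,4,5$ (for $r=3$ the node $\alpha_4$ is absent, so $\alpha_1$ is isolated).

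The determinant of this Cartan matrix is $9-r$. That equals $|\det K_X^\perp|$, because $\Pic(X)$ is unimodular and $K_X$ is primitive with $K_X^2=9-r$. Hence the $\alpha_i$ form a $\ZZ$-basis of $K_X^\perp$, so $K_X^\perp$ is the root lattice of $E_r$. The norm-$2$ vectors in a root lattice of ADE type are exactly its roots, so $R_r$ is the root system $E_r$ and the $\alpha_i$ are simple roots.

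\textbf{$W(R_r)\subseteq S(X)$.} A reflection $s_\rho(x)=x+(x\cdot\rho)\rho$ in a root $\rho$ is an isometry of $\Pic(X)$ and fixes $K_X$ because $K_X\cdot\rho=0$. So every such reflection lies in $S(X)$, and therefore so does $W(R_r)$.

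\textbf{$S(X)\subseteq W(R_r)$ (the main step).} Restricting to $K_X^\perp$ gives an injective map $S(X)\to O(K_X^\perp)$. It is injective because $\Pic(X)\otimes\QQ=\QQ K_X\oplus K_X^\perp\otimes\QQ$. We know $O(E_r)=W(E_r)\rtimes\mathrm{Aut}(\text{Dynkin diagram})$, so it suffices to show that a diagram automorphism which extends to an element of $S(X)$ is trivial. The groups $\mathrm{Aut}(\text{Dynkin})$ are:
\begin{itemize}
\item trivial for $r=7,8$;
\item $\ZZ/2$ for $r=4,5,6$;
\item $\ZZ/2$ for $r=3$, acting on the $A_2$ factor.
\end{itemize}

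I expect the hardest part is showing that these nontrivial diagram automorphisms do not extend. I would use the discriminant group $D(K_X^\perp)\cong\ZZ/(9-r)$. The extension $\Pic(X)\supset\ZZ K_X\oplus K_X^\perp$ is an overlattice defined by an anti-isometry between $D(\ZZ K_X)$ and $D(K_X^\perp)$. An element of $S(X)$ fixes $K_X$, so it acts trivially on $D(\ZZ K_X)$, and therefore it acts trivially on $D(K_X^\perp)$. The nontrivial diagram automorphisms of $A_4$, $D_5$, $E_6$, and of the $A_2$ factor, all act on the discriminant group by $-1$. This is nontrivial because $9-r=5,4,3,6$ exceeds $2$. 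Hence they do not extend. Whether the claim about $-1$ holds for $D_5$ specifically is the one spot I would check explicitly.

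It follows that the image of $S(X)$ lies in $W(R_r)$. Each element of $W(R_r)$ extends (by the reflections above), and the extension is unique by injectivity, so $S(X)=W(R_r)$.
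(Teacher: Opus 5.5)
Your proof is correct. The paper has no proof of its own here: it quotes Manin. The closest argument in the paper is the companion lemma for $\Bl_2(\PP^2)$ and $\FF_1$, which works with the finite set of $(-1)$-classes that every element of $S(X)$ must permute. The standard classical argument for $3\le r\le 8$ is in the same spirit: $W(R_r)$ acts transitively on exceptional configurations $(e_0;e_1,\dots,e_r)$, and an element of $S(X)$ fixing one of them is the identity.

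You instead stay inside lattice theory. The determinant count $\det=9-r=K_X^2$ identifies $K_X^\perp$ with the $E_r$ root lattice, which gives $R_r$ and the simple roots at once. The decomposition $O(L)=W\rtimes\operatorname{Aut}(\Delta)$ then reduces $S(X)\subseteq W(R_r)$ to excluding diagram automorphisms. Finally, the discriminant group $L^\vee/L\cong\ZZ/(9-r)$, on which $S(X)$ acts trivially, excludes them.

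Your route is shorter and uniform, at the cost of importing standard facts about root lattices and gluing. The exceptional-configuration route is more elementary and also yields simple transitivity on blow-down structures. It still works for $r\le 2$, where $K_X^\perp$ is not spanned by roots.

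The point you flagged for $D_5$ is fine. For $A_n$ ($n\ge 2$), $D_n$ ($n$ odd), $E_6$ and the $A_2$ factor, the nontrivial diagram automorphism equals $-w_0$. Since $W$ acts trivially on $L^\vee/L$ (because $s_\alpha(x)-x\in\ZZ\alpha$ for $x\in L^\vee$), that automorphism acts there as $-1$, which is nontrivial on $\ZZ/4$.
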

It turns out that Theorem \ref{thm:manin} also holds for the remaining del Pezzo surfaces. This is e.g.\ stated without proof in \cite[\S2.3]{LLR}.
\begin{lemma} 
For $X =  \Bl_{2}(\PP^2)$ and $X = \PP^1 \times \PP^1$ one has $S(X) = W(A_1) \cong \ZZ/2\ZZ$
and the simple roots may respectively be taken to be $F_1-F_2$ and $E_1-E_2$.
 For $X = \FF_1$ and $X = \PP^2$ the group $S(X)$ is trivial.
\end{lemma}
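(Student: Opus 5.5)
The plan is a direct computation in each of the three cases: find the orthogonal complement $K_X^\perp\subset \Pic(X)$, determine its roots, and then describe every isometry of $\Pic(X)$ fixing $K_X$. Such an isometry $f\in S(X)$ preserves $K_X^\perp$. Whenever $\Pic(X)\otimes\QQ=\QQ K_X\oplus (K_X^\perp\otimes\QQ)$ (true since $K_X^2\neq 0$), $f$ is determined by its restriction to $K_X^\perp$, which must be an isometry of that lattice. So in each case it suffices to list the isometries of the (small) lattice $K_X^\perp$ and check which of them extend integrally to $\Pic(X)$.

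For $X=\PP^2$ we have $\Pic(X)=\ZZ H$ and $K_X=-3H$. An isometry must send $H\mapsto \pm H$, and fixing $K_X$ forces $H\mapsto H$, so $S(X)$ is trivial. For $X=\FF_1=\Bl_1\PP^2$, $K_X^\perp$ is spanned by $v=H-3E_1$, since $K_X\cdot v=-3+3=0$. Then $v^2=1-9=-8$, so there are no roots. The restriction of $f$ is $v\mapsto \pm v$. The map $v\mapsto -v$ together with $K_X\mapsto K_X$ would send $H=\tfrac{1}{8}(-3K_X+\ldots)$ to a non-integral class; I will verify this by writing $H$ and $E_1$ in the $\QQ$-basis $(K_X,v)$ and checking integrality explicitly. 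Hence $S(X)$ is trivial.

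For $X=\Bl_2\PP^2$, take the basis $\rho=E_1-E_2$ and $w=H-E_1-E_2$ of $K_X^\perp$. One checks $K_X\cdot w=-3+1+1$, which is nonzero, so $w$ is not in the complement and needs adjusting. I will compute $K_X^\perp$ honestly: it has rank $2$, contains $\rho$ (with $\rho^2=-2$), and contains a second vector $u$ orthogonal to $\rho$, for instance $u=H-E_1-E_2$ corrected by a multiple of $K_X$ over $\QQ$, or more directly $u=3H-\ldots$ with $u\cdot\rho=0$. Negative definiteness of $K_X^\perp$ (Hodge index) means it has finitely many isometries, generated by $\pm1$ on the two orthogonal lines $\QQ\rho$ and $\QQ u$ once it is checked that these are the only norm-minimal directions. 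The roots are then $\pm\rho$, so $W=\{1,s_\rho\}$, and $s_\rho$ (swapping $E_1$ and $E_2$) lies in $S(X)$. The remaining candidates, $-1$ on $u$, must be shown not to extend integrally, by the same rational-coordinate check as before.

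$\PP^1\times\PP^1$ is analogous: $K_X^\perp=\ZZ(F_1-F_2)$, whose generator is a root. The swap $F_1\leftrightarrow F_2$ realises the reflection, and this is everything because the restriction is $\pm 1$ on a rank one lattice.

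The main obstacle is the integrality check excluding the non-Weyl sign changes. For that I will use the determinant and discriminant group: $\Pic(X)$ is unimodular, and a sign change on a summand of the finite-index sublattice $\ZZ K_X\oplus K_X^\perp$ extends to $\Pic(X)$ only if it acts compatibly on the glue group, which here is cyclic of order $K_X^2$, or $3$ times that. This pins the extension down to the Weyl reflection.
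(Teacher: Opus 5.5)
Your argument is correct, but for the two cases that need real work, $\Bl_2(\PP^2)$ and $\FF_1$, it takes a different route from the paper. For $\PP^2$ and $\PP^1\times\PP^1$ the paper argues exactly as you do.

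The paper works with the finite $S(X)$-stable set of $(-1)$-classes $\{C : C^2=-1,\ C\cdot K_X=-1\}$. On $\Bl_2(\PP^2)$ a short Diophantine check shows these are exactly $E_1$, $E_2$ and $L=H-E_1-E_2$. They generate $\Pic(X)$, so $f$ is determined by the permutation it induces. Preserving $E_1\cdot E_2=0$ and $E_i\cdot L=1$ forces $L$ to be fixed and $E_1,E_2$ to be fixed or swapped. On $\FF_1$ the unique $(-1)$-class and $K_X$ span $\Pic(X)\otimes\QQ$, so $f$ is the identity at once.

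That route gets integrality for free, since a permutation of a generating set is automatically integral, and it never has to compute $O(K_X^\perp)$. Yours is the standard lattice-theoretic route. Here $K_X$ is primitive and $\Pic(X)$ is unimodular, so the glue group $\Pic(X)/(\ZZ K_X\oplus K_X^\perp)$ is $\ZZ/K_X^2$. An element $g\in O(K_X^\perp)$ extends with $K_X$ fixed if and only if $g$ acts trivially on this group. This explains uniformly why root reflections always extend and $-1$ never does once $K_X^2>2$. It is the mechanism behind $S(X)=W(R)$ in general, at the cost of having to determine $O(K_X^\perp)$.

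A few loose ends in the $\Bl_2(\PP^2)$ case should be tightened.
\begin{itemize}
\item Put $t=H-E_1-2E_2$. Then $K_X^\perp=\ZZ\rho\oplus\ZZ t$, and the form on $x\rho+yt$ is $-2(x^2+xy+2y^2)$. This equals $-2$ only for $(x,y)=(\pm1,0)$, so $\pm\rho$ are the only roots.
\item Consequently every isometry of $K_X^\perp$ preserves $\QQ\rho$, hence $\rho^\perp=\QQ u$ with $u=2H-3E_1-3E_2$. Your ``$3H-\cdots$'' cannot be made integral. This is all you need: $u$ does not span a norm-minimal direction ($t$ has norm $-4$, $u$ has norm $-14$), and it does not have to.
\item The integrality check takes one line. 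The map that is $+1$ on $K_X$ and $-1$ on $K_X^\perp$ is $x\mapsto -x+\frac{2(x\cdot K_X)}{K_X^2}K_X$. Since $E_1\cdot K_X=-1$ and $K_X^2\in\{7,8\}$, it sends $E_1$ to a non-integral class. The remaining candidate $-s_\rho$ is this map composed with the integral $s_\rho$, so it is excluded too.
\item The phrase ``or $3$ times that'' in your last paragraph is spurious. In both cases where the check is needed, the glue group is exactly $\ZZ/K_X^2$.
\end{itemize}
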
 
\begin{proof} First consider the case $X = \Bl_2(\PP^2)$.  Consider the basis $H, E_1,E_2$ of $\Pic(X)$ as in \refeq{eq:picbasis}.  By definition $f \in S(X)$ is an automorphism of $\Pic(X)$ preserving the intersection form and the canonical class $K_X$,  thus it should also preserve the set $\Cscr := \{C \in \Pic(X) \,  | \,  C^2 = -1, C \cdot K_X = -1 \}$ of the classes of $(-1)$-curves.  Let $C = aH + bE_1 + cE_2$ be in $\Cscr$.  Since $K_X = -3H + E_1 + E_2$, one has $C^2 = a^2 - b^2 -c^2 = -1$ and $C \cdot K_X = -3a - b - c = -1$.  It is a straightforward exercise to show that the only integer solutions to this system of equations are $(a,b,c) = (0,1,0),(0,0,1)$ and $(1,-1,-1)$, i.e.  the set $\Cscr$ has $3$ elements $E_1$,$E_2$ and $L:= H-E_1-E_2$ (the strict transform of the line through the $2$ points of the blow-up).  Hence $f$ induces a permutation on this set of 3 elements.  Moreover,  since the $3$ elements  $E_1,E_2, L$ generate the whole $\Pic(X)$,  this permutation determines an automorphism uniquely.  We have $E_1 \cdot E_2 = 0$, $E_1 \cdot L = 1$ and $E_2 \cdot L = 1$,  so the permutation induced by $f$ can either fix both $E_1$ and $E_2$ or swap them.  Swapping $E_1$ and $E_2$ is exactly the reflection with respect to the root $E_1-E_2$.  For $X = \FF_1$ one has $\Pic(X) = \ZZ H \oplus \ZZ E$, where $E$ is the unique $(-1)$-class and $K_X = -3H = E$,  so a similar argument shows that any $f \in S(X)$ is forced to be the identity.  For $X = \PP^1 \times \PP^1$ the orthogonal complement to $K_X$ is generated by the simple root $F_1 - F_2$, where $F_1,F_2$ are the classes of the two ruling,  hence any $f \in S(X)$ can either be the identity or swap $F_1$ and $F_2$.  For $X = \PP^2$ the claim is trivial since $\Pic(X)$ has rank $1$. 
\end{proof}
\begin{definition}
  A \emph{numerical full exceptional collection} on $X$ is an ordered basis $(e_0,\dots,e_{n-1})$ in $K_0^{\num}(X)$ such that $\chi(e_i,e_i)=1$, $\chi(e_i,e_j)=0$ for $i>j$.
\end{definition}
\begin{proposition} \label{prop:numericalcol} Let $(e_0,\ldots,e_{n-1})$ be a full exceptional collection in  $K_0^{\num}(X)$ such that $r(e_i)\ge 0$ for all $i$. Then there exists a unique
 full exceptional collection $(E_0,\ldots,E_{n-1})$ of sheaves on $X$ such that $[E_i]=e_i$ for all $i$.
\end{proposition}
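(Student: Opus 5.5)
The plan is to deduce existence from the known transitivity of the braid group action on numerical full exceptional collections, to lift the braid word to sheaves, and to use rigidity of exceptional bundles for uniqueness.

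\textbf{Existence.} Fix a reference full exceptional collection of sheaves $\EE^0=(E^0_0,\dots,E^0_{n-1})$ on $X$, for instance one built from line bundles and structure sheaves $\Oscr_{C}$ of exceptional curves. Its classes $[E^0_i]$ form a numerical full exceptional collection.

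The key input is the theorem of Kuleshov--Orlov on del Pezzo surfaces, which I will take as known. It says that the braid group $B_n$, together with sign changes $e_i\mapsto -e_i$, acts transitively on numerical full exceptional collections in $K_0^{\num}(X)$. This is proved numerically with the Riemann--Roch formula \eqref{eq:riemann_roch}. So there is a braid word $\beta$ and signs $\epsilon_i$ with $(e_0,\dots,e_{n-1})=\beta\cdot([E^0_0],\dots,[E^0_{n-1}])$ up to the signs $\epsilon_i$.

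Next I apply $\beta$ to $\EE^0$ in $\Dscr^b(X)$ using Proposition \ref{pr:braidaction}. This gives a full exceptional collection of objects whose classes are $\pm e_i$. By Proposition \ref{pr:KOexobjs}, each of these objects is a shift $F_i[k_i]$ of an exceptional sheaf $F_i$. Equivalently, I can work directly with the modified action $\tilde\sigma_i$ of Convention \ref{conv:tilde}, which already acts on collections of sheaves. In either case $[F_i]=\pm e_i$.

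It remains to fix the signs. If $r(e_i)>0$, then $[F_i]=e_i$, because $F_i$ is a sheaf and so has rank $r(F_i)\ge 0$. If $r(e_i)=0$, then $F_i$ is a torsion sheaf $\Oscr_C(d)$ on a $(-1)$-curve. Its class has $c_1=[C]$ with $[C]\cdot(-K_X)=1>0$, while the class $e_i$ of a rank zero exceptional object has $c_1\cdot(-K_X)=\pm1$. I will interpret the hypothesis $r(e_i)\ge0$ as also requiring positive degree when the rank is zero, since otherwise $e_i$ cannot be realised by a sheaf at all. With that convention $[F_i]=e_i$ again. Full exceptional collections are closed under shifting individual members, so $(F_0,\dots,F_{n-1})$ is the required collection.

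\textbf{Uniqueness.} I would argue pairwise that an exceptional sheaf on a del Pezzo surface is determined by its class.

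Suppose $E,E'$ are exceptional sheaves with $[E]=[E']$. Then $\chi(E,E')=\chi(E,E)=1$. By Theorem \ref{thm:mustable} both sheaves are $\mu$-stable of the same slope, and Serre duality with $-K_X$ ample gives $\Ext^2(E,E')=\Hom(E',E\otimes\omega_X)^*=0$ by stability. Hence $\Hom(E,E')\ne0$, and a nonzero map between stable bundles of the same slope and rank is an isomorphism. For torsion sheaves $\Oscr_C(d)$, the class already determines the curve $C$ (a $(-1)$-curve is rigid) and the degree $d$.

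This gives uniqueness one sheaf at a time, independently of the collection, so uniqueness of the collection follows.

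\textbf{Main obstacle.} The hardest step is the transitivity input in the existence part. If one does not want to quote Kuleshov--Orlov, it must be reproved by a numerical induction on $\sum_i r(e_i)^2$, using mutations that decrease the ranks. This is where the real work lies. By comparison, the sign bookkeeping and the uniqueness argument are routine.
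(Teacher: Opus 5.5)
\textbf{The gap is in the existence step.} Your overall architecture matches the paper's. You connect $(e_i)$ numerically to a realised collection, lift the braid word to $\Dscr^b(X)$ via Proposition \ref{pr:braidaction}, and let sheafness fix the shifts. But the step that carries all the content is not available from the source you cite.

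The transitivity theorem of Kuleshov--Orlov \cite{KuleshovOrlov} is, to my knowledge, a statement about full exceptional collections of \emph{objects} in $\Dscr^b(X)$, up to shifts. Passing to $K_0^{\num}(X)$, it only tells you that the numerical collections which are \emph{already} classes of actual collections form one orbit under braids and signs. To put an arbitrary numerical $(e_i)$ into that orbit you must first know that $(e_i)$ is realised, which is exactly the existence claim. As written the argument is therefore circular, and nothing in that theorem applies to classes not yet known to come from objects.

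What is missing is a genuinely numerical reduction result. The paper uses \cite[Theorem 10.9]{Perling}: every numerical full exceptional collection can be transformed by numerical mutations into one consisting of rank-one classes. Such a collection is realised by line bundles, and applying the inverse mutations in $\Dscr^b(X)$ realises $(e_i)$. With this input you need neither a fixed reference collection nor full transitivity; transitivity would follow afterwards by combining it with Kuleshov--Orlov. Your suggested induction on $\sum_i r(e_i)^2$ is precisely this theorem, and it is where all the work lies, so it cannot be waved through.

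\textbf{The rest is correct, and your uniqueness argument takes a different route.} The paper cites \cite[\S 1.3]{KarpovNogin} for uniqueness of the lift up to even shifts and then lets sheafness fix the shift. You instead prove directly that an exceptional sheaf is determined by its class, using $\mu$-stability (Theorem \ref{thm:mustable}), Serre duality to kill $\Ext^2$, and the fact that a $(-1)$-curve is the only effective curve in its class. That argument is correct and more self-contained.

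Your rank-zero caveat is genuinely needed, and the paper's proof tacitly relies on it too. On $X_1$, replace $[\Oscr_{E_1}(-1)]$ by its negative in the classes of $(\Oscr_{E_1}(-1),\Oscr,\Oscr(H),\Oscr(2H))$. This gives a numerical full exceptional collection with all ranks $\ge 0$, yet no collection of sheaves realises it, because a rank-zero sheaf has effective $c_1$. The hypothesis should read ``rank $>0$, or rank $0$ and positive degree''. This is harmless for Corollary \ref{cor:SXcoll}, since $S(X)$ preserves rank and degree.
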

\begin{proof}
  By \cite[Theorem 10.9]{Perling} any numerical full exceptional collection can be transformed by mutations into a collection consisting of objects of rank $1$.  Such a collection is clearly realised by a collection of line bundles on $X$.
  
  Applying the inverse of the corresponding sequence of mutations on the level of derived categories,  we conclude that every numerical full exceptional collection on a del Pezzo surface is realised by an actual full exceptional collection.  Moreover, such a collection is unique up to even shifts (see e.g. \cite[\S 1.3]{KarpovNogin}). Since
  we require that the lift consist of sheaves, this fixes the shift.
\end{proof}
\begin{remark} It is not true that every numerical exceptional class can be lifted to an exceptional object. The class $18[O]-9[O(1)]+[O(2)]$ in $K_0^{\num}(\PP^2)$ is numerically exceptional but its rank is 10, which is not a Markov number. So it does not lift by \cite{Rudakov}.
\end{remark}
The following proposition shows that the exact sequence in the statement of \cite[Lemma 2.10]{Krah} is canonically split (this can also be extracted from the proof in loc.\ cit.).
\begin{proposition} \label{prop:SXaction}
  Let $S(X)$ act on $K_0^{\num}(X)$ in the following way. First identify
  \[
    h: K_0^{\num}(X)\r \ZZ \oplus \Pic(X) \oplus \ZZ:e \mapsto (r(e),c_1(e),\chi(e)) 
  \]
  and then for $s \in S(X)$, $e \in K_0^{\num}(X)$ put $s(e) = (r(e),s c_1(e), \chi(e))$. This action is compatible with the Euler form and moreover it preserves
  the rank and degree of a class and hence also its slope.
\end{proposition}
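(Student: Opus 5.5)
The plan is to show that $h$ is an isomorphism of abelian groups, express the Euler form through Riemann–Roch purely in terms of $(r,c_1,\chi)$ and intersection numbers, and then observe that $s$ acts only on the $\Pic(X)$ component through an isometry fixing $K_X$.

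\textbf{Step 1: $h$ is an isomorphism.} For a surface, the Chern character identifies $K_0^{\num}(X)\otimes\QQ$ with $\QQ\oplus \Pic(X)_\QQ\oplus \QQ$. Injectivity of $h$ on $K_0^{\num}$ follows because $\chi(\Oscr_X,-)$, $\chi(\Oscr_p,-)$ and the pairings with $\Oscr_C$ for curve classes $C$ detect $(r,c_1,\chi)$, and conversely; this uses that on a rational surface the Euler form is nondegenerate. For surjectivity, the classes $[\Oscr_X]$, $[\Oscr_p]$ and $[L]-[\Oscr_X]$ for line bundles $L$ map onto generators: $(1,0,1)$, $(0,0,1)$, and $(0,D,\chi(L)-1)$. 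So $h$ is an isomorphism, and the formula defines an action of $S(X)$ on $K_0^{\num}(X)$ which is clearly a group action.

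\textbf{Step 2: the Euler form in terms of $(r,c_1,\chi)$.} By Riemann–Roch, $\chi(e)=r\chi(\Oscr_X)+\tfrac12 c_1\cdot(c_1-K_X)-c_2$. This lets us eliminate $c_2$ in favour of $\chi$, writing $\mathrm{ch}_2=\tfrac12c_1^2-c_2=\chi(e)-r+\tfrac12 c_1\cdot K_X$ (using $\chi(\Oscr_X)=1$). Then
\[
\chi(e,f)=\int \mathrm{ch}(e^\vee)\mathrm{ch}(f)\,\mathrm{td}_X ,
\]
with $\mathrm{td}_X=1-\tfrac12K_X+[pt]$. Expanding gives an expression that is polynomial in $r(e),r(f),\chi(e),\chi(f)$, $c_1(e)\cdot c_1(f)$, $c_1(e)\cdot K_X$ and $c_1(f)\cdot K_X$. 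I will not grind out the exact coefficients; only the shape of the formula matters.

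\textbf{Step 3: invariance under $S(X)$.} Each $s\in S(X)$ preserves the intersection form and $K_X$. Hence it preserves every quantity appearing in the formula from Step 2, while $r$ and $\chi$ are left untouched, so $\chi(s e,s f)=\chi(e,f)$. The rank is preserved by definition. The degree $d=c_1\cdot(-K_X)$ is preserved since $s(c_1)\cdot K_X=s(c_1)\cdot s(K_X)=c_1\cdot K_X$, and therefore the slope is preserved as well.

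\textbf{Main obstacle.} The only step that is not formal is Step 1: identifying $K_0^{\num}(X)$ integrally with $\ZZ\oplus\Pic(X)\oplus\ZZ$ via $h$, as opposed to merely after tensoring with $\QQ$. Here I would rely on the explicit generators listed in Step 1. I would also use that numerical and rational equivalence coincide on rational surfaces, so that $\Pic(X)$ is exactly the numerical group of divisors.
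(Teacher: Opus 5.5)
Your proposal is correct and follows essentially the same route as the paper, which likewise uses Riemann--Roch to write $\chi(e,f)=r(e)\chi(f)+r(f)\chi(e)-r(e)r(f)-c_1(e)\cdot c_1(f)+r(f)(K_X\cdot c_1(e))$. This depends only on ranks, Euler characteristics and intersection numbers of the $c_1$'s with each other and with $K_X$, after which invariance, rank and degree preservation follow exactly as in your Step~3. Your Step~1, showing that $h$ is an isomorphism on $K_0^{\num}(X)$, supplies a justification that the paper simply takes for granted, and it is sound.
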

\begin{proof}
  That the action preserves the rank is straight from the definition. The degree of an object $e$ is $(-K_X)\cdot c_1(e)$ which is clearly also preserved.
For any $e,f \in K^{\num}_0(X)$ one obtains from Riemann-Roch 
\begin{equation}
  \label{eq:chi_rr}
\chi(e,f) = r(e)\chi(f) + r(f)\chi(e) - r(e)r(f) - c_1(e)\cdot c_1(f) + r(f)(K_X\cdot c_1(e)).
\end{equation}
Since $S(X)$ preserves the intersection form and $K_X$,  we get $\chi(e,f) = \chi(s(e),s(f))$ for any $s \in S(X)$.
\end{proof}
  
\begin{corollary} \label{cor:SXcoll}
  The action of $S(X)$ on $K_0^{\num}(X)$ induces an action on the set of full exceptional collections consisting of sheaves.
\end{corollary}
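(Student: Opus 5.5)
The plan is to combine Proposition \ref{prop:SXaction} with Proposition \ref{prop:numericalcol}. Let $\EE=(E_0,\ldots,E_{n-1})$ be a full exceptional collection of sheaves on $X$ and let $s\in S(X)$.

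\emph{Step 1: the class collection is again numerically exceptional.} Put $e_i=[E_i]\in K_0^{\num}(X)$. Since the classes of a full exceptional collection form a basis of $K_0(X)$, and hence of $K_0^{\num}(X)$, the tuple $(e_0,\ldots,e_{n-1})$ is a numerical full exceptional collection. Now apply $s$. The map $s$ is additive in $e$: it fixes $r$ and $\chi$ and acts linearly on $c_1$. Via the isomorphism $h$ it is invertible, with inverse $s^{-1}$, so $(s(e_0),\ldots,s(e_{n-1}))$ is again a basis. By Proposition \ref{prop:SXaction} the action preserves the Euler form, so $\chi(s(e_i),s(e_i))=1$ and $\chi(s(e_i),s(e_j))=0$ for $i>j$. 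Hence $(s(e_i))_i$ is a numerical full exceptional collection.

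\emph{Step 2: lifting.} The action also preserves rank, and $r(e_i)=r(E_i)\ge 0$ because the $E_i$ are sheaves. Proposition \ref{prop:numericalcol} therefore gives a unique full exceptional collection of sheaves $s(\EE)$ whose classes are $s(e_i)$.

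\emph{Step 3: it is a group action.} Uniqueness in Proposition \ref{prop:numericalcol} is what makes $s(\EE)$ well defined. For $s,t\in S(X)$, both $(st)(\EE)$ and $s(t(\EE))$ are full exceptional collections of sheaves with classes $(st)(e_i)$. The numerical action is a group action: through $h$ it is just $c_1\mapsto s\,c_1$. So uniqueness forces $(st)(\EE)=s(t(\EE))$, and likewise the identity acts trivially.

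The only delicate point is the input behind Step 2, namely that an exceptional sheaf is determined by its class and that lifts are unique up to even shift. We take this from the cited results. Everything else is formal.
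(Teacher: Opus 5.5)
Your proof is correct and follows the paper's argument: the paper likewise obtains the numerical statement from Proposition~\ref{prop:SXaction} and then lifts to sheaves via Proposition~\ref{prop:numericalcol}. Your Step~3, which uses uniqueness of the lift to verify the group-action axioms, simply spells out a point the paper leaves implicit.
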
 

\begin{proof}
  The corresponding numerical statement follows from Proposition \ref{prop:SXaction}.
  We then invoke  Proposition \ref{prop:numericalcol}.
\end{proof}
\begin{proposition} \label{prop:symmetry}
  Let $\EE$ and $\EE'$ be two full exceptional collections of sheaves on $X$ having the same Gram matrix.  Then $\EE$ and $\EE'$ are related by tensoring with a line bundle and the action of the symmetry group $S(X)$.
\end{proposition}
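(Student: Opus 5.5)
The plan is to reduce the statement to the numerical level and then invoke the uniqueness part of Proposition \ref{prop:numericalcol}. Both $\EE$ and $\EE'$ are full, so their classes $e_i=[E_i]$ and $e_i'=[E'_i]$ form bases of $K_0^{\num}(X)$. The Euler form is nondegenerate on $K_0^{\num}(X)$. Hence the linear map $\phi:K_0^{\num}(X)\to K_0^{\num}(X)$ defined by $e_i\mapsto e_i'$ is an isometry of the Euler form $\chi$, because the Gram matrices agree. The main work is to show that any isometry of $(K_0^{\num}(X),\chi)$ which sends sheaf classes (with $r\ge 0$) to sheaf classes is the composition of tensoring by a line bundle with some $s\in S(X)$, acting as in Proposition \ref{prop:SXaction}. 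Once this holds at the numerical level, the collections $\EE'$ and $(L\otimes s(\EE))$ are full exceptional collections of sheaves with the same classes. By the uniqueness in Proposition \ref{prop:numericalcol} (with Corollary \ref{cor:SXcoll} guaranteeing that $s(\EE)$ is realised by sheaves) they coincide.

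The key steps for analysing $\phi$ are as follows.

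First, the isometry $\phi$ preserves the antisymmetrisation $\chi(a,b)-\chi(b,a)$. By \eqref{eq:chi_rr} this equals $r(b)(K_X\cdot c_1(a))-r(a)(K_X\cdot c_1(b))$, i.e.\ $r(a)d(b)-r(b)d(a)$. Its radical is $\{r=d=0\}$, and the form induces a nondegenerate pairing on the rank-$2$ quotient $K_0^{\num}(X)/\{r=d=0\}$ given by the $(r,d)$-coordinates. Since $\phi$ preserves this skew form, it acts on the $(r,d)$-plane by an element of $SL_2(\ZZ)$.

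Second, the class of a point, $[\Oscr_p]$, spans the kernel of $\chi(-,\cdot)$ restricted suitably. More intrinsically, it spans the radical of the symmetrised form intersected with $\{r=d=0\}$, up to the isotropic structure. I would characterise $\pm[\Oscr_p]$ intrinsically in terms of $\chi$, e.g.\ as the generator of the orthogonal of $\{r=0\}$ inside the radical of the skew form. This shows that $\phi([\Oscr_p])=\pm[\Oscr_p]$, and by an orientation/sign argument $\phi$ preserves the rank function up to sign.

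Third, ranks are positive on both collections. The sum of ranks paired appropriately (for instance $\chi(e_i,[\Oscr_p])=r(e_i)$) shows $r(\phi(e))=r(e)$, because $\chi(e_i',[\Oscr_p])=r(e_i')$ must match. For this I need to compare the Gram matrices against the expansion of $[\Oscr_p]$ in the two bases. This is the main obstacle: $[\Oscr_p]$ is determined by the Gram matrix up to sign as the vector in the kernel of $M-M^T$ (restricted) with a fixed property, and the sign is then fixed by positivity of ranks.

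Fourth, with $r$ preserved, $\phi$ acts on $\ker r\cong \Pic(X)\oplus\ZZ[\Oscr_p]$ preserving $-c_1\cdot c_1$ (via $\chi$ restricted to rank-zero classes). Preserving the skew form forces the induced map on $\Pic(X)$ to preserve $K_X$, since $d$ is read off from $\chi(e,[\Oscr_X])-\chi([\Oscr_X],e)$. This map on $\Pic(X)$ is therefore some $s\in S(X)$. Composing $\phi$ with $s^{-1}$ leaves an isometry acting trivially on $\ker r$ and preserving $r$. Such an isometry is $e\mapsto e+r(e)\ell+(\ldots)[\Oscr_p]$, and being a $\chi$-isometry forces it to equal multiplication by $\operatorname{ch}(L)$ with $c_1(L)=\ell$, i.e.\ tensoring by a line bundle $L$. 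I would do this by a direct computation from \eqref{eq:chi_rr}. The remaining delicate point is the sign/orientation issue in the third step; I would resolve it using that every $e_i$ has positive rank or is a torsion sheaf on a $(-1)$-curve, as in Proposition \ref{pr:KOexobjs}.
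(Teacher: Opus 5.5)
You have the same skeleton as the paper, and you add the lift via the uniqueness in Proposition \ref{prop:numericalcol}, which the paper leaves implicit. The map $e_i\mapsto e_i'$ is a $\chi$-isometry $\phi$ of $K_0^{\num}(X)$. Isometries are composites of $S(X)$ (acting as in Proposition \ref{prop:SXaction}), twists by line bundles and $-1$. The sign is excluded by positivity of rank. The paper simply cites \cite[Lemma 2.10]{Krah} for the classification of isometries, whereas you prove it by hand. That is doable, but there is a gap at the step you yourself call the main obstacle: identifying $\pm[\Oscr_p]$ intrinsically from $\chi$. The characterisations you offer do not work.

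\begin{itemize}
\item ``The orthogonal of $\{r=0\}$ inside the radical of the skew form'' is circular, since showing that $\phi$ preserves $\{r=0\}$ is exactly what this step must prove.
\item Read literally, ``the radical of the symmetrised form intersected with $\{r=d=0\}$'' is $0$. Indeed $\chi^+=\chi+\chi^t$ is nondegenerate: pair successively with $[\Oscr_p]$, with rank-zero classes, and with $[\Oscr_X]$.
\item The kernel of $M-M^T$ is $\{r=d=0\}$, which has rank $\rk\Pic(X)$. It singles out no line unless $X=\PP^2$.
\end{itemize}

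The missing idea is the following. The sublattice $\{r=d=0\}=\mathrm{rad}(\chi^-)$ is intrinsic. By \eqref{eq:chi_rr}, $\chi^+$ restricted to it is $((0,D,m),(0,D',m'))\mapsto -2\,D\cdot D'$ with $D,D'\in K_X^\perp$. This form is negative definite by the Hodge index theorem, since $K_X^2>0$. Hence the radical of $\chi^+|_{\mathrm{rad}(\chi^-)}$ is exactly $\ZZ[\Oscr_p]$, so $\phi[\Oscr_p]=\epsilon[\Oscr_p]$ with $\epsilon=\pm1$. Equivalently, $\phi$ commutes with the Serre operator $\kappa\colon[E]\mapsto[E\otimes\omega_X]$, which is determined by $\chi$, and $(\kappa-1)^2(r,c_1,\chi)=(0,0,rK_X^2)$. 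Then $r(\phi e)=\epsilon\, r(e)$ follows at once from $r(e)=\chi(e,[\Oscr_p])$, with no expansions to compare. Finally $\epsilon=1$ because the ranks of the $E_i$ are $\ge 0$ and not all zero. They need not be $>0$, since torsion sheaves $\Oscr_C(d)$ can occur.

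Your last step needs two repairs.
\begin{itemize}
\item The form $\chi^-([\Oscr_X],-)$ does not show that $K_X$ is preserved, because $\phi$ moves $[\Oscr_X]$. Use instead $\chi^-(a,e)=r(a)d(e)$, valid for $r(e)=0$ and any $a$.
\item The map $\psi=s^{-1}\phi$ is not trivial on $\ker r$. An isometry that preserves $r$ and is trivial on $\ker r$ has the form $e\mapsto e+r(e)w$, and the isometry condition forces $w=0$. What you actually get is $\psi(0,D,m)=(0,D,m+D\cdot\ell)$ for some $\ell\in\Pic(X)$, by unimodularity. Composing with $-\otimes L^{-1}$, where $c_1(L)=\ell$, makes it trivial on $\ker r$, hence the identity.
\end{itemize}

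With these changes your argument is a self-contained substitute for the citation of Krah's lemma.
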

\begin{proof} Sending $\EE$ to $\EE'$ defines an $\alpha$ automorphism of $K_0^{\num}(X)$ compatible with the Euler form. By \cite[Lemma 2.10]{Krah} together with the splitting
  constructed in Proposition \ref{prop:SXaction}
  of the exact sequence given in loc.\ cit.\ we obtain that $\alpha$ can be written as a composition of an element of $S(X)$, tensoring with a line bundle and possibly
  a global sign change. However this last factor cannot occur since both $\EE$ and $\EE'$ consist of sheaves and hence they have total rank $>0$.
  This
  gives what we want.
  \end{proof}
 \subsection{Geometric helices on del Pezzo surfaces}
\label{sec:geom_helix}
\begin{definition}\label{def:verystrong} We say that a full exceptional collection of sheaves $\EE = (E_0, \dots,E_{n-1})$ is \emph{very strong} if it satisfies
\begin{equation}\label{eq:superstrong}
\Ext^l_X(E_i,E_j \otimes \omega_X^{-k}) = 0 \text{ for all } k \geq 0,\ l \neq 0,\ i,\ j. 
\end{equation} 
\end{definition} 

\begin{remark}\label{rem:verystrongslopes} An exceptional collection $(\Escr_0, \dots, \Escr_{n-1})$ is very strong if and only if
\[
\mu(\Escr_0) \leq \dots \leq \mu(\Escr_{n-1}) \leq \mu(\Escr_0 \otimes \omega_X^{-1}) = \mu(\Escr_0) + K_X^2.
\]
The ``only if'' direction follows immediately from Proposition \ref{pr:slopes}. 
For the ``if'' direction we first note that \eqref{eq:superstrong} for $l=2$ follows from Serre
duality and Theorem \ref{thm:mustable}.
For $l=1$ we observe that if $\mu(F)>\mu(E)$ then by restriction to a smooth anti-canonical divisor
as in the proof of \cite[Lemma 3.7]{KuleshovOrlov} we have a surjection $\Ext^1(E,F\otimes \omega_X)
\r \Ext^1(E,F)$. This allows us to go from $k$ to $k+1$ in \eqref{eq:superstrong}, which reduces us
to the case $k=0$ where we can invoke Proposition \ref{pr:slopes}.
 \end{remark}
 From this observation we obtain the following corollary.
 \begin{corollary} The action exhibited in Corollary \ref{cor:SXcoll} restrict to an action of $S(X)$ on very strong exceptional collections.
 \end{corollary}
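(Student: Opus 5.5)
By Corollary \ref{cor:SXcoll}, every $s\in S(X)$ sends a full exceptional collection of sheaves $\EE=(E_0,\dots,E_{n-1})$ to another such collection $s(\EE)=(E_0',\dots,E_{n-1}')$ with $[E_i']=s([E_i])$ in $K_0^{\num}(X)$. What remains is to check that $s(\EE)$ is again very strong whenever $\EE$ is. We will do this with the slope criterion of Remark \ref{rem:verystrongslopes}, which turns very strongness into a purely numerical condition on the classes.

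First we confirm that the criterion applies to $s(\EE)$. Proposition \ref{pr:KOexobjs} says each $E_i'$ is either an exceptional vector bundle or a torsion sheaf on a $(-1)$-curve. By Proposition \ref{prop:SXaction} the action preserves rank, so $r(E_i')=r(E_i)$. If $\EE$ is very strong, every $E_i$ is torsion-free: a torsion sheaf $E_i$ would satisfy $\mu(E_i)=+\infty$, so the chain $\mu(E_i)\le \mu(E_0)+K_X^2$ would fail. The inequality is strict unless $i=0$, and for $i=0$ it forces $\mu(E_0)$ finite. More simply, every $E_i$ has positive rank, hence so does every $E_i'$, and therefore each $E_i'$ is a vector bundle.

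Next, Proposition \ref{prop:SXaction} shows that the action preserves degree and rank, hence slope. So $\mu(E_i')=\mu(E_i)$ for all $i$. Since $K_X^2$ is an intrinsic number, unaffected by $s$, the chain of inequalities
\[
\mu(E_0')\le\cdots\le\mu(E_{n-1}')\le \mu(E_0')+K_X^2
\]
is literally the same chain as for $\EE$. Remark \ref{rem:verystrongslopes} then gives that $s(\EE)$ is very strong. The inverse $s^{-1}$ also lies in $S(X)$, so $S(X)$ preserves the set of very strong collections, and the action restricts to it.

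The only point needing care is that the remark uses $\mu(\Escr_0\otimes\omega_X^{-1})=\mu(\Escr_0)+K_X^2$ and assumes torsion-free sheaves. Both are handled above: positivity of rank gives torsion-freeness, and the identity holds because $c_1$ shifts by $-K_X$ under twisting, which adds $(-K_X)^2$ to the slope. Beyond this I expect no real obstacle; the argument is essentially formal.
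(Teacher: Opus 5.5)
Your proposal is correct and follows the paper's own route: the paper's proof simply notes that Remark \ref{rem:verystrongslopes} reduces very strongness to the slope chain $\mu(E_0)\le\cdots\le\mu(E_{n-1})\le\mu(E_0)+K_X^2$, and the $S(X)$-action preserves slopes by Proposition \ref{prop:SXaction}. Your extra check that $s(\EE)$ consists of vector bundles is a reasonable addition, but your slope-based argument for torsion-freeness is muddled: a torsion $E_0$ would not violate the chain on its own, and excluding it needs fullness. It is cleaner to cite Proposition \ref{pr:onlybundles} for positivity of the ranks and then combine rank preservation with Proposition \ref{pr:KOexobjs}.
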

 \begin{proof} This follows from Remark \ref{rem:verystrongslopes} together with the fact that as stated in Proposition \ref{prop:SXaction} the action
   of $S(X)$ on $K_0^{\num}(X)$ preserves slopes.
   \end{proof}
\begin{proposition}\label{pr:onlybundles} Let $\EE = (E_0, \dots,E_{n-1})$ be a very strong exceptional collection on a del Pezzo surface $X$. Then each $E_i$, $i=0, \dots, n-1$ is a vector bundle. 
\end{proposition}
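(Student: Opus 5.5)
By Proposition \ref{pr:KOexobjs}(2), every $E_i$ is either a vector bundle or a torsion sheaf $\Oscr_C(d)$ with $C$ a $(-1)$-curve. So the plan is to assume some $E_i=\Oscr_C(d)$ is torsion and derive a contradiction with the very strong condition \eqref{eq:superstrong}. The natural test is the case $i=j$ with some $k\ge 0$: the pair $(E_i, E_i\otimes\omega_X^{-k})$ is controlled entirely by the curve $C$, so no information about the other members of the collection should be needed.

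The computation goes as follows. Since $\omega_X|_C$ has degree $K_X\cdot C=-1$, we have
\[
E_i\otimes \omega_X^{-k}\cong \Oscr_C(d+k).
\]
For sheaves supported on a curve $C\subset X$, local-to-global gives $\Ext^l_X(\Oscr_C(a),\Oscr_C(b))$ in terms of the local Exts. Locally, $\mathcal{E}xt^0(\Oscr_C,\Oscr_C)=\Oscr_C$ and $\mathcal{E}xt^1(\Oscr_C,\Oscr_C)=\Oscr_C(C)=N_{C/X}\cong\Oscr_{\PP^1}(-1)$. This yields
\[
\chi(\Oscr_C(a),\Oscr_C(b)) = \chi(\Oscr_{\PP^1}(b-a)) - \chi(\Oscr_{\PP^1}(b-a-1)) = 1,
\]
which also follows from Riemann–Roch because $C^2=-1$. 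More concretely, I would look at $l=2$ via Serre duality:
\[
\Ext^2_X(\Oscr_C(a),\Oscr_C(b))\cong \Hom(\Oscr_C(b),\Oscr_C(a)\otimes\omega_X)^*=\Hom(\Oscr_C(b),\Oscr_C(a-1))^* = H^0(\Oscr_{\PP^1}(a-b-1))^*.
\]
With $a=d$ and $b=d+k$ this vanishes for all $k\ge0$, so $l=2$ gives nothing. I therefore turn to $l=1$. The local-to-global spectral sequence gives
\[
0\to H^1(\Oscr_{\PP^1}(k))\to \Ext^1\to H^0(\Oscr_{\PP^1}(k-1))\to H^2=0,
\]
so for $k\ge1$ we get $\Ext^1_X(E_i,E_i\otimes\omega_X^{-k})\supseteq H^0(\Oscr_{\PP^1}(k-1))\neq0$. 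Taking $k=1$ already gives a nonzero $\Ext^1$, which contradicts \eqref{eq:superstrong}. As a sanity check, $\chi=1$ while $\hom=\dim H^0(\Oscr_{\PP^1}(k))=k+1$, so $\ext^1=k\ge1$ for $k\ge 1$; this is consistent.

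The main obstacle is verifying the local $\mathcal{E}xt$ computation and the degeneration of the spectral sequence. The Euler characteristic argument avoids both: $\chi=1$ follows from \eqref{eq:chi_rr} or from direct Riemann–Roch, $\hom(\Oscr_C(d),\Oscr_C(d+1))=2$, and $\ext^2=0$ by the Serre duality computation above. Hence $\ext^1=1\neq0$, which is the required contradiction, and every $E_i$ is a vector bundle.
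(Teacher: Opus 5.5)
Your proof is correct, and it takes a different route from the paper. You test condition \eqref{eq:superstrong} with $i=j$ and $k=1$. This is legitimate, since the definition quantifies over all $i,j$; equivalently, it is the geometric-helix condition between $E_i$ and $E_{i+n}=E_i\otimes\omega_X^{-1}$. The computation then reduces entirely to $C\cong\PP^1$:
\[
\chi(\Oscr_C(d),\Oscr_C(d+1))=-C^2=1,\qquad \dim\Hom=2,\qquad \dim\Ext^2=0,
\]
hence $\dim\Ext^1_X(E_i,E_i\otimes\omega_X^{-1})=1\neq 0$.

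The paper never uses the diagonal terms. It uses fullness to find a vector bundle $E$ in the collection alongside $\Oscr_C(d)$. It then invokes Kuleshov--Orlov's description $E|_C\cong\Oscr_C(t)^{\oplus n}\oplus\Oscr_C(t+1)^{\oplus m}$. From this it shows that $\Ext^1_X(\Oscr_C(d),E\otimes\omega_X^{-k})$ is nonzero for $k\gg 0$: this group is a $\Hom$ on $C$ from a line bundle whose degree decreases with $k$ into $E|_C$.

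What each approach buys:
\begin{itemize}
\item Your version is more self-contained. It needs no fullness, no information about the other members of the collection, and no restriction lemma. It also proves something sharper: an exceptional torsion sheaf can never satisfy $\Ext^1_X(F,F\otimes\omega_X^{-1})=0$, so it can never occur in a geometric helix at all.
\item The paper's version uses only off-diagonal vanishing. It would therefore survive a weaker notion of ``very strong'' that omitted the case $i=j$.
\end{itemize}

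One small slip: in your local-to-global exact sequence, $\Ext^1$ surjects onto $H^0(\Oscr_{\PP^1}(k-1))$ rather than containing it. This does not affect the conclusion, and your Euler characteristic computation avoids the issue entirely.
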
 

\begin{proof} By Proposition \ref{pr:KOexobjs} every exceptional sheaf is either a vector bundle or of the form $\Oscr_C(d)$,  where $C \subset X$ is a $(-1)$-curve.   Suppose $E_i = \Oscr_C(d)$ for some $i$.  By assumption the collection $\EE$ is full,  so it cannot consist entirely of torsion sheaves.  Hence without loss of generality we can assume that $E_{i+1} = E$ is a vector bundle.  By Lemma 3.1 of \cite{KuleshovOrlov} the restriction $E|_C$ of an exceptional vector bundle $E$ to a $(-1)$-curve $C$ has the form $n\Oscr_C(t) \oplus m \Oscr_C(t+1)$.  Then the condition (\ref{eq:superstrong}) implies that for all $k \geq 0$ we have
\begin{align*} 0 =  \Ext^{1}_X(E_i,E_{i+1} \otimes \omega_X^{-k}) = \Ext^1_X(\Oscr_C(d),E \otimes \omega_X^{-k}) \\
= \Hom_C(\Oscr_C(d-k-1),E|_C) = \Hom_S(\Oscr_C(d-k-1),n\Oscr_C(t) \oplus m \Oscr_C(t+1)) 
\end{align*}
But for $k$ large enough the last $\Hom$ is clearly non-zero, so we get a contradiction. 
\end{proof}
A full exceptional collection of sheaves $\EE = (E_0, \dots, E_{n-1})$ is very strong if and only if it generates a \emph{geometric helix} in the sense of \cite{BridgelandStern}: 
\begin{definition}\label{def:helix} Let $\EE = (E_0, \dots, E_{n-1})$ be an exceptional collection.  We define the corresponding infinite \emph{helix} $\HH = \{E_j\}_{j \in \ZZ}$ generated by $\EE$ by setting $E_{i-n} = E_i \otimes \omega_X$ for all $i \in \ZZ$.  An exceptional collection $(E_j, \dot,  E_{j+n-1})$ for any $j \in \ZZ$ is called a \emph{thread} of the helix $\HH$.  A helix $\HH = \{E_j\}_{j \in \ZZ}$ is said to be \emph{geometric} if $\Hom(E_i,E_j[k]) = 0$ for all $i, j$ and $k\neq 0$.  \end{definition}
\begin{remark}
  \label{rem:helixshift} 
  Definition \ref{def:helix} does not require a helix to consist of sheaves.  However,  a geometric helix on a del Pezzo surface automatically consists sheaves,  modulo a simultaneous shift.  Indeed,  suppose $E$ and $F[k]$,  with $E$ and $F$ sheaves and $k \neq 0$, are two exceptional objects in the same thread of a geometric helix. Since $\mu(E\otimes \omega^{-1}_X) = K^2_X + \mu(E) > \mu(E)$, one has either $\mu(F) > \mu(E)$ or $\mu(F) < \mu(E \otimes \omega^{-1}_X$ (note that $\mu(F) = \mu(E)$ automatically implies the second case and $\mu(F) = \mu(E \otimes \omega^{-1}_X)$ automatically implies the first).  If $\mu(F) > \mu(E)$,  Proposition \ref{pr:slopes} implies that $\Ext_X^{-k}(E,F[k]) = \Hom_X(E,F) \neq 0$ which contradicts the helix being geometric.  Similarly, if $\mu(F) < \mu(E \otimes \omega^{-1}_X)$, then $\Ext^{k}_X(F[k],E\otimes \omega^{-1}_X) = \Hom_X(F,E\otimes  \omega^{-1}_X) \neq 0$.  \end{remark} 

In what follows we will usually talk of exceptional collections rather than the helices they generate,  but nevertheless keep in mind that the resulting graded algebra is independent on the choice of a thread in the helix, up to regrading (see \S\ref{sec:notation}) which does not change the category of graded modules. Therefore we are always allowed to ``rotate'' an exceptional collection: 

\begin{definition}\label{def:rotation} The \emph{left rotation} of an exceptional collection $\EE = (E_0, \dots,E_{n-1})$ is defined to be $(E_{n-1}\otimes \omega_X, E_0, \dots,E_{n-2})$.  Similarly,  the \emph{right rotation} of $\EE$ is $(E_1, \dots,E_{n-1}, E_0 \otimes \omega_X^{-1})$.  By \cite[\S 4]{MR992977} the left rotation can be achieved by a composition of left mutations as $(\widetilde{L_{E_0}}\dots\widetilde{L_{E_{n-2}}} E_{n-1}, E_0, \dots, E_{n-2})$. Likewise, the the right rotation can be expressed as $(E_1, \dots,E_{n-1}, \widetilde{R_{E_{n-1}}}\dots \widetilde{R_{E_1}} E_0)$. 
\end{definition} 

 \section{NCCRs and very strong exceptional collections}\label{sec:allnccrs}
\subsection{Non-commutative crepant resolutions}
\label{sec:NCCRs}
\begin{definition}[Van den Bergh \cite{VdB}]\label{def:nccr}Let $R$ be a normal Gorenstein domain.  An $R$-algebra $\Lambda$ is a \emph{non-commutative crepant resolution} (NCCR) of $R$ if 
\begin{enumerate}
\item $\Lambda = \End_R(M)$ for some non-zero $M \in \reff(R)$,  where $\reff(R)$ denotes the category of finitely generated reflexive $R$-modules.
\item $\Lambda$ is Cohen-Macaulay as an $R$-module and has finite global dimension. 
\end{enumerate}
\end{definition}
We will mainly use this definition in two situations:
\begin{itemize}
\item \emph{``the complete context''}: $R$ is a complete local ring.
\item \emph{``the graded context''}:  $R$ is a graded ring of the form $k+R_1+R_2+\cdots$ and $M$ is a graded $R$-module.
\end{itemize}
\begin{convention}
If we are in the complete context then we implicitly assume that all operations are completed. This applies for example to the path algebra of a quiver.
If we are in the graded context then we implicitly assume that all constructions take the grading (which is extra data) into account.
\end{convention}
\begin{definition}
  \label{def:basic}
  Being in the graded or complete context we say that $\Lambda=\End_R(M)$ is \emph{basic} if all simples are one-dimensional, or equivalently, if the indecomposable summands of $M$
  occur with multiplicity one.
\end{definition}
\begin{remark}
  In the graded or complete context all NCCRs are Morita equivalent to a basic one.
\end{remark}
\begin{remark} If $\Lambda$, $\Gamma$ are basic graded NCCRs, then $\Lambda$, $\Gamma$ are graded Morita equivalent if and only if they can be obtained from each other
  by regrading (see \S\ref{sec:notation}).
\end{remark}
 \subsection{From very strong exceptional collections to NCCRs}
  Let $X$ be a del Pezzo surface,
 $Y=\Tot(\omega_X)$, and let $\pi:Y\to X$ be the projection.  Let $\EE=(E_0,\dots,E_{n-1})$ be a very strong exceptional collection of sheaves and set $\Escr=\bigoplus_{i=0}^d E_{n-1}$.  Define
 \begin{equation}
   \label{eq:tilting_algebra}
B(\EE):=\End_{\Dscr^b(Y)}(\pi^*(\Escr)).
\end{equation}
which we view as a graded algebra (the grading being inherited from the $G_m$-action on $Y$).

If $\HH$ is a geometric helix generated by $\EE$,  then by \cite[Theorem 3.6]{BridgelandStern} $B(\EE)$ is isomorphic to the \emph{rolled-up helix algebra} $B(\HH)$, which is defined to be the graded subalgebra of the graded algebra $\bigoplus_{k \geq 0} \prod_{j - i =k} \Hom_X(E_i, E_j)$ consisting of elements invariant under the $\ZZ$-action induced by $- \otimes \omega_X$.
Hence following Bridgeland and Stern we sometimes refer to $B(\EE)$ as defined in \eqref{eq:tilting_algebra} as a rolled-up helix algebra as well. 

\begin{theorem}[{\cite[\S 7]{VdB}},  see also the proof of Theorem \ref{thm:nccrfromFEC}] The object $\pi^*(\Escr)$ is a tilting bundle in $\Dscr^b(Y)$ and $B(\EE)$ is an NCCR of the affine cone $R_X$ given by $\omega_X$.  \end{theorem}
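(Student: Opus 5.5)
We split the statement into the tilting property of $\pi^*\Escr$ on $Y$ and the NCCR property of $B(\EE)$ over $R_X$.

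\emph{Step 1: vanishing of higher self-extensions.} Since $\pi:Y=\Tot(\omega_X)\r X$ is affine with $\pi_*\Oscr_Y=\bigoplus_{k\ge 0}\omega_X^{-k}$, adjunction gives
\[
\Ext^l_Y(\pi^*\Escr,\pi^*\Escr)=\bigoplus_{k\ge0}\Ext^l_X(\Escr,\Escr\otimes\omega_X^{-k}).
\]
By the very strong condition \eqref{eq:superstrong}, this vanishes for $l\ne 0$. In degree $0$ it recovers the graded algebra $B(\EE)$, each graded piece being finite dimensional.

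\emph{Step 2: generation.} Take $\Fscr\in\Dscr(\QCoh Y)$ with $\RHom_Y(\pi^*\Escr,\Fscr)=0$. By adjunction $\RHom_X(\Escr,\pi_*\Fscr)=0$. Since $\EE$ is full, $\Escr$ is a compact generator of $\Dscr(\QCoh X)$, so $\pi_*\Fscr=0$. Because $\pi$ is affine, $\pi_*$ is conservative, hence $\Fscr=0$. So $\pi^*\Escr$ is a tilting bundle, and $\Dscr^b(\coh Y)\cong\Dscr^b(\mod B(\EE))$. The global dimension of $B(\EE)$ is finite because $Y$ is smooth; it is bounded by $\dim Y=3$ using the standard argument for tilting bundles on smooth quasi-projective varieties that are projective over an affine base.

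\emph{Step 3: NCCR property.} Let $p:Y\r Z=\Spec R_X$ be the contraction, so $p_*\Oscr_Y=R_X$. Then $B(\EE)=\End_Y(\pi^*\Escr)$, and I would show:
\begin{enumerate}
\item $M:=p_*\pi^*\Escr$ is reflexive. Indeed $p$ is an isomorphism away from the vertex, and the vertex has codimension $3\ge 2$ in $Z$.
\item $B(\EE)\cong\End_{R_X}(M)$. Both sides are reflexive $R_X$-modules that agree on the punctured spectrum $Z\setminus\{0\}\cong Y\setminus X$. Here I expect to need that $B(\EE)=p_*\HHom(\pi^*\Escr,\pi^*\Escr)$ is itself reflexive, which follows from the Cohen--Macaulay property below.
\item $B(\EE)$ is Cohen--Macaulay over $R_X$. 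Since $\omega_Y\cong\Oscr_Y$ and $\Escr$ is tilting, Grothendieck duality for $p$ gives
\[
\RHom_{R_X}(B(\EE),\omega_{R_X})\cong Rp_*\HHom(\pi^*\Escr,\pi^*\Escr)=B(\EE),
\]
using the higher vanishing from Step 1. A module whose dual complex is concentrated in degree $0$ is maximal Cohen--Macaulay over the Gorenstein ring $R_X$.
\end{enumerate}

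The main obstacle is this last step: making the duality argument (or equivalently a local cohomology computation at the vertex) precise. A concrete route is to compute $H^i_{\mathfrak m}$ via the graded pieces $H^i(X,\Escr^\vee\otimes\Escr\otimes\omega_X^{-k})$ for all $k\in\ZZ$. Vanishing for $k\ge0$ comes from \eqref{eq:superstrong}. For $k<0$ it follows from Serre duality applied to the same condition, since $\Ext^{2-l}_X(\Escr,\Escr\otimes\omega_X^{-k})^*\cong\Ext^l_X(\Escr,\Escr\otimes\omega_X^{k+1})$.
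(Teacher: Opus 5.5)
The step that fails is Step 3(1). The module $M:=p_*\pi^*\Escr=\bigoplus_{k\ge 0}\Gamma(X,\Escr\otimes\omega_X^{-k})$ is not reflexive in general, and the codimension of the vertex in $Z$ does not show that it is. The reflexive hull of $M$ is its module of sections over the punctured spectrum, i.e. $\Gamma(Y\setminus X,\pi^*\Escr)=\bigoplus_{k\in\ZZ}\Gamma(X,\Escr\otimes\omega_X^{-k})$. Since $X$ is a \emph{divisor} in $Y$, sections of $\pi^*\Escr$ over $Y\setminus X$ may have poles along $X$. So $M$ is reflexive if and only if $H^0(X,\Escr\otimes\omega_X^{j})=0$ for all $j\ge 1$, and the very strong condition does not force this.

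Concretely, $(\Oscr(3),\Oscr(4),\Oscr(5))$ on $\PP^2$ is very strong, because the condition is invariant under twisting by a line bundle. But the summand $\Gamma(Y,\pi^*\Oscr(3))=\bigoplus_{k\ge 0}(R_X)_{k+1}$ is the shifted maximal ideal $(R_X)_{+}(1)$, whose reflexive hull is $R_X(1)$. So (1) is false as stated. Your argument in (2) ("both sides are reflexive") rests on it, so as written you do not yet exhibit $B(\EE)$ as $\End_{R_X}(M)$ with $M\in\reff(R_X)$, as Definition \ref{def:nccr} requires.

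The repair is easy: replace $M$ by its reflexive hull $\Gamma^*(X,\Escr)=\bigoplus_{k\in\ZZ}\Gamma(X,\Escr\otimes\omega_X^{-k})$. This is the module used in the proof of Theorem \ref{thm:nccrfromFEC}. Then $\End_{R_X}(\Gamma^*(X,\Escr))=\Gamma^*(X,\HHom_X(\Escr,\Escr))$. This coincides with $B(\EE)=\bigoplus_{k\ge 0}\Hom_X(\Escr,\Escr\otimes\omega_X^{-k})$ because the negative-degree pieces vanish: for $j\ge 1$ one has $\Hom_X(\Escr,\Escr\otimes\omega_X^{j})\cong\Ext^2_X(\Escr,\Escr\otimes\omega_X^{1-j})^*=0$ by \eqref{eq:superstrong} with $l=2$. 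This is exactly your own Serre duality remark with $l=0$. Alternatively, your argument in (2) goes through verbatim once $M$ is replaced by its hull, since $B(\EE)$ is torsion-free and Cohen--Macaulay, hence reflexive.

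With this change the rest of your proof is sound. The Grothendieck duality argument for the Cohen--Macaulay property is correct, using $p^!\omega_{R_X}=\omega_Y\cong\Oscr_Y$ and self-duality of $\HHom(\pi^*\Escr,\pi^*\Escr)$. Your generation argument (fullness of $\EE$ plus conservativity of $\pi_*$ for the affine morphism $\pi$) is the appropriate one here.

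It differs from the argument in the proof of Theorem \ref{thm:nccrfromFEC} that the paper points to (the statement itself is only cited from \cite[\S 7]{VdB}). There, generation comes from admissibility of the thick subcategory generated by $\pi^*\Mscr'$ together with Lemma \ref{lm:noSOD}. That route needs finite global dimension of the endomorphism algebra as input, which there is supplied by the NCCR hypothesis and is not available a priori in your setting. Your route instead deduces finite global dimension afterwards from the derived equivalence.
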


Hence,  we have an equivalence of categories
\begin{equation}
\label{eq:derived_equivalence}
\RHom_{\Dscr^b(Y)}(\pi^*(\Escr),-):\Dscr^b(Y)\to \Dscr^b(B(\EE)).
\end{equation}
In particular, $B(\EE)$ is $3$-Calabi--Yau (see \cite[Proposition 4.1]{Bridgeland}, \cite[Theorem 3.6]{BridgelandStern})
and can be encoded as a Jacobi algebra. We will discuss this in the next section.

\subsection{Relation with Jacobi algebras}
\label{sec:jacobi}
\begin{definition}\label{def:QPJacobi}
Let $Q$ be a quiver and let $kQ$ its path algebra.  A \emph{potential} on $Q$ is an element
\[
W \in kQ/[kQ,kQ].
\]
The \emph{Jacobi algebra} associated to $(Q,W)$ is
\[
\Jscr(Q,W):=kQ/(\partial W),
\]
where $(\partial W)$ denotes the two-sided ideal generated by the cyclic derivatives $\partial_a W$ for all arrows $a\in Q_1$.
\end{definition}
\begin{remark} In the complete context (see \S\ref{sec:NCCRs}) by our conventions $kQ$ is completed (this is the setting of \cite{DWZ}) and so is the space of commutators
  $[kQ,kQ]$. This implies that $W$ is represented by
  a (usually) infinite convergent linear combination of cyclic paths.
In the graded context we assume that $Q$ is graded (i.e.\ the arrows are equipped with a degree) and $W$ is homogeneous.
\end{remark}
Now let $\Lambda$ be a $3$-Calabi-Yau algebra and assume that we are in the complete or graded context.  Then  \cite[Theorem 3.1]{Bocklandt} (see also \cite[Theorems A,B]{VdBCY}) implies that $\Lambda$ can be represented as the Jacobi algebra $\Jscr(Q,W)$ associated to a quiver $Q$ with a homogeneous potential $W$ in the sense of Definition \ref{def:QPJacobi}.  
\begin{remark}\label{rem:Ext1graph}
  Note that $Q$ is the $\Ext^1$-graph of $\Lambda$. Let $(S_i)_{i=0}^{n-1}$ be the non-isomorphic simples of $\Lambda$, where in the graded
context,  $S_i$ is taken to be in degree zero. Then $Q=\{0,\ldots, n-1\}$ and the  arrows from $i$ to $j$ correspond to a basis of $\Ext^1_\Lambda(S_i,S_j)$.
\end{remark}

If $(\widecheck{E_j})_j$ denotes the left dual collection then the equivalence \eqref{eq:derived_equivalence} maps $\iota_*(\widecheck{E_j})$,  where $\iota: X \hookrightarrow Y$ is the inclusion of the zero-section,  to the simple $B(\EE)$-modules (see also \cite[\S 3]{Bridgeland}).  
Using some elementary algebra based on Remark \ref{rem:Ext1graph} this leads to the following description of $Q$:
\begin{proposition}
\label{prop:quiver_rule}
The vertices of $Q$ are in bijection with objects of the exceptional collection $\EE$, and the arrows are obtained from the left/right dual (see Remark \ref{rem:left_right_dual})
exceptional collection $\widecheck{\EE}=(\widecheck{E_{n-1}}, \ldots \widecheck{E_{0}})$ (see Proposition \ref{def:dualcollection}) as follows:
\begin{enumerate}
\item There are no loops, i.e.\ there are no arrows $i\r j$ if $i=j$.
\item If $i<j$ then the arrows correspond to a basis for $\Ext^1(\widecheck{E_j},\widecheck{E_i})^*$. Those arrows have degree zero.
\item if $i>j$ then the arrows correspond to a basis for $\Ext^2(\widecheck{E_i},\widecheck{E_j})$. Those arrows have degree one.
\end{enumerate}
Moreover the superpotential $W$ has degree one (for the construction of the superpotential see \cite[Theorem 6.3]{KellerDeformed}).
\end{proposition}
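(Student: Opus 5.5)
The plan is to compute the $\Ext^1$-quiver of $\Lambda=B(\EE)$ directly, as Remark \ref{rem:Ext1graph} allows. The first step is to identify the simples. The equivalence \eqref{eq:derived_equivalence} sends $\iota_*(\widecheck{E_j}')$ to the simple module $S_j$, where $\iota:X\hookrightarrow Y$ is the zero section and $\widecheck{\EE}'$ is the left dual collection. To see this, note that adjunction gives
\[
\RHom_Y(\pi^*E_i,\iota_*F)=\RHom_X(E_i,F),
\]
because $\pi\circ\iota=\Id_X$. Proposition \ref{def:dualcollection} then shows that this equals $\CC$ in degree $0$ when $i=j$ and vanishes otherwise. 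The module is one-dimensional and concentrated at vertex $j$, hence it is $S_j$ sitting in degree $0$.

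The second step is to compute $\Ext^1_\Lambda(S_i,S_j)=\Ext^1_Y(\iota_*\widecheck{E_i}',\iota_*\widecheck{E_j}')$. The zero section is a divisor with normal bundle $\omega_X$, so we have the standard triangle
\[
L\iota^*\iota_*F\simeq F\oplus F\otimes\omega_X^{-1}[1].
\]
It splits because $\iota$ admits the retraction $\pi$, or one can just use the long exact sequence. Applying adjunction gives
\[
\Ext^1_Y(\iota_*A,\iota_*B)\cong \Ext^1_X(A,B)\oplus \Hom_X(A\otimes\omega_X^{-1},B)=\Ext^1_X(A,B)\oplus\Hom_X(A,B\otimes\omega_X).
\]
Serre duality identifies the second summand with $\Ext^2_X(B,A)^*$. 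The $G_m$-weight places the first summand in degree $0$ and the second in degree $1$, since the conormal direction carries weight one. Now use the ordering of $\widecheck{\EE}'$, which is exceptional in the order $(\widecheck{E_{n-1}}',\ldots,\widecheck{E_0}')$. Thus $\RHom(\widecheck{E_a}',\widecheck{E_b}')=0$ whenever $a<b$. For $i=j$ both summands vanish by exceptionality, so there are no loops. For $i<j$ the $\Ext^2$ term vanishes, and only $\Ext^1_X(\widecheck{E_i}',\widecheck{E_j}')$ survives. For $i>j$ the $\Ext^1$ term vanishes, and only the degree-one part $\Ext^2(\widecheck{E_j}',\widecheck{E_i}')^*$ survives.

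The third step is bookkeeping, and it is where I expect the main care to be needed. I must match the conventions: arrows from $i$ to $j$ use $\Ext^1(S_i,S_j)$ or its dual depending on left-module and functional-composition conventions. I must also convert from the left dual to the right dual. By Remark \ref{rem:left_right_dual} these differ by the Serre functor $-\otimes\omega_X[2]$, which preserves $\Ext$ groups, so the statement may be phrased with either dual. Checking that these identifications produce exactly the index pattern stated, namely $\Ext^1(\widecheck{E_j},\widecheck{E_i})^*$ for $i<j$ and $\Ext^2(\widecheck{E_i},\widecheck{E_j})$ for $i>j$, is the fiddly part. I would fix the convention by testing on $\PP^2$ with $(\Oscr,\Oscr(1),\Oscr(2))$, which must give three degree-zero arrows $0\to1$, three degree-zero arrows $1\to2$, and three degree-one arrows $2\to0$. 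Finally, $W$ has degree one by the grading in Keller's construction \cite[Theorem 6.3]{KellerDeformed}. For a graded $3$-Calabi-Yau algebra whose Calabi-Yau structure has Gorenstein parameter $1$, which here comes from $\omega_Y\cong\Oscr_Y$ twisted by the weight of the fibre coordinate, the potential sits in that degree.
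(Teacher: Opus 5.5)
You follow the paper's route, which it only sketches: identify the simple $S_j$ with the image of $\iota_*\widecheck{E_j}'$, then compute $\Ext^1$ between these objects on $Y$ by adjunction along the zero section. Your first step is fine. So is the formula
\[
\Ext^1_Y(\iota_*A,\iota_*B)\cong \Ext^1_X(A,B)\oplus \Ext^2_X(B,A)^*,
\]
with the second summand in degree one. The next step, however, is wrong as written.

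You state correctly that $\RHom_X(\widecheck{E_a}',\widecheck{E_b}')=0$ for $a<b$, but then apply it backwards. For $i<j$ it is $\Ext^1_X(\widecheck{E_i}',\widecheck{E_j}')$ that vanishes, while $\Ext^2_X(\widecheck{E_j}',\widecheck{E_i}')$ need not. For $i>j$ the situation is reversed. So the two groups you say "survive" are identically zero. The correct outcome is
\[
\Ext^1_Y(S_i,S_j)\cong
\begin{cases}
\Ext^2_X(\widecheck{E_j}',\widecheck{E_i}')^* \text{ (degree one)} & \text{if } i<j,\\
\Ext^1_X(\widecheck{E_i}',\widecheck{E_j}') \text{ (degree zero)} & \text{if } i>j.
\end{cases}
\]
For example, on $\PP^2$ with $\EE=(\Oscr,\Oscr(1),\Oscr(2))$ one has $\widecheck{\EE}'=(\Oscr(-1)[2],\Omega_{\PP^2}(1)[1],\Oscr)$. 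Then $\Ext^1_Y(S_0,S_1)=0$, while $\Ext^1_Y(S_1,S_0)\cong H^0(T_{\PP^2}(-1))$ is three-dimensional of degree zero.

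What reconciles this with the statement is exactly the convention you deferred to a test. The functor $\RHom_Y(\pi^*\Escr,-)$ lands in \emph{right} $\Lambda$-modules, since $\Lambda=\End_Y(\pi^*\Escr)$ acts by precomposition. Take the quiver of $\Lambda$ with functional composition, in which $\Hom(E_i,E_j)$ gives arrows $i\to j$. For right modules over such an algebra, $\Ext^1(S_j,S_i)$ is dual to the span of the arrows $i\to j$.

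With this convention, the arrows $i\to j$ behave as follows:
\begin{itemize}
\item for $i<j$ they form a basis of $\Ext^1_X(\widecheck{E_j}',\widecheck{E_i}')^*$, in degree zero;
\item for $i>j$ they form a basis of $\Ext^2_X(\widecheck{E_i}',\widecheck{E_j}')$, in degree one.
\end{itemize}
The Serre functor takes the right dual to the left dual and preserves $\Ext$'s. Applying it gives exactly the proposition, including why one case carries a dual and the other does not.

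Your version reaches the correct degree pattern only because the swapped vanishing happens to cancel the unaddressed left/right reversal. Even then its index pattern, $\Ext^1_X(\widecheck{E_i}',\widecheck{E_j}')$ for $i<j$, does not match the statement; this is the mismatch you flagged as "fiddly". Checking one example can only fix a global convention once the general computation is correct, so the vanishing step has to be redone and the right-module convention made explicit.
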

For use below we note that $Q$ can also be obtained from $\widecheck{\EE}$ in the following ``numerical'' way.
\begin{lemma}\label{lm:arrowschiY}
Let $Y\subset X$ be a smooth anti-canonical divisor.  Then the number of arrows $i\r j$ in $Q$ is equal to $\chi_Y(\widecheck{E_{i,Y}},\widecheck{E_{j,Y}})$, with
the convention that a negative number corresponds to arrows from $j$ to $i$.
\end{lemma}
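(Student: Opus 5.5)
By Proposition \ref{prop:quiver_rule}, the signed count of arrows $i\r j$ is $\dim\Ext^1(\widecheck{E_j},\widecheck{E_i})$ when $i<j$, and $\dim\Ext^2(\widecheck{E_i},\widecheck{E_j})$ when $i>j$. The plan is to express both through a single Euler characteristic on $X$ and then identify that Euler characteristic with $\chi_Y$ after restricting to $Y$.

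The key identity is that for any $F,G\in\Dscr^b(X)$ and a smooth anticanonical divisor $\iota:Y\hookrightarrow X$ one has $\chi_Y(F|_Y,G|_Y)=\chi_X(F,G)-\chi_X(F,G\otimes\omega_X)$. This follows from the triangle $G\otimes\omega_X\r G\r \iota_*\iota^*G$, the adjunction $\RHom_Y(\iota^*F,\iota^*G)=\RHom_X(F,\iota_*\iota^*G)$, and additivity of $\chi$. Serre duality on $X$ gives $\chi_X(F,G\otimes\omega_X)=\chi_X(G,F)$. Hence
\[
\chi_Y(F|_Y,G|_Y)=\chi_X(F,G)-\chi_X(G,F),
\]
which is antisymmetric, as it should be on an elliptic curve.

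We apply this with $F=\widecheck{E_i}$ and $G=\widecheck{E_j}$. In the dual collection the order is reversed: $\widecheck{E_j}$ precedes $\widecheck{E_i}$ when $j>i$. So for $i<j$ we have $\chi_X(\widecheck{E_i},\widecheck{E_j})=0$, and the identity gives $\chi_Y=-\chi_X(\widecheck{E_j},\widecheck{E_i})$. For $i>j$ we get $\chi_Y=\chi_X(\widecheck{E_i},\widecheck{E_j})$. It remains to show that the Ext groups between consecutive members of $\widecheck{\EE}$ are concentrated in a single degree:
\begin{itemize}
\item for $j>i$, $\Ext^\bullet(\widecheck{E_j},\widecheck{E_i})$ is concentrated in degree $1$, so $\chi_X=-\dim\Ext^1$;
\item for $i>j$, $\Ext^\bullet(\widecheck{E_i},\widecheck{E_j})$ is concentrated in degree $2$, so $\chi_X=\dim\Ext^2$.
\end{itemize}
With these, the number of arrows $i\r j$ equals $\chi_Y$ in both cases, matching the statement's sign convention: for $i<j$, $\chi_Y=\dim\Ext^1(\widecheck{E_j},\widecheck{E_i})\ge 0$ is the number of arrows $i\r j$; for $i>j$, $\chi_Y=\dim\Ext^2(\widecheck{E_i},\widecheck{E_j})$ is also the number of arrows $i\r j$, and the negative values of $\chi_Y$ occur exactly when the arrows run the other way.

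The main obstacle is this concentration claim. It should come from $\EE$ being very strong. The algebra $B(\EE)$ is Koszul-like with simples $\iota_*\widecheck{E_j}$, and
\[
\Ext^k_Y(\iota_*\widecheck{E_i},\iota_*\widecheck{E_j})=\Ext^k_X(\widecheck{E_i},\widecheck{E_j})\oplus\Ext^{k-1}_X(\widecheck{E_i},\widecheck{E_j}\otimes\omega_X^{-1}),
\]
using $L\iota^*\iota_*=\Id\oplus\omega_X^{-1}[1]$. The grading argument behind Proposition \ref{prop:quiver_rule} then shows that degree-zero arrows come only from $\Ext^1_X$ between dual objects and degree-one arrows only from $\Ext^2_X$. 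To get vanishing in the remaining degrees, I would use that the $\widecheck{E_j}$ are the simple modules of the finite-dimensional degree-zero part $B(\EE)_0=\End_X(\Escr)$, which is a directed algebra of global dimension $\le 2$. Its Ext groups between simples relate to its quiver, which has no oriented cycles, and the Ext groups between simples are also computed by $\Ext_X$ between the dual objects, since $\Dscr^b(X)\cong\Dscr^b(B(\EE)_0)$.

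If the clean vanishing fails in general, it is only needed for the Euler characteristic. In that case I would instead argue via Remark \ref{rem:Ext1graph} that $\dim\Ext^1_\Lambda(S_i,S_j)-\dim\Ext^1_\Lambda(S_j,S_i)$ equals an alternating sum that 3-Calabi--Yau duality collapses to $-\chi_Y$ up to sign. Since the quiver has no 2-cycles between $i$ and $j$ in opposite degrees for geometric helices, the signed count then determines the arrows.
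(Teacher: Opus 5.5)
There is a genuine gap at the step you flag as the main obstacle.

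\textbf{The two bullets contradict each other.} After renaming indices, both concern $\RHom_X(\widecheck{E_a},\widecheck{E_b})$ with $a>b$. The first puts it in degree $1$ and the second in degree $2$, so together they would force $Q$ to have no arrows at all.

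They are also false individually. For $\PP^2$ with $\EE=(\Oscr,\Oscr(1),\Oscr(2))$ one has $\widecheck{E_2}=\Oscr(2)$ and $\widecheck{E_0}=\Oscr(3)[-2]$. So $\RHom(\widecheck{E_2},\widecheck{E_0})$ is $3$-dimensional in degree $2$; these are the three degree-one arrows $2\r 0$. This contradicts your first bullet for $(i,j)=(0,2)$.

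What you overlooked is that, for fixed $a>b$, the single complex $\RHom_X(\widecheck{E_a},\widecheck{E_b})$ governs both kinds of arrows between $a$ and $b$: the degree-zero arrows $b\r a$ come from $\Ext^1$, and the degree-one arrows $a\r b$ come from $\Ext^2$. Your global-dimension-two remark does give the vanishing of $\Ext^0$ and $\Ext^{\ge 3}$. Combined with Proposition \ref{prop:quiver_rule}, this yields only the signed count $\#(b\r a)-\#(a\r b)=\dim\Ext^1-\dim\Ext^2=-\chi_X(\widecheck{E_a},\widecheck{E_b})$. From there your identity $\chi_Y(F_Y,G_Y)=\chi_X(F,G)-\chi_X(G,F)$ finishes the computation as you describe. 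That identity is correct, and it is what the paper imports from \cite[Lemma 3.12]{NVdB2}.

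\textbf{The missing idea is concentration in a single degree.} You need that $\Ext^1$ and $\Ext^2$ cannot both be nonzero for the same pair, i.e.\ that $\RHom_X(\widecheck{E_a},\widecheck{E_b})$ lives in one degree (which one depends on the pair). Without this, $Q$ could have oriented $2$-cycles, and ``the number of arrows $i\r j$'' would not be determined by one signed integer. Global dimension two and directedness of $\End_X(\Escr)$ cannot supply this, since they do not forbid an arrow and a minimal relation between the same two vertices.

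The paper gets it from Propositions \ref{pr:KOexobjs} and \ref{pr:slopes}. Each $\widecheck{E_a}$ is a shift of an exceptional sheaf, and for an exceptional pair of sheaves $\Ext^2=0$ and at most one of $\Hom$, $\Ext^1$ is nonzero.

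\textbf{On your fallback.} The route via the $3$-Calabi--Yau Euler form on $\Lambda$ plus absence of $2$-cycles could be made rigorous. You would have to cite the no-$2$-cycles fact from \cite[Proposition 7.5]{BridgelandStern}, because in the paper Remark \ref{rem:no_loops_or_two_cycles} is deduced from the very description being proved here. But ``up to sign'' leaves out the actual content of the lemma. You must compute $\chi_\Lambda(S_i,S_j)=\chi_X(\widecheck{E_i},\widecheck{E_j})-\chi_X(\widecheck{E_j},\widecheck{E_i})$ and match it against the arrow conventions of Proposition \ref{prop:quiver_rule}.

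In that computation, adjunction puts $L\iota^*\iota_*$ in the first argument. So your displayed formula should have $\Ext^{k-1}_X(\widecheck{E_i},\widecheck{E_j}\otimes\omega_X)$ rather than $\otimes\,\omega_X^{-1}$.
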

\begin{proof} 
Since $A(\EE):=\End(\oplus_i E_i)$ has global dimension two (see e.g. \cite[Lemma A.3]{dTdVVdB}) the only possible $\Ext$'s between the simples
of $A(\EE)$ are in degree $\le 2$. The simples of $A(\EE)$ correspond to the elements of $\widecheck{\EE}$. Hence we have $\Ext^k(\widecheck{E_i},\widecheck{E_j})=0$
for $k=0$ and $k\ge 3$. By Proposition \ref{pr:slopes} there is at most a single non-zero $\Ext$ between any $\widecheck{E_i}$, $\widecheck{E_j}$. It follows
that the number of arrows $i\r j$ in $Q$ (counting arrows from $j\r i$ as negative) is equal to
\[
\begin{cases}
0 &\text{if $i=j,$}\\
-\chi(\widecheck{E_j},\widecheck{E_i})&\text{if $i<j,$}\\
\chi(\widecheck{E_i},\widecheck{E_j})&\text{if $i>j$.}
\end{cases}
\]
By \cite[Lemma 3.12]{NVdB2} this may be rewritten as
\[
\begin{cases}
0 &\text{if $i=j$}\\
-\chi_Y(\widecheck{E_{j,Y}},\widecheck{E_{i,Y}})&\text{if $i<j$}\\
\chi_Y(\widecheck{E_{i,Y}},\widecheck{E_{j,Y}})&\text{if $i>j$}
\end{cases}
\]
We finish with the observation that $\chi_Y(\widecheck{E_{i,Y}},\widecheck{E_{i,Y}}) = 0$ and $\chi_Y(\widecheck{E_{i,Y}},\widecheck{E_{j,Y}})=-\chi_Y(\widecheck{E_{j,Y}},\widecheck{E_{i,Y}})$.
\end{proof}
\begin{remark} \label{rem:no_loops_or_two_cycles}
  It follows from the explicit description of $Q$ that it has no
  loops or oriented 2-cycles. This was first observed in \cite[Proposition 7.5]{BridgelandStern}.  An independent proof was given in \cite[Theorem 4.2.2]{dTdVVdB}.
  \end{remark} 
 \subsection{From NCCRs to very strong exceptional collections}
 
 In this section we prove the following result:
 \begin{theorem}\label{thm:nccrfromFEC} Let $X$ be a del Pezzo surface and $R_X :=  \bigoplus_{k \leq 0} \Gamma(X, \omega_X^{\otimes k})$.
   If $\Lambda$ is a basic (see Definition \ref{def:basic})
   graded NCCR of $R_X$ then   there exists a
  very strong exceptional collection $\EE = (E_0, \dots, E_{n-1})$ on $X$ such that $\Lambda$ is obtained by regrading (see \S\ref{sec:notation})
  from $\End_{\Dscr^b(Y)}(\pi^*(\Escr))$, where $\Escr = \oplus_{i=0}^{n-1} E_i$ and $\pi: Y \to X$ the canonical projection from $Y = \Tot(\omega_X)$.
\end{theorem}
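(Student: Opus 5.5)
Let $\Lambda=\End_R(M)$ with $M$ a basic graded reflexive module, $R=R_X$. Since $p:Y\to Z=\Spec R$ is a crepant resolution which is an isomorphism outside the cone point and the exceptional locus (the zero section) has codimension $\ge 2$ only in $Z$ when $\dim Y=3$, I would first pass from $M$ to a vector bundle on $Y$. Concretely, put $\Mscr:=(p^*M)^{\vee\vee}$, or better the graded bundle on $Y$ determined by $M$ via the $G_m$-equivariant structure: write $M$ as a graded $R$-module, consider the associated $G_m$-equivariant sheaf on $Y$, and take the reflexive hull. The first key step is to show that $\Mscr$ is a tilting bundle on $Y$ with $\End_Y(\Mscr)=\Lambda$. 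For this I would follow the standard argument (as in \cite{VdB} or Iyama--Wemyss' comparison of NCCRs with crepant resolutions): since both $Y$ and $\Lambda$ are crepant resolutions of $R$, the Cohen--Macaulay property of $\End_R(M)$ together with Grothendieck duality on $p$ (using $p^!\Oscr_Z=\Oscr_Y$) yields $\Ext^i_Y(\Mscr,\Mscr)=0$ for $i>0$ and $p_*\HHom(\Mscr,\Mscr)=\End_R(M)$; finite global dimension of $\Lambda$ then gives generation of $\Dscr(\QCoh Y)$. I expect this step to be the main obstacle: the vanishing $R^ip_*\HHom(\Mscr,\Mscr)=0$ for $i=1,2$ needs care since the fibre over the cone point is the $2$-dimensional $X$. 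I would try to use that local cohomology $H^i_{\mathfrak m}(\Lambda)=0$ for $i<3$ and compare with $H^i(Y\setminus X,\HHom(\Mscr,\Mscr))$, using $Y\setminus X\cong Z\setminus\{0\}$, exactly as in the derived equivalence between NCCRs and crepant resolutions in dimension three (alternatively, invoke that $\Lambda$ and $\Oscr_Y$ are derived equivalent via a tilting object built from $\Mscr$).

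Second, using the $G_m$-equivariance, $\Mscr$ is graded and I would restrict to the zero section: set $\Escr:=\Mscr|_X=\iota^*\Mscr$. Because $\pi_*\Oscr_Y=\bigoplus_{k\ge0}\omega_X^{-k}$, the vanishing $\Ext^{>0}_Y(\Mscr,\Mscr)=0$ together with the graded structure should give, after possibly replacing $\Mscr$ by $\pi^*\Escr$ (any $G_m$-equivariant vector bundle on the line bundle $Y$ is, up to twisting summands by characters, pulled back from $X$ once one knows its restriction is rigid; I would prove this by deforming along the contracting $G_m$-action), that $\Ext^l_X(E,F\otimes\omega_X^{-k})=0$ for $l\neq0,k\ge0$ and summands $E,F$ of $\Escr$. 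In particular $\Escr$ is rigid, so by Theorem \ref{thm:KOrigidsumex} it is a direct sum of exceptional bundles $E_0,\dots,E_{n-1}$, distinct since $M$ is basic; regrading corresponds to the twists by characters.

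Finally, I would order the $E_i$ by slope and show they form a full exceptional collection. Taking $k=0$, $l=2$ and Serre duality, and combining $\Hom$'s between stable bundles (Theorem \ref{thm:mustable}) with the $\Ext^l$ vanishings for all $k\ge0$, one gets the slope inequalities of Remark \ref{rem:verystrongslopes}, after rotating by $\omega_X$ if necessary; semiorthogonality $\Hom^\bullet(E_j,E_i)=0$ for $j>i$ should follow from $\Hom(E_j,E_i)=0$ when $\mu(E_j)>\mu(E_i)$ (stability) plus the vanishing of $\Ext^1,\Ext^2$ and Riemann--Roch via \eqref{eq:chi_slope}. Fullness follows because $\pi^*\Escr$ generates $\Dscr(Y)$ and $\iota_*$ of the orthogonal of $\langle E_i\rangle$ would be killed by $\RHom(\pi^*\Escr,-)$. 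Then $\EE$ is very strong and $\Lambda$ is a regrading of $B(\EE)$.
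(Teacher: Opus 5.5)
There is a genuine gap in your first step, and it is not a technicality. The zero section $X$ is a \emph{divisor} in $Y$, so a reflexive sheaf on $Y$ is not determined by its restriction to $Y\setminus X\cong Z\setminus\{0\}$. The summands of any extension of $M$ can be twisted by powers of $\Oscr_Y(X)\cong\pi^*\omega_X$ without changing $M$ as an ungraded module.

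Your argument (Cohen--Macaulayness of $\End_R(M)$, reflexivity, Grothendieck duality) uses nothing that distinguishes these extensions, yet the conclusion depends on the choice. On $\PP^2$, the bundles $\pi^*(\Oscr\oplus\Oscr(1)\oplus\Oscr(5))$ and $\pi^*(\Oscr\oplus\Oscr(1)\oplus\Oscr(2))$ agree on $Y\setminus X$ and come from the same $M$. For the first one, $\Ext^2_Y$ contains $\Ext^2_X(\Oscr(5),\Oscr)\neq 0$. Moreover $\Gamma(Y,\mathcal{E}nd)=\bigoplus_{k\ge 0}\Hom_X(\Mscr,\Mscr\otimes\omega_X^{-k})$ misses $\Hom_X(\Oscr,\Oscr(5)\otimes\omega_X)\neq 0$, so even $p_*\HHom(\Mscr,\Mscr)=\End_R(M)$ fails.

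Your local cohomology comparison cannot repair this. One has $H^i(Y\setminus X,\mathcal{E}nd(\Mscr))\cong H^{i+1}_{\mathfrak m}(\End_R(M))$, and this vanishes only for $i=1$, since $H^3_{\mathfrak m}(\End_R(M))\neq 0$. So Cohen--Macaulayness gives exactly the vanishing of $\Ext^1_X(\Mscr,\Mscr\otimes\omega_X^{k})$ for all $k$ (Proposition \ref{pr:strigid}) and nothing about $\Ext^2$.

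The NCCR versus crepant resolution comparison theorems you allude to assume fibres of dimension at most one. With the surface $X$ as fibre, that comparison is precisely what has to be proved here. Your later ``rotating by $\omega_X$ if necessary'' comes after you have already assumed the $\Ext^2$-vanishing, so it does not close the hole. Likewise, your claim that an equivariant bundle with rigid restriction is a pullback is unsupported; it becomes unnecessary below.

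The missing idea is to normalise first, on $X$. Take $\Mscr=\widetilde M$, get rigidity from Proposition \ref{pr:strigid}, and decompose into exceptional bundles by Theorem \ref{thm:KOrigidsumex}. Then replace each summand $\Mscr_j$ by $\Mscr_j\otimes\omega_X^{t_j}$ with slope in $[0,K_X^2)$. This changes $M$ only by grading shifts of summands, hence $\Lambda$ only by regrading.

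This slope window, via Serre duality and $\mu$-stability (Theorem \ref{thm:mustable}), does two things. It kills $\Ext^2_X(\Mscr',\Mscr'\otimes\omega_X^{-k})$ for $k\ge 0$, so $\pi^*\Mscr'$ has no higher self-extensions. It also kills $\Hom_X(\Mscr',\Mscr'\otimes\omega_X^{k})$ for $k\ge 1$, so $\End_Y(\pi^*\Mscr')$ is reflexive and hence a regrading of $\Lambda$. Ordering by slope then yields the very strong exceptional collection directly.

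Your phrase ``finite global dimension gives generation'' also needs an argument. The paper uses that the thick subcategory generated by $\pi^*\Mscr'$ is admissible, because its endomorphism ring has finite global dimension. It then uses that $\Dscr^b(Y)$ has no nontrivial $R$-linear semiorthogonal decompositions, since its Serre functor relative to $R$ is trivial (Lemma \ref{lm:noSOD}). Your fullness argument via $\Hom_X(F_1,F_2)\cong\Hom_Y(\pi^*F_1,\iota_*F_2)$ is the same as the paper's.
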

Using the following proposition one may also use this result to describe NCCRs of $\widehat{R_X}$.
 \begin{proposition} \label{prop:completion}
   Let $R=k+R_1+R_2+\cdots$ be a three-dimensional graded normal Gorenstein domain with an isolated singularity. Then a NCCR of the completion $\widehat{R}$ is the completion of a graded NCCR of $R$.
\end{proposition}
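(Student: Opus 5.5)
Given an NCCR $\Lambda=\End_{\widehat R}(M)$ of $\widehat R$, with $M$ reflexive, I would show that $M$ itself is the completion of a graded reflexive $R$-module $N$. Then $\Lambda=\widehat{\End_R(N)}$, and I would check that $\End_R(N)$ is a graded NCCR of $R$. Throughout I may assume $\Lambda$ is basic, so $M=\bigoplus_i M_i$ with the $M_i$ pairwise non-isomorphic indecomposables.

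\textbf{Step 1: every indecomposable summand of $M$ is the completion of a graded module.} Since $R$ has an isolated singularity and $\dim R=3$, $\Lambda$ being an NCCR forces $\Lambda$ to be a maximal Cohen--Macaulay $\widehat R$-module with finite global dimension. By \cite{IyamaWemyss14_2} (modifying/maximal modifying modules), $M$ is then a \emph{rigid} object in a suitable sense: $\Ext^1_{\widehat R}(M,M)$ is the obstruction to $\End(M)$ being CM in dimension three, so it vanishes. This is what I expect to be the main input.

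The plan is to show that rigid reflexive modules over $\widehat R$ come from graded $R$-modules. The $\mathbb{G}_m$-action gives a family of automorphisms $\lambda_t$ of $\widehat R$. The twists $\lambda_t^*M$ form a connected algebraic family of modules. Because $\Ext^1(M,M)=0$, deformations of $M$ are trivial, so $\lambda_t^*M\cong M$ for all $t$. With finite-length truncations $M/\mathfrak m^k M$ and Artin approximation or an orbit argument in the finite-dimensional variety of module structures, this should give an equivariant (i.e.\ graded) structure on $M$. Since $\Ext^1(M,M)=0$ is a finite-length space for an isolated singularity, the standard argument applies: the $\mathbb{G}_m$-orbit of the point of $M/\mathfrak m^kM$ in the representation variety is contained in a single $\mathrm{Aut}$-orbit, and $\mathrm{Aut}$ is connected modulo a unipotent radical. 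The stabilizer of $M$ in $\mathrm{Aut}\ltimes\mathbb{G}_m$ therefore surjects onto $\mathbb{G}_m$ and contains a torus lifting it. That torus yields a compatible grading on all truncations, hence a grading on $M$ as an inverse limit. By Krull--Schmidt for graded modules over $R$, which is $*$-local, the indecomposables stay indecomposable.

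The main obstacle is passing from the rigidity of the truncations to a coherent lift of the torus over all $k$. I would handle it by working with a single $\mathfrak m$-adic truncation level large enough that $M$ is determined by $M/\mathfrak m^kM$. This is possible because $\widehat R$ has an isolated singularity, so that a finite-determinacy result in the style of Auslander applies. I would then lift the torus there.

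\textbf{Step 2: conclusion.} Let $N=\bigoplus_i N_i$ with each $N_i$ graded and $\widehat N_i\cong M_i$. Then $N$ is reflexive, since reflexivity is detected after completion because completion is faithfully flat. Also $\End_R(N)$ is graded with $\widehat{\End_R(N)}=\Lambda$. Being CM passes from $\Lambda$ to $\End_R(N)$, since for graded modules depth can be computed at the irrelevant ideal. Finite global dimension of a graded algebra is also detected at the graded simples, i.e.\ after completion. Hence $\End_R(N)$ is a graded NCCR whose completion is $\Lambda$.
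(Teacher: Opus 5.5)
Your overall architecture is the paper's. You get rigidity of $M$ (the paper uses Theorem~\ref{thm:IR}, from \cite{IyamaReiten}), then gradability of a rigid $\widehat R$-module, then descent of the NCCR property, and your Step~2 is at the same level of detail as the paper's last two sentences. The gap is in Step~1. That step carries all the content, and the paper does not reprove it but quotes it as Proposition~\ref{pr:rigidgraded} from \cite{KellerMurfetVdB}. Two parts of your argument there do not hold up.

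First, ``$\lambda_t^*M\cong M$ for all $t$ because a rigid module has no nontrivial deformations in a connected family'' is not an argument in this setting. $M$ has infinite length, so there is no finite-dimensional variety of module structures on $M$ in which $t\mapsto\lambda_t^*M$ is an algebraic curve. Moreover, $\Ext^1_{\widehat R}(M,M)=0$ does not imply that the truncations $M/\mathfrak m^nM$ are rigid over $\widehat R/\mathfrak m^n$, so the orbit-openness argument has nothing to act on.

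Second, your way around compatibility across truncation levels fails. There is no $n$ for which $M$ is determined, among finitely generated $\widehat R$-modules, by $M/\mathfrak m^nM$: the $\widehat R$-module $M/\mathfrak m^nM$ has the same $n$-truncation as $M$ without being isomorphic to it. A Maranda-type determinacy inside reflexive or MCM modules would not rescue this either, because a grading on a single truncation does not produce a grading on $M$. You would need a graded $R$-module $N$ with $N/\mathfrak m^nN\cong M/\mathfrak m^nM$ as graded modules and with $\widehat N$ in that class. Lifting a graded presentation of the truncation gives an $N$ with the right truncation but no control over $\widehat N$.

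The missing idea is to use the infinitesimal generator of the $\mathbb G_m$-action and to handle all truncations at once.
\begin{itemize}
\item Let $E$ be the Euler derivation of $\widehat R$. The rule $r\cdot(m_1,m_2)=(rm_1+E(r)m_2,\,rm_2)$ makes $M\oplus M$ an extension of $M$ by $M$.
\item Since $\Ext^1_{\widehat R}(M,M)=0$ this extension splits. A splitting is exactly a $k$-linear $\phi:M\to M$ with $\phi(rm)=E(r)m+r\phi(m)$.
\item Because $E(\mathfrak m^n)\subseteq\mathfrak m^n$, $\phi$ acts on every $M/\mathfrak m^nM$. The semisimple parts of these actions are compatible with the surjections $M/\mathfrak m^{n+1}M\to M/\mathfrak m^nM$, since Jordan decomposition is functorial, and they satisfy the same Leibniz rule.
\item Their eigenspace decompositions are therefore compatible gradings of all truncations simultaneously. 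To make them integer gradings, split off the summands belonging to the different cosets of $\ZZ$ in $k$ that contain eigenvalues, and shift each to integer degrees; this is where $\mathrm{char}\,k=0$ enters.
\item These gradings assemble into a finitely generated graded $N$ with $\widehat N\cong M$.
\end{itemize}
In your group-theoretic language, $\phi$ is a Lie algebra element of the stabiliser in $\mathrm{GL}(M/\mathfrak m^nM)\times\mathbb G_m$ lying over the generator of $\mathrm{Lie}\,\mathbb G_m$. This is what should replace the claim $\lambda_t^*M\cong M$. The coherence in $n$ has to come from a compatible system of this kind, not from a single truncation level.
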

 First we require some preparation.
 Let $X$ be a smooth projective variety and $\Lscr \in \Pic(X)$ an ample line bundle.  Let $R := R_X :=  \Gamma^*(X,\Lscr) =\bigoplus_{k \in \ZZ} \Gamma(X, \Lscr^{\otimes k})$ be the corresponding graded ring of sections.   It is known that $R$ is a finitely generated normal $k$-algebra.  The affine variety $Z := \Spec(R_X)$ is a cone over $X$  with the unique singularity at the origin.  For $M \in \gr(R_X)$ denote by $\Mscr = \widetilde{M} \in \coh(X)$ the corresponding coherent sheaf on $X$.  

\begin{proposition}\label{pr:strigid} Let $\Mscr \in \coh(X)$ be such that $\Mscr = \widetilde{M}$,  where $M$ is a reflexive $R$-module with $\End_R(M)$ Cohen-Macaulay.  Assume that  $\Mscr$ is a vector bundle.  Then 
\[
\Ext^i_X(\Mscr,\Mscr \otimes_X \Lscr^{\otimes k})	= 0
\]
 for any $0 < i < \dim(X)$, $k \in \ZZ$.  In particular,  if $\dim X \geq 2$,  $\Mscr$ is rigid. 
\end{proposition}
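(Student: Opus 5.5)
We compare the module $\End_R(M)$ with the graded sections of the sheaf $\Escr nd(\Mscr)$ on $X$, and then use local cohomology to translate the Cohen–Macaulay condition into vanishing of sheaf cohomology on $X$.

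Since $M$ is reflexive and $R$ is normal with $\dim R=\dim X+1\ge 2$, we have $M=\Gamma^*(X,\Mscr)=\bigoplus_k\Gamma(X,\Mscr\otimes\Lscr^{k})$. Indeed, reflexive modules over a normal ring are determined by their restriction to the punctured spectrum $U=\Spec R\setminus\{\mathfrak m\}$, and $U$ is a $\mathbb G_m$-bundle over $X$. The same argument applies to $\End_R(M)$, which is again reflexive, and gives
\[
\End_R(M)=\Gamma^*(X,\HHom(\Mscr,\Mscr))=\bigoplus_{k\in\ZZ}\Hom_X(\Mscr,\Mscr\otimes\Lscr^{k}).
\]
Write $N:=\End_R(M)$ and $\mathcal N:=\widetilde N=\HHom(\Mscr,\Mscr)$. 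Here we use that $\Mscr$ is a vector bundle, so that $\mathcal N$ is locally free and the sheafification of $\End_R(M)$ is $\HHom(\Mscr,\Mscr)$.

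Next we invoke the standard comparison between local cohomology at the vertex and graded sheaf cohomology on $X$. For $i\ge 2$ we have
\[
H^i_{\mathfrak m}(N)\cong\bigoplus_{k\in\ZZ}H^{i-1}(X,\mathcal N\otimes\Lscr^{k}).
\]
Moreover $H^0_{\mathfrak m}(N)=H^1_{\mathfrak m}(N)=0$, because $N=\Gamma^*(X,\mathcal N)$. This comparison follows from the Čech description $H^i(U,\widetilde N)=H^{i+1}_{\mathfrak m}(N)$ for $i\ge1$, together with the fact that cohomology on $U$ is the $\ZZ$-graded sum of cohomology on $X$, since $U\to X$ is affine with fibres $\mathbb G_m$.

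The Cohen–Macaulay hypothesis means that $H^i_{\mathfrak m}(N)=0$ for $i<\dim R=\dim X+1$. By the comparison this gives $H^{j}(X,\mathcal N\otimes\Lscr^k)=0$ for $1\le j\le \dim X-1$ and all $k$. Since $\Mscr$ is locally free,
\[
H^j(X,\HHom(\Mscr,\Mscr)\otimes\Lscr^k)=\Ext^j_X(\Mscr,\Mscr\otimes\Lscr^k),
\]
which is the claim. Taking $k=0$ and $j=1$, which lies in the allowed range as soon as $\dim X\ge 2$, shows that $\Mscr$ is rigid.

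The main obstacle is making the identifications precise: that $\widetilde{\End_R(M)}=\HHom(\Mscr,\Mscr)$, and that $\End_R(M)$ coincides with its full module of sections $\Gamma^*$ rather than differing from it in finitely many degrees. Both points rest on reflexivity and depth at least $2$ at the vertex. The rest is the standard graded local cohomology sequence $0\to H^0_{\mathfrak m}\to N\to\Gamma^*\to H^1_{\mathfrak m}\to0$ together with the isomorphisms $H^{i+1}_{\mathfrak m}(N)\cong H^i_*(X,\mathcal N)$ for $i\ge1$.
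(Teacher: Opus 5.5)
Your proof is correct and is essentially the paper's argument. You rewrite $\bigoplus_k\Ext^i_X(\Mscr,\Mscr\otimes\Lscr^{\otimes k})$ as the graded cohomology of $\HHom_X(\Mscr,\Mscr)$, identify it with $H^{i+1}_{\mathfrak m}$ of $\Gamma^*(X,\HHom_X(\Mscr,\Mscr))=\End_R(M)$, and kill it by the Cohen--Macaulay depth condition for $i+1<\dim R$; the only difference is that you spell out the reflexivity step giving $\End_R(M)\cong\Gamma^*(X,\HHom_X(\Mscr,\Mscr))$, which the paper uses without comment.
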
 

\begin{proof} 
For any $0 < i < \dim(X)$ we have 
\begin{align*}
\bigoplus_{k \in \ZZ} \Ext^i_X(\Mscr,\Mscr \otimes_X \Lscr^{\otimes k})	= \bigoplus_{k \in \ZZ} H^i(X, \HHom_X(\Mscr,\Mscr)\otimes_X \Lscr^{\otimes k})  \\
 = H^{i+1}_{R_{\geq 1}} (\Gamma^*(X,\HHom_X(\Mscr,\Mscr)) 
 = H^{i+1}_{R_{\geq 1}} (\Hom_R(M,M)) = 0,
 \end{align*}
where in the first equality we used that $\Mscr$ is a vector bundle and in the last equality we used $i+1 < \dim(X) + 1 = \dim(R)$ and the fact that $\Hom_R(M,M)$ is Cohen-Macaulay (also see \cite[Theorem A4.1]{Eisenbud} for the statement relating global and local cohomology).\footnote{Alternatively to prove Proposition \ref{pr:strigid} one may use that $\widetilde{\End_R(M)} \cong {\mathcal End}_X(\Mscr)$ is arithmetically Cohen–Macaulay by \cite[Proposition 1.2]{Beauville}.  We thank Alexander Kuznetsov for pointing this out to us.}
\end{proof} 

\begin{remark} If $\dim X = 2$,  the condition that $\Mscr$ is a vector bundle is automatic,  because in this case reflexive sheaves are locally free and $\widetilde{(-)}$ preserves reflexivity.  \end{remark} 

Now assume that $X$ is a del Pezzo surface and let $\Lscr = \omega_X^{-1}$.  Then $R$ is Gorenstein.  Let ${M \in \reff(R)}$ be a reflexive module such that $\Lambda := \End_R(M)$ is  an NCCR of $R$.  The following theorem implies that $M$ is rigid.   

\begin{theorem}[Iyama-Reiten, {\cite[Theorem 8.15]{IyamaReiten}}] \label{thm:IR}
  Let $R$ be a  three-dimensional normal Gorenstein domain with isolated singularities and let $M \in \reff(R)$ a reflexive module such that $\End_R(M)$ is Cohen-Macaulay.  Then $M$ is rigid,  i.e.  $\Ext_R^1(M,M) = 0$. 
\end{theorem}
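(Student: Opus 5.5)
The plan is to show directly that every extension $0\to M\to N\to M\to 0$ of $R$-modules splits. The splitting will be constructed first on the punctured spectrum, using the Cohen--Macaulay property of $\Lambda=\End_R(M)$, and then extended over the closed point using reflexivity. We may assume $R$ is local (or graded local) with maximal ideal $\mathfrak m$, since rigidity can be checked after localising at every maximal ideal. Let $U=\Spec R\setminus\{\mathfrak m\}$ be the punctured spectrum. Because $R$ has an isolated singularity and $M$ is reflexive, hence maximal Cohen--Macaulay in codimension $\le 2$, the sheaf $\Mscr:=\widetilde M|_U$ is locally free on the regular scheme $U$. The argument is the local analogue of Proposition \ref{pr:strigid}.

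\textbf{Step 1: the extension on $U$ splits.} Since $\Mscr$ is locally free, $\Ext^1_U(\Mscr,\Mscr)=H^1(U,\HHom(\Mscr,\Mscr))$. The sheaf $\HHom(\Mscr,\Mscr)$ is the restriction of $\widetilde{\Lambda}$ to $U$. The standard comparison between sheaf cohomology on $U$ and local cohomology gives $H^1(U,\widetilde\Lambda)\cong H^2_{\mathfrak m}(\Lambda)$. Since $\Lambda$ is Cohen--Macaulay of dimension $3$, it has depth $3$, so $H^2_{\mathfrak m}(\Lambda)=0$. Hence $\Ext^1_U(\Mscr,\Mscr)=0$, and the restricted sequence $0\to\Mscr\to\widetilde N|_U\to\Mscr\to 0$ splits. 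So there is a morphism $s_U:\Mscr\to \widetilde N|_U$ whose composite with $\widetilde N|_U\to\Mscr$ is the identity.

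\textbf{Step 2: $N$ is reflexive.} By the depth lemma applied to $0\to M\to N\to M\to 0$, $N$ has depth $\ge 2$ at every prime of height $\ge 2$, and it is torsion free. Over a normal domain this implies that $N$ is reflexive. Consequently $N=\Gamma(U,\widetilde N)$ and $M=\Gamma(U,\widetilde M)$, and for any reflexive $A,B$ we have $\Hom_R(A,B)=\Hom_U(\widetilde A|_U,\widetilde B|_U)$.

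\textbf{Step 3: extend the splitting.} By Step 2, $s_U$ is the restriction of a unique $R$-linear map $s:M\to N$. The composite $M\xrightarrow{s}N\to M$ agrees with $\mathrm{id}_M$ on $U$, and $\Hom_R(M,M)\to\Hom_U(\Mscr,\Mscr)$ is injective (indeed bijective). Hence the composite is the identity, so the extension splits and $\Ext^1_R(M,M)=0$.

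The only step with real content is Step 1, namely identifying $H^1(U,\widetilde\Lambda)$ with $H^2_{\mathfrak m}(\Lambda)$ and using depth $\Lambda=3$. This is exactly where the hypotheses "$\End_R(M)$ Cohen--Macaulay", "$\dim R=3$" and "isolated singularity" (which makes $\Mscr$ locally free on $U$) enter. Step 2 needs a little care: one checks the reflexivity of $N$ using Serre's criterion $(S_2)$ on a normal domain.
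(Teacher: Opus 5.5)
Your argument is correct, but it takes a different route from the paper. The paper stays within commutative algebra on $\Spec R$. It applies $\Hom_R(-,M)$ to a presentation $0\to K\to P\to M\to 0$ with $P$ projective, which gives $0\to\End_R(M)\to\Hom_R(P,M)\to\Hom_R(K,M)\to\Ext^1_R(M,M)\to 0$. It then runs the depth lemma twice, using depth $3$ for $\End_R(M)$ and depth $\ge 2$ for the reflexive modules $\Hom_R(P,M)$ and $\Hom_R(K,M)$, to conclude that $\Ext^1_R(M,M)$ has positive depth. Since this module has finite length (this is where the isolated singularity enters), it must vanish.

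You instead pass to the punctured spectrum. Local freeness of $\Mscr$ on $U$, which is the same input as the finite length, gives $\Ext^1_U(\Mscr,\Mscr)\cong H^1(U,\widetilde\Lambda)\cong H^2_{\mathfrak m}(\Lambda)=0$. Your Steps 2--3 then show that an extension split on $U$ is split, i.e.\ that restriction $\Ext^1_R(M,M)\to\Ext^1_U(\Mscr,\Mscr)$ is injective. Unwinding the paper's chase, with $L$ the cokernel of $\End_R(M)\to\Hom_R(P,M)$, gives $\Ext^1_R(M,M)\cong H^1_{\mathfrak m}(L)\hookrightarrow H^2_{\mathfrak m}(\End_R(M))$. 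So in effect both proofs embed $\Ext^1_R(M,M)$ into $H^2_{\mathfrak m}(\End_R(M))$ and then kill it using depth $3$.

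The paper's version is shorter and needs nothing beyond the depth lemma. Yours needs the comparison $H^i(U,-)\cong H^{i+1}_{\mathfrak m}(-)$ and the gluing step. In return it names the obstruction explicitly and makes the result visibly the local analogue of Proposition \ref{pr:strigid}.

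One small simplification: Step 2 does not need reflexivity of $N$ via Serre's criterion. The depth lemma immediately gives $\depth N\ge 2$ at $\mathfrak m$, and that alone makes $\Hom_R(M,N)\to\Hom_U(\Mscr,\widetilde N|_U)$ bijective.
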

For convenience of the reader we give a direct proof. 
\begin{proof} Consider an exact sequence of $R$-modules with $P$ finitely generated projective
\begin{align*}
0 \to N \to P \to M \to 0.
\end{align*} 
Applying $\Hom_R(-,M)$ we get an exact sequence
\begin{align*}
0 \to \End_R(M) \xrightarrow{\varphi} \Hom_R(P,M) \to \Hom_R(N, M) \to \Ext^1_R(M,M) \to 0.
\end{align*} 
By assumption the module $\End_R(M)$ is Cohen-Macaulay,  hence it has depth $3$.  Since $M$ is reflexive,  so are $\Hom_R(P,M)$ and $\Hom_R(N, M)$.  Therefore these modules have depth at least $2$.  Let $L = \coker(\varphi)$. Then 
\begin{align*}
\depth(L) \geq \min \{\depth(\End_R(M))-1,\depth ( \Hom_R(P,M)) \} \geq 2.
\end{align*} 
Assuming $\Ext^1_R(M,M)$ is non-zero,  the same argument then gives \[
\depth(\Ext^1_R(M,M)) \geq 1.
\]  But this is impossible since $\Ext^1_R(M,M)$ has finite length as an $R$-module.   Indeed,  this follows from the fact that over algebras of global dimension $2$ every reflexive module is projective,  hence $M_\pp$ is projective over $R_\pp$ for every $\pp$ in the regular part of $\Spec(R)$. 
\end{proof} 
It is well-known (see e.g. \cite[Exercise 8.8]{HuybrechtsFM}) that for $Z$ a smooth projective Calabi-Yau variety (i.e. $\omega_Z \simeq \Oscr_Z$),  its derived category $\Dscr^b(Z)$ admits no non-trivial semiorthogonal decompositions.  The reason is that $\Dscr^b(Z)$ has trivial Serre functor (up to shift),  so any semiorthogonal decomposition is automatically orthogonal.  We will require an analogous statement for $Y = \Tot(\omega_X)$ with $X$ a del Pezzo surface or,  more generally,  a smooth projective Fano variety.  As mentioned above,  $Y$ is a \emph{non-compact} Calabi-Yau variety,  hence its derived category does not admit a Serre functor unless we restrict to the subcategory of objects with compact support.  However,  $\Dscr^b(Y)$ has trivial \emph{Serre functor relative to $R_X$} which means that for every $F, G \in \Dscr^b(Y)$ there is a functorial isomorphism in $\Dscr^b(R_X)$:
\[
\RHom_{R_X}(\RHom_{Y}(F,G),R_X) \cong \RHom_Y(G,F)
\]
See e.g.  \cite[Lemma 4.13]{IyamaWemyss14}.  Hence we can conclude in the same manner:

\begin{lemma}\label{lm:noSOD} Let $X$ be a smooth projective Fano variety and $Y = \Tot(\omega_X)$.  Then $\Dscr^b(Y)$ admits no non-trivial $R$-linear semiorthogonal decompositions. \end{lemma}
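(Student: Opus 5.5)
We follow the analogy with the compact Calabi--Yau case just described. Suppose $\Dscr^b(Y)=\langle \mathcal A,\mathcal B\rangle$ is an $R$-linear semiorthogonal decomposition, so $\RHom_Y(B,A)=0$ for all $B\in\mathcal B$, $A\in\mathcal A$. The plan is to show first that $\RHom_Y(A,B)=0$ as well, so that the decomposition is orthogonal, and then that an orthogonal decomposition is trivial.

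\emph{Step 1 (orthogonality).} Apply the relative Serre duality isomorphism
\[
\RHom_{R_X}(\RHom_{Y}(A,B),R_X) \cong \RHom_Y(B,A)=0 .
\]
This gives $\RHom_{R_X}(N,R_X)=0$ for $N:=\RHom_Y(A,B)\in \Dscr^b(R_X)$. Here $N$ has finitely generated cohomology because $Y\r \Spec R_X$ is proper. Since $R_X$ is Gorenstein, $R_X$ is a dualizing complex up to shift, so $\RHom_{R_X}(-,R_X)$ is an anti-autoequivalence of $\Dscr^b(\mod R_X)$. Hence $N=0$, and the decomposition is fully orthogonal, i.e.\ $\Dscr^b(Y)=\mathcal A\oplus\mathcal B$.

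\emph{Step 2 (connectedness).} Decompose $\Oscr_Y=A_0\oplus B_0$ with $A_0\in\mathcal A$, $B_0\in\mathcal B$. Orthogonality gives
\[
\End(\Oscr_Y)=\Gamma(Y,\Oscr_Y)=\End(A_0)\times\End(B_0).
\]
But $\Gamma(Y,\Oscr_Y)=R_X$ is a domain and so has no non-trivial idempotents. Therefore one summand, say $B_0$, vanishes, and $\Oscr_Y\in\mathcal A$.

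To conclude, note that $\mathcal A$ is closed under tensoring by $\pi^*\omega_X^{-k}$. This holds because $\mathcal A$ is the orthogonal complement of $\mathcal B$ and the decomposition is orthogonal, and line bundles pulled back from $X$ give autoequivalences which preserve a unique orthogonal decomposition only up to permutation. Rather than rely on this, one can argue more directly: since $\omega_X^{-1}$ is ample and $Y\r X$ is affine, $\pi^*\omega_X^{-1}$ is ample on $Y$. The powers $\Oscr_Y(k)$ form an ample sequence and hence classically generate $\Dscr^b(Y)$ up to the usual resolution argument. Each $\Oscr_Y(k)$ has endomorphism ring $R_X$, a domain, so by the same idempotent argument each $\Oscr_Y(k)$ lies in $\mathcal A$ or in $\mathcal B$. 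Moreover $\Hom(\Oscr_Y,\Oscr_Y(k))=\Gamma(Y,\Oscr_Y(k))\neq 0$ for all $k$, since $\Oscr_Y(k)$ for $k<0$ still has sections along the fibres, the ring being $\bigoplus_m\Gamma(X,\omega^{-m-k})$. This forces every $\Oscr_Y(k)$ into $\mathcal A$. Then $\mathcal A$ contains a generating set, so $\mathcal B=0$.

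\emph{Main obstacle.} The main obstacle is Step 1's use of duality: we must check that a complex $N$ with $\RHom_{R}(N,R)=0$ vanishes, using finite generation of the cohomology of $N$. Generation in Step 2 is routine.
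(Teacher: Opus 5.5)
Your proof is correct and follows the same route as the paper. The trivial Serre functor relative to $R_X$ forces any semiorthogonal decomposition to be orthogonal, using that $\RHom_{R_X}(-,R_X)$ is a duality on $\Dscr^b(\mod R_X)$ because $R_X$ is Gorenstein; you then supply correctly the indecomposability step the paper leaves implicit, via $\Gamma(Y,\Oscr_Y)=R_X$ being a domain and generation by the $\pi^*\omega_X^{-k}$. Your aside claiming that $\mathcal A$ is closed under tensoring by $\pi^*\omega_X^{-k}$ is not justified as written, but nothing depends on it since you replace it by the direct argument.
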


\begin{remark} If $R$ is a normal Gorenstein domain and $\Lambda$ is an NCCR of $R$.  Then one can also show directly that $\Lambda$ has trivial Serre functor relative to $R$, i.e.  for every $M,N \in  \Dscr^b(\Lambda)$ there is a functorial isomorphism 
\[
\RHom_\Lambda(M,N) \cong \RHom_R(\RHom_\Lambda(N,M),R).
\]
Indeed,  it is sufficient to consider $M = N = \Lambda$ since $\Lambda$ has finite global dimension and thus $\Dscr^b(\Lambda) = \Perf(\Lambda)$.  Then the statement reduces to $\Lambda \cong \RHom_R(\Lambda,R)$ which in turn follows from the fact that $\Lambda$ is Cohen-Macaulay as an $R$-module.
 \end{remark} 
Now we are ready to prove Theorem \ref{thm:nccrfromFEC}:
\begin{proof}[Proof of Theorem \ref{thm:nccrfromFEC}]
  Assume that $ \Lambda = \End_{R_X}(M)$ is a basic graded NCCR of $R_X$. Denote $\Mscr = \widetilde{M} \in \coh(X)$  the corresponding coherent sheaf.
  Then Proposition \ref{pr:strigid} implies that $\Mscr$ is a rigid vector bundle. 

  Note also that $M$ and $\Gamma^*(X,\Mscr)$ are isomorphic in codimension one and reflexive,  hence $M \cong \Gamma^*(X,\Mscr)$. Moreover $M(t)=\Gamma^\ast(X,\Mscr\otimes \omega_X^{-t})$. Hence, since $M$ has no repeated indecomposable summands up to shifts, $\Mscr$ has no indecomposable summands related by tensoring with a a power of $\omega_X$.

  Recall that by Theorem \ref{thm:KOrigidsumex} every rigid vector bundle on a del Pezzo surface is a direct sum of exceptional vector bundles, i.e.
\[
\Mscr =  \Mscr_0 \oplus \dots \oplus \Mscr_{n-1},
\]
where $\Mscr_j \in \coh(X)$ are exceptional.
Let $\Mscr_j': = \Mscr_j \otimes \omega_X^{t_j}$ where $t_j \in \ZZ$
is such that $\mu(\Mscr_j') \in [0, K^2_X)$. Then the $(\Mscr'_j)_j$
are all distinct.  We further reorder the $(\Mscr_j')_j$ in such a way
that the slopes are non-decreasing, i.e.
$\mu(\Mscr_i') \leq \mu(\Mscr_j')$ if $i < j$.  Let
$\Mscr' = \bigoplus_{j=0}^{n-1} \Mscr_j'$.  Note that
$\Gamma^*(X,\Mscr_j) = \Gamma^*(X,\Mscr_j')(-t_j)$, so forgetting the
grading we can replace $\Mscr$ by $\Mscr'$.  Proposition
\ref{pr:strigid} implies that $\Mscr'$ is still rigid.  We claim that
$(\Mscr_0', \dots, \Mscr_{n-1}')$ is a very strong exceptional
collection on $X$.  Indeed, since $\Mscr'$ is rigid, we have
\[
\Ext^1_X(\Mscr_i', \Mscr_j') = \Ext^1_X(\Mscr_j', \Mscr_i') = 0 \text{ for any } 0 \leq i,j \leq n-1.
\]
Since exceptional bundles on del Pezzo surfaces are $\mu$-stable (Theorem \ref{thm:mustable}),  the condition on the order of slopes gives $\Hom_X(\Mscr_j', \Mscr_i') = 0$ whenever $i < j$.  It remains to check that $\Ext^2_X(\Mscr_j', \Mscr_i') = 0$ for $i < j$.  Indeed,  by Serre duality 
\begin{align*}\Ext^2_X(\Mscr_j', \Mscr_i')  = \Hom_X(\Mscr_i', \Mscr_j' \otimes \omega_X)^*
\end{align*}

But $\mu(\Mscr_j' \otimes \omega_X) = \mu(\Mscr_j') - K^2_X < \mu(\Mscr_i')$,  since $\mu(\Mscr_i'), \mu(\Mscr_j') \in [0,K^2_X)$.  Thus, $\Hom_X(\Mscr_i', \Mscr_j' \otimes \omega_X)$ vanishes and therefore so does $\Ext^2_X(\Mscr_j', \Mscr_i')$.  

The same argument as above shows that $\Ext^2_X(\Mscr', \Mscr'\otimes \omega_X^{-k}) = 0$ for all $k \geq 0$.   Hence,  combining this with Proposition \ref{pr:strigid} we have 
\begin{equation}\label{eq:tilting}\Ext^i_X(\Mscr', \Mscr'\otimes \omega_X^{-k}) = 0 \text{ for all }i > 0, k \geq 0.
\end{equation}

Now let $Y := \Tot(\omega_X)$ and denote by $\pi: Y \to X$ the canonical projection.  Then $\End_Y(\pi^*(\Mscr'))$ is a regrading of $\End_R(M) = \Lambda$ (since $\End_Y(\pi^*(\Mscr')) \to \End_R(M)$ is an isomorphism in codimension $1$ and both sides are reflexive) and $\pi^*(\Mscr')$ is a tilting object in $\Dscr^b(Y)$.  Indeed,  by (\ref{eq:tilting}) we have
\begin{align*} \Ext^i_Y(\pi^*(\Mscr'), \pi^*(\Mscr')) =  \Ext^i_X(\Mscr', \pi_*\pi^*(\Mscr')) \\ =  \bigoplus_{k \in \ZZ} \Ext^i_X(\Mscr', \Mscr' \otimes \omega_X^{-k}) = 0,
\end{align*}
 for all $i > 0$. 
 
Next we will show that $\pi^*(\Mscr')$ generates $\Dscr^b(Y)$.  To this aim we adopt an argument in \cite{SVdB}.  Denote by $\Dscr \cong \Dscr^b(\End_Y(\pi^*(\Mscr')))$ the thick subcategory of $\Dscr^b(Y)$ generated by $\pi^*(\Mscr')$.  Since $\End_Y(\pi^*(\Mscr'))$ has finite global dimension,  Lemma 1.1.1 in \cite{PVdB} implies that the subcategory $\Dscr$ is admissible in $\Dscr^b(Y)$.  But by Lemma \ref{lm:noSOD} $\Dscr^b(Y)$ does not admit any non-trivial $R$-linear semiorthogonal decompositions,  so we conclude that $\Dscr = \Dscr^b(Y)$.  Observe that this is equivalent to  $\pi^*(\Mscr')$ generating $\Dscr_{\QCoh}(Y)$ in the sense that the right orthogonal to $\pi^*(\Mscr')$  in $\Dscr_{\QCoh}(Y)$ is zero (due to a theorem by Ravenel and Neeman,  see \cite[Theorem 2.1.2]{BVdB}). 
 
Finally,  we show that the exceptional collection $(\Mscr_0', \dots, \Mscr_r')$ is full.  Suppose to the contrary that there exists an object $\Fscr \in \langle \Mscr_0', \dots, \Mscr_{n-1}' \rangle^\perp$.  For any $\Fscr_1,\Fscr_2 \in \Dscr^b(X)$ we have 
\[ \Hom_{\Dscr^b(X)}(\Fscr_1,\Fscr_2) \cong \Hom_{\Dscr^b(Y)}(\pi^*\Fscr_1,i_*\Fscr_2),
\]
where $i: X \to Y$ is the embedding of the zero section and we have used the adjunction $(\pi^*,\pi_*)$ together with the fact that $\pi\circ i = id_Y$.  Hence $i_* \Fscr \in \langle \pi^* \Mscr_1', \dots,  \pi^*\Mscr_{n-1}' \rangle^\perp = 0$ and therefore $\Fscr = 0$.\footnote{We thank Alexander Kuznetsov for drawing our attention to this simple and clear argument.  Alternatively,  one can argue as follows.  Note that it suffices to show that $n = \rk K_0 (X)$,  because on a del Pezzo surface every exceptional collection of maximal length is full \cite{KuleshovOrlov}.  Since it is already established that $\pi^*(\Mscr')$ is a tilting complex,  we have $\Dscr^b(\Lambda) \cong \Dscr^b(Y)$,  hence $\rk K_0(\Lambda) = \rk K_0( Y) = \rk K_0 (X)$.  On the other hand,  $K_0(\Lambda) = n$,  because both sides equal the number of non-isomorphic graded simple $\Lambda$-modules.}
\end{proof}
To prove Proposition \ref{prop:completion} we need the following fact relating rigidity to grading:
\begin{proposition}[Keller-Murfet-Van den Bergh, {\cite[Proposition 6.1]{KellerMurfetVdB}}]\label{pr:rigidgraded}
  Let $R=k+R_1+\dots$ be a Noetherian graded $k$-algebra with $char(k) = 0$ and $M$ a finitely generated $\widehat{R}$-module with $\Ext^1_{R}(M,M) = 0$.
  Then there exists a finitely generated graded $R-$module $N$ such that $M$ is the completion of $N$. 
\end{proposition}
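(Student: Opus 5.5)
The idea is to turn the grading of $R$ into an infinitesimal symmetry, namely the Euler derivation. We then use the vanishing of $\Ext^1_{\widehat R}(M,M)$ to lift this derivation to $M$, and recover the grading on $M$ from the eigenspace decomposition of the lifted operator.

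\textbf{Step 1: the Euler derivation.} Let $E:\widehat R\to\widehat R$ be the Euler derivation, $E(r)=(\deg r)\, r$ on homogeneous elements, extended continuously. Define an $\widehat R$-module $M_E$ with underlying space $M\oplus M$ and action $r\cdot(m,n)=(rm,\ rn+E(r)m)$. The Leibniz rule makes this a module, and it is an extension
\[
0\to M\to M_E\to M\to 0 .
\]
Since $\Ext^1_{\widehat R}(M,M)=0$, the extension splits. A splitting has the form $m\mapsto(m,D(m))$, where $D:M\to M$ is $k$-linear and satisfies $D(rm)=E(r)m+rD(m)$.

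\textbf{Step 2: finite level.} Since $E(\mathfrak m^j)\subset\mathfrak m^j$, the map $D$ preserves $\mathfrak m^jM$. Hence $D$ induces operators $D_j$ on the finite-dimensional spaces $M_j:=M/\mathfrak m^jM$, compatible with the projections. Take the Jordan decomposition $D_j=S_j+N_j$. The semisimple parts are compatible with the projections, because Jordan parts are polynomials in the operator. Moreover each $S_j$ still satisfies the twisted Leibniz rule with respect to the action of $R/\mathfrak m^j$. To see this, view the action map $R/\mathfrak m^j\otimes M_j\to M_j$ as intertwining $E\otimes 1+1\otimes D_j$ with $D_j$. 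Jordan decomposition is functorial for such intertwiners, and $E$ is already semisimple.

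\textbf{Step 3: integral eigenvalues.} This is where $\operatorname{char} k=0$ enters, so that $E$ has integer eigenvalues on $R$. The eigenvalues of $S_j$ are shifted by integers under multiplication by homogeneous elements. Split $M$ according to the cosets in $k/\ZZ$ of the eigenvalues on $M/\mathfrak m M$. This gives a direct sum decomposition into $\widehat R$-submodules. On each summand, subtract a constant from $S=\varprojlim S_j$, which preserves the Leibniz rule, so that all eigenvalues become integers. Now let $N$ be the span of all elements of $M$ that are eigenvectors of $S$ for integer eigenvalues $d$; to make this precise, take compatible eigenvectors in each $M_j$. Declaring such elements to have degree $d$ makes $N$ a graded $R$-module.

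\textbf{Step 4: finite generation and completion.} Lift a basis of $M/\mathfrak m M$ consisting of $S$-eigenvectors to homogeneous elements of $N$. By graded Nakayama these generate $N$. The map $\widehat N\to M$ is then surjective by complete Nakayama. It is injective because on each quotient $M_j$ the eigenspace decomposition is a direct sum decomposition.

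I expect the main obstacle to be Steps 2 and 3. One has to check carefully that passing to semisimple parts preserves the derivation identity and is compatible with the inverse limit. One also has to check that eigenvectors exist in the limit, which needs a Mittag-Leffler argument on the finite-dimensional eigenspaces of $M_j$, so that $N$ is large enough to be dense in $M$.
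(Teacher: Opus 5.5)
The paper gives no proof of this statement; it is quoted from Keller--Murfet--Van den Bergh, so there is no internal argument to compare with. Judged on its own, your Euler-derivation proof is correct, and $\operatorname{char}k=0$ enters exactly where you say. The obstacles you flag are mostly easy. The projection $M_{j+1}\to M_j$ intertwines the semisimple operators $S_{j+1}$ and $S_j$, so it maps the eigenspace $M_{j+1,d}$ \emph{onto} $M_{j,d}$. Hence the inverse systems of eigenspaces have surjective transition maps, and $N_d:=\varprojlim_j M_{j,d}$ surjects onto every $M_{j,d}$. Nothing more (no further Mittag-Leffler argument) is needed for density.

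The one step that needs a small repair is finite generation. Graded Nakayama applied to $N$ requires that the lifted eigenvectors $g_i\in N_{d_i}$ span $N/\mathfrak m N$. In effect this means $N\cap\mathfrak mM=\mathfrak mN$, which you do not know at that point. Argue degreewise instead.

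\begin{itemize}
\item Let $d_{\min}$ be the smallest eigenvalue on $M/\mathfrak mM$. By Nakayama over $R/\mathfrak m^{j+1}$, the images of the $g_i$ generate $M_{j+1}$.
\item Hence the kernel $\mathfrak m^jM_{j+1}$ of $M_{j+1}\to M_j$ is spanned by the eigenvectors $ag_i$ with $a\in\mathfrak m^j$ homogeneous. Since $\mathfrak m^j\subset R_{\ge j}$, all eigenvalues on this kernel are $\ge j+d_{\min}$.
\item Consequently $N_d=0$ for $d<d_{\min}$, and $N_d\to M_{j,d}$ is an isomorphism for $j>d-d_{\min}$.
\item $M_{j,d}$ is spanned by the images of the $ag_i$ with $a\in R$ homogeneous of degree $d-d_i$. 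So the same elements span $N_d$, and $N=\sum_i Rg_i$, with each $N_d$ finite dimensional.
\end{itemize}

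With this in place, your surjectivity and injectivity arguments for $\widehat N\to M$ go through as written.

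Finally, you use that $k$ is algebraically closed, both for the Jordan decomposition and so that the eigenvalues lie in $k$. This is the paper's standing convention, so it is harmless here. Over a non-closed field, however, ``subtract a constant on each summand'' would have to become ``subtract a suitable $\widehat R$-linear endomorphism''. For example, take $M=\CC[[x]]$ over $\RR[[x]]$ with $D=x\frac{d}{dx}+i$; there one subtracts multiplication by $i$.
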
 
\begin{proof}[Proof of Proposition \ref{prop:completion}]
  Let  $ \Lambda = \End_{\widehat{R}}(M)$ be an NCCR of $\widehat{R}$. Then by Theorem \ref{thm:IR} $M$ is rigid.
  Let $N\in \gr(R)$ be the corresponding graded module
given by Proposition \ref{pr:rigidgraded}. Since completion preserves depth with respect to the irrelevant ideal in $R$ we obtain that $\Gamma=\End_{R}(N)$
is a graded NCCR of $R$. Clearly $\Lambda$ is the completion of $\Gamma$.
\end{proof}
\section{Mutations of NCCRs}
\label{sec:mutnccr}
\begin{definition}\label{def:NCCRmut} Let $R$ be a normal Gorenstein domain and $M = N \oplus L$ an $R$-module.  Let $f:N' \to M$ be a right $\add (N)$-approximation of $M$,  i.e. $N' \in \add (N)$ and the induced map 
\[
\Hom_R(N,N') \xrightarrow{} \Hom_R(N,M)
\]
 is surjective.  Then we define \emph{the right mutation} of $M$ with respect to $N$ as 
\[
\mu^+_N(M) := N\oplus \ker(f).
\]
\emph{Left mutations} are defined dually by 
\[
\mu^-_N(M) := N\oplus \ker(g)^*,
\]
where $g: N'' \to M^*$ is a right $\add (N^*)$-approximation of $M^*$. 
  \end{definition} 
\begin{remark} It is clear that mutations can be carried out in the graded context (see \S\ref{sec:NCCRs}).
  \end{remark}
  
 \begin{remark} In general,  one should keep in mind that mutations are defined only up to additive closure.  For instance, left and right mutations $\mu^{\pm}_N$ are mutually inverse only up to taking $\add(-)$.  However,  if we are in the complete or graded context,  mutations are unique up to isomorphism if we can consider approximations $f: N' \to M$ for which no non-trivial direct summands of $N'$ are contained in $\ker(f)$.  \end{remark} 
  
\begin{remark} If we are in the complete or graded context and if we fix a decomposition of $M = \oplus_{i=1}^n M_i$ into indecomposable direct summands,  we can write $\mu^{\pm 1}_j(M)$ for $\mu^{\pm 1}_{\oplus_{i \neq j} M_i}(M) = \bigoplus_{i \neq j} M_i \oplus M_j'$.  Note that this again gives a fixed enumeration of indecomposable direct summands of $\mu^{\pm 1}_j(M)$,  hence we can consider iterated mutations and talk of the ``$j$'th mutation'' $\mu_j^{\pm 1}$ at every step.   \end{remark} 
  \begin{theorem}[{\cite[Theorems 1.22, 1.23, 1.25]{IyamaWemyss14_2}}]\label{thm:IWmutations} Let $R, M$ and $N$ be as in Definition \ref{def:NCCRmut}.  Assume that $\Lambda := \End_R(M)$ is an NCCR of $R$. Then 
  \begin{enumerate}
  \item  $\End_R(\mu_N^+(M))$ and $\End_R(\mu_N^-(M))$ are also NCCRs of $R$. 
  \item The algebras $\Lambda$,  $\End_R(\mu_N^+(M))$ and $\End_R(\mu_N^-(M))$ are derived equivalent. 
  \item Assume in addition that we are in the complete or graded context and $\dim R = 3$.  Let $\Lambda = \End_R(M)$ be an NCCR of $R$, where $M = \bigoplus_{i=1}^n M_i$ with $M_i$ indecomposable and pairwise non-isomorphic (i.e.  $\Lambda$ is basic according to Definition \ref{def:basic}).  Then $\mu_i^+(M) \cong \mu_i^-(M)$ (up to a shift in grading of the components in the graded context).  In this case we can write $\mu_i: = \mu_i^+ = \mu_i^-$.  
  \end{enumerate}
  \end{theorem}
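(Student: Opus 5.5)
This result is quoted from Iyama--Wemyss, so here I only reconstruct how one would prove it; the Iyama--Wemyss proof may differ in detail. The plan is to realise the passage from $M$ to $\mu^+_N(M)$ as a tilting equivalence over $\Lambda=\End_R(M)$.

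First I would set up the exchange sequence. Write $M=N\oplus L$ and take a right $\add(N)$-approximation $f\colon N'\to L$; the full approximation of $M$ differs from it only by summands of $N$. Put $K=\ker f$, so that
\[ 0\to K\to N'\xrightarrow{f} L \]
is exact. Applying $\Hom_R(M,-)$ gives an exact sequence of $\Lambda$-modules
\[ 0\to \Hom_R(M,K)\to \Hom_R(M,N')\to \Hom_R(M,L). \]
Its image lies in the part cut out by the approximation condition. Set $T:=\Hom_R(M,\mu^+_N(M))=\Hom_R(M,N)\oplus\Hom_R(M,K)$. The key claims are:
\begin{enumerate}
\item[(a)] $\operatorname{pd}_\Lambda T\le 1$;
\item[(b)] $\Ext^1_\Lambda(T,T)=0$;
\item[(c)] $\Lambda$ lies in the thick subcategory generated by $T$.
\end{enumerate}
Claim (c) is the easiest: $\Hom_R(M,N)$ is already a summand of both $\Lambda$ and $T$, and the cokernel of $\Hom_R(M,N')\to\Hom_R(M,L)$ has to be controlled. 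For (a) and (b) I would use depth arguments in the style of the proof of Theorem~\ref{thm:IR}. Namely, $\Lambda$ is CM of full dimension $d$, and $\Hom$ between reflexive modules has depth $\ge 2$. Since $\Lambda$ has finite global dimension, the Auslander--Buchsbaum formula for $\Lambda$-modules, which are CM over $R$ precisely when they are projective, lets one bound projective dimension by depth.

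Next I would identify $\End_\Lambda(T)$ with $\End_R(\mu^+_N(M))$. Both sides are reflexive $R$-modules, so it suffices to compare them at height-one primes $\pp$. There $R_\pp$ is a DVR, every reflexive module is free, and $\Hom_R(M,-)$ is a Morita equivalence. This gives (2) for the right mutation, and dually for the left mutation via $(-)^*$. Part (1) then follows: finite global dimension is inherited from $\Lambda$ through the derived equivalence, since for module-finite $R$-algebras finite global dimension is a derived invariant. The CM property of $\End_R(\mu^\pm_N(M))$ comes from the relative Serre functor: derived equivalences given by tilting modules between $R$-orders preserve the condition $\RHom_R(\Gamma,R)\cong\Gamma$, as in the remark following Lemma~\ref{lm:noSOD}. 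I expect this step, showing that $T$ is genuinely tilting and that the CM condition transfers, to be the main obstacle. The depth bookkeeping is where one uses that $R$ is Gorenstein and $\Lambda$ is CM.

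For (3) I would take $d=3$, work in the complete or graded context, and mutate at the summand $M_i$. I would use minimal approximations, so the mutation is unique up to isomorphism, and consider the simple $\Lambda$-module $S_i$. Because $\Lambda$ is an NCCR of a $3$-dimensional Gorenstein ring, it is $3$-Calabi--Yau relative to $R$. Hence the minimal projective resolution of $S_i$ has the self-dual shape
\[ 0\to P_i\to Q_1\to Q_0\to P_i\to S_i\to 0,\qquad Q_0,Q_1\in\add\Bigl(\bigoplus_{j\neq i}P_j\Bigr). \]
Reading this resolution through $\Hom_R(M,-)$, the part $Q_0\to P_i$ is the minimal right approximation $N'\to M_i$. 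Its kernel is $\Hom_R(M,M_i^+)$, and by the resolution $M_i^+$ is the cokernel of $M_i\to Q_1$. The dual of $Q_1\to Q_0$, obtained by applying $\Hom_\Lambda(-,\Lambda)$ and using the CY duality, is the minimal approximation defining $\mu^-_i$. This identifies $\mu_i^+(M)$ with $\mu_i^-(M)$, up to the degree shift coming from the degree of the Calabi--Yau pairing in the graded case.
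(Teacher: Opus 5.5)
The paper gives no proof of this statement; it is quoted from \cite{IyamaWemyss14_2}. Taken on its own terms, your sketch for (1)--(2) has a genuine gap: the tilting module is on the wrong side of the exchange sequence.

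\textbf{Parts (1)--(2).} Apply $\Hom_R(M,-)$ to $0\to K\to N'\to L$. This shows that $\Hom_R(M,K)$ is a second syzygy of $C=\coker(\Hom_R(M,N')\to\Hom_R(M,L))$. So the Auslander--Buchsbaum argument only gives $\operatorname{pd}_\Lambda T\le d-2$. Claim (a) therefore holds for $d=3$, but (1)--(2) are stated in arbitrary dimension, and for $d\ge 4$ it fails.

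In fact $T$ need not be tilting at all. Take $S=k[[x,y,z,w]]$ and $R=S^G$ with $G=\{\pm1\}$, so $d=4$. Let $M=S=R\oplus S_-$, so that $\Lambda\cong S*G$ is an NCCR, and take $N=R$.
\begin{itemize}
\item The approximation is $f\colon R^4\to S_-$, given by $(x,y,z,w)$.
\item There are no loops at the vertex of $S_-$, so $\Hom_R(M,K)\cong\Omega^2S_1$, the second syzygy of the simple $S_1$ at that vertex.
\item The resolution of $S_1$ is the Koszul complex $0\to P_1\to P_0^4\to P_1^6\to P_0^4\to P_1\to S_1\to 0$, so $\operatorname{pd}T=2$.
\item By $4$-Calabi--Yau duality ($D=\Hom_k(-,k)$), $\Ext^2_\Lambda(\Omega^2S_1,\Omega^2S_1)\cong\Ext^4_\Lambda(S_1,\Omega^2S_1)\cong D\Hom_\Lambda(\Omega^2S_1,S_1)\cong k^6$.
\end{itemize}
This is a nonzero summand of $\Ext^2_\Lambda(T,T)$. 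Yet $\End_R(R\oplus K)$ is an NCCR derived equivalent to $\Lambda$.

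Step (c), which you call the easiest, is exactly where this shows: the kernel-side sequence presents $\Hom_R(M,L)$ only modulo $C$, and nothing controls $C$. Even for $d=3$, (b) is circular. The depth argument of Theorem~\ref{thm:IR} yields $\Ext^1_\Lambda(T,T)=0$ only once $\End_\Lambda(T)$ is known to be CM, and you planned to deduce that afterwards from the derived equivalence.

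The working route uses the dualised exchange sequence instead. Dualising a right $\add(N^*)$-approximation of $M^*$ gives a left $\add(N)$-approximation $M\to N_1$, and this produces the tilting module for $\mu^-_N$. One then passes to $\mu^+_N$ using that $\mu^\pm_N$ are mutually inverse up to $\add$; equivalently, $\Hom_R(\mu^+_N(M),M)$ is the tilting module over $\End_R(\mu^+_N(M))$. The example confirms this. The sequence $0\to K\to R^4\to S_-\to 0$ stays exact under $\Hom_R(R\oplus K,-)$ because $\Ext^1_R(K,K)\cong\Ext^1_R(S_-,S_-)=0$. This makes $\Hom_R(R\oplus K,M)$ a tilting module of projective dimension $\le 1$.

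\textbf{Part (3).} You resolve the simple $S_i$, but $\Hom_R(M,-)$ applied to the exchange sequence begins the resolution of $\Lambda_i=\Lambda e_i/\Lambda(1-e_i)\Lambda e_i$. The reason is that the image of $\Hom_R(M,N')\to\Hom_R(M,M_i)$ is exactly the maps factoring through $\add(M/M_i)$.
\begin{itemize}
\item $\Lambda_i=S_i$ precisely when the quiver has no loop at $i$.
\item If there is a loop, $P_i$ occurs in your $Q_0$ and $Q_1$, and $Q_0\to P_i$ is not the approximation.
\item The statement assumes neither absence of loops nor an isolated singularity. Without the latter, $\Lambda_i$ can be infinite-dimensional.
\end{itemize}
So one must resolve $\Lambda_i$ itself. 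When $\Lambda_i$ is finite-dimensional, $3$-Calabi--Yau duality gives $\Ext^j_\Lambda(\Lambda_i,\Lambda)=0$ for $j\neq 3$ and $\Ext^3_\Lambda(\Lambda_i,\Lambda)\cong D\Lambda_i$. One still has to match the dual resolution with the exchange sequence defining $\mu^-_i$. The infinite-dimensional case needs a separate argument. This artinian/non-artinian dichotomy for $\Lambda_i$ is the distinction Iyama--Wemyss make. Also, the right approximation defining $\mu^-_i$ is the dual of $P_i\to Q_1$, not of $Q_1\to Q_0$.

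In this paper's setting the singularity is isolated and the quivers have no loops (Remark~\ref{rem:no_loops_or_two_cycles}). There $\Lambda_i=S_i$ and your argument for (3) essentially works, but it does not prove the statement as quoted.
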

  
 In the case we will focus on, there is an alternative way to view mutations of NCCRs, namely as mutations of \emph{quivers with potentials} in the sense of Derksen, Weyman and Zelevinsky \cite{DWZ}.

\begin{definition}\label{def:DWZmutation}
Let $(Q,W)$ be a quiver with potential 
 and let $i\in Q_0$ be a vertex which is not contained in a loop or oriented 2-cycle.  
Define $\tilde{\mu}_i(Q,W)=(\widetilde{Q},\widetilde{W})$ to be obtained from $(Q,W)$ as follows:
\begin{enumerate}
\item For every pair of arrows $\beta,\alpha$ with $t(\beta)=i=s(\alpha)$, add a new arrow
\[
[\alpha\beta]: s(\beta)\to t(\alpha).
\]
\item Replace each arrow $\gamma$ with $s(\gamma)=i$ or $t(\gamma)=i$ by an arrow $\gamma^*$ in the opposite direction.
\item Define $\widetilde{W}$ to be the sum of
\begin{itemize}
\item the potential obtained from $W$ by replacing every subpath $\alpha\beta$ with $t(\beta)=i=s(\alpha)$ by the corresponding arrow $[\alpha\beta]$, and
\item the additional term $\sum_{\alpha,\beta} [\alpha\beta]\beta^*\alpha^*$, where the sum ranges over all pairs $(\beta,\alpha)$ with $t(\beta)=i=s(\alpha)$.
\end{itemize}
\end{enumerate}
\begin{remark} In the graded context we need to assign a degree to $\alpha^\ast$, $\beta^\ast$ such that $\deg \alpha +\deg \beta+\deg \alpha^\ast +\deg \beta^\ast=\deg W$.
The precise rule is given in \cite[\S2]{dTdVVdB}.
\end{remark}
If $W$ is unspecified then $\widetilde{Q}$ is still well-defined and we write
$
\tilde{\mu}_iQ=\widetilde{Q}
$.
Let $(Q,W)$ be a quiver with potential such that $Q$ has no loops or oriented $2$-cycles and assume that we are in the complete or graded context.
We let
\[
\mu_i(Q,W): =(\widetilde{Q},\widetilde{W})_{\redd}:=(\widetilde{Q}_{red},\widetilde{W}_{\redd}),
\]
to be the \emph{reduced part} of $(\widetilde{Q},\widetilde{W})$ (see \cite[Theorem 4.6]{DWZ}; in particular, $\widetilde{W}_{\redd}$ contains no 2-cycles). We write 
\[
\mu_i\Jscr(Q,W):=\Jscr(\mu_i(Q,W)).
\]
We will also define $\mu_iQ$ as the quiver obtained from $\widetilde{\mu}_i{Q}$ by deleting
all $2$-cycles.
\end{definition}

\begin{theorem}[Keller-Yang, {\cite[Theorem 3.2]{KellerYang}}]\label{thm:KYIW=DWZ}
Let $\Lambda$ be a 3-Calabi-Yau algebra with presentation $\Lambda\cong \Jscr(Q,W)$ with $Q$ having no loops or $2$-cycles.  
For $j\in Q_0$ denote by $P_j = \Lambda e_j$ the corresponding indecomposable projective module.  Then there is an algebra isomorphism
\[
\End(\mu_{\oplus_{j\neq i}{P_j}}(\Lambda))^\circ \cong \mu_i\Jscr(Q,W)=\Jscr(\mu_i(Q,W)).
\]
\end{theorem}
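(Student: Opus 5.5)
The plan is to identify both sides with endomorphism algebras of silting/tilting objects in $\Dscr^b(\Lambda)$ and compare them through the Iyama–Wemyss exchange sequence.

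Write $T=\Lambda=\bigoplus_j P_j$ and $T'=\mu_{\oplus_{j\neq i}P_j}(\Lambda)=\bigoplus_{j\neq i}P_j\oplus P_i'$, where $P_i'$ is defined by the triangle
\[
P_i\xrightarrow{f}\bigoplus_{a:i\to j}P_{j}\to P_i'\to P_i[1].
\]
Here $f$ is the minimal left $\add(\bigoplus_{j\neq i}P_j)$-approximation of $P_i$. Because $Q$ has no loops, this approximation is given by the arrows of $Q$ incident to $i$. In the graded or complete context with $\Lambda$ $3$-CY, Theorem \ref{thm:IWmutations} shows that the mutated module is again a summand of a tilting object. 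Concretely, $\End(T')$ is identified with the mutated NCCR, up to the $(-)^\circ$ convention coming from left versus right modules.

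On the quiver side, the plan follows Keller–Yang's dg approach. Let $\Gamma=\Gamma(Q,W)$ be the complete Ginzburg dg algebra, which is quasi-isomorphic to $\Lambda$ because $\Lambda$ is $3$-CY. Set $(Q',W')=\tilde{\mu}_i(Q,W)$ and let $\Gamma'=\Gamma(Q',W')$. I will construct a dg $\Gamma'$-$\Gamma$ functor $F:\Dscr(\Gamma')\to\Dscr(\Gamma)$ that sends $e_j\Gamma'$ to $e_j\Gamma$ for $j\neq i$, and sends $e_i\Gamma'$ to the cone of $e_i\Gamma\to\bigoplus_{a}e_{t(a)}\Gamma$. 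This is done by writing explicit images of the generators: each new arrow $[\alpha\beta]$ goes to the composite $\alpha\beta$, each reversed arrow $\alpha^*,\beta^*$ goes to the corresponding component of the cone, and the degree $-1$ and $-2$ generators go to suitable homotopies. Checking that $F$ commutes with the differential reduces to the identity $\partial_{[\alpha\beta]}\widetilde W=\ldots$ and the cyclic derivative relations. This is a finite but delicate computation, and I expect it to be the main obstacle, mainly because of signs and the degree $-2$ loops $t_j$.

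Next I show that $F$ is an equivalence. The image of $\Gamma'$ is $T'$, which is a compact generator, since $P_i$ lies in the thick closure of $T'$ by the triangle. It therefore suffices to check that $F$ induces a quasi-isomorphism $\Gamma'\to\RHom(T',T')$. Both sides are concentrated in degrees $\le 0$, and it is enough to match them on simples or to compare their cohomology in degree $0$ together with a dimension count.

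Finally, take $H^0$. We have $H^0(\Gamma')=\Jscr(\tilde\mu_i(Q,W))$, and by DWZ splitting (\cite[Theorem 4.6]{DWZ}) this is isomorphic to $\Jscr(\mu_i(Q,W))$, because the trivial part only contributes 2-cycles that are eliminated. On the other side, $H^0\RHom(T',T')=\End(T')$ because $T'$ is tilting, so its higher negative self-extensions vanish: mutation preserves the NCCR property by Theorem \ref{thm:IWmutations}. The op appears from identifying right modules over $\End(T')$ with left modules, and this yields the stated isomorphism.
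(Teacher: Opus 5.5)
The paper does not prove this statement; it quotes Keller--Yang, and your outline follows that source's strategy: Ginzburg algebras, the cone $T'$ of the approximation, a dg functor $F$ sending $e_j\Gamma'$ to $T'_j$, then $H^0$ and DWZ reduction. The surrounding bookkeeping is fine:
\begin{itemize}
\item $T'$ is a compact generator;
\item $H^0(\Gamma')=\Jscr(\tilde\mu_i(Q,W))\cong\Jscr(\mu_i(Q,W))$ by the splitting theorem;
\item the cone of the minimal left approximation agrees with the Iyama--Wemyss mutation.
\end{itemize}
One convention point: with the paper's conventions (left modules, functional composition), $\Hom_\Lambda(\Lambda e_i,\Lambda e_j)=e_i\Lambda e_j$ consists of paths from $j$ to $i$. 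So the approximation is given by the arrows \emph{ending} at $i$, which is harmless given the $(-)^\circ$. However, the step that carries the entire content of the theorem is not established.

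What you need is that $\varphi\colon\Gamma'\to\RHom_\Gamma(T',T')$ is a quasi-isomorphism. At the very least you need $H^0(\varphi)\colon\Jscr(\tilde\mu_i(Q,W))\to\End(T')$ to be bijective, since that is all you use at the end. Your suggested shortcut, comparing degree-$0$ cohomology together with a dimension count, does not work, for two reasons.
\begin{itemize}
\item The algebras are infinite-dimensional (a nonzero $3$-CY algebra always is), so there is nothing to count.
\item More seriously, $T'$ is tilting over $\Gamma\simeq\Lambda$, so $\RHom_\Gamma(T',T')$ is concentrated in degree $0$. A quasi-isomorphism would therefore force $H^{<0}(\Gamma')=0$, i.e.\ that the Ginzburg algebra of the mutated potential is again concentrated in degree $0$. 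That is a \emph{consequence} of the theorem, not an available input, so any argument that looks only at degree $0$ is circular.
\end{itemize}
Relatedly, $H^0\RHom(T',T')=\End(T')$ holds for any object; tilting is irrelevant at that point.

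Your alternative, ``match them on simples,'' points in the right direction but is not an argument as written. A viable version would work with complete (pseudo-compact) dg algebras concentrated in non-positive degrees. It would determine the images of the simples under $F$; for instance $F(S'_i)\cong S_i[1]$, which uses that there is no loop at $i$. It would then use a Koszul-duality argument to turn this control over the simples into the statement that $\varphi$ is a quasi-isomorphism. Without some such input, your plan yields only an algebra homomorphism $\Jscr(\mu_i(Q,W))\to\End(T')^\circ$, not an isomorphism.
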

In particular, in this setting the mutation of NCCRs defined in Definition \ref{def:NCCRmut} agrees with DWZ mutations of quivers with potential.

Since the quiver $Q$ for $B(\EE)$ contains no loops or oriented two-cycles (see Remark \ref{rem:no_loops_or_two_cycles}) Theorem \ref{thm:KYIW=DWZ} allows us to describe mutations of the NCCR $B(\EE)$ by the combinatorial DWZ mutation procedure in Definition \ref{def:DWZmutation}.  For this reason in our situation we will often refer to mutations of NCCRs as \emph{quiver mutations}. 
This will also help us distinguish these mutations from mutations of exceptional collections on $X$ inducing the braid group action.  

Note that it is not guaranteed in general that the quiver of $\mu_i\Jscr(Q,W)$ again contains no loops or two-cycles (there is only such a guarantee for $W$).  However,  Bridgeland and Stern \cite[Theorem 1.7]{BridgelandStern} showed that this is indeed the case for mutations of $3$-Calabi--Yau algebras obtained from very strong exceptional collections on del Pezzo surfaces:
  
\begin{theorem}[Bridgeland-Stern, {\cite[Theorem 1.7]{BridgelandStern}}, see also \cite{dTdVVdB} for another proof]\label{thm:BS} Let $B(\EE) = \Jscr(Q,W)$ be as above with $\EE$ a very strong exceptional collection on a del Pezzo surface $X$.  Let $i \in Q_0$  be a vertex and $\mu_i B(\EE) = \Jscr(Q',W')$ the algebra obtained by mutation at $i$.  Then there exists a very strong exceptional collection $\EE'$ on $X$ such that $\mu_i B(\EE) = B(\EE')$.  In particular, $Q'$ does not contain loops or two-cycles. 
\end{theorem}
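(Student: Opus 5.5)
The plan is to combine Theorem \ref{thm:nccrfromFEC} with the mutation theory of Iyama--Wemyss, so that the very strong collection is read off from the mutated module itself. Write $B(\EE)=\End_{R_X}(M)$ with $M=\Gamma^*(X,\Escr)=\bigoplus_j M_j$ and $M_j=\Gamma^*(X,E_j)$. This is a basic graded NCCR, since the $E_j$ have slopes in a half-open interval of length $K_X^2$ and are pairwise non-isomorphic, hence not related by twists by $\omega_X$. By Theorem \ref{thm:KYIW=DWZ} the Jacobi algebra $\mu_i B(\EE)=\Jscr(\mu_i(Q,W))$ is, up to taking opposites and the usual identification, $\End_{R_X}(\mu_i(M))$, where $\mu_i(M)=\bigoplus_{j\neq i}M_j\oplus M_i'$ is the Iyama--Wemyss mutation. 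By Theorem \ref{thm:IWmutations}(1) this is again an NCCR, and in the graded context it is basic. Indeed $M_i'\not\cong M_j(t)$ for $j\ne i$, since otherwise the mutation would drop a summand, contradicting that $K_0$ ranks are preserved under the derived equivalence of Theorem \ref{thm:IWmutations}(2).

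Next we apply Theorem \ref{thm:nccrfromFEC} to $\End_{R_X}(\mu_i(M))$. This produces a very strong exceptional collection $\EE'$, with the same number $n$ of objects, such that $\End(\mu_i M)$ is a regrading of $B(\EE')=\End_Y(\pi^*\Escr')$. Concretely, $\EE'$ consists of the sheaves $\widetilde{M_j}\otimes\omega_X^{t_j}$ for $j\neq i$ together with $\widetilde{M_i'}\otimes\omega_X^{t}$, reordered by slope. Regrading does not change the ungraded algebra, so $\mu_iB(\EE)\cong B(\EE')$ as ungraded algebras, and as graded algebras up to regrading. We then deal with the opposite algebra. Either we observe that $B(\EE)^{\circ}\cong B(\EE^\vee)$ for the dual collection $(E_{n-1}^\vee,\dots,E_0^\vee)$, which is again very strong by Remark \ref{rem:verystrongslopes} since dualising negates slopes, or we apply the argument to $\End(\mu_i M)^\circ=\End(\mu_i(M)^*)$. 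In both cases we land on some very strong collection.

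The final claim follows from Remark \ref{rem:no_loops_or_two_cycles}. The quiver of $B(\EE')$ has no loops or two-cycles, and the quiver of a basic algebra is determined by the algebra as its $\Ext^1$-graph (Remark \ref{rem:Ext1graph}). So $Q'$, the quiver of $\mu_iB(\EE)$, equals the quiver of $B(\EE')$ and has no loops or two-cycles.

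The main obstacle is the compatibility in the first step. The DWZ reduced mutation $\mu_i(Q,W)$ has to be identified with $\End(\mu_i M)$ via Keller--Yang, and this needs $Q$ to have no loops or two-cycles at $i$. That holds by Remark \ref{rem:no_loops_or_two_cycles}. We also need $\mu_i^+=\mu_i^-$, which is Theorem \ref{thm:IWmutations}(3), so that the graded module mutation is well defined. I expect the bookkeeping of gradings, that is, checking that the new summand is graded and that degrees match up to regrading, to be routine.
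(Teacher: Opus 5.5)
Your proposal is correct and follows the paper's proof, which consists of exactly your two main steps. The mutation, identified with the Iyama--Wemyss mutation via Keller--Yang, is again an NCCR by Theorem~\ref{thm:IWmutations}, so Theorem~\ref{thm:nccrfromFEC} realises it as $B(\EE')$; the paper leaves implicit the bookkeeping you add (basicness, the opposite algebra via $\EE^\vee$ or $\mu_i(M)^*$, and reading $Q'$ off as the $\Ext^1$-quiver of $B(\EE')$).

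One small slip: the slopes of a very strong collection lie in the closed interval $[\mu(E_0),\mu(E_0)+K_X^2]$, and the right endpoint can be attained. So the fact that no two $E_j$ differ by an $\omega_X$-twist should instead be deduced from exceptionality, using $\Ext^2(E_k\otimes\omega_X^{-1},E_k)\cong\Hom(E_k,E_k)^*\neq 0$.
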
 
This result may be obtained from Theorem \ref{thm:nccrfromFEC}: 
\begin{proof}[Proof of Theorem \ref{thm:BS}]
  By Theorem \ref{thm:IWmutations} $\mu_i B(\EE)$ is an NCCR of $R_X$.  Hence by Theorem \ref{thm:nccrfromFEC} $\mu_i B(\EE) \cong B(\EE')$ for some very strong exceptional collection $\EE'$ on $X$. 
\end{proof}

It is useful to understand explicitly how $\EE$ and $\EE'$ are related in Theorem \ref{thm:BS}.  First we introduce some notations.  Rotating the helix if necessary, we can always assume that $i = n-1$.  The following is proved in \cite{BridgelandStern}:

\begin{proposition}\label{prop:clusterVSregularmut} 
Let $\EE = (E_0, \dots, E_{n-1})$ be a very strong exceptional collection  on del Pezzo surface. Then there exists $j \in \{1, \dots, n-2\}$ such that 
\[
\EE'=  (E_{1},E_{2},\ldots, E_{j} ,\widetilde{R_{E_j}}\cdots \widetilde{R_{E_{1}}} E_0,E_{j+1},\ldots,E_{n-1}).
\]
is a very strong exceptional collection with
 $\mu_{0} B(\EE) \cong  B(\EE')$. 
\end{proposition}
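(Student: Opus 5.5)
We work on the level of NCCRs and transport the mutation to the exceptional collection via Theorem \ref{thm:nccrfromFEC}. Write $M=\Gamma^*(X,\Escr)=\bigoplus_i M_i$ with $M_i=\Gamma^*(X,E_i)$. By Theorem \ref{thm:IWmutations}(3), $\mu_0 B(\EE)=\End_R(\mu_0^+M)$, and $\mu_0^+M=\bigoplus_{i\neq 0}M_i\oplus K$, where $K=\ker(f)$ for a minimal right $\add(\bigoplus_{i\ne0}M_i)$-approximation $f:N'\to M_0$. All these modules are reflexive and graded, so by the correspondence $M\mapsto \widetilde M$ used in the proof of Theorem \ref{thm:nccrfromFEC}, $K=\Gamma^*(X,\Kscr)$ with $\Kscr=\ker(\widetilde N'\to E_0)$ (or the kernel of the map to $E_0\otimes\omega_X^{-t}$ for a suitable twist). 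The proof of Theorem \ref{thm:nccrfromFEC} shows that $\Kscr$ is an exceptional bundle. After twisting $\Kscr$ by a power of $\omega_X$ so that its slope lands in the window $[\mu(E_1),\mu(E_1)+K_X^2)$, and then reordering by slope, we obtain a very strong collection $\EE'$ with $B(\EE')$ a regrading of $\mu_0B(\EE)$. What remains is to identify $\Kscr$ and its position.

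The key step is to compare the graded approximation with iterated right mutations of $E_0$. A map $N'\to M_0$ of graded modules is, in each degree, built from $\Hom_X(E_i\otimes\omega_X^{k},E_0)$ with $k\ge0$. Because of the slope ordering and $\mu$-stability, these are nonzero only for the helix members preceding $E_0$, namely $E_{i-n}=E_i\otimes\omega_X$. The simplest way to proceed is to apply the right rotation first and instead mutate the last element $E_0\otimes\omega_X^{-1}$ of $(E_1,\dots,E_{n-1},E_0\otimes\omega_X^{-1})$. In that setting the approximation uses maps from $E_1,\dots,E_{n-1}$ into $F:=E_0\otimes\omega_X^{-1}$.

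We then choose $j$ minimal such that $\Hom(E_i,F)\neq0$ forces $i\le j$ in the relevant sense, which amounts to choosing $j$ so that the objects after $E_j$ are orthogonal to the mutated object. The claim to establish is that $\widetilde{R_{E_j}}\cdots\widetilde{R_{E_1}}E_0$, suitably twisted by $\omega_X^{-1}$, coincides with $\Kscr$. The argument proceeds by induction along $i=1,\dots,j$. At each step $R_{E_i}$ replaces the current object $G$ by the cone of $G\to\Hom^\bullet(G,E_i)^*\otimes E_i$, and Proposition \ref{pr:slopes} tells us whether $(G,E_i)$ is a Hom-, Ext- or null-pair, hence whether the mutation is a kernel, a cokernel, or trivial. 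Equivalently, and this is how the proposition is stated in \cite{BridgelandStern}, we may argue via the dual collection. By Proposition \ref{prop:quiver_rule}, the simple $S_0$ corresponds to $\iota_*\widecheck{E_0}$. Under the derived equivalence \eqref{eq:derived_equivalence}, Keller–Yang mutation at vertex $0$ acts on the simples by a spherical twist or reflection, and this is realized on $\widecheck\EE$ by a braid move that moves $\widecheck{E_0}$ past exactly those duals linked to it by arrows of one orientation. Translating back through Proposition \ref{def:dualcollection} produces the displayed formula, with $j$ the last index $i$ for which $Q$ has arrows $0\to i$ of degree zero, grouped as one block.

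The main obstacle is to show that the resulting $\EE'$ is a sheaf collection in the correct slope order, i.e. that $\widetilde{R_{E_j}}\cdots\widetilde{R_{E_1}}E_0$ is a vector bundle whose slope lies between $\mu(E_j)$ and $\mu(E_{j+1})$, so that Remark \ref{rem:verystrongslopes} applies. We plan to avoid any direct verification here by using uniqueness. Theorem \ref{thm:BS} already guarantees a very strong $\EE'$ with $\mu_0B(\EE)\cong B(\EE')$, whose elements are $E_1,\dots,E_{n-1}$ together with one new exceptional bundle $\Kscr$. Since the sequence of braid moves is determined by its class in $K_0$, Proposition \ref{prop:numericalcol} then identifies $\Kscr$ with the sheafified iterated mutation; its class equals $\pm[R_{E_j}\cdots R_{E_1}E_0]$ because both arise from the same Euler-form reflection. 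Its position in the slope order then fixes $j$, and $1\le j\le n-2$ holds because $\Kscr\not\cong E_0$ and $\Kscr\not\cong E_0\otimes\omega_X^{-1}$ up to twist.
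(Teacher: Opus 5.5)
Your overall reduction works, and it differs from the paper, which gives no argument here and simply quotes the statement from Bridgeland--Stern. Applying Theorem~\ref{thm:nccrfromFEC} to $\mu_0^+M$, as in the paper's proof of Theorem~\ref{thm:BS}, shows that $\mu_0B(\EE)\cong B(\EE')$ for a very strong $\EE'$. This $\EE'$ consists of $E_1,\dots,E_{n-1}$ and one new exceptional bundle $\mathcal{K}'$ (a twist of $\widetilde{K}$), placed by slope between $E_j$ and $E_{j+1}$ for some $0\le j\le n-1$.

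However, the step that carries the content of the proposition, $\mathcal{K}'\cong\widetilde{R_{E_j}}\cdots\widetilde{R_{E_1}}E_0$, is not proved.
\begin{itemize}
\item ``Both arise from the same Euler-form reflection'' presupposes a comparison between the exchange sequence $0\to K\to N'\to M_0$ and the braid moves on $K_0$. You never carry this out, and it is essentially what is to be proved.
\item ``The sequence of braid moves is determined by its class in $K_0$'' has no precise meaning.
\item You define $j$ twice, once by a Hom-vanishing condition and once as the slope position, without showing the two agree.
\end{itemize}

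The missing argument is short and needs no $K_0$. Take $j$ to be the slope position. Then $(E_1,\dots,E_j,\mathcal{K}',E_{j+1},\dots,E_{n-1})$ and $\tilde\sigma_j^{-1}\cdots\tilde\sigma_1^{-1}\EE$ are full exceptional collections that differ only in slot $j+1$. In a full exceptional collection, the object in that slot generates the subcategory of all $Z$ with $\RHom(Z,E_i)=0$ for $i\le j$ and $\RHom(E_k,Z)=0$ for $k>j$. This subcategory is equivalent to the bounded derived category of finite-dimensional vector spaces. Hence $\mathcal{K}'$ is a shift of $R_{E_j}\cdots R_{E_1}E_0$, and since $\mathcal{K}'$ and $\widetilde{R_{E_j}}\cdots\widetilde{R_{E_1}}E_0$ are both sheaves, they are isomorphic.

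If you prefer to route this through Proposition~\ref{prop:numericalcol}, the correct reason the classes agree is different from the one you give. $\chi$-orthogonality to $[E_1],\dots,[E_{n-1}]$ in these positions fixes the class up to sign, and the rank fixes the sign.

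The bound $1\le j\le n-2$ is also only asserted. By the same uniqueness, $j=0$ or $j=n-1$ forces $\mathcal{K}'\cong E_0$, respectively $E_0\otimes\omega_X^{-1}$, that is, $K\cong M_0(t)$. So you must show that mutation at $0$ is non-trivial. One way:
\begin{itemize}
\item If $K\cong M_0(t)$, then $\mu_0B(\EE)$ is a regrading of $B(\EE)$, with the vertex of $K$ matching that of $M_0$.
\item By Theorem~\ref{thm:KYIW=DWZ} its quiver is $\mu_0Q$, whose arrows at $0$ are those of $Q$ reversed. These arrows survive the reduction, since $Q$ has no loops or $2$-cycles (Remark~\ref{rem:no_loops_or_two_cycles}).
\item Hence $|Q(0,i)|=|Q(i,0)|$ for all $i$, so both vanish and $0$ is an isolated vertex.
\item This contradicts connectedness of $Q$: $B(\EE)$ is prime, being an NCCR of a domain.
\end{itemize}

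Finally, drop the paragraphs on graded approximations and on Keller--Yang mutation acting on simples. They are never used, and parts are inaccurate. Maps into $E_0$ come from all $E_i\otimes\omega_X^k$ with $k\ge1$, not only $k=1$. The objects after $E_j$ need only be semi-orthogonal to the new object, not orthogonal.
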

To be consistent with the terminology introduced above, we will call $\EE'$ the quiver mutation of $\EE$ at $E_0$.
Quiver mutations at other $E_i$ are defined by rotation.
The following proposition strengthens Proposition \ref{prop:clusterVSregularmut} 
\begin{proposition}
  \label{prop:rem:cluster}
We have 
\[
\EE'=  (E_{1},E_{2},\ldots, E_{j} ,\widetilde{R_{E_j}}\cdots \widetilde{R_{E_{1}}} E_0,E_{j+1},\ldots,E_{n-1}).
\]
where $j$ is the smallest index such that $\EE'$
is very strong and $\Hom_X(E_0,E_j)
 \neq 0$.  
\end{proposition}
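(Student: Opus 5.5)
Taking Proposition \ref{prop:clusterVSregularmut} as given, I would identify the index $j$ occurring there by a uniqueness argument for the mutated object. Put $F_k:=\widetilde{R_{E_k}}\cdots\widetilde{R_{E_1}}E_0$ for $0\le k\le n-1$, with $F_0=E_0$. For each such $k$ the collection
\[
\EE'_k=(E_1,\ldots,E_k,F_k,E_{k+1},\ldots,E_{n-1})
\]
is a full exceptional collection of sheaves obtained from $\EE$ by braid mutations. I would show two things. First, if $\EE'_k$ is very strong then $k\ge 1$ and $\Hom_X(E_0,E_k)\neq 0$ automatically hold, apart from degenerate cases. Second, among the very strong collections $\EE'_k$ at most one can realise $\mu_0B(\EE)$, namely the one with smallest admissible $k$. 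Combining these with Proposition \ref{prop:clusterVSregularmut}, which guarantees that some $j$ works, identifies $j$ as stated.

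To carry this out I would track ranks and slopes through the successive right mutations, using Proposition \ref{pr:slopes}. If $\mu(E_0)=\mu(E_k)$, the pair $(F_{k-1},E_k)$ behaves so that $R_{E_k}$ merely transposes the objects. If $\Hom(E_0,E_k)=0$ for all relevant $k$, then $E_0$ is orthogonal to the corresponding block and $F_k=E_0$ up to shift. The resulting $\EE'_k$ is then a mere reordering of $\EE$ inside an orthogonal block, and its rolled-up helix algebra is $B(\EE)$ up to regrading, not $\mu_0B(\EE)$. This explains why the condition $\Hom_X(E_0,E_j)\neq0$ is imposed.

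For the minimality statement I would compare quivers. By Theorem \ref{thm:KYIW=DWZ}, the quiver of $\mu_0B(\EE)$ is $\mu_0Q$. By Proposition \ref{prop:quiver_rule} and Lemma \ref{lm:arrowschiY}, the quiver of $B(\EE'_k)$ is determined numerically by the classes of the dual collection restricted to an anticanonical curve. Passing from $\EE'_k$ to $\EE'_{k+1}$ changes these classes by a reflection, so if both $\EE'_k$ and $\EE'_{k+1}$ were very strong and $\Hom(F_k,E_{k+1})\ne0$, the two quivers would differ in the arrows at the mutated vertex. At most one of them can equal $\mu_0Q$. Alternatively, and more robustly, I would use the module-theoretic description in Definition \ref{def:NCCRmut}. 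The mutated summand is the kernel of a minimal right $\add(\oplus_{i\ne0}\pi^*E_i)$-approximation. On $X$, the right mutation through $E_1,\ldots,E_j$ builds this kernel degree by degree, via the relative Serre functor and the exact triangle defining $R_{E_k}$. A very strong $\EE'_k$ reproduces the approximation exactly when all objects $E_i$ with $\Hom(E_0,E_i)\neq0$ and slope below $\mu(F_k)$ have been passed. The first $k$ at which the collection becomes very strong is the first point at which this happens.

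I expect the main obstacle to be this last identification: showing that no smaller $k$ with $\EE'_k$ very strong gives the NCCR mutation. The slope ordering from Remark \ref{rem:verystrongslopes} forces $\mu(E_k)\le\mu(F_k)\le\mu(E_{k+1})$. I would try to derive a contradiction from this by showing that the approximation would then miss a nonzero map $E_0\to E_{k'}$ with $k'>k$. Such a map would give $\Hom(F_k,E_{k'})$ the wrong dimension compared with $\mu_0Q$, and I would check the resulting inequality against $\chi(F_k,E_{k'})$ computed via \eqref{eq:chi_slope}.
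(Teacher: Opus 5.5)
Your plan rests on the claim that among the very strong $\EE'_k$ at most one can realise $\mu_0B(\EE)$. Your ``main obstacle'' paragraph then proposes to derive a contradiction from a very strong $\EE'_k$ with smaller $k$. Both are false, so the key step cannot be completed.

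Take $X=\PP^1\times\PP^1$ and $\EE=(\Oscr,\Oscr(1,0),\Oscr(1,1),\Oscr(2,1))$. Its slopes are $0,2,4,6$ and $K_X^2=8$, so it is very strong. Then $F_1=\widetilde{R_{\Oscr(1,0)}}\Oscr=\Oscr(2,0)$. Since $\RHom(\Oscr(2,0),\Oscr(1,1))=H^*(\Oscr(-1,1))=0=\RHom(\Oscr(1,1),\Oscr(2,0))$, we get $F_2=F_1$. So both $\EE'_1=(\Oscr(1,0),\Oscr(2,0),\Oscr(1,1),\Oscr(2,1))$ and $\EE'_2=(\Oscr(1,0),\Oscr(1,1),\Oscr(2,0),\Oscr(2,1))$ have slopes $2,4,4,6$ and are very strong. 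Both satisfy $\Hom(E_0,E_k)\neq 0$. They differ only by reordering an orthogonal block, so they have the same rolled-up helix algebra. If Proposition \ref{prop:clusterVSregularmut} happens to hand you $j=2$, nothing is contradictory about $k=1$.

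Your other steps do not close this either. The reflection argument only covers the case where $\EE'_k,\EE'_{k+1}$ are both very strong and $\Hom(F_k,E_{k+1})\neq 0$, which in fact never occurs. The case $\Hom(F_k,E_{k+1})=0$, which is exactly what produces several very strong positions, is not treated. The claim that the approximation is ``reproduced'' at the first very strong $k$ is asserted, not proved.

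What you need is the opposite of uniqueness. The indices $k$ past the block of $E_0$ for which $\EE'_k$ is very strong form a consecutive range, and consecutive ones differ by a permutation inside an orthogonal block. Then the smallest such $k$ gives the same algebra as the $j$ from Proposition \ref{prop:clusterVSregularmut}. The local inequalities $\mu(E_k)\le\mu(F_k)\le\mu(E_{k+1})$ do not give this contiguity on their own; a global input is needed.

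The paper gets it from the polygon, via Proposition \ref{prop:lm:clustergeneralised}:
\begin{itemize}
\item By Lemma \ref{lem:generalised_right}, the polygon of $\EE'_k$ is the generalised right mutation of $P$ at $m_0$ with respect to $l_{k,k+1}$.
\item By Proposition \ref{prop:convex_perling}, $\EE'_k$ is very strong exactly when this polygon is convex.
\item Convexity holds exactly when $l_{k,k+1}$ is an opposing vertex of $m_0$, i.e.\ lies on the supporting line $H_0$ parallel to $m_0$.
\item The opposing vertices are consecutive on one long edge, and by Remarks \ref{rem:composition} and \ref{lem:block} passing from one to the next is a permutation within a block.
\end{itemize}
In the example above, $P$ is a parallelogram, and $l_{1,2},l_{2,3}$ are the two endpoints of the edge $m_2\parallel m_0$.
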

\begin{proof}
  This is a rephrasing of Proposition \ref{prop:lm:clustergeneralised} below.
  \end{proof}
In other words,  a quiver mutation amounts to applying several consequent braid group mutations to a very strong exceptional collection until it becomes very strong again for the first time.  The only exception to this rule is when the first mutation is a mutation in an orthogonal pair.  Such a mutation trivially keeps the collection very strong and does not affect the corresponding NCCR.  In this case one has to mutate ``through'' the whole orthogonal block.
In what follows,  we usually identify a quiver mutation (a mutation of an NCCR) and its corresponding sequence of mutations on the level of exceptional collections,  as described in Proposition \ref{prop:clusterVSregularmut}.
\begin{remark}
  \label{rem:left:quiver}
  If $\EE'$ is the quiver mutation of $\EE$ at $E_i$ 
  then using the Convention \ref{conv:tilde} we may write $\EE'$ as
  $
    \tilde{\sigma}_{j}^{-1}\cdots \tilde{\sigma}_{i+2}^{-1}\tilde{\sigma}_{i+1}^{-1}\EE
    $ for suitable $j$ where the indices are wrapped to remain in the interval $[1,n]$.
    Thus it makes sense to refer to $\EE'$ as the \emph{right} a quiver mutation of $\EE$ and $E_i$. The left quiver mutation
    at $E_i$ can be defined symmetrically and it will be equal to $\tilde{\sigma}_{j'+1}\cdots\tilde{\sigma}_{i-1}\tilde{\sigma}_i \EE$ for suitable $j'$.
    Using the analogue of Proposition \ref{prop:rem:cluster} one sees that left and right quiver mutations at $E_i$ are each other's inverse up to rotation
    and permuting the exceptional objects in a block.
\end{remark}
\section{Blocks and block mutations}
\subsection{Blocks and block quivers}
\label{sec:blocks_and_block_quivers}
\begin{definition}
If $Q$ is a quiver then a \emph{block} is  a set of vertices which are
not connected by edges. We say that a block is not incident to loops or oriented 2-cycles
if none of its members are.
\end{definition}
\begin{definition} A \emph{block quiver} $(Q,I)$ is a quiver $Q$ together with a decomposition
$Q_0=I_0\coprod \cdots \coprod I_{k-1}$ where the $I_i$ are blocks.
\end{definition}
In the sequel we will encounter a particular kind of block quiver. 
\begin{definition}
\label{def:vertex_mult}
A quiver with \emph{vertex multiplicities} is $(Q,(\alpha_i)_{i\in Q_0})$ is a pair
consisting of a quiver $Q$ and for each vertex $i\in Q_0$ a ``multiplicity'' $\alpha_i\ge 1$. 
\end{definition}
If $(Q,(\alpha_i)_{i\in Q_0})$ is a quiver with vertex multiplicities, without loops, then $Q^e$ is the block quiver obtained by replacing  each vertex~$i$ in $Q$ by a block 
$I_i$ of
size $\alpha_i$ and by letting for every $u\in I_i$, $v\in I_j$, $i\neq j$, $Q^e(u,v)$ be a distinct copy of $Q(i,j)$. If a block quiver $\Sigma$ is of the form $Q^e$ then we call $\Sigma$ \emph{regular} and we call $Q$ the \emph{reduced quiver} associated
to $\Sigma$. We denote it by $\Sigma_{\red}$.
\subsection{Block quiver mutations}
\label{sec:block_quiver_mut}
Let $(Q,W)$ be a quiver with potential.
Let
$I=\{i_0,\ldots,i_{\alpha-1}\}\subset Q_0$ be  a block not incident to loops or oriented 2-cycles. Then it is clear that
$\widetilde{\mu}_I (Q,W):=\widetilde{\mu}_{i_0}\cdots \widetilde{\mu}_{i_{\alpha-1}} (Q,W)$  (see Definition \ref{def:DWZmutation})
is well defined. In other words it does not depend on the chosen ordering on $I$.
If $Q$ has no loops or oriented $2$-cycles then we also put $\mu_I (Q,W)=\widetilde{\mu}_I(Q,W)_{\redd}$
and we call $\mu_I(Q,W)$ a \emph{block mutation} of $(Q,W)$.
One may show that $\mu_I$ is the composition of $(\mu_{i_j})_j$. Following the conventions
in Definition \ref{def:DWZmutation} we also use the notations $\widetilde{\mu}_I Q$ and $\mu_I Q$ when appropriate.

In case $(Q,I)$ is a block quiver,  we write $\mu_i$ for $\mu_{I_i}$.
\subsection{Block mutations of NCCRs}
Assume that we are in the graded or complete context.
Let $\Lambda$ be an NCCR of $R$ which is represented as $\Lambda$. Let $(S_i)_{i=0}^{n-1}$ be the non-isomorphic simples of $\Lambda$ (concentrated in degree zero in the graded
context), and let $Q$ be the associated
$\Ext^1$-graph as 
in Remark \ref{rem:Ext1graph}.

A \emph{block} is a collection
of simples $(S_i)_{i\in I}$ for $I=\{i_0,\ldots,i_{\alpha-1}\}\subset \{0,\ldots, n-1\}$ such that $\RHom_\Lambda(S_{i_k},S_{i_l})=0$ for $k\neq l$.
If this is the case and $I$ is not adjacent to any loops and oriented $2$-cycles in $Q$
then we claim that $\mu_I(\Lambda):=(\mu_{i_0}\cdots \mu_{i_{\alpha-1}})(\Lambda)$ is again well defined. This follows by representing $\Lambda$ as $\Jscr(Q,W)$ (see \S\ref{sec:mutnccr}) and invoking \S\ref{sec:block_quiver_mut}.
\begin{remark} Presumably this result will also hold without the restriction that $I$ is not adjacent to any loops or oriented $2$-cycles.
\end{remark}

\subsection{Block exceptional collections}
\label{sec:block_ex}
\begin{definition} A \emph{block} is a collection of exceptional objects $(E^{(0)},\ldots,E^{(\alpha-1)})$  which are pairwise orthogonal.
\end{definition}
\begin{definition}
A \emph{block exceptional collection} is a sequence of blocks
\begin{equation}
\label{eq:block_ex}
\EE=((E_0^{(0)},\ldots, E_0^{(\alpha_0-1)}) , 
\ldots,
(E_{k-1}^{(0)},\ldots, E_{k-1}^{(\alpha_{k-1}-1)} ))
\end{equation}
which becomes an exceptional collection when we forget the subdivision into blocks. 
If $k=2$ then we call $(\EE_0,\EE_1) = ((E_0^{(0)},\ldots, E_0^{(\alpha_0-1)}),  (E_{1}^{(0)},\ldots, E_{1}^{(\alpha_{1}-1)}))$ a \emph{block exceptional pair}. We say that a block exceptional collection $\EE$ is \emph{full} if the associated exceptional collection is full.
\end{definition}
\begin{definition} If $(\EE,\FF)=((E^{(0)},\ldots,E^{(\alpha-1)}),(F^{(0)},\ldots,F^{(\beta-1)}))$ is a block exceptional pair then we define
  \begin{equation}
    \label{eq:blockLR}
    \begin{aligned}
      L_\EE \FF&=(L_{E^{(0)}}L_{E^{(1)}}\cdots L_{E^{(\alpha-1)}}F^{(0)},\ldots,L_{E^{(0)}}L_{E^{(1)}}\cdots L_{E^{(\alpha-1)}} F^{(\beta-1)})\\
      R_\FF \EE&=(R_{F^{(0)}}R_{F^{(1)}}\cdots R_{F^{(\beta-1)}}E^{(0)},\ldots,R_{F^{(0)}}R_{F^{(1)}}\cdots R_{F^{(\beta-1)}} E^{(\alpha-1)})
    \end{aligned}
  \end{equation}
\end{definition}
The following result is easy to verify.
\begin{lemma} \label{lem:dual_block_exceptional}
  Let $\EE=(\EE_0,\cdots,\EE_{k-1})$ be a full block exceptional collection. Define the \emph{right dual} block exceptional collection 
  $\widecheck{\EE}  
=(\widecheck{\EE_{k-1}},\ldots, \widecheck{\EE_0})$ via
\[
\widecheck{\EE_i} := R_{\EE_{k-1}} \dots R_{\EE_{i+1}}(\EE_i)
\]
Then the underlying exceptional collections of $\widecheck{\EE}$, with the order of the exceptional objects in the blocks inverted, is the right dual (see Proposition \ref{def:dualcollection}) of the underlying exceptional collection of $\EE$. 
\end{lemma}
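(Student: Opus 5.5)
The plan is to reduce the statement to the unblocked Proposition \ref{def:dualcollection} by expanding the block mutations into ordinary ones and using that mutations inside an orthogonal block are trivial. Write the underlying collection as $(E_0^{(0)},\ldots,E_0^{(\alpha_0-1)},\ldots,E_{k-1}^{(0)},\ldots,E_{k-1}^{(\alpha_{k-1}-1)})$, relabelled as $(G_0,\ldots,G_{N-1})$. The ordinary right dual is $\widecheck{G_m}=R_{G_{N-1}}\cdots R_{G_{m+1}}G_m$, with the duals listed in reverse order.

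The first step is the orthogonal-block observation. If $(A,B)$ is an exceptional pair with $\Hom^\bullet(A,B)=0$, then $R_BA=A$ and $L_AB=B$, since the triangles defining them degenerate. Hence two consecutive objects of a block can be swapped without changing either object. Consequently, for a block $\FF=(F^{(0)},\ldots,F^{(\beta-1)})$, the functors $R_{F^{(l)}}$ commute with each other. To see this, identify $R_{F^{(l)}}$ with the right mutation through $F^{(l)}$, or equivalently with projection onto ${}^\perp\langle F^{(l)}\rangle$ composed appropriately. Because the $F^{(l)}$ are mutually orthogonal, $\langle \FF\rangle=\langle F^{(0)}\rangle\oplus\cdots\oplus\langle F^{(\beta-1)}\rangle$. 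The composite $R_{F^{(0)}}\cdots R_{F^{(\beta-1)}}$ is then the right mutation through the whole admissible subcategory $\langle\FF\rangle$, that is, the cone of the map to the right adjoint projection. This is independent of the ordering.

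Next, fix an object $G_m=E_i^{(p)}$. Its ordinary dual applies $R_{E_i^{(p')}}$ for $p'>p$, which are the objects later in its own block, and then the full block mutations $R_{\EE_{i+1}},\ldots,R_{\EE_{k-1}}$ in the appropriate order. The block-level definition omits the mutations through the later members of its own block. These omitted mutations act trivially, because the objects are orthogonal to $E_i^{(p)}$. Thus $\widecheck{G_m}=R_{\EE_{k-1}}\cdots R_{\EE_{i+1}}(E_i^{(p)})$, which is exactly the $p$-th entry of $\widecheck{\EE_i}$. One must still check that $R_{\EE_j}$ applied to the already-mutated object agrees with the expression in \eqref{eq:blockLR}. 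This holds because \eqref{eq:blockLR} composes the single mutations in exactly that way, and the composite is order-independent by the previous paragraph.

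Finally, the ordering is handled as follows. The ordinary right dual lists $\widecheck{G_{N-1}},\ldots,\widecheck{G_0}$. Within the $i$-th block this gives $\widecheck{E_i^{(\alpha_i-1)}},\ldots,\widecheck{E_i^{(0)}}$, which is the block $\widecheck{\EE_i}$ with its order inverted. The blocks themselves appear in the order $\widecheck{\EE_{k-1}},\ldots,\widecheck{\EE_0}$. The main point needing care is the commutation and trivial-mutation claim for orthogonal objects. To settle it, I would verify directly from the triangle $R_BA\to A\to\Hom^\bullet(A,B)^*\otimes B$ that $R_{B}$ fixes any object $A$ with $\Hom^\bullet(A,B)=0$. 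I would also verify that $R_{B}R_{B'}=R_{B'}R_B$ when $B\perp B'$ in both directions, using the octahedral axiom or the projection description.
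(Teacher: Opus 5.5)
Your proposal is correct. It is exactly the routine verification the paper leaves to the reader; the paper only remarks that the lemma is easy to verify. The two ingredients are that mutating past an object orthogonal to the one being mutated is trivial, and that the composite right mutation through an orthogonal block does not depend on the order, since it is the right adjoint of the inclusion ${}^{\perp}\langle\FF\rangle\hookrightarrow\Dscr^b(X)$. The same order-independence absorbs the reversed order in which \eqref{eq:blockLR} composes the single mutations.

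Two small touch-ups:
\begin{itemize}
\item $R_{\langle\FF\rangle}A$ is $\operatorname{cone}(A\to\iota\iota^{*}A)[-1]$, where $\iota^{*}$ is the \emph{left} adjoint of $\iota\colon\langle\FF\rangle\hookrightarrow\Dscr^b(X)$. It is not ``the cone of the map to the right adjoint projection''.
\item You should also check that the objects in each $\widecheck{\EE_i}$ are pairwise orthogonal, which is implicit in calling $\widecheck{\EE}$ a block collection. This follows because $R_{\langle\EE_{i+1},\ldots,\EE_{k-1}\rangle}$ is fully faithful on $\langle\EE_{i+1},\ldots,\EE_{k-1}\rangle^{\perp}$, and that subcategory contains the objects of $\EE_i$.
\end{itemize}
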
 

\begin{definition}
\label{def:block_form}
We say that a  block exceptional collection has \emph{broken blocks} if $\EE_i$, $\EE_{i+1}$ are orthogonal for some $i$ with $\EE_{k}:=\omega_X^{-1}\otimes_X \EE_0$.
\end{definition}
\begin{definition}
Any full exceptional collection $\EE$ may be transformed using a suitable rotation into a block exceptional collection $\EE'$ without broken blocks. If $\EE'$ has block sizes $(\alpha_i)_{i=0}^{k-1}$ then we say that $\EE$ is $k$-block and
we call the $(\alpha_i)_i$ the \emph{block multiplicities} of $\EE$. The block multiplicities are well defined up to cyclic permutation.
\end{definition}
\begin{remark} The blocks related properties of an exceptional collection can be characterised solely in terms of slopes. See Proposition \ref{pr:slopes}.
\end{remark}
\begin{convention} \label{conv:terminology}
  Below terminology and notation, when applied to a block exceptional collection, are understood to apply
to the associated exceptional collection, unless defined otherwise.
\end{convention}
For a block exceptional pair $(\EE,\FF)$ we define $\widetilde{L_\EE}\FF$, $\widetilde{R_\FF \EE}$ by replacing  $L$ and $R$ in \eqref{eq:blockLR} by $\widetilde{L}$ and $\widetilde{R}$.
Proposition \ref{prop:clusterVSregularmut}  generalises in the following way:
\begin{proposition}\label{prop:clusterVSregularmut_block} 
Let $\EE = (\EE_0, \dots,\EE_{k-1})$ be a very strong block exceptional collection  on a del Pezzo surface. Then there exists $j \in {1, \dots, k-2}$ such that
\[
\EE' := (\EE_{1},\EE_{2},\ldots, \EE_{j} ,\widetilde{R_{\EE_j}}\cdots \widetilde{R_{\EE_{1}}} \EE_0,\EE_{j+1},\ldots,E_{k-1}).
\] is a very strong block exceptional collection with
 $\mu_{0} B(\EE) \cong  B(\EE')$. 
\end{proposition}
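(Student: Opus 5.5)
The plan is to reduce the statement to repeated single-vertex mutations, using Proposition \ref{prop:clusterVSregularmut} together with the description of block mutations in \S\ref{sec:block_quiver_mut}. Write the block $\EE_0=(E^{(0)},\ldots,E^{(\alpha-1)})$. The objects of $\EE_0$ are pairwise orthogonal. By Lemma \ref{lem:dual_block_exceptional} and Proposition \ref{prop:quiver_rule}, the corresponding simples of $B(\EE)$ satisfy $\RHom(S_u,S_v)=0$. So they form a block of the quiver $Q$, not incident to loops or $2$-cycles by Remark \ref{rem:no_loops_or_two_cycles}, and $\mu_0 B(\EE)=\mu_{i_0}\cdots\mu_{i_{\alpha-1}}B(\EE)$ is well defined. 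The strategy is to perform these $\alpha$ single mutations one at a time on the level of exceptional collections and check that the outcome is the displayed $\EE'$.

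First I would mutate at the last element $E^{(\alpha-1)}$ of $\EE_0$. Using orthogonality, I reorder it to the front; this does not change $B(\EE)$. Proposition \ref{prop:rem:cluster} then says the quiver mutation moves it to the right through the smallest initial segment after which the collection is very strong again. The key point is to identify this stopping place. Passing through the remaining members of $\EE_0$ is a mutation in orthogonal pairs: it is trivial and leaves the collection very strong. By Proposition \ref{prop:rem:cluster} one must continue through this orthogonal block, and then through $\EE_1,\ldots,\EE_j$ for some $j$.

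I would characterise $j$ by slopes, using Proposition \ref{pr:slopes} and Remark \ref{rem:verystrongslopes}. The object $\widetilde{R_{\EE_j}}\cdots\widetilde{R_{\EE_1}}E^{(\alpha-1)}$ must have slope in $[\mu(\EE_j),\mu(\EE_{j+1})]$. I would argue that this slope condition depends only on the block data. Since all objects of a block have the same slope, mutating through a block either keeps very-strongness for every object at once or for none. Hence $j$ is determined by the block structure and is the same for every member of $\EE_0$.

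Next I would iterate. After the first mutation, the remaining objects of $\EE_0$ are still orthogonal to each other. I would show they are also orthogonal to the newly placed object $R E^{(\alpha-1)}$, because mutation functors are equivalences between orthogonal complements: $R_{\EE_j}\cdots R_{\EE_1}$ is applied to the whole block. So the same argument applies to each $E^{(k)}$ in turn, with the same $j$. The result is the collection $\EE'$ in which $\widetilde{R_{\EE_j}}\cdots\widetilde{R_{\EE_1}}\EE_0$ sits as a block between $\EE_j$ and $\EE_{j+1}$. By construction it is very strong, and $\mu_0B(\EE)\cong B(\EE')$ follows from composing the isomorphisms given by Proposition \ref{prop:clusterVSregularmut} at each step.

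The main obstacle is showing that the stopping index $j$ is uniform across the block, and that the intermediate collections mix already-mutated and not-yet-mutated members of $\EE_0$ without breaking the block structure. I would first try to settle this numerically, using \eqref{eq:chi_slope} to compute ranks and slopes of the mutated objects from the Gram matrix. In these formulas only the block-level quantities enter.
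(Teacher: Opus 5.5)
This is essentially the paper's route. Remark \ref{rem:block_quiver} identifies the block quiver mutation with the composite of the single quiver mutations at $E^{(\alpha-1)},\ldots,E^{(0)}$, each given by Proposition \ref{prop:rem:cluster}. Where the paper tracks such composites (proof of Corollary \ref{cor:block_complete}), it reads off the common landing place from the HP-polygon: all sub-edges of the long edge of $\EE_0$ are parallel, so they share the same earliest opposing vertex, which is a corner and hence a block boundary (Proposition \ref{prop:lm:clustergeneralised}). Your slope bookkeeping is an equivalent check, since by \eqref{eq:chi_slope} the slope of $\widetilde{R_F}E$ depends only on $\mu(E)$ and on $F$. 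You should still add a line explaining why the moving object never stops strictly inside some $\EE_{j'}$; the polygon picture makes this automatic.

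One correction: after the first step, the unmutated $E^{(k)}$ are in general not orthogonal to $R_{\EE_j}\cdots R_{\EE_1}E^{(\alpha-1)}$. For instance, for $j=1$ and $\EE_1=(F)$ one has $\RHom(E^{(k)},R_FE^{(\alpha-1)})\cong\RHom(E^{(k)},F)\otimes\RHom(E^{(\alpha-1)},F)^*[-1]\neq 0$. What full faithfulness of the mutation functors gives, and what the final block structure needs, is that the mutated objects $R_{\EE_j}\cdots R_{\EE_1}E^{(k)}$ are pairwise orthogonal.
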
 
We will call $\EE'$ a \emph{block quiver mutation} of $\EE$. The obvious analogue or Proposition \ref{prop:rem:cluster} holds.
\begin{remark} \label{rem:block_quiver}
  If $\EE_i=(E_i^{(0)},\ldots,E_i^{(\alpha_i-1)})$ then it is easy to see that a block quiver mutation as in Proposition \ref{prop:clusterVSregularmut_block}
is  the composition of quiver mutations at $E_i^{(\alpha_i-1)},\allowbreak E_i^{(\alpha_i-2)},\allowbreak\ldots,\allowbreak E_i^{(0)}$.
\end{remark}
 
\subsection{Block quivers associated to block exceptional collections}
\label{sec:blockquiver}
If $\EE$ is a block exceptional collection
\[
\EE=((E_0^{(0)},\ldots, E_0^{(\alpha_0-1)}) , 
\ldots,
(E_{k-1}^{(0)},\ldots, E_{k-1}^{(\alpha_{k-1}-1)} ))
\]
on a del Pezzo surface $X$, then the reduced Gram matrix $M_{\red}$ is the $k\times k$-matrix such that $M_{\red,ij}=\chi(E_i^{(?)}, E_j^{(?)})$.

If $\EE$ is in addition very strong
then
the quiver  associated to $\EE$, as constructed in Proposition \ref{prop:quiver_rule} becomes
naturally a block quiver $Q$ with blocks $(\{(i,0), \ldots, (i,\alpha_i-1)\})_{i=0}^{k-1}$. This block quiver is actually regular so it may equivalently be viewed as a quiver with  vertex multiplicities $(Q_{\red},(\alpha_i)_{i=0}^{k-1})$. 

\section{Toric machinery of exceptional collections}\label{sec:toric}
\subsection{Introduction}
This section is based on the framework developed by Perling in \cite{Perling} (see also \cite{KuznetsovToric},  \cite{HillePerling}) which allows to associate a toric system in $\Pic(X) \otimes \QQ$ and a fan in a two-dimensional lattice to an exceptional collection on a rational surface $X$.  Moreover,  very strong exceptional collections on del Pezzo surfaces correspond to fans generating convex polygons.  Via this correspondence mutations of exceptional collections as well as quiver mutations can be seen as operations on toric systems or fans.  For simplicity we will formulate everything for del Pezzo surfaces using the notations introduced in \S \ref{sec:delPezzoprelim}.
\subsection{Toric systems and their Gale duals}
\begin{proposition}[{\cite[\S 5]{Perling}, \cite[Proposition 4.9]{KuznetsovToric}}]\label{prop:Ts} 
 Let $\EE=(E_0,\dots,E_{n-1})$ be a full exceptional collection of vector bundles on a del Pezzo surface $X$ and denote $r_i := r(E_i)$. 
 
 Define $T_{i,i+1} \in \Pic(X) \otimes \QQ$ with $0 \leq i \leq n-2$ by
 \begin{equation}
   \label{eq:real_toric}
\begin{aligned}
T_{i,{i+1}}& := s(E_{i},E_{i+1}) = \frac{c_1(E_{i+1})}{r_{i+1}} -  \frac{c_1(E_i)}{r_i} ,  \,\,  i=0, \dots,n-2  \\
T_{n-1,n} &:= T_{-1,0} := T_{n-1,0} := s(E_{n-1},E_{0} \otimes \omega_X^{-1})
\end{aligned}
\end{equation}
Denote also $T_{ij} = T_{i,{i+1}} + \dots + \dots T_{{j-1},j}$ for $i < j$  Then $(T_{i,{i+1}})_{i=0}^{n-1}$ form a \emph{toric system},  which means that 
\begin{enumerate}
\item \label{it:prop:Ts_1} $T_{i-1,i} \cdot T_{i,i+1} = \frac{1}{r_{i}^2}$ for all $0 \leq i \leq n-1$
\item \label{it:prop:Ts_2} $T_{i-1,i}\cdot T_{j,j+1} = 0$ for $0 \leq i < j \leq n-1$.
\item \label{it:prop:Ts_3} $\sum_{i=0}^{n-1} T_{i,i+1} = -K_X$
\item \label{it:prop:Ts_4} $r_{i}r_{j}T_{ij}$ is integral for all $0 \leq i < j \leq n-1$ (hence ${a_{ij} := (r_{i}r_{j}T_{ij})^2 \in \ZZ}$). 
\end{enumerate}
Moreover,  
\begin{equation}
  \label{eq:chi_T}
\chi(E_{i},E_{j}) = \frac{a_{ij} + r_{i}^2 +r_{j}^2 }{r_{i}r_{j}}.
\end{equation}
\end{proposition}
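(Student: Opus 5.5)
The proof is a direct computation from Riemann--Roch in the form \eqref{eq:riemann_roch} and \eqref{eq:chi_slope} of Proposition \ref{prop:chi}, applied to pairs in the helix generated by $\EE$. We extend $\EE$ to the helix $E_{i-n}=E_i\otimes\omega_X$. Then $T_{i,i+1}=s(E_i,E_{i+1})$ for all $i\in\ZZ$, the sequence is $n$-periodic, and $T_{n-1,n}$ agrees with the definition in \eqref{eq:real_toric}. Property \eqref{it:prop:Ts_3} is immediate: the sum telescopes to $s(E_0\otimes\omega_X^{-1})-s(E_0)=-K_X$.

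For \eqref{eq:chi_T} and \eqref{it:prop:Ts_4}, take $i<j$. Since $(E_i,E_j)$ is an exceptional pair, we have $\chi(E_j,E_i)=0$. We apply \eqref{eq:riemann_roch} to $\chi(E_j,E_i)$ and to $\chi(E_i,E_j)$, and subtract. Since $s(E_j,E_i)=-T_{ij}$, the quadratic terms cancel and we recover \eqref{eq:chi_slope}: $\chi(E_i,E_j)=r_ir_j\,\mu(E_i,E_j)$. Substituting $\mu(E_i,E_j)=-\frac{2}{r_ir_j}\chi(E_j,E_i)+T_{ij}^2+\frac1{r_i^2}+\frac1{r_j^2}$ (obtained from $\chi(E_j,E_i)=0$) into the $\chi(E_i,E_j)$ formula gives
\[
\chi(E_i,E_j)=r_ir_jT_{ij}^2+\frac{r_j}{r_i}+\frac{r_i}{r_j}=\frac{a_{ij}+r_i^2+r_j^2}{r_ir_j}.
\]
Integrality of $r_ir_jT_{ij}=r_ic_1(E_j)-r_jc_1(E_i)$ is clear because the $c_1$ are integral classes. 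This gives \eqref{it:prop:Ts_4} and \eqref{eq:chi_T}.

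For the orthogonality relations \eqref{it:prop:Ts_1} and \eqref{it:prop:Ts_2} we use the Euler form. For consecutive indices $i<j$ with $j\le i+n$, put $x_{ij}:=r_ir_jT_{ij}$. Then $T_{i-1,i}\cdot T_{j,j+1}$ can be written by polarization as a combination of squares:
\[
T_{i-1,i}\cdot T_{j,j+1}=\tfrac12\bigl(T_{i-1,j+1}^2+T_{ij}^2-T_{i-1,j}^2-T_{i,j+1}^2\bigr),
\]
where we set $T_{ii}=0$. Each square can be expressed through $\chi$ via the formula just derived, $T_{ab}^2=\frac{\chi(E_a,E_b)}{r_ar_b}-\frac1{r_a^2}-\frac1{r_b^2}$. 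This is valid whenever $(E_a,E_b)$ is exceptional, that is for $a<b\le a+n-1$. When $b=a+n$ we have instead $T_{ab}=-K_X$ and $T_{ab}^2=K_X^2$, and the adjacent-term identity still follows from the degree computation. For $j=i$ the $\frac1{r^2}$ terms leave exactly $\frac1{r_i^2}$, because the cross terms are linear in $\mu$ by \eqref{eq:chi_slope} and telescope. For $i<j$ all the $\frac1{r^2}$ terms cancel, and the remaining $\chi$-terms cancel because $\chi(E_a,E_b)=r_ar_b\mu(E_a,E_b)$ is additive in the slope difference: $\mu(E_{i-1},E_{j+1})+\mu(E_i,E_j)=\mu(E_{i-1},E_j)+\mu(E_i,E_{j+1})$. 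Hence the product is zero.

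The main obstacle is the boundary bookkeeping. The pairs $(E_{i-1},E_{j+1})$ must be exceptional, which requires $j+1-(i-1)\le n-1$. When $i=0$ and $j=n-1$ (or the corresponding wrap-around cases) one of the pairs is $(E_a,E_a\otimes\omega_X^{-1})$. This pair is not exceptional, and there I would compute directly using $T_{a,a+n}=-K_X$, $\mu(E_a,E_a\otimes\omega_X^{-1})=K_X^2$, and $\chi(E,E\otimes\omega^{-1})=1+K_X^2$. The last value follows from \eqref{eq:riemann_roch} with $s=-K_X$, since $\frac{r^2}{2}\bigl(K_X^2+K_X^2+\frac2{r^2}\bigr)=1+r^2K_X^2$; this must be used with care, as the rank factor appears. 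Alternatively I would cite \cite[\S5]{Perling} for these edge cases.
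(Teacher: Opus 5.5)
Your derivations of \eqref{it:prop:Ts_3}, \eqref{it:prop:Ts_4} and \eqref{eq:chi_T} are correct. So is the polarization argument for \eqref{it:prop:Ts_1} and \eqref{it:prop:Ts_2}. The paper gives no proof of its own and only cites Perling and Kuznetsov; your argument is the standard Riemann--Roch computation behind those results.

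The genuine problem is your final paragraph. The wrap-around cases you single out are not edge cases that a direct computation will settle. They are cases where the claimed vanishing is false, because the two classes being multiplied coincide or are cyclically adjacent:
\begin{itemize}
\item For $(i,j)=(0,n-1)$ you have $T_{-1,0}=T_{n-1,n}$, so the product is $T_{n-1,n}^2$.
\item For $(i,j)=(0,n-2)$ the classes are adjacent, and the product is $1/r_{n-1}^2$ by \eqref{it:prop:Ts_1}.
\item For $(i,j)=(1,n-1)$ the classes are adjacent, and the product is $1/r_0^2$ by \eqref{it:prop:Ts_1}.
\end{itemize}
Concretely, on $\PP^2$ with $(\Oscr,\Oscr(1),\Oscr(2))$ every $T_{i,i+1}$ equals $H$. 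Every product in \eqref{it:prop:Ts_2}, with the indexing $0\le i<j\le n-1$ as written, then equals $H^2=1$. So neither your proposed direct computation with $T_{a,a+n}=-K_X$ nor a citation can produce $0$ there; carrying the computation out just returns $T_{n-1,n}^2$. Item \eqref{it:prop:Ts_2} has to be read as the usual cyclic toric-system condition: $T_{p,p+1}\cdot T_{q,q+1}=0$ whenever $p\not\equiv q,\,q\pm 1 \pmod n$.

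With that reading there is no boundary bookkeeping at all:
\begin{itemize}
\item By $n$-periodicity of the $T_{i,i+1}$ you may take $p<q$ with $p+2\le q\le p+n-2$.
\item Every object entering your polarization identity is then among $E_p,\dots,E_{q+1}$. These are at most $n$ consecutive members of the helix, hence lie in one thread, so all four pairs you use are exceptional.
\item Likewise \eqref{it:prop:Ts_1} only involves $E_{i-1},E_i,E_{i+1}$, which lie in one thread because $n=\rk K_0(X)\ge 3$. So the case $b=a+n$ you mention for the adjacent identity never arises.
\end{itemize}
Your ``main obstacle'' paragraph should therefore be replaced by this observation.

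Finally, your stated value $\chi(E,E\otimes\omega_X^{-1})=1+K_X^2$ is wrong for $r>1$. Your own computation from \eqref{eq:riemann_roch} gives $1+r^2K_X^2$.
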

\begin{definition}
  \label{def:gale_dual}
  Let $V$ be a vector space over a field $k$ and let $(v_i)_{i=1,\ldots,n}\in V$.  Assume that $(v_i)_i$ spans $V$. 
The \emph{Gale dual} of $(V,(v_i)_i)$ is a pair $(W,(w_i)_{i=1,\ldots,n})$ where $W$ is a $k$-vector space and $(w_i)_i \in W$ such that
\begin{enumerate}
\item  $\dim W=n-\dim V$;
\item  $(w_i)_i$ spans $W$;
\item \label{it:def:gale_dual_3} $\sum_{i=1}^n w_i \otimes v_i=0 \text{ in } W \otimes V$.
\end{enumerate}
\end{definition} 
\begin{remark}
One verifies that the Gale dual is unique up to unique isomorphism.
\end{remark}
It is easy to see that in the toric system   corresponding to $\EE$,  as defined in Proposition \ref{prop:Ts}, the vectors $(T_{i,i+1})_{i=0}^{n-1}$ generate
$\Pic(X)_\RR$ as $\RR$-vector space. Thus the Gale dual of $(\Pic(X)_\RR, (T_{i,i+1})_{i=0}^{n-1})$ is well defined (see \cite{KuznetsovToric,Perling}) and we write it as $(L,(l_{i,i+1})_{i=0}^{n-1})$
where indices are taken modulo $n$, i.e. $l_{n-1,0}:=l_{n-1,n}$. 
In Theorem \ref{th:gale} below we give an explicit description
of said Gale dual. 
\begin{remark} \label{rem:gale_setup}
Note that since $n-\dim \Pic(X)_\RR=\rk K_0(X)-(\rk K_0(X)-2)=2$ the Gale dual lives in a two dimensional space.
\end{remark}
\subsection{Gale duals of toric systems and dual exceptional collections}
\label{sec:dual_ex}
In this section we show how the Gale dual $(L,(l_{i,i+1})_i)$ of $(T_{i,i+1})_{i=0}^{n-1}$ can be directly obtained from the dual exceptional collection of $\EE$. This result appears to be new.
We use this to establish a direct connection between  $(L,(l_{i,i+1})_i)$ and the quiver associated to $B(\EE)$ (see \S\ref{sec:jacobi}). This appears to be new as well.
\begin{theorem}\label{th:gale} 
  Let $\EE = (E_0,\dots, E_{n-1})$ be a full exceptional collection of vector bundles on a del Pezzo surface $X$.  Let $(F_{n-1}, \dots, F_0)$ be the right dual collection of $\EE$
  (see Proposition \ref{def:dualcollection}).  For $Y\subset X$ a smooth anti-canonical divisor let $L := K^{\num}_0(Y)_{\RR}$,  $y \in Y$ and 
\begin{equation}
\label{eq:formula}
l_{i,i+1}=-[\Oscr_{y}]+\sum_{j=0}^i r_j[F_{j,Y}] \text{ for all } i \in \{0, \dotsm n-1\}. 
\end{equation}
 The Gale dual of the  toric system $(T_{i,i+1})_{i=0}^{n-1}$ corresponding to $\EE$,  as defined in Proposition \ref{prop:Ts},  can be taken to be $(L,(l_{i,i+1})_{i=0}^{n-1})$. 
 Moreover
\begin{equation}
\label{eq:cycle}
\sum_{j=0}^{n-1}r_j[F_{j,Y}]=0
\end{equation}
so that if we extend the formula \eqref{eq:formula} to all $i\in\ZZ$, we have $l_{i+n,i+n+1}=l_{i,i+1}$.
\end{theorem}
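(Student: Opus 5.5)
The plan is to verify the three conditions of Definition \ref{def:gale_dual} directly, after choosing explicit coordinates on $L$. For a smooth anti-canonical divisor $Y$ (an elliptic curve), numerical K-theory is rank plus degree. So $L=K_0^{\num}(Y)_\RR\cong\RR^2$ via $e\mapsto(r(e),\deg e)$. In these coordinates $[\Oscr_y]=(0,1)$ and $[F_{j,Y}]=(r(F_j),\,d(F_j))$, where $d(F)=c_1(F)\cdot(-K_X)$. This gives $\dim L=2=n-\dim\Pic(X)_\RR$ (Remark \ref{rem:gale_setup}). The main algebraic input is the duality $\chi(F_j,E_i)=\delta_{ij}$ from Proposition \ref{def:dualcollection}. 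It gives two expansion formulas in $K_0^{\num}(X)$ for any class $v$:
\[
v=\sum_i\chi(F_i,v)[E_i],\qquad v=\sum_i\chi(v,E_i)[F_i].
\]

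First I prove \eqref{eq:cycle}. Apply the second expansion to $v=[\Oscr_x]$ for a point $x\in X$. By Riemann–Roch (or Serre duality on the surface) $\chi(\Oscr_x,E_i)=r_i$, hence $[\Oscr_x]=\sum_i r_i[F_i]$. Now restrict to $Y$, choosing $x\notin Y$. The derived restriction of $\Oscr_x$ to $Y$ is then $0$, so $\sum_i r_i[F_{i,Y}]=0$. Equivalently, taking rank and $c_1$ of $[\Oscr_x]=\sum_i r_i[F_i]$ gives
\[
\sum_i r_i\,r(F_i)=0,\qquad \sum_i r_i\,c_1(F_i)=0 .
\]
This yields \eqref{eq:cycle} and the periodicity $l_{i+n,i+n+1}=l_{i,i+1}$.

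Spanning is easy. The differences $l_{i,i+1}-l_{i-1,i}=r_i[F_{i,Y}]$ together with $l_{-1,0}=-[\Oscr_y]$ span $L$, because the $[F_i]$ form a basis of $K_0^{\num}(X)$. Consequently their restrictions realise ranks and degrees spanning $\RR^2$: the class $[\Oscr_X]$ has rank $1$ and $[\Oscr_x]$ has degree direction.

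The main step is condition \eqref{it:def:gale_dual_3}, namely $\sum_i l_{i,i+1}\otimes T_{i,i+1}=0$. I will use Abel summation:
\[
\sum_{i=0}^{n-1}T_{i,i+1}\otimes l_{i,i+1}=-(-K_X)\otimes[\Oscr_y]+\sum_j r_j\,T_{j,n}\otimes[F_{j,Y}],
\]
with $T_{j,n}=\tfrac{c_1(E_0)}{r_0}-K_X-\tfrac{c_1(E_j)}{r_j}$. In the two coordinates this becomes a pair of identities in $\Pic(X)_\RR$:
\[
\sum_j r_j r(F_j)T_{j,n}=0,\qquad \sum_j r_j d(F_j)T_{j,n}=-K_X .
\]
By the cycle identities, the terms involving $c_1(E_0)/r_0-K_X$ drop out. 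What remains is
\[
\sum_j r(F_j)c_1(E_j)=0,\qquad -\sum_j d(F_j)c_1(E_j)=-K_X .
\]
To obtain these I will take $c_1$ of the first expansion $v=\sum\chi(F_i,v)[E_i]$ and apply it to suitable test classes. Pairing with $[\Oscr_x]$ gives ranks. Pairing with $[\Oscr_C]$ for curves $C$ gives degrees, and $\chi(F,\Oscr_C)$ is computed by Riemann–Roch as $r(F)\chi(\Oscr_C)-c_1(F)\cdot C$. The precise choices are the test classes $[\Oscr_X]$ and $[\Oscr_Y]=[\Oscr_X]-[\omega_X]$, together with linearity in $C$; with them both identities should reduce to Riemann–Roch bookkeeping.

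I expect this sign- and duality-bookkeeping to be the main obstacle. In particular, the left/right dual conventions and the precise form of $\chi(\Oscr_C,-)$ must match so that exactly $-K_X$ appears. If the direct computation becomes messy, I would instead check the identity by pairing both sides against an arbitrary $D\in\Pic(X)$, using formula \eqref{eq:chi_rr}.
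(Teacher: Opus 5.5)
Your proposal is correct and is essentially the paper's proof. Both derive the cycle relation from $\sum_j r_j[F_j]=[\text{point}]$. Both reduce the Gale condition by summation by parts to $\sum_i[F_{i,Y}]\otimes c_1(E_i)=-[\Oscr_y]\otimes c_1(\omega_X^{-1})$, and prove it by expanding in the basis $([E_i])_i$ via $\chi(F_i,-)$. The paper phrases this through the codimension filtration on $K_0(X)$, you through rank/degree coordinates on $L$. Your bookkeeping does close: taking $c_1$ of $[\Oscr_x]=\sum_j r(F_j)[E_j]$ and of $[\Oscr_Y]=-\sum_j d(F_j)[E_j]$ yields exactly your two identities.

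Two small slips. First, the test class for the rank identity must be the point class $[\Oscr_x]$, not $[\Oscr_X]$. Second, $[\Oscr_x]$ restricts to $0$ on $Y$, so it cannot supply the degree direction in your spanning argument; $l_{-1,0}=-[\Oscr_y]$ already does.
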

\begin{proof}  First we verify \eqref{eq:cycle}. 
Let $\alpha:Y\hookrightarrow X$ be the inclusion. We claim that 
\begin{equation}
\label{eq:cycle1}
\sum_{j=0}^{n-1} r_j [F_j]=[\alpha_\ast\Oscr_y] \text{ in } K_0(X). 
\end{equation}
Indeed,  it is sufficient to check that \eqref{eq:cycle1} holds after applying $\chi_X(-,[E_i])$ for every $i =0, \dots, n-1$,  but this gives just $r_i = r(E_i)$. 
Since $[\alpha^\ast\alpha_\ast\Oscr_y]=0$, applying $\alpha^\ast$ to \eqref{eq:cycle1} we get  \eqref{eq:cycle}. 

Now we prove the first part.  Note that $\dim K^{\num}_0(Y)_{\RR}=2=n-\dim \Pic(X)_\RR$ (see Remark \ref{rem:gale_setup}) and moreover it is easy to see that ${\alpha^\ast: K_0(X)_\RR \to K^{\num}_0(Y)_{\RR}}$ is surjective.  
So the $[F_{j,Y}]$ span $K_0^{\num}(Y)_{\RR}$.  Therefore $(l_{i,i+1})_i$ (as defined by \eqref{eq:formula}) also spans $K^{\num}_0(Y)_{\RR}$ and hence
it remains to prove that
\[
\sum_{i=0}^{n-1} l_{i,i+1}\otimes T_{i,i+1} = 0 \quad \text{ in } K^{\num}_0(Y)_{\RR}\otimes \Pic(X)_\RR.
\]
or concretely
\begin{equation}
\label{eq:gale_dual}
\sum_{i=0}^{n-1} l_{i,i+1} \otimes (c_1(E_{i+1})/r_{i+1}-c_1(E_i)/r_i)=0,
\end{equation}
where $E_{n}:= E_0 \otimes \omega_X^{-1}$. Let us write $E'_i=E_{i\mod n}$. 
Then \eqref{eq:gale_dual} can be
rewritten as 
\[
\sum_{i=0}^{n-1} l_{i,i+1} \otimes \bigg(\frac{c_1(E_{i+1}')}{r_{i+1}}-\frac{c_1(E_i')}{r_i} \bigg) = -l_{n-1,0}\otimes c_1(\omega_X^{-1})
\]
or equivalently 
\[
\sum_{i=0}^{n-1} \frac{(l_{i-1,i}-l_{i,i+1})}{r_i} \otimes c_1(E_{i})=-l_{n-1,0}\otimes c_1(\omega_X^{-1}).
\]
So we have to verify
\begin{equation}
\label{eq:rewritten}
\sum_{i=0}^{n-1} [F_{i,Y}] \otimes c_1(E_{i})=-[\Oscr_y]\otimes c_1(\omega_X^{-1}) \quad \text{ in } K^{\num}_0(Y)_{\RR}\otimes \Pic(X)_\RR.
\end{equation}
Consider the standard filtration on $K_0(X)_\RR$ by codimension of supports $F^\ast(X):=F^\ast K_0(X)_\RR$.
 We have 
\begin{align*}
F^0(X)/F^1(X)&=\RR[\Oscr_X]\\
F^2(X)&=\RR[\alpha_\ast\Oscr_y]
\end{align*}
for an arbitrary $y\in Y$.  Moreover there is a natural
identification
\[
F^1 K_0(X)_\RR/F^2 K_0(X)_\RR\cong \Pic(X)_\RR
\]
such that for $E\in K_0(X)_\RR$ we have
\begin{align*}
c_1(E)\cong [E]-\rk(E)[\Oscr_X] \qquad \mod F^2(X). 
\end{align*}
In particular
\begin{equation} \label{eq:c1}
c_1(\omega_X^{-1})\cong [\Oscr_X(Y)]-[\Oscr_X] \cong  [\alpha_\ast \Oscr_Y(Y)]\cong [\alpha_\ast \Oscr_Y]\qquad \mod F^2(X).
\end{equation}
By \eqref{eq:cycle} we have
\[
\sum_{i=0}^{n-1} [F_{i,Y}] \otimes r_i[\Oscr_X]=0. 
\]
Hence by \eqref{eq:c1} we find that \eqref{eq:rewritten} is equivalent to
\[
\sum_{i=0}^{n-1} [F_{i,Y}] \otimes [E_{i}]=-[\Oscr_y]\otimes [\alpha_\ast\Oscr_Y] \quad \mod K^{\num}_0(Y)_{\RR} \otimes \RR[\alpha_\ast\Oscr_y]
\]
in $K^{\num}_0(Y)_{\RR} \otimes K_0(X)_{\RR}$. In fact we have 
\[
\sum_{i=0}^{n-1} [F_{i,Y}] \otimes [E_{i}]=-[\Oscr_y]\otimes [\alpha_\ast\Oscr_Y] + [\Oscr_Y] \otimes [\alpha_\ast\Oscr_y].
\]
Indeed, after applying $\Id\otimes \chi_X([F_i], -)$ this is equivalent to
\[
[F_{i,Y}]=\chi(F_{i,Y})[\Oscr_y]+\rk(F_{i,Y})[\Oscr_Y]
\]
which is clear.
\end{proof}

\begin{lemma} \label{lem:hisone}
Let $(L,(l_{i,i+1})_{i=0}^{n-1})$ be as in Theorem \ref{th:gale}. Then for all $i$ we have
\[
\chi_Y(l_{i-1,i},l_{i,i+1})=r_i^2.
\]
\end{lemma}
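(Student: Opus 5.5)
We reduce the statement to an Euler-form computation on $Y$. By \eqref{eq:formula},
\[
l_{i,i+1}=l_{i-1,i}+r_i[F_{i,Y}],
\]
with the convention that $l_{-1,0}=l_{n-1,n}=l_{n-1,0}$, which \eqref{eq:cycle} justifies. Hence
\[
\chi_Y(l_{i-1,i},l_{i,i+1})=\chi_Y(l_{i-1,i},l_{i-1,i})+r_i\,\chi_Y(l_{i-1,i},[F_{i,Y}]).
\]
Since $Y$ is an elliptic curve, $\chi_Y$ on $K_0^{\num}(Y)$ is antisymmetric, given by $\chi_Y(a,b)=\rk(a)\deg(b)-\rk(b)\deg(a)$. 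Therefore $\chi_Y(l,l)=0$ and it remains to show
\[
\chi_Y(l_{i-1,i},[F_{i,Y}])=r_i .
\]

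Next we expand $l_{i-1,i}=-[\Oscr_y]+\sum_{j<i} r_j[F_{j,Y}]$. The term $\chi_Y(-[\Oscr_y],[F_{i,Y}])$ equals $\rk(F_{i,Y})=\rk F_i$, using antisymmetry and $\chi_Y(\Oscr_y,G)=-\rk G$ (Serre duality on $Y$). For the remaining terms we use \cite[Lemma 3.12]{NVdB2}, as in the proof of Lemma \ref{lm:arrowschiY}. For an exceptional pair $(A,B)$ on $X$ it gives $\chi_Y(A_Y,B_Y)=\chi_X(A,B)-\chi_X(B,A)=\chi_X(A,B)$. The dual collection is ordered $(F_{n-1},\dots,F_0)$, so for $j<i$ the pair $(F_i,F_j)$ is exceptional, and consequently $\chi_Y(F_{j,Y},F_{i,Y})=-\chi_X(F_i,F_j)$. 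The target identity thus becomes
\[
\rk F_i-\sum_{j<i} r_j\,\chi_X(F_i,F_j)=r_i .
\]

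To verify this we use the relation \eqref{eq:cycle1}, $\sum_j r_j[F_j]=[\alpha_\ast\Oscr_y]$ in $K_0(X)$. Applying $\chi_X(F_i,-)$ gives
\[
\sum_j r_j\chi_X(F_i,F_j)=\chi_X(F_i,\alpha_\ast\Oscr_y)=\rk F_i .
\]
For $j>i$ we have $\chi_X(F_i,F_j)=0$ because $(F_j,F_i)$ is exceptional with $F_j$ first, and $\chi_X(F_i,F_i)=1$. Hence $\sum_{j<i} r_j\chi_X(F_i,F_j)+r_i=\rk F_i$, which is exactly the required identity.

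The main obstacle is sign and ordering bookkeeping. One must fix which of $\chi_X(F_i,F_j)$ and $\chi_X(F_j,F_i)$ vanishes in the right dual ordering, and the sign convention of the formula $\chi_Y=\chi_X(A,B)-\chi_X(B,A)$. The F_j may be shifted complexes rather than sheaves, but ranks and $\chi$ are additive, so this does not affect the argument. Finally, the case $i=0$ needs separate care: the sum over $j<0$ is empty, and we must check that the periodicity convention $l_{-1,0}=l_{n-1,0}$ agrees with $-[\Oscr_y]$. It does, because of \eqref{eq:cycle}.
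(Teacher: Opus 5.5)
Your proof is correct, but it follows a different route from the paper. The paper cites \cite[Proposition 4.17]{KuznetsovToric}. Since $(l_{i,i+1})_i$ is a Gale dual of a toric system by Theorem \ref{th:gale}, this gives $\chi_Y(l_{i-1,i},l_{i,i+1})=hr_i^2$ for a single nonzero constant $h$. The paper then fixes $h=1$ by evaluating only at $i=n-1$. There $F_{n-1}=E_{n-1}$ by Remark \ref{rem:extreme_dual} and $l_{n-1,0}=-[\Oscr_y]$, so everything collapses to $\chi_Y(-[\Oscr_y]-r_{n-1}[E_{n-1,Y}],-[\Oscr_y])=r_{n-1}^2$.

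You instead verify the identity for every $i$ directly. You reduce it to $\rk F_i=\sum_{j\le i}r_j\chi_X(F_i,F_j)$ and read this off from \eqref{eq:cycle1} by pairing with $F_i$. This uses the semi-orthogonality of the dual collection and the restriction formula $\chi_Y(A_Y,B_Y)=\chi_X(A,B)-\chi_X(B,A)$. Your sign and ordering bookkeeping is right: $\chi_X(F_i,F_j)=0$ for $j>i$, and $\chi_Y(F_{j,Y},F_{i,Y})=-\chi_X(F_i,F_j)$ for $j<i$.

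The paper's argument is shorter, but it needs both the Gale-duality assertion of Theorem \ref{th:gale} and Kuznetsov's general result. Yours is self-contained. It uses only the explicit formula \eqref{eq:formula} together with \eqref{eq:cycle} and \eqref{eq:cycle1}, so it never needs to know that the $l_{i,i+1}$ form a Gale dual. As a by-product it proves, rather than assumes, the proportionality and positivity statement of Kuznetsov's proposition in this setting, with constant $1$.
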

\begin{proof} 
By \cite[Proposition 4.17]{KuznetsovToric} there exists a non-zero $h\in \RR$ such that
\begin{equation}
\label{eq:volume}
\chi_Y(l_{i-1,i},l_{i,i+1})=hr_i^2.
\end{equation}
for all $i$.  We only need to compute $h$ for a single $i$.  Let $i=n-1$.  It follows from \eqref{eq:cycle} and Remark \ref{rem:extreme_dual}
that 
\begin{align*}
l_{n-2,n-1} &= -[\Oscr_y]-r_{n-1} [F_{n-1,Y}]\\
&=-[\Oscr_y]-r_{n-1} [E_{n-1,Y}]
\end{align*}
so that
\begin{align*}
\chi_Y(l_{n-2,n-1},l_{n-1,0})&=\chi_Y(-[\Oscr_y]-r_{n-1} [E_{n-1,Y}], -[\Oscr_y])\\
&=r_{n-1}^2. \qedhere
\end{align*}
\end{proof}

\begin{corollary} \label{cor:choose}
Let $(L,(l_{i,i+1})_i)$ be the Gale dual of $(\Pic(X)_\RR, (T_{i,i+1})_{i=0,\ldots, n-1})$ (not necessarily the model of Theorem \ref{th:gale}).  Choose a volume form $\omega \in\Lambda^2 L \cong \RR$ such
that $\omega(l_{i-1,i},l_{i,i+1})>0$ for all $i$ (see  \cite[Proposition 4.17]{KuznetsovToric}). Then we have for $i>j$.
\begin{equation}
\label{eq:chi}
\chi_X(F_i,F_j)= \chi_Y([F_{i,Y}],[F_{j,Y}]) = \omega(\overline{m}_i,\overline{m}_{j})
\end{equation}
where
\[
\overline{m}_i=\frac{l_{i,i+1}-l_{i-1,i}}{\sqrt{\omega(l_{i-1,i},l_{i,i+1})}}.
\]
\end{corollary}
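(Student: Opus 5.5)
The plan is to prove the statement first in the explicit model of Theorem \ref{th:gale}, and then transfer it to an arbitrary Gale dual using that the Gale dual is unique up to unique isomorphism.

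\emph{First equality.} The equality $\chi_X(F_i,F_j)=\chi_Y([F_{i,Y}],[F_{j,Y}])$ for $i>j$ is the same comparison used in the proof of Lemma \ref{lm:arrowschiY}, namely \cite[Lemma 3.12]{NVdB2}, applied to the dual collection. I will simply cite it.

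\emph{Second equality in the model of Theorem \ref{th:gale}.} Take $L=K_0^{\num}(Y)_\RR$ with the vectors $l_{i,i+1}$ given by \eqref{eq:formula}.
\begin{itemize}
\item Since $Y$ is an elliptic curve, $\chi_Y$ is antisymmetric on $K_0^{\num}(Y)_\RR$: indeed $\chi_Y(a,b)=\rk(a)\deg(b)-\rk(b)\deg(a)$. It is also non-degenerate on this 2-dimensional space, so $\chi_Y$ defines a volume form on $L$.
\item By Lemma \ref{lem:hisone}, $\chi_Y(l_{i-1,i},l_{i,i+1})=r_i^2>0$ for all $i$, so $\omega=\chi_Y$ is an admissible choice of volume form.
\item From \eqref{eq:formula} we get $l_{i,i+1}-l_{i-1,i}=r_i[F_{i,Y}]$ for $1\le i\le n-1$.
\item For $i=0$ we use the periodic extension. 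By \eqref{eq:cycle}, $l_{-1,0}=l_{n-1,n}=-[\Oscr_y]$, so again $l_{0,1}-l_{-1,0}=r_0[F_{0,Y}]$.
\end{itemize}
Hence, for every $i$,
\[
\overline{m}_i=\frac{r_i[F_{i,Y}]}{\sqrt{r_i^2}}=[F_{i,Y}],
\]
since $r_i>0$. Therefore $\omega(\overline{m}_i,\overline{m}_j)=\chi_Y([F_{i,Y}],[F_{j,Y}])$, as required.

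\emph{Independence of the model.} Let $(L',(l'_{i,i+1})_i)$ be any other Gale dual. The unique isomorphism $\phi:L\to L'$ sends $l_{i,i+1}\mapsto l'_{i,i+1}$, so we may transport everything to $L$. Any other volume form there is $c\,\chi_Y$ with $c\in\RR^\ast$. The positivity requirement $\omega(l_{i-1,i},l_{i,i+1})>0$ together with Lemma \ref{lem:hisone} forces $c>0$. Replacing $\omega$ by $c\omega$:
\begin{itemize}
\item each $\overline{m}_i$ is divided by $\sqrt{c}$;
\item so $\omega(\overline{m}_i,\overline{m}_j)$ changes by the factor $c\cdot(\sqrt{c})^{-2}=1$.
\end{itemize}
Thus the right-hand side of \eqref{eq:chi} does not depend on the choice of $\omega$ or of the model.

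The only delicate points are the index bookkeeping at $i=0$, handled by \eqref{eq:cycle}, and the sign of $c$, handled by the positivity normalization. I do not expect a serious obstacle beyond these.
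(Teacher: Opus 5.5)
Your proof is correct and follows the paper's own argument: work in the model of Theorem \ref{th:gale}, note that the right-hand side of \eqref{eq:chi} is invariant under positive rescaling of $\omega$ while Lemma \ref{lem:hisone} forces $\omega=\chi_Y$, compute $\overline{m}_i=[F_{i,Y}]$, and cite \cite[Lemma 3.12]{NVdB2} for the first equality. That citation applies because $(F_i,F_j)$ is an exceptional pair in the dual collection when $i>j$. Your explicit handling of the index $i=0$ via \eqref{eq:cycle}, and of the transport between different Gale-dual models, just spells out steps the paper leaves implicit.
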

\begin{proof} We may use the concrete realisation of $(L,(l_{i,i+1})_i)$ constructed in Theorem \ref{th:gale}. Moreover,  $\omega$ is a scalar multiple of $\chi_Y$, since any two volume forms differ only by a scalar.  Since the right-hand side
of \eqref{eq:chi} is independent of $\omega$,  we may assume $\omega\in \{-\chi_Y,\chi_Y\}$ and by Lemma \ref{lem:hisone} we then must take $\omega=\chi_Y$.
Then we compute
\begin{align*}
\omega(\overline{m}_i,\overline{m}_j)&=\frac{\chi_Y(r_i[F_{i,Y}],r_j[F_{j,Y}])}{\sqrt{\omega(l_{i-1,i},l_{i,i+1})}\sqrt{\omega(l_{j-1,j},l_{j,j+1})}}= \frac{\chi_Y(r_i[F_{i,Y}],r_j[F_{j,Y}])}{r_ir_j} =\\&=\chi_Y([F_{i,Y}],[F_{j,Y}]),
\end{align*}
where we have used Lemma \ref{lem:hisone} again for the second equality.  Since $i>j$ and therefore $(F_i,F_j)$ is an exceptional pair, one has $\chi_Y([F_{i,Y}],[F_{j,Y}])=\chi_X(F_{i},F_{j})$ (see e.g. \cite[Lemma 3.12]{NVdB2}).
\end{proof}

\begin{lemma}
\label{lem:canonical}
Let $(L,(l_{i,i+1})_i)$ be as in Theorem \ref{th:gale}. 
Then the lattice $L_{\ZZ}$ in $L$ spanned by $(l_{i,i+1})_i$ is $K^{\num}_0(Y)$.
\end{lemma}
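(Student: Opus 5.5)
The inclusion $L_\ZZ\subseteq K^{\num}_0(Y)$ is immediate from \eqref{eq:formula}: each $l_{i,i+1}$ is an integral combination of the classes $[\Oscr_y]$ and $[F_{j,Y}]$. The content of the lemma is the reverse inclusion. I will use that $Y$ is an elliptic curve, so $K^{\num}_0(Y)\cong\ZZ^2$ via $(\rk,\deg)$ with basis $[\Oscr_Y],[\Oscr_y]$. Moreover $\chi_Y(a,b)=\rk(a)\deg(b)-\deg(a)\rk(b)$ is a unimodular volume form on it. Hence the index of $L_\ZZ$ equals $\gcd$ of the numbers $\chi_Y(l_{a,a+1},l_{b,b+1})$, and it suffices to show $[\Oscr_Y]$ and $[\Oscr_y]$ lie in $L_\ZZ$.

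First, by \eqref{eq:cycle} we have $l_{n-1,0}=-[\Oscr_y]$, so $[\Oscr_y]\in L_\ZZ$. Consequently $L_\ZZ$ is the lattice spanned by $[\Oscr_y]$ and the differences $l_{i,i+1}-l_{i-1,i}=r_i[F_{i,Y}]$. So the problem is whether the $r_i[F_{i,Y}]$, together with $[\Oscr_y]$, produce a class of rank one. Equivalently, the $\gcd$ of the integers $r_i\rk(F_i)$ should be $1$.

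My plan is to prove this by reducing to collections of line bundles.

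\textbf{Line bundle case.} If all $r_i=1$, then $L_\ZZ$ contains $[\Oscr_y]$ and all $[F_{i,Y}]$. The $F_i$ form a basis of $K_0(X)$, so $L_\ZZ$ contains $\alpha^*K_0(X)$, in particular $\alpha^*[\Oscr_X]=[\Oscr_Y]$. Thus $L_\ZZ=K^{\num}_0(Y)$.

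\textbf{Reduction.} By \cite[Theorem 10.9]{Perling}, used already in Proposition \ref{prop:numericalcol}, every full exceptional collection is connected to a collection of line bundles by braid mutations. It therefore suffices to show that the lattice $L_\ZZ$ does not change under a single mutation $\sigma_i^{\pm1}$. I would verify this directly from \eqref{eq:formula}.

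Mutating $(E_{i-1},E_i)$ changes only the two ranks $r_{i-1},r_i$ and the corresponding two dual objects. On the dual side this is a mutation of $(F_i,F_{i-1})$. By \eqref{eq:formula} the partial sums defining $l_{j,j+1}$ are unchanged for $j\neq i-1$. So only $l_{i-1,i}$ changes.

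I expect the new $l_{i-1,i}'$ to be expressible as $l'_{i-1,i}=\big(l_{i-2,i-1}+l_{i,i+1}\big)\cdot c-l_{i-1,i}$, or a similar relation. Here $c$ would come from the mutation triangle, which on $K_0$ reads $[L_{F}G]=[G]-\chi(F,G)[F]$. The coefficients would be read off using $\chi_Y(l_{j-1,j},l_{j,j+1})=r_j^2$ (Lemma \ref{lem:hisone}) and the toric-system mutation rule of \cite{HillePerling,KuznetsovToric}.

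The main obstacle is checking that this relation has \emph{integer} coefficients in both directions, so that the lattice is preserved rather than merely the rational span. I would attempt this via the K-theoretic formula above: $r'_{i-1}[F'_{\cdot,Y}]$ and $r'_i[F'_{\cdot,Y}]$ are integral combinations of $[F_{i-1,Y}],[F_{i,Y}]$. Then I would compare with $l_{i-2,i-1}$ and $l_{i,i+1}$, whose difference is $r_{i-1}[F_{i-1,Y}]+r_i[F_{i,Y}]$; this difference is unchanged because the total class $[\alpha_*\Oscr_y]=\sum r_j[F_j]$ from \eqref{eq:cycle1} is mutation invariant. If this integrality computation becomes unwieldy, the fallback is an index argument. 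Compute the index of $L_\ZZ$ as the $\gcd$ of the $\chi_Y(l_{a,a+1},l_{b,b+1})$ via \eqref{eq:chi}. These numbers are partial sums of Gram-matrix entries weighted by ranks, and integers that are mutation invariant. Then evaluate them in the line bundle case, where the index is $1$.
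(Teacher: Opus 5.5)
\textbf{There is a genuine gap.} Your opening steps are correct: the inclusion $L_\ZZ\subseteq K^{\num}_0(Y)$, the observation $l_{n-1,0}=-[\Oscr_y]\in L_\ZZ$, the reduction to $\gcd_i\bigl(r_i\rk(F_i)\bigr)=1$, and the line bundle case. The failure is in the reduction step. You assert, but do not prove, that $L_\ZZ$ is unchanged by a single braid mutation, and the local computation you sketch cannot establish it.

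Use the normalised form and Lemma \ref{lm:ts_and_ls_mut}, or directly \eqref{eq:formula} together with the numerical dual rule $[F'_i]=[F_{i+1}]$, $[F'_{i+1}]=[F_i]+h[F_{i+1}]$ with $h=\chi(E_i,E_{i+1})$. Then the only new generator under $\tilde\sigma_{i+1}$ is $l'_{i,i+1}=l_{i-1,i}+\frac{r_{i+1}-hr_i}{r_{i+1}}\,m_{i+1}=l_{i-1,i}+(r_{i+1}-hr_i)[F_{i+1,Y}]$. Here the coefficient of $m_{i+1}$ is in general not an integer; for example $r_i=1$, $r_{i+1}=2$, $h=3$ occurs on $\PP^2$.

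One checks that each of the two inclusions between the old and new lattices is equivalent to $hr_i[F_{i+1,Y}]$ lying in the relevant lattice. Knowing that the new $r'_j[F'_{j,Y}]$ are integral combinations of $[F_{i,Y}],[F_{i+1,Y}]$ does not help. The old lattice contains only $r_i[F_{i,Y}]$ and $r_{i+1}[F_{i+1,Y}]$ (besides the other generators), not $[F_{i,Y}]$ and $[F_{i+1,Y}]$ themselves. Nothing in the local data forces $hr_i[F_{i+1,Y}]\in L_\ZZ$. It holds only because $L_\ZZ$ is already all of $K^{\num}_0(Y)$, which is what you are trying to prove.

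Your fallback is also wrong as stated. The numbers $\chi_Y(l_{a,a+1},l_{b,b+1})$ are not mutation invariant: by Lemma \ref{lem:hisone}, $\chi_Y(l_{i-1,i},l_{i,i+1})=r_i^2$, and the ranks change under mutation. In addition, Perling's mutation sequence passes through collections containing objects of rank $0$, which lie outside the scope of Theorem \ref{th:gale} as stated.

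The missing idea is one you already half-state, and it makes mutations unnecessary. Since $\chi_Y$ is unimodular on $K^{\num}_0(Y)$, the index $d=[K^{\num}_0(Y):L_\ZZ]$ divides $\chi_Y(u,v)$ for all $u,v\in L_\ZZ$. Apply this to adjacent generators, using the very lemma you quote: $\chi_Y(l_{i-1,i},l_{i,i+1})=r_i^2$, so $d\mid r_i^2$ for every $i$. On the other hand $\gcd_i r_i=1$, for three reasons taken together:
\begin{itemize}
\item $r_i=\chi_X(E_i,\Oscr_p)$ for a point $p$;
\item the classes $[E_i]$ form a basis of $K_0(X)$;
\item $\chi_X(-,\Oscr_p)$ is surjective onto $\ZZ$, since $\chi_X(\Oscr_X,\Oscr_p)=1$.
\end{itemize}
Hence $d=1$. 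This is the paper's proof, and it needs neither the reduction to line bundles nor any mutation invariance.
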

\begin{proof} Clearly $L_{\ZZ}\subset K^{\num}_0(Y)$.  For a point $p \in X$ the map ${\chi_X(-,[\Oscr_p]):K_0(X)\r \ZZ}$ is surjective since $\chi_X([\Oscr_X],[\Oscr_p])=1$.  One has $r_i = \chi_X([E_i],[\Oscr_p])$,  hence
we conclude $\gcd(\{r_i\}_{i=0}^{n-1})=1$.  If $[K^{\num}_0(Y):L_{\ZZ}]:=d$,  then $d$ divides $\chi_Y(-,-)$ when evaluated on $L_{\ZZ}$.
It follows from Lemma \ref{lem:hisone} that $d$ divides $r_i^2$ for all $i$ which contradicts the fact that the $r_i$ are coprime, unless $d=1$.
\end{proof}

\begin{theorem} \label{thm:achievement}
Let $(L,(l_{i,i+1})_{i=0}^{n-1})$ be the Gale dual of $(\Pic(X)_\RR, (T_{i,i+1})_{i=0}^{n-1})$ and let $L_{\ZZ}$ be the lattice generated by $(l_{i,i+1})_i$.
Let $\omega$ be the volume form on $L$ such that the fundamental volume of the lattice $L_{\ZZ}$ is $1$, satisfying in addition $\omega(l_{i-1,i},l_{i,i+1})>0$ for some or,  equivalently,  all $i$.
Then
\begin{enumerate}
\item \label{it:thm2_1} $m_i = l_{i,i+1}-l_{i-1,i}$ is $r_i$ times a primitive element of $L_{\ZZ}$ (hence $(r_i)_i$ are determined by $(l_{i,i+1})_i$).
\item \label{it:thm2_2} $l_{i,i+1}$ is primitive in $L_{\ZZ}$.
\item \label{it:thm2_3} $r_i^2=\omega(l_{i-1,i}, l_{i,i+1})$.
\item  \label{it:thm2_4}  For $i<j$ put $\chi_{ij}=\chi_X(E_i,E_j)$. Then
\begin{equation}
\label{eq:adjacent_chi}
\chi_{i,i+1}=\frac{\omega(m_i,m_{i+1})}{r_ir_{i+1}}
\end{equation}
and 
\begin{equation}
\label{eq:additivity}
\chi_{i,j}= r_ir_j \sum_{k=i}^{j-1} \frac{\chi_{k,k+1}}{r_kr_{k+1}} = r_ir_j \sum_{k=i}^{j-1} \frac{\omega(m_k,m_{k+1})}{(r_kr_{k+1})^2}.
\end{equation}
\item  \label{it:thm2_5}  The number of arrows from $i$ to $j$ in the quiver (see Lemma \ref{lm:arrowschiY}) corresponding to $(E_i)_i$ is equal to
\[
\frac{\omega(m_i,m_{j})}{r_ir_{j}}
\]
with the convention that a negative number corresponds to arrows from $j$ to~$i$.
\end{enumerate}
\end{theorem}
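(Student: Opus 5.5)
By uniqueness of the Gale dual up to unique isomorphism, I would work throughout in the concrete model of Theorem \ref{th:gale}: $L=K_0^{\num}(Y)_\RR$ and $l_{i,i+1}=-[\Oscr_y]+\sum_{j\le i} r_j[F_{j,Y}]$. By Lemma \ref{lem:canonical}, $L_\ZZ=K_0^{\num}(Y)$. On an elliptic curve $Y$, $\chi_Y$ is the antisymmetric form $\chi_Y(a,b)=\rk(a)\deg(b)-\deg(a)\rk(b)$, and it is unimodular on $K_0^{\num}(Y)\cong\ZZ^2$. Hence $\omega=\pm\chi_Y$, and Lemma \ref{lem:hisone} (positivity $\chi_Y(l_{i-1,i},l_{i,i+1})=r_i^2>0$) forces $\omega=\chi_Y$. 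This already gives \eqref{it:thm2_3}.

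For \eqref{it:thm2_1} I would note that $m_i=r_i[F_{i,Y}]$ by \eqref{eq:formula}, using \eqref{eq:cycle} for the wrap-around at $i=0$. It then suffices to show that $[F_{i,Y}]$ is primitive. Since $F_i$ is exceptional (up to shift, a vector bundle or a torsion sheaf $\Oscr_C(d)$), $\gcd(\rk F_i, \deg F_{i,Y})=\gcd(r(F_i),d(F_i))$. Exceptionality gives $\chi(F_i,F_i)=1$, and Riemann--Roch in the form \eqref{eq:chi_rr} shows that a common divisor of $r$ and $c_1\cdot K_X$ divides $1$. Alternatively, one can use that the restriction of an exceptional object to $Y$ is simple, hence of coprime rank and degree. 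For \eqref{it:thm2_2} I would use $\omega(l_{i-1,i},l_{i,i+1})=\omega(l_{i,i+1}-m_i,l_{i,i+1})=-\omega(m_i,l_{i,i+1})$. If $l_{i,i+1}=d\,v$, then $d$ divides $r_i^2$, and the same argument with $l_{i,i+1}$ and $l_{i+1,i+2}$ shows that $d$ divides $r_{i+1}^2$. That is not yet enough. A cleaner route is $l_{n-1,n}=-[\Oscr_y]$, which is primitive, combined with the compatibility of primitivity with the mutation/rotation action. This is the step where I am least sure. What I would try first is to show that $l_{i,i+1}$ equals, up to sign, the class of the restriction to $Y$ of a suitable exceptional object, namely the mutated object $L_{F_0}\cdots$ obtained when $-[\Oscr_y]$ is shifted through the dual collection. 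Such a class is primitive by the coprimality argument above.

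For \eqref{it:thm2_5}, Lemma \ref{lm:arrowschiY} gives the number of arrows as $\chi_Y([F_{i,Y}],[F_{j,Y}])=\omega(m_i,m_j)/(r_ir_j)$. For \eqref{eq:adjacent_chi}, I would use $\chi_Y(E_{i,Y},E_{j,Y})=\chi_X(E_i,E_j)$ for an exceptional pair (\cite[Lemma 3.12]{NVdB2}) together with \eqref{eq:chi_slope}, $\chi_{ij}=r_ir_j\mu(E_i,E_j)$. This makes \eqref{eq:additivity} automatic, because slopes are additive along the chain. It therefore remains to identify $\mu(E_i,E_{i+1})=\omega(m_i,m_{i+1})/(r_ir_{i+1})^2$, that is, $\chi_Y(F_{i,Y},F_{i+1,Y})=r_ir_{i+1}\mu(E_i,E_{i+1})=\chi_{i,i+1}$ for adjacent indices. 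I expect this to follow from the local relation between $(E_i,E_{i+1})$ and the dual pair $(F_{i+1},F_i)$: mutating the pair changes neither $\chi$ nor its sign convention. Alternatively it follows from $\mu(E_i,E_{i+1})=-K_X\cdot T_{i,i+1}$ and the Gale relation paired against $K_X$, by evaluating \eqref{eq:rewritten} with $\Id\otimes(\cdot K_X)$.

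The main obstacle is this last identification and the primitivity in \eqref{it:thm2_2}. For the identification I would go through the Gale relation, since that is purely linear algebra once \eqref{eq:rewritten} is available.
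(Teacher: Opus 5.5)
Your architecture is the paper's, and those steps are fine: work in the model of Theorem~\ref{th:gale}; use Lemma~\ref{lem:canonical}, unimodularity of $\chi_Y$ and Lemma~\ref{lem:hisone} to get $\omega=\chi_Y$ and (3); read off $m_i=r_i[F_{i,Y}]$ via \eqref{eq:cycle}; get (5) from Lemma~\ref{lm:arrowschiY}; and get \eqref{eq:additivity} from \eqref{eq:chi_slope}. There are two problems with (1) and (2).

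\emph{Item (1).} Your primary argument is false: numerical exceptionality does not force $\gcd(r,d)=1$. On $X_1$, in the basis \eqref{eq:picbasis}, take the class $e$ with $(r,c_1,\chi)=(7,H+4E_1,6)$. By \eqref{eq:chi_rr} it has $\chi(e,e)=42+42-49+15-49=1$, yet $r=7$ and $d=-K_X\cdot c_1=7$. You need your alternative, which is the paper's argument: $\RHom_Y(F_{i,Y},F_{i,Y})\cong\CC\oplus\CC[-1]$, so $F_{i,Y}$ is spherical on the elliptic curve $Y$ and its (rank, degree) vector is primitive.

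\emph{Item (2).} You name the right idea but do not carry it out, and the detour through mutated objects is unnecessary. The toric system of the rotated collection $(E_1,\ldots,E_{n-1},E_0\otimes\omega_X^{-1})$ is $(T_{1,2},\ldots,T_{n-1,0},T_{0,1})$, because $s(-,-)$ is unchanged when both arguments are twisted by $\omega_X^{-1}$. By uniqueness of the Gale dual, its Gale dual is $(l_{1,2},\ldots,l_{n-1,0},l_{0,1})$, which spans the same lattice. After $i+1$ rotations, $l_{i,i+1}$ sits in the last slot. Applying Theorem~\ref{th:gale} to the rotated collection (still a full exceptional collection of vector bundles) identifies it with $-[\Oscr_y]$, which is primitive in $K^{\num}_0(Y)=L_\ZZ$.

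\emph{Equation \eqref{eq:adjacent_chi}.} Your reduction to $\chi_Y([F_{i,Y}],[F_{i+1,Y}])=\chi_{i,i+1}$ is correct, but you only say you expect it. Your second route does not work. Pairing the Gale relation with $K_X$ gives the single identity $\sum_i\mu(E_i,E_{i+1})\,l_{i,i+1}=0$ (with $E_n:=E_0\otimes\omega_X^{-1}$) in the two-dimensional space $L$. That cannot by itself determine the $n$ individual slopes.

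The missing step is short:
\begin{itemize}
\item Since $(F_{i+1},F_i)$ is an exceptional pair, \cite[Lemma 3.12]{NVdB2} and antisymmetry of $\chi_Y$ give $\chi_Y([F_{i,Y}],[F_{i+1,Y}])=-\chi_X(F_{i+1},F_i)$.
\item Since $\chi(F_j,E_k)=\delta_{jk}$, one has $[F_j]=\sum_k (M^{-1})_{jk}[E_k]$, where $M$ is the upper unitriangular Gram matrix of $\EE$.
\item Hence $\chi_X(F_{i+1},F_i)=(M^{-1})_{i,i+1}=-M_{i,i+1}=-\chi_{i,i+1}$.
\end{itemize}
This is what the paper means when it calls \eqref{eq:adjacent_chi} the special case $j=i+1$ of (5).
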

\begin{proof} By Lemma \ref{lem:canonical} we may,  and we will, assume that $L_\ZZ=K^{\num}_0(Y)$ and $\omega=\chi_Y$.
Then \eqref{it:thm2_1} follows from \eqref{eq:formula} and the fact that $F_{j,Y}$ are spherical and hence have primitive $(\text{rank}, \text{degree})$-vectors.

To prove \eqref{it:thm2_2} we note that $(\Pic(X)_\RR, (T_{i,i+1})_{i=0}^{n-1})$ is compatible with rotation of $(E_i)_i$ and hence so is its Gale dual
$(L,(l_{i,i+1})_i)$. Hence we may assume $i=n-1$. It follows from \eqref{eq:formula} that $l_{n-1,0}=-[\Oscr_p]$ which is clearly primitive.

\eqref{it:thm2_3} is just Lemma \ref{lem:hisone}.

\eqref{it:thm2_5} follows from Corollary \ref{cor:choose} and Lemma \ref{lm:arrowschiY} taking \eqref{it:thm2_3} into account.

\eqref{eq:adjacent_chi} in \eqref{it:thm2_4} is a special case of \eqref{it:thm2_5}. \eqref{eq:additivity} in \eqref{it:thm2_4} follows from \eqref{eq:chi_slope}.
\end{proof}

\begin{definition} \label{def:normalized}
The volume form $\omega$ as introduced in Theorem \ref{thm:achievement} will be called the \emph{normalised volume form} on $L$.
\end{definition}
Below we will use the following geometric reinterpretation of Theorem \ref{thm:achievement}(\ref{it:thm2_3},\ref{it:thm2_4}).
\begin{lemma}\label{lm:chiareas} Let $\omega$ be an a volume form on $L$ such that $\omega(l_{i-1,i},l_{i,i+1})>0$.
Define the triangles $A = \conv(0,l_{i-1,i},l_{i,i+1})$, $B = \conv(0,l_{i,i+1},l_{i+1,i+2})$, $C= \conv(l_{i-1,i},l_{i,i+1},l_{i+1,i+2})$. Then 
\begin{equation}
  \label{eq:area_formula}
\chi(E_i,E_{i+1}) = \frac{\area(C)}{\sqrt{\area(A)\area(B)}}.
\end{equation}
where $\Delta(C)$ should be interpreted as a signed area.
\end{lemma}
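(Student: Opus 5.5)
The plan is to reduce \eqref{eq:area_formula} to Theorem \ref{thm:achievement}, items \eqref{it:thm2_3} and \eqref{eq:adjacent_chi}. The right-hand side of \eqref{eq:area_formula} is invariant under rescaling $\omega$ by a positive constant: numerator and denominator are both homogeneous of degree one in $\omega$. The sign condition $\omega(l_{i-1,i},l_{i,i+1})>0$ forces $\omega$ to be a positive multiple of the normalised volume form of Definition \ref{def:normalized}. So we may assume $\omega$ is the normalised volume form, and take signed areas of triangles to be $\Delta(\conv(a,b,c))=\tfrac12\omega(b-a,c-a)$.

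With this convention, $\area(A)=\tfrac12\omega(l_{i-1,i},l_{i,i+1})=\tfrac12 r_i^2$ by Theorem \ref{thm:achievement}\eqref{it:thm2_3}. Likewise $\area(B)=\tfrac12\omega(l_{i,i+1},l_{i+1,i+2})=\tfrac12 r_{i+1}^2$. Both are positive, so $\sqrt{\area(A)\area(B)}=\tfrac12 r_ir_{i+1}$.

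For $C$ we compute
\[
\area(C)=\tfrac12\omega(l_{i,i+1}-l_{i-1,i},\,l_{i+1,i+2}-l_{i-1,i}).
\]
Write $l_{i+1,i+2}-l_{i-1,i}=m_i+m_{i+1}$, where $m_k=l_{k,k+1}-l_{k-1,k}$. Then $\area(C)=\tfrac12\omega(m_i,m_i+m_{i+1})=\tfrac12\omega(m_i,m_{i+1})$. Dividing gives $\omega(m_i,m_{i+1})/(r_ir_{i+1})$, which equals $\chi(E_i,E_{i+1})$ by \eqref{eq:adjacent_chi}.

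The only delicate point is the orientation and sign bookkeeping. The signed area of $C$ must be taken with the cyclic vertex order $l_{i-1,i},l_{i,i+1},l_{i+1,i+2}$, and the orientation must match the one making $\area(A),\area(B)>0$. Once these conventions are fixed, the rest is immediate. The case $i=n-1$ needs indices taken modulo $n$, with $E_n=E_0\otimes\omega_X^{-1}$. This is handled by the periodicity $l_{i+n,i+n+1}=l_{i,i+1}$ from Theorem \ref{th:gale} together with rotation invariance, and $\chi(E_{n-1},E_0\otimes\omega_X^{-1})$ is covered by the same formula.
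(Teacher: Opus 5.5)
Your proof is correct and follows the paper's own route. The paper introduces the lemma as a geometric reinterpretation of Theorem \ref{thm:achievement}, items (3) and (4), and you do exactly that: rescale to the normalised volume form using homogeneity, identify $2\Delta(A)=r_i^2$, $2\Delta(B)=r_{i+1}^2$ and $2\Delta(C)=\omega(m_i,m_{i+1})$, and handle the wrap-around index by rotation.
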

Proposition 10.7 in \cite{Perling} shows that $(l_{i,i+1})_{i=0}^{n-1}$ defines a complete fan in $L$.
It follows that the intervals $[l_{i-1,i},l_{i,i+1}]$ are the edges of a polygon $P$ (see \S\ref{sec:notation})  that contains the origin.
We call $P$ the \emph{polygon associated to the full exceptional collection~$\EE$}.  
We think of the edges of $P$ as being indexed by the elements of the full exceptional collection (i.e.  the edge $[l_{i-1,i},l_{i,i+1}]$ corresponds to $E_i$).
We recall the following beautiful result:
\begin{proposition}[{\cite[\S 9]{Perling}}]\label{prop:convex_perling}
The collection $\EE$ is very strong if and only if $P$ is convex.
\end{proposition}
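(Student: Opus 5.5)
The plan is to reduce convexity of $P$ to a statement about the signs of the consecutive turning determinants, and then translate those signs into slope inequalities via Remark \ref{rem:verystrongslopes}. Since $(l_{i,i+1})_i$ defines a complete fan (Perling, Proposition 10.7) and $\omega(l_{i-1,i},l_{i,i+1})>0$ for all $i$, the vertices $l_{i,i+1}$ wind exactly once counterclockwise around the origin. A star-shaped polygon of this kind is convex if and only if at every vertex $l_{i,i+1}$ the polygon turns weakly left. This means the signed area of the triangle $C=\conv(l_{i-1,i},l_{i,i+1},l_{i+1,i+2})$ must be $\ge 0$ for all $i$, with indices taken modulo $n$. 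Equivalently, $\omega(m_i,m_{i+1})\ge 0$ for all $i$, since $\omega(m_i,m_{i+1})=2\area(C)$ up to the sign convention. The standard fact I will use is that a closed polygon with total winding number one and all turning angles in $[0,\pi)$ is convex; the winding condition is exactly what the complete-fan statement provides.

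Next I apply Lemma \ref{lm:chiareas}, or equivalently \eqref{eq:adjacent_chi}. This gives $\chi(E_i,E_{i+1})=\omega(m_i,m_{i+1})/(r_ir_{i+1})$ for $0\le i\le n-2$. For the wrap-around index $i=n-1$ I use the rotation invariance of the Gale dual: under rotation, the pair $(E_{n-1},E_0)$ becomes the pair $(E_{n-1},E_0\otimes\omega_X^{-1})$. Convexity is therefore equivalent to
\[
\chi(E_i,E_{i+1})\ge 0 \quad (0\le i\le n-2),\qquad \chi(E_{n-1},E_0\otimes\omega_X^{-1})\ge 0 .
\]
By \eqref{eq:chi_slope}, $\chi(E_i,E_{i+1})=r_ir_{i+1}\mu(E_i,E_{i+1})$. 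Each of these consecutive pairs is an exceptional pair: $(E_{n-1},E_0\otimes\omega_X^{-1})$ is one by Serre duality, since it is a pair in the helix. The inequalities therefore become
\[
\mu(E_0)\le\mu(E_1)\le\dots\le\mu(E_{n-1})\le\mu(E_0\otimes\omega_X^{-1}).
\]
By Remark \ref{rem:verystrongslopes}, this chain of inequalities is exactly the condition for $\EE$ to be very strong, which completes the equivalence.

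The main obstacle is the first step: showing that local convexity at every vertex, combined with the fan property, forces global convexity. The fan property guarantees that the angular coordinate of $l_{i,i+1}$ increases strictly and wraps around exactly once. Given that, nonnegative turning at every vertex implies the total turning equals $2\pi$, with no backtracking, and this yields convexity. Conversely, a convex polygon traversed counterclockwise has nonnegative turning at every vertex. I also need to check the orientation convention: with $\omega$ chosen so that $\omega(l_{i-1,i},l_{i,i+1})>0$, the positive sign of $\area(C)$ must correspond to a left (convex) turn, and this is consistent because the origin lies inside $P$. Straight vertices, where $\area(C)=0$, correspond to orthogonal pairs, and these are allowed on both sides of the equivalence.
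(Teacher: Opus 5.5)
Your argument is correct, but it does not follow the paper's route. The paper proves nothing here: it imports the statement from Perling's \S 9, where it is established within his framework of toric systems.

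You instead assemble the proof from results the paper establishes independently of this proposition, so there is no circularity. These are:
\begin{itemize}
\item the vertex-wise identity $\omega(m_i,m_{i+1})=r_ir_{i+1}\chi(E_i,E_{i+1})$ of Theorem~\ref{thm:achievement}\eqref{it:thm2_4}, with the vertex $l_{n-1,0}$ handled by rotating the helix (the Gale dual then rotates cyclically and the normalised $\omega$ is unchanged);
\item the identity \eqref{eq:chi_slope};
\item the slope criterion of Remark~\ref{rem:verystrongslopes}.
\end{itemize}

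What your route buys is a short, self-contained proof that also makes the local dictionary explicit. A left turn at $l_{i,i+1}$ is a $\Hom$-pair, a straight vertex is an orthogonal pair, and a right turn is an $\Ext$-pair; the same dictionary underlies Lemma~\ref{lem:very_useful}. The price is reliance on del Pezzo-specific input: Gorodentsev's trichotomy, stability of exceptional bundles, the restriction argument behind Remark~\ref{rem:verystrongslopes}, and the dual-collection model of Theorem~\ref{th:gale}. Citing Perling keeps the convexity statement on the toric side.

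You should make one point explicit. In the direction convex $\Rightarrow$ very strong, you apply \eqref{eq:adjacent_chi} to an $\EE$ not yet known to be very strong. The paper, however, derives \eqref{eq:adjacent_chi} from item \eqref{it:thm2_5}, which is phrased via the quiver of $B(\EE)$. The identity is nevertheless valid for every full exceptional collection of bundles. Corollary~\ref{cor:choose} gives $\chi(F_{i+1},F_i)=\omega(\overline{m}_{i+1},\overline{m}_i)$. The Gram matrix of the right dual is the inverse transpose of that of $\EE$, so $\chi(F_{i+1},F_i)=-\chi(E_i,E_{i+1})$.

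A minor correction to your geometric step: the total turning of $2\pi$ comes from the fan property, not from nonnegativity of the turns. The direction of $m_i$ lies in the open arc from the direction of $l_{i,i+1}$ to that of $-l_{i-1,i}$, so every turn lies in $(-\pi,\pi)$, and these turns sum to $2\pi$. Nonnegativity then makes the edge directions monotone, which gives convexity.
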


The following observation will very useful for us.
\begin{lemma}
  \label{lem:very_useful}
Let 
\[
\EE=((E_0^{(0)},\ldots, E_0^{(\alpha_0-1)}) , 
\ldots,
(E_{k-1}^{(0)},\ldots, E_{k-1}^{(\alpha_{k-1}-1)} ))
\]
be a very strong block exceptional collection (see \S\ref{sec:block_ex}) without broken blocks (see Definition \ref{def:block_form}).
Let $P$ be the polygon associated to $\EE$. Then the long edges of $P$ (see \S\ref{sec:notation})
correspond to the blocks in $\EE$. The edges are obtained by dividing every long edge $e_i$, corresponding to the block $\EE_i$, into $\alpha_i$ equal segments
corresponding to $E_i^{(0)},\ldots, E_i^{(\alpha_i-1)}$, respectively.
\end{lemma}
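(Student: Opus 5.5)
We want to show that in the polygon $P$ with vertices $l_{i,i+1}$, the edge vectors $m_i=l_{i,i+1}-l_{i-1,i}$ of objects in the same block are equal, while consecutive blocks give non-parallel edges. We use Theorem \ref{thm:achievement}, working in the normalised volume form $\omega$. Then $m_i=r_i u_i$ with $u_i$ primitive, and by part \eqref{it:thm2_5} the number of arrows between $i$ and $j$ in the quiver is $\omega(m_i,m_j)/(r_ir_j)=\omega(u_i,u_j)$. For adjacent objects, \eqref{eq:adjacent_chi} gives $\chi_{i,i+1}=\omega(u_i,u_{i+1})$. By Proposition \ref{pr:slopes}, two consecutive objects are orthogonal (i.e.\ in the same block) exactly when $\chi_{i,i+1}=0$, since in a very strong collection slopes are non-decreasing and a $\Hom$-pair has $\chi=\dim\Hom>0$.

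\emph{Step 1: edges within a block are parallel.} Inside a block the consecutive edges satisfy $\omega(u_i,u_{i+1})=0$, so $u_i=\pm u_{i+1}$. Proposition \ref{prop:convex_perling} makes $P$ convex. The condition $\omega(l_{i-1,i},l_{i,i+1})>0$ says that the vertices wind counterclockwise around the origin, so consecutive edges cannot reverse direction. Hence $u_i=u_{i+1}$, and consecutive edges in a block are collinear and point the same way.

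\emph{Step 2: edges between blocks are not parallel.} Since there are no broken blocks, the last object of $\EE_i$ and the first object of $\EE_{i+1}$ form a $\Hom$-pair, including the wrap-around pair $(\EE_{k-1},\omega_X^{-1}\otimes\EE_0)$ via rotation invariance of the Gale dual. Therefore $\omega(u,u')=\chi>0$, the vertex there is non-straight, and the long edges of $P$ are exactly the unions of the edges of a block. This requires the block structure to be cyclically consistent. The wrap-around case needs a little care: I would rotate the collection and use the fact, already used in the proof of Theorem \ref{thm:achievement}\eqref{it:thm2_2}, that the Gale dual is compatible with rotation.

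\emph{Step 3: the segments are equal.} The objects of one block are pairwise orthogonal and exceptional with equal slopes. I would show that they all have the same rank, so that $m_i=r_iu$ is the same vector for each of them and the long edge is divided into $\alpha_i$ equal segments. The plan is to apply \eqref{eq:riemann_roch} to an orthogonal pair $E,F$ in a block, where $\chi(E,F)=0$ and $\mu(E,F)=0$. This gives $s(E,F)^2=-(1/r_E^2+1/r_F^2)$. Combining this with $\chi(F,E)=0$ alone is symmetric and gives nothing new, so more input is needed. I would instead use Lemma \ref{lm:chiareas} or \eqref{eq:additivity}. For $E_i,E_{i+1}$ in a block and $E_{i+2}$ (or the previous neighbour) outside it, the quantity $\chi_{i,i+2}=r_ir_{i+2}\bigl(0+\chi_{i+1,i+2}/(r_{i+1}r_{i+2})\bigr)$ must match the count of arrows from $E_i$ in the regular block quiver, which is the same for all members of the block. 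This should force $r_i=r_{i+1}$.

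I expect Step 3, the equality of ranks, to be the main obstacle. If the arrow-count argument fails, I would fall back on the known fact that orthogonal exceptional objects of equal slope on a del Pezzo surface have equal rank, which should follow from Riemann--Roch plus integrality in \eqref{eq:chi_T}.
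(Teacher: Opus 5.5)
Your Steps 1 and 2 are correct, and they take a more direct route than the paper. The paper passes to the right dual block collection $\FF$ and proves that $\FF$ has no broken blocks. For interior pairs it uses the left dual; for the wrap-around it uses Lemma \ref{lem:dual_not_broken} with Proposition \ref{pr:slopes}. It then reads off straight versus non-straight vertices from \eqref{eq:chi} applied to the pairs $(F_{i+1},F_i)$. For adjacent indices $\chi(E_i,E_{i+1})=-\chi(F_{i+1},F_i)$, so your use of \eqref{eq:adjacent_chi} and Proposition \ref{pr:slopes} on the original pairs gives the same dichotomy without that detour. Your remark that antiparallel consecutive edges would give $\omega(l_{i-1,i},l_{i,i+1})$ and $\omega(l_{i,i+1},l_{i+1,i+2})$ opposite signs is also right. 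One point should be made explicit: all members of a block have the same slope. So non-orthogonality of $\EE_i,\EE_{i+1}$ means exactly that the last member of $\EE_i$ and the first of $\EE_{i+1}$ form a $\Hom$-pair.

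The gap is Step 3. You never actually prove that the ranks within a block agree, and neither route you offer is complete as written.

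\emph{The arrow-count route} can be made to work, but it needs three things you do not supply.
First, the number of arrows $i\to j$ equals $\chi_{ij}$ when $E_i,\dots,E_{j-1}$ lie in one block. This follows from the dual-collection description of the arrows by inverting the unitriangular Gram matrix.
Second, this number must be the same for all $i$ in the block. Take this from Theorem \ref{thm:achievement}\eqref{it:thm2_5} plus Step 1, which show it equals $\omega(u_i,u_j)$. Do not take it from the regularity asserted without proof in \S\ref{sec:blockquiver}, which risks circularity.
Third, choose $j$ to be the first index after the block, since for $\alpha_i\ge 3$ the object $E_{i+2}$ may still lie in the block.
With these, \eqref{eq:additivity} gives $\chi_{ij}=(r_i/r_{j-1})\chi_{j-1,j}$, and hence $r_i=r_{j-1}$.

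\emph{Your fallback} states a true fact with the wrong reason. Integrality of $r_ir_jT_{ij}=r_ic_1(E_j)-r_jc_1(E_i)$ is automatic, and Riemann--Roch for an orthogonal pair only returns $\chi=0$. What forces equal ranks is that $(r(E),d(E))$ is primitive, because $E|_Y$ is spherical on the elliptic curve $Y$. Together with the equal slopes from \eqref{eq:chi_slope}, this gives $[E_Y]=[E'_Y]$ for an orthogonal pair $(E,E')$. That is exactly the input the paper combines with the model \eqref{eq:formula}.

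Within your own framework the cleanest fix takes two lines. By Step 1 and Theorem \ref{thm:achievement}\eqref{it:thm2_1}, $m_i=r_iu$ and $m_{i+1}=r_{i+1}u$ with the same primitive $u$. Theorem \ref{thm:achievement}\eqref{it:thm2_3} then gives
\[
r_i^2=\omega(l_{i-1,i},l_{i-1,i}+r_iu)=r_i\,\omega(l_{i-1,i},u),\qquad
r_{i+1}^2=\omega(l_{i,i+1},l_{i,i+1}+r_{i+1}u)=r_{i+1}\,\omega(l_{i,i+1},u),
\]
while $\omega(l_{i,i+1},u)=\omega(l_{i-1,i}+r_iu,u)=\omega(l_{i-1,i},u)$. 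Hence $r_i=r_{i+1}$ and $m_i=m_{i+1}$. Geometrically, the two triangles with apex at the origin have their bases on the same line, so they have the same height.
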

\begin{proof}
The right dual to the exceptional collection underlying $\EE$ can be made into a block exceptional collection as 
stated in  Lemma \ref{lem:dual_block_exceptional}.
  Let
 $
\FF=(\FF_{k-1},
\ldots,
\FF_0)
$
be the result.\footnote{Recall that $\FF$ is constructed by first performing appropriate block mutations on $\EE$ and then inverting the order of the exceptional objects
  in the blocks.}
We claim that $\FF$ has no broken blocks.
The blocks $\FF_{i},\FF_{i-1}$ for $i=k-1,\ldots,1$ are not
orthogonal, since if they were they could be joined into a single block to form a new full block exceptional collection $\FF'$. Then the left dual $\EE'$ of $\FF'$ would have the same underlying exceptional collection as~$\EE$, but fewer blocks, contradicting the hypothesis that $\EE$ has no broken blocks.

It remains to show that $\FF_{0}$ is not orthogonal to $\omega_X^{-1}\otimes \FF_{n-1}$. This follows from Lemma \ref{lem:dual_not_broken} below and Proposition \ref{pr:slopes}.

Using \eqref{eq:chi} applied to pairs $(F_{i+1},F_{i})$ we see that the straight vertices of $P$ are given by the intra block pairs and the non-straight vertices are given by pairs
joining two adjacent blocks. It remains to show that the straight vertices divide the long edges of $P$ into equal segments. This follows from
the explicit model \eqref{eq:formula} and the fact that if $E,F$ is an orthogonal exceptional pair then $[E_Y]=[F_Y]$ in $K^{\num}_0(Y)_{\RR}$.  Indeed by \eqref{eq:chi_slope}
$[E_Y]$ and $[F_Y]$ are proportional. But since $E_Y$, $F_Y$ are spherical objects on $Y$, their (rank,degree)-vectors are primitive. This finishes the proof.
\end{proof}
\begin{lemma} \label{lem:dual_not_broken}
  Let
  \[
    \EE=(E_0,\ldots,E_{n-1})
  \]
  be a full exceptional collection on $X$ and let
  \[
    \FF=(F_{n-1},\ldots,F_0)
  \]
  be its right dual. Then 
  \[
    \RHom_X(F_0,\omega_X^{-1}\otimes F_{n-1}[-\dim X])=\RHom_X(E_0,E_{n-1})
  \]
\end{lemma}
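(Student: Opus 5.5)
Recall from Remark \ref{rem:extreme_dual} that $F_{n-1}=\widecheck{E_{n-1}}=E_{n-1}$ and $F_0=\widecheck{E_0}=\omega_X^{-1}\otimes E_0[-\dim X]$. The plan is to reduce the statement to these two identifications and the fact that tensoring with a line bundle is an autoequivalence.

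\begin{enumerate}
\item Substitute both identifications into the left-hand side. This gives
\[
\RHom_X(F_0,\omega_X^{-1}\otimes F_{n-1}[-\dim X])=\RHom_X(\omega_X^{-1}\otimes E_0[-\dim X],\,\omega_X^{-1}\otimes E_{n-1}[-\dim X]).
\]
\item Since $\omega_X^{-1}\otimes -$ is an autoequivalence of $\Dscr^b(X)$ and shifting both arguments by the same amount does not change $\RHom$, the right-hand side equals $\RHom_X(E_0,E_{n-1})$. This is the claimed identity.
\end{enumerate}

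The main point to verify is Remark \ref{rem:extreme_dual}. The claim $\widecheck{E_{n-1}}=E_{n-1}$ is immediate from Proposition \ref{def:dualcollection}, because the defining composite of right mutations is empty for $i=n-1$.

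For $\widecheck{E_0}$, use the duality characterisation in Proposition \ref{def:dualcollection}. The object $\widecheck{E_0}$ is the unique object, up to isomorphism, with $\RHom(\widecheck{E_0},E_i)=0$ for $i\ge 1$ and $\RHom(\widecheck{E_0},E_0)=\CC$ in degree $0$. Call these two conditions $(\ast)$.

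Check $(\ast)$ for $G:=S^{-1}(E_0)=\omega_X^{-1}\otimes E_0[-\dim X]$ by Serre duality:
\[
\RHom(S^{-1}E_0,E_i)\cong\RHom(E_i,E_0)^*.
\]
This vanishes for $i>0$ by exceptionality, and equals $\CC$ for $i=0$. Moreover $G$ lies in $\langle E_0,\dots,E_{n-1}\rangle=\Dscr^b(X)$ by fullness.

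Uniqueness of an object satisfying $(\ast)$ follows from the semiorthogonal decomposition $\Dscr^b(X)=\langle E_0,\langle E_1,\dots,E_{n-1}\rangle\rangle$. Any object left orthogonal to $\langle E_1,\dots,E_{n-1}\rangle$ lies in ${}^\perp\langle E_1,\dots,E_{n-1}\rangle$. This subcategory is generated by a single exceptional object, namely $S^{-1}E_0$. Such an object is then determined by $\RHom(-,E_0)$ being $\CC$ in degree $0$.

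So the only real work is this uniqueness step, which I expect to be the main obstacle. It is standard for full exceptional collections, via the Serre functor $S^{-1}=\omega_X^{-1}\otimes-[-\dim X]$ mapping $\langle E_1,\ldots,E_{n-1}\rangle^\perp$ to ${}^\perp\langle E_1,\ldots,E_{n-1}\rangle$. With Remark \ref{rem:extreme_dual} established, steps 1 and 2 complete the proof.
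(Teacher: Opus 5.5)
Your proof is correct and matches the paper's. The paper also substitutes $F_{n-1}=E_{n-1}$ and $F_0=\omega_X^{-1}\otimes E_0[-\dim X]$ from Remark \ref{rem:extreme_dual} and uses that a common line-bundle twist and shift do not change $\RHom$; the only difference is that you additionally verify Remark \ref{rem:extreme_dual}, using Serre duality and ${}^\perp\langle E_1,\ldots,E_{n-1}\rangle=\langle S^{-1}E_0\rangle$, which the paper leaves to the reader, and that verification is sound.
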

\begin{proof}
  According to Remark \ref{rem:extreme_dual} one has $F_{n-1}=E_{n-1}$ and $F_0=\omega_X^{-1}\otimes E_0[-\dim X]$. This yields what we want.
  \end{proof}
  \section{Mutations on the level of toric systems and polygons}
  \subsection{Description of mutations}
Let $\EE = (E_0, \dots,E_{n-1})$ be a full exceptional collection of vector bundles on a del Pezzo surface,  $(T_{0,1}, \dots,T_{n-1,0})$ the corresponding toric system in $\Pic(X)_\QQ$ and $(L,(l_{0,1},\dots,l_{n-1,0}))$ the Gale dual.
In this section we explain how mutations of $\EE$ act on $(l_{i,i+1})_0^{n-1}$. In \eqref{eq:rem:geommut} we do this for braid mutations (see \S\ref{sec:ex_col}), and in Proposition \ref{prop:lm:clustergeneralised}
we do this for quiver mutations (see Proposition \ref{prop:rem:cluster}). See also \cite[Proposition 8.5]{Perling} for related formulas. As before we put $m_i=l_{i+1,i}-l_{i,i-1}$.
\begin{lemma}\label{lm:ts_and_ls_mut} Denote $h=\chi(E_i,E_{i+1})$. 
\begin{enumerate}
\item\label{it:tsmutleft} Assume $hr(E_i)-r(E_{i+1}) \neq 0$, i.e.  $\tilde{\sigma}_{i+1}(\EE)$ consists of vector bundles. The toric system corresponding to 
\[
\tilde{\sigma}_{i+1}(\EE) =(E_0,\dots, E_{i-1},\widetilde{L_{E_i}} E_{i+1}, E_i, E_{i+2}, \dots, E_{n-1})
\]
 is given by 
\[ 
(T_{0,1},  \dots, T_{i-2,i-1}, T_{i-1,i}-\alpha T_{i,i+1}, \alpha T_{i,i+1},T_{i,i+1} + T_{i+1,i+2},T_{i+2,i+3},\dots,  T_{n-1,0}),
\] where 
\begin{align*}
  \alpha =\frac{T_{i-1,i} \cdot T_{i,i+1}}{(T_{i,i+1}+T_{i+1,i+2}) \cdot T_{i,i+1}} =  \frac{r(E_{i+1})}{hr(E_i)-r(E_{i+1})}
=  \frac{\omega(m_{i+1},l_{i+1,i+2})}{\omega(m_{i+1},l_{i-1,i})}
  .
\end{align*}
For the Gale dual corresponding to $\tilde{\sigma}_{i+1}(\EE)$ we may take the following vectors in $L$:
\begin{equation}
  \label{eq:gale_left}
(l_{0,1}, \dots,l_{i-1,i},l_{i-1,i}+\frac{\omega(l_{i-1,i},m_{i+1})}{\omega(l_{i,i+1},m_{i+1})} m_{i+1}, l_{i+1,i+2},  \dots,l_{n-1,0}).
\end{equation}
\item\label{it:tsmutright} Assume $hr(E_{i+1})-r(E_i) \neq 0$, i.e.  $\tilde{\sigma}_{i+1}^{-1}(\EE)$ consists of vector bundles.  The toric system corresponding to \[
\tilde{\sigma}_{i+1}^{-1}(\EE) = (E_0, \dots, E_{i-1},E_{i+1},\widetilde{R_{E_{i+1}}} E_i,  E_{i+2}, \dots, E_{n-1})
\]
is given by 
\[ (T_{0,1}, \dots,T_{i-2,i-1},T_{i-1,i}+T_{i,i+1},\alpha T_{i,i+1}, T_{1,2}-\alpha T_{i,i+1},T_{i+2,i+3}, \dots,  T_{n-1,0}),
\]
where 
\[
  \alpha=\frac{T_{i+1,i+2} \cdot T_{i,i+1}}{T_{i,i+1}\cdot (T_{i-1,i}+T_{i,i+1)}}=\frac{r(E_i)}{hr(E_{i+1})-r(E_i)}
  =-\frac{\omega(l_{i,i+1},m_i)}{\omega(l_{i+1,i+2},m_i)}
  .
\]
Then for the Gale dual corresponding to $\tilde{\sigma}_{i+1}^{-1}(\EE)$ we may take the following vectors in $L$:
\begin{equation}
  \label{eq:gale_right}
(l_{0,1},  \dots, l_{i-1,i}, l_{i+1,i+2}- \frac{\omega(l_{i+1,i+2},m_i)}{\omega(l_{i,i+1},m_i)} m_i , l_{i+1,i+2},\dots,l_{n-1,0}).
\end{equation}
\end{enumerate}
\end{lemma}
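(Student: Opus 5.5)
We prove \eqref{it:tsmutleft}; \eqref{it:tsmutright} is symmetric (or follows by applying \eqref{it:tsmutleft} to $\tilde{\sigma}_{i+1}^{-1}(\EE)$ and inverting the formulas). The argument has three steps: compute the slopes of the mutated collection, assemble its toric system, and then read off a Gale dual.

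\textbf{Step 1: slopes of the mutated object.} Put $E'=\widetilde{L_{E_i}}E_{i+1}$. Since $(E_i,E_{i+1})$ is an exceptional pair of sheaves, Proposition \ref{pr:slopes} says that $\RHom(E_i,E_{i+1})$ is concentrated in a single degree. Hence in $K_0$ we have $[E']=\pm([E_{i+1}]-h[E_i])$, with the sign fixed so that $E'$ is a sheaf. Therefore
\[
r(E')=\pm(r_{i+1}-hr_i),\qquad c_1(E')=\pm\bigl(c_1(E_{i+1})-hc_1(E_i)\bigr).
\]
The hypothesis $hr_i-r_{i+1}\neq 0$ ensures that $E'$ is a vector bundle, so its slope class $s(E')$ is defined. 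A direct computation gives
\[
s(E')-s(E_{i-1})=T_{i-1,i}-\alpha T_{i,i+1},\qquad \alpha=\frac{r_{i+1}}{hr_i-r_{i+1}},
\]
because $s(E_{i+1})-s(E_i)=T_{i,i+1}$.

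\textbf{Step 2: the new toric system.} The next consecutive difference is $s(E_i)-s(E')=\alpha T_{i,i+1}$. The one after that is $s(E_{i+2})-s(E_i)=T_{i,i+1}+T_{i+1,i+2}$. All other differences are unchanged. To match the first expression for $\alpha$, take the toric system relation $T_{i-1,i}\cdot T_{i,i+1}=1/r_i^2$ and combine it with $T_{i+1,i+2}\cdot T_{i,i+1}=1/r_{i+1}^2$. One also needs $T_{i,i+1}^2$, which can be expressed through $h$ via \eqref{eq:riemann_roch} and \eqref{eq:chi_slope}. Once these are substituted, checking the identity is routine algebra.

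\textbf{Step 3: the Gale dual.} The new toric vectors are obtained from the old ones by a linear substitution. So the new Gale dual keeps every old $l$ except the two indices $i-1,i$ and $i,i+1$, which are replaced by $l'_{i-1,i},l'_{i,i+1}$. The Gale relation $\sum l\otimes T=0$ then forces two equations:
\[
l'_{i-1,i}=l_{i-1,i},\qquad -\alpha\, l'_{i-1,i}+\alpha\, l'_{i,i+1}+l'_{i+1,i+2}=l_{i,i+1}.
\]
These come from matching the coefficients of $T_{i-1,i}$ and $T_{i,i+1}$, with $T_{i+1,i+2}$ keeping the coefficient $l_{i+1,i+2}$. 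Solving gives
\[
l'_{i,i+1}=l_{i-1,i}+\alpha^{-1}m_{i+1}.
\]
The spanning condition is preserved, and the dimension count is unchanged.

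\textbf{Step 4: the $\omega$-expression for $\alpha$.} It remains to show that $\alpha^{-1}=\omega(l_{i-1,i},m_{i+1})/\omega(l_{i,i+1},m_{i+1})$. By Theorem \ref{thm:achievement}, $\omega(l_{i,i+1},m_{i+1})=\omega(l_{i,i+1},l_{i+1,i+2})=r_{i+1}^2$. Also \eqref{eq:adjacent_chi} gives $hr_ir_{i+1}=\omega(m_i,m_{i+1})$. Writing $l_{i-1,i}=l_{i,i+1}-m_i$, we get
\[
\omega(l_{i-1,i},m_{i+1})=r_{i+1}^2-hr_ir_{i+1}.
\]
The ratio is therefore $(r_{i+1}-hr_i)/r_{i+1}=-\alpha^{-1}$. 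I expect the main obstacle to be exactly this sign and orientation bookkeeping: the sign convention in $\widetilde{L}$, the convention $m_i=l_{i,i+1}-l_{i-1,i}$, and the orientation of $\omega$. These conventions must be fixed consistently so that the stated formulas come out, and the first step would be to pin them down against the $K_0$ sign in Step 1.
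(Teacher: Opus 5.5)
\textbf{Comparison with the paper.} Your plan follows the paper's proof. The paper also reads the new toric system off the slopes. It builds the Gale dual by rewriting $\sum_j l_{j,j+1}\otimes T_{j,j+1}=0$ in terms of the new vectors, keeping $l_{i-1,i}$ and $l_{i+1,i+2}$. It then converts $\alpha$ into $\omega$-language via Theorem \ref{thm:achievement}(\ref{it:thm2_3},\ref{it:thm2_4}).

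The only difference is how $\alpha$ is pinned down. The paper takes the shape $T'+\alpha T_{i,i+1}=T_{i-1,i}$ from the definitions and determines $\alpha$ from the orthogonality axiom $T'\cdot(T_{i,i+1}+T_{i+1,i+2})=0$ of the new toric system. You instead get $\alpha=r_{i+1}/(hr_i-r_{i+1})$ directly from the class $\pm([E_{i+1}]-h[E_i])$ and then check the first expression. Both routes need the same value $T_{i,i+1}^2=h/(r_ir_{i+1})-1/r_i^2-1/r_{i+1}^2$. Yours has the mild advantage of actually justifying the shape of the new toric system.

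\textbf{The gap: a sign slip in Step 3.} As written, the argument does not close. From $-\alpha l_{i-1,i}+\alpha l'_{i,i+1}+l_{i+1,i+2}=l_{i,i+1}$ one gets
\[
l'_{i,i+1}=l_{i-1,i}+\alpha^{-1}\bigl(l_{i,i+1}-l_{i+1,i+2}\bigr)=l_{i-1,i}-\alpha^{-1}m_{i+1},
\]
because $m_{i+1}=l_{i+1,i+2}-l_{i,i+1}$. Your Step 3 has $+\alpha^{-1}m_{i+1}$ instead.

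So what Step 4 must establish is
\[
\omega(l_{i-1,i},m_{i+1})/\omega(l_{i,i+1},m_{i+1})=-\alpha^{-1},
\]
and this is exactly what your computation there produces. With the correction, \eqref{eq:gale_left} follows at once.

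As a sanity check, take $(\Oscr,\Oscr(1),\Oscr(2))$ on $\PP^2$ with $i=0$, $l_{0,1}=(1,0)$, $l_{1,2}=(0,1)$, $l_{2,0}=(-1,-1)$, so $\alpha=1/2$. The correct new vector is $l_{2,0}-2m_1=(1,-3)$, whereas your formula gives $(-3,1)$.

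\textbf{No convention issue remains.} The ``sign and orientation bookkeeping'' you single out as the main obstacle is not actually an obstacle. The sign of $\widetilde{L}$ is irrelevant, since, as you observed, $s(E')$ is unchanged under $[E']\mapsto -[E']$. The convention $m_i=l_{i,i+1}-l_{i-1,i}$ is fixed by Theorem \ref{thm:achievement}. The ratio is also insensitive to rescaling $\omega$.

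A minor point: matching coefficients of the $T_{j,j+1}$ is a choice, not something that is forced, because these vectors are linearly dependent. It is a legitimate choice, since the statement only claims one \emph{may} take these vectors.

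\textbf{On the printed third expression for $\alpha$.} Your Step 4 gives $\alpha=-\omega(l_{i,i+1},m_{i+1})/\omega(l_{i-1,i},m_{i+1})$, which is also what the paper's proof derives. The third expression printed in the statement is $\omega(m_{i+1},l_{i+1,i+2})/\omega(m_{i+1},l_{i-1,i})$. Using $\omega(m_{i+1},l_{i+1,i+2})=\omega(m_{i+1},l_{i,i+1})$, it equals $\omega(l_{i,i+1},m_{i+1})/\omega(l_{i-1,i},m_{i+1})=-\alpha$; in the $\PP^2$ example it is $-1/2$. So do not try to prove that expression literally: it is a sign typo, and it does not affect \eqref{eq:gale_left}.
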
 

\begin{proof} We will prove only the first part.  Directly from the definition of $T_{ij}$'s we get that the toric system corresponding to
  $\tilde{\sigma}_{i+1}(\EE) =(E_0,\dots, E_{i-1},\widetilde{L_{E_i}} E_{i+1}, E_i, E_{i+2}, \dots, E_{n-1})$ is given by 
\[ 
(T_{0,1},  \dots, T_{i-2,i-1}, T', \alpha T_{i,i+1},T_{i,i+1} + T_{i+1,i+2},T_{i+2,i+3},\dots,  T_{n-1,0}),
\]
for some $\alpha \in \QQ$ with $T' +  \alpha T_{i,i+1} = T_{i-1,i}$.  Moreover,  by Proposition \ref{prop:Ts}\eqref{it:prop:Ts_2}  we have $T' \perp T_{i,i+1} + T_{i+1,i+2}$.  Hence $\alpha =\frac{T_{i-1,i} \cdot T_{i,i+1}}{(T_{i,i+1}+T_{i+1,i+2}) \cdot T_{i,i+1}}$.  Using Proposition \ref{prop:Ts}(\ref{it:prop:Ts_1},\ref{it:prop:Ts_4}) combined with \eqref{eq:chi_T} we have 
\[
\alpha = \frac{1/r(E_i)^2}{1/r(E_{i+1})^2 + (-1/r(E_i)^2 + h/r(E_i)r(E_{i+1}) - 1/r(E_i)^2)} = \frac{r(E_{i+1})}{hr(E_i) - r(E_{i+1})},
\]
where $h=\chi(E_i,E_{i+1})$. Using Theorem \ref{thm:achievement}(\ref{it:thm2_3},\ref{it:thm2_4}) we compute
\begin{align*}
  \frac{r(E_{i+1})}{hr(E_i) - r(E_{i+1})}&=\frac{\sqrt{\omega(l_{i,i+1},l_{i+1,i+2})}}{
                                           \frac{\omega(m_i,m_{i+1})}{\sqrt{\omega(l_{i-1,i},l_{i,i+1})\omega(l_{i-1,i},l_{i,i+1})}}\sqrt{\omega(l_{i-1,i},l_{i,i+1})}-
                                           \sqrt{\omega(l_{i,i+1},l_{i+1,i+2})}}\\
                                         &=
                                           \frac{\omega(l_{i,i+1},l_{i+1,i+2})}{\omega(m_i,m_{i+1})-\omega(l_{i,i+1},l_{i+1,i+2})}\\
                                         &=\frac{\omega(l_{i,i+1},m_{i+1})}{\omega(m_i,m_{i+1})-\omega(l_{i,i+1},m_{i+1})}\\
  &=-\frac{\omega(l_{i,i+1},m_{i+1})}{\omega(l_{i-1,i},m_{i+1})}
  \end{align*}

By the definition of Gale dual (see Definition \ref{def:gale_dual}) we have
\[
l_{0,1}\otimes T_{0,1}+l_{1,2}\otimes T_{1,2}+\dots +l_{n-1,0}\otimes T_{n-1,0}=0 \text{ in } L \otimes \Pic(X)_\RR.
\]
Then 
\begin{align*} l_{0,1}\otimes T_{0,1} + \dots + l_{i-1,i}\otimes (T_{i-1,i}-\alpha T_{i,i+1}) +
(l_{i-1,i} + \frac{l_{i,i+1} - l_{i+1,i+2}}{\alpha})\otimes \alpha T_{i,i+1} + \\ + l_{i+1,i+2}\otimes (T_{i,i+1}+T_{i+1,i+2}) + \dots + 
 +l_{n-1,0}\otimes T_{n-1,0} =0  \text{ in } L \otimes \Pic(X)_\RR.
\end{align*}
Invoking once again the definition of the Gale dual, we get $l_{i,i+1}' = l_{i-1,i}-\frac{m_{i+1}}{\alpha}$. Substituting the expression for $\alpha$
yields \eqref{eq:gale_left}.
 \end{proof}
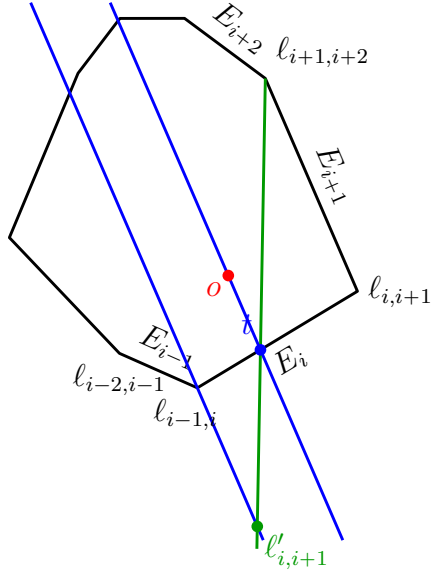
\begin{figure}[h!]\label{fig:braidmutleft}
\centering
\caption{Geometric interpretation of $\tilde{\sigma}_{i+1}$ on the level of $\{l_{i,i+1}\}_0^{n-1}$}

\begin{tikzpicture}[scale=1.1, line join=round,
  blu/.style={line width=1.1pt,blue},
  grn/.style={line width=1.1pt,green!60!black},
  poly/.style={line width=1.1pt,black}
]

\coordinate (l0) at ( 1.9281,-1.0401);
\coordinate (l1) at ( 0.8151, 1.5339);
\coordinate (l2) at (-0.1612, 2.2602);
\coordinate (l3) at (-0.9429, 2.2595);
\coordinate (l4) at (-1.4482, 1.5966);
\coordinate (l5) at (-2.2779,-0.3910);
\coordinate (l6) at (-0.9475,-1.7911);
\coordinate (l7) at (-0.0065,-2.2083);


\path[use as bounding box] (-2.85,-4.25) rectangle (2.20,2.75);

\pgfmathsetmacro{\m}{(-1.0401-1.5339)/(1.9281-0.8151)}   
\pgfmathsetmacro{\loa}{-2.2083 - \m*(-0.0065)}           

\def\ytop{2.45}
\def\ybot{-4.05}

\pgfmathsetmacro{\xUPtop}{\ytop/(\m)}
\pgfmathsetmacro{\xUPbot}{\ybot/(\m)}
\pgfmathsetmacro{\xLOtop}{(\ytop-\loa)/(\m)}
\pgfmathsetmacro{\xLObot}{(\ybot-\loa)/(\m)}

\coordinate (upA) at (\xUPtop,\ytop);
\coordinate (upB) at (\xUPbot,\ybot);
\coordinate (loA) at (\xLOtop,\ytop);
\coordinate (loB) at (\xLObot,\ybot);

\draw[poly]
  (l0)--(l1)--(l2)--(l3)--(l4)--(l5)--(l6)--(l7)--cycle;

\path (l6)--(l7) node[midway,sloped,above] (EN2) {$E_{i-1}$};
\path (l7)--(l0) node[midway,sloped,below] (EN1) {$E_{i}$};
\path (l0)--(l1) node[midway,sloped,above] (E0)  {$E_{i+1}$};
\path (l1)--(l2) node[midway,sloped,above] (E1)  {$E_{i+2}$};

\path[name path=UP] (upA) -- (upB);
\path[name path=LO] (loA) -- (loB);
\path[name path=Eedge] (l7) -- (l0);

\path[name intersections={of=Eedge and UP, by=t}];

\path[name path=green] (l1) -- ($(l1)!2.0!(t)$);
\path[name intersections={of=green and LO, by=p}];


\draw[blu] (upA) -- (upB);
\draw[blu] (loA) -- (loB);
\draw[grn] (l1) -- ($(l1)!1.05!(p)$);

\fill[blue] (t) circle (2pt);
\fill[green!60!black] (p) circle (2pt);

\pgfmathsetmacro{\yo}{-0.85}
\pgfmathsetmacro{\xo}{\yo/(\m)}
\coordinate (o) at (\xo,\yo);
\fill[red] (o) circle (2pt);

 ---------------------------
\node[blue,above=2pt,xshift=-5pt] at (t) {$t$};
\node[green!60!black,below right=-2pt] at (p) {$\ell'_{i,i+1}$};
\node[red,below left=-2pt] at (o) {$o$};

\node[right]       at (l0) {$\ell_{i,i+1}$};
\node[above right] at (l1) {$\ell_{i+1,i+2}$};
\node[below left,xshift=12pt] at (l7) {$\ell_{i-1,i}$}; 
\node[below]       at (l6) {$\ell_{i-2,i-1}$};

\end{tikzpicture}

\end{figure}

For $u,v\in L$ such that  $\omega(u,v)\neq 0$ let $A_{uv}$ be the unique affine transformation (shear mapping) of $\RR^2$ such that 
\begin{enumerate}
\item $A_{uv}(u)=v$.
\item $A_{uv}$ is the identity on the line through the origin parallel to $[u,v]$.
\end{enumerate}
Explicitly, $A_{uv}$ is given by
\begin{equation}
\label{eq:A}
A_{uv}(x) = x+\frac{\omega(x,v-u)}{\omega(u,v-u)}(v-u)
\end{equation}
Using this notation we may rewrite the left and right mutations of $(l_{01},\ldots,l_{n-1,0})$ as
\begin{equation}
  \label{eq:rem:geommut}
\begin{gathered}
  (l_{0,1}, \dots,l_{i-1,i},A_{l_{i,i+1},l_{i+1,i+2}}(l_{i-1,i}), l_{i+1,i+2},  \dots,l_{n-1,0})\\
  (l_{0,1},  \dots, l_{i-1,i}, A_{l_{i,i+1},l_{i-1,i}}(l_{i+1,i+2}) , l_{i+1,i+2},\dots,l_{n-1,0}).
\end{gathered}
\end{equation}
See Figure \ref{fig:braidmutleft} for an illustration of the first formula.
For use below we record the following trivial lemma.
\begin{lemma}
  \label{lem:switch}
  If $A_{uv}(x)=y$ with $\omega(x,y)\neq 0$ then $A_{uv}=A_{xy}$.
\end{lemma}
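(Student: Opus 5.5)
The plan is to use the explicit formula \eqref{eq:A} together with the uniqueness built into the definition of $A_{uv}$. First I observe that \eqref{eq:A} is linear in $x$, so $A_{uv}$ is in fact a linear map of $L$. It has the form $A_{uv}(x)=x+\varphi(x)(v-u)$, where $\varphi(x)=\omega(x,v-u)/\omega(u,v-u)$ is a linear functional vanishing exactly on the line $\ell:=\RR(v-u)$. In particular $A_{uv}$ is the identity on $\ell$, and it moves every point parallel to $\ell$.

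Now suppose $A_{uv}(x)=y$ with $\omega(x,y)\neq 0$. Then $y-x=\varphi(x)(v-u)$. If $\varphi(x)$ were $0$ we would have $y=x$ and hence $\omega(x,y)=0$, a contradiction. So $\varphi(x)\neq 0$, and this gives two facts:
\begin{enumerate}
\item $y-x$ is a non-zero multiple of $v-u$, so the line through the origin parallel to $[x,y]$ is exactly $\ell$;
\item $\omega(x,v-u)\neq 0$, so $x\notin \ell$.
\end{enumerate}
Since $\omega(x,y)\neq 0$, the shear $A_{xy}$ is defined. By construction it is the (linear) map that fixes the line through the origin parallel to $[x,y]$, which is $\ell$, pointwise, and sends $x$ to $y$.

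Finally, I compare $A_{uv}$ and $A_{xy}$. Both are linear maps that restrict to the identity on $\ell$ and send $x$ to $y$. Because $x\notin\ell$, the line $\ell$ and the vector $x$ together span the two-dimensional space $L$, so a linear map is determined by these data. Hence $A_{uv}=A_{xy}$.

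There is no real obstacle here. The only point needing care is to check that $y\neq x$, which is exactly what ensures that the direction of $[x,y]$ agrees with that of $[u,v]$ and that $x$ lies off the fixed line. Both follow at once from the hypothesis $\omega(x,y)\neq 0$.
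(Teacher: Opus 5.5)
Your proof is correct. The paper records this lemma as trivial and gives no proof, and your argument is exactly the intended verification: $\omega(x,y)\neq 0$ forces $y-x$ to be a non-zero multiple of $v-u$ with $x\notin\RR(v-u)$, so both linear shears fix the same line pointwise and agree on $x$. Equivalently, you can substitute $y-x=\varphi(x)(v-u)$ directly into \eqref{eq:A} for $A_{xy}$; the factors of $\varphi(x)$ cancel and you recover the formula for $A_{uv}$.
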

\begin{definition} Assume $l_{j,j+1}$ is not  on the line through the origin, parallel to $m_i$.
  The  \emph{generalised right mutation of
    $l_{i,i+1},\ldots,l_{i-1,i}$ at $m_i$ with respect to $l_{j,j+1}$} 
  is
  \begin{equation}
    \label{eq:cycle3}
    (l_{0,1}, \dots, l_{i-1,i}, l'_{i,i+1},\ldots,l'_{j-1,j},l_{j,j+1},\ldots,l_{n-1,0}),
    \end{equation}
    where $l'_{i,i+1},\ldots,l'_{j-1,j} :=A(l_{i+1,i+2}),\ldots,A(l_{j,j+1})$ with  $A:=A_{l_{i,i+1},l_{i-1,i}}$. 
    The expression \eqref{eq:cycle3} should be regarded as cyclic. I.e. the indices of the $(l')_{u,u+1}$
    wrap around modulo $n$.
\end{definition} 
\begin{remark}
  \label{rem:composition}
  Geometrically,  one can think of a generalised mutation as:
  \begin{itemize}
  \item
    collapsing edge $m_i$;
  \item
    displacing $m_{i+1},\ldots,m_j$ to obtain edges $m'_i,\ldots,m'_{j-1}$;
  \item
    creating a new edge $m'_j$ at the vertex $l_{j,j+1}$ (i.e.\ more explicitly $(l'_{j-1,j},l_{j,j+1})$),  parallel to the collapsed one.
  \end{itemize}
Note that for $j=i+1$ this operation coincides with the corresponding braid mutation described \eqref{eq:rem:geommut}. 
Moreover it follows from Lemma \ref{lem:switch} that  this operation is compatible with composition in the following sense:
the generalised right mutation at $m_i$ with respect to $l_{j,j+1}$ followed by the generalised right mutation at $l'_{j-1,j}-l_{j,j+1}$
(the newly created edge parallel to $m_i$) with respect to $l_{k,k+1}$ is the same as the generalised right mutation at $m_i$ with respect to $l_{k,k+1}$.
\end{remark}
\begin{definition}
  Let $\EE=(E_0,\ldots,E_{n-1})$ be a full exceptional collection. The \emph{generalised right mutation} of $\EE$ at $E_0$ with respect to the pair $(E_j,E_{j+1})$ is
  \begin{equation}
    \label{eq:iis0}
  \begin{aligned}
    \EE'&=\tilde{\sigma}_j^{-1}\cdots\tilde{\sigma}_2^{-1}\tilde{\sigma}_1^{-1}\EE\\
    &=(E_{1},E_{2},\ldots, E_{j} ,\widetilde{R_{E_j}}\cdots \widetilde{R_{E_{1}}} E_0,E_{j+1},\ldots,E_{n-1}).
  \end{aligned}
  \end{equation}
  The generalised right mutation of $\EE$ at $E_i$ with respect to the pair $(E_j,E_{j+1})$ is obtained by first right rotating $\EE$
  such that $E_i$ is at the zeroth position
  (and the $(E_j,E_{j+1})$ are at positions $(E_{j-i},E_{j+1-i})$),
  performing the mutation as defined above and then performing the inverse left mutation.
  \end{definition}
  \begin{lemma} \label{lem:generalised_right}
    Let $\EE'$ be a generalised right mutation of $\EE$ at $E_i$ with respect to the pair $(E_j,E_{j+1})$ and assume both $\EE$ and $\EE'$ consist of
    vector bundles.
Let $\ell:=(l_{01},\ldots,l_{n-1,0})$ and $\ell':=(l'_{01},\dots,l'_{n-1,0})$ be respectively the Gale duals of the toric systems
associated to $\EE$ and $\EE'$. Then $(l'_{01},\dots,l'_{n-1,0})$ is the generalised right mutation of $(l_{01},\ldots,l_{n-1,0})$
at $m_i$ with respect to $l_{j,j+1}$.
\end{lemma}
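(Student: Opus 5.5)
The proof is an induction on $j$, built from the single braid mutation formula \eqref{eq:rem:geommut} and the composition property in Remark \ref{rem:composition}. We first reduce to $i=0$. The Gale dual $(L,(l_{u,u+1})_u)$ is compatible with rotation of $\EE$: the toric system of the right rotated collection is the cyclic shift of $(T_{u,u+1})_u$, because $s(E_{n-1},E_0\otimes\omega_X^{-1})$ is unchanged when $\omega_X^{-1}$ is tensored on. Hence the Gale dual is cyclically shifted as well. Both the generalised mutation of $\EE$ at $E_i$ and the generalised mutation of the polygon at $m_i$ are defined cyclically, so after rotating we may assume $i=0$. Then $\EE'=\tilde\sigma_j^{-1}\cdots\tilde\sigma_1^{-1}\EE$.

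The induction on $j$ runs as follows. For $j=1$ the claim is Lemma \ref{lm:ts_and_ls_mut}\eqref{it:tsmutright}, rewritten as the second formula in \eqref{eq:rem:geommut}, with the indices read cyclically so that $l_{-1,0}=l_{n-1,0}$. For the inductive step, set $\EE^{(k)}=\tilde\sigma_k^{-1}\cdots\tilde\sigma_1^{-1}\EE$. In $\EE^{(k)}$ the object $\widetilde{R_{E_k}}\cdots\widetilde{R_{E_1}}E_0$ sits at position $k$, and the edge of its polygon indexed by this object is the newly created edge $m'_k$, which is parallel to $m_0$. Applying $\tilde\sigma_{k+1}^{-1}$ is a braid right mutation at this edge. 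By \eqref{eq:rem:geommut} it is the generalised right mutation at $m'_k$ with respect to $l_{k+1,k+2}$, and Remark \ref{rem:composition} (via Lemma \ref{lem:switch}) composes it with the previous step to give the generalised right mutation at $m_0$ with respect to $l_{k+1,k+2}$.

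Formally, the condition $A_{l_{0,1},l_{-1,0}}(l_{k,k+1})=l'_{k-1,k}$ together with $\omega(l'_{k-1,k},l_{k,k+1})\ne 0$ gives $A_{l_{0,1},l_{-1,0}}=A_{l'_{k-1,k},l_{k,k+1}}$. The intermediate value $\omega(l'_{k-1,k},l_{k,k+1})$ is nonzero because the intermediate polygon comes from a fan (Perling, Proposition 10.7), so consecutive vectors are linearly independent. Note also that Gale duals are determined only up to linear isomorphism of $L$. The formulas \eqref{eq:gale_left} and \eqref{eq:gale_right} give specific representatives that change only the relevant entries, and we chain exactly those representatives.

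The main obstacle is the hypothesis that every intermediate collection $\EE^{(k)}$ consists of vector bundles, which Lemma \ref{lm:ts_and_ls_mut} needs at each step. Only $\EE$ and $\EE'$ are assumed to be bundles, and an intermediate $\widetilde{R}$ could a priori be a torsion sheaf $\Oscr_C(d)$ by Proposition \ref{pr:KOexobjs}, or could have $hr(E_{k+1})-r(E_k)=0$. I would handle this in one of two ways. The first is to check that the whole computation only uses the numerical classes in $K_0^{\num}(X)$. The formal Gale-dual recursion then still holds by continuity, or by working with the toric system in $\Pic(X)_\QQ$ where the rank-zero step degenerates to $\alpha=\infty$; the composed formula extends since $A_{l_{0,1},l_{-1,0}}$ does not depend on the intermediate data. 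The second, which I would try first, is to argue directly. Compute the toric system of $\EE'$ from the definition: only $T_{-1,0},\ldots,T_{j,j+1}$ change, with the new entries summing to the old ones and satisfying the orthogonality relations of Proposition \ref{prop:Ts}. Then verify that the proposed $l'$ satisfies the Gale relation $\sum l'\otimes T'=0$ by telescoping as in the proof of Lemma \ref{lm:ts_and_ls_mut}. This route avoids intermediate bundles altogether.
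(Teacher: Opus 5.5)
Your reduction to $i=0$, the base case $j=1$, and the inductive step via Remark~\ref{rem:composition} are exactly the paper's argument when every intermediate collection $\tilde\sigma_k^{-1}\cdots\tilde\sigma_1^{-1}\EE$ consists of bundles, and you correctly identify torsion intermediates as the real difficulty. One small slip: Lemma~\ref{lem:switch} gives $A_{l_{0,1},l_{-1,0}}=A_{l_{k,k+1},l'_{k-1,k}}$, which is the shear the braid step at position $k$ uses; $A_{l'_{k-1,k},l_{k,k+1}}$ is its inverse.

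The two routes diverge in how the torsion difficulty is handled. The paper moves to the real manifold $\Mscr_n$ of numerical exceptional collections. It uses Lemma~\ref{lem:families} to deform $\ee$ so that no intermediate object has rank $0$ for $t\neq 0$, applies the bundle case along the deformation, and lets $t\to 0$. Your first option (``by continuity'') names this strategy but leaves out its substance. Nothing varies continuously until you construct such a deformation, and that forces you off the lattice $K_0^{\num}(X)$.

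Your preferred second option is a genuinely different argument, and it works; it even makes the induction unnecessary. Subtract $\sum_u l_{u,u+1}\otimes T_{u,u+1}=0$ and use $A(x)-x=-\frac{\omega(x,m_0)}{\omega(l_{0,1},m_0)}\,m_0$. The Gale relation for $\ell'$ then reduces to
\[
\omega(l_{j,j+1},m_0)\,T'_{j-1,j}=-\sum_{u=0}^{j-1}\omega(l_{u,u+1},m_0)\,T_{u,u+1}.
\]
This is more than ``telescoping as in Lemma~\ref{lm:ts_and_ls_mut}'': for $j\ge 2$ the new entry $T'_{j-1,j}=s(E_j,E'_0)$ is not a multiple of a single old $T_{u,u+1}$. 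To prove the identity, proceed as follows.
\begin{enumerate}
\item Pair the old Gale relation with $T_{k,k+1}$ to get $\frac{l_{k-1,k}}{r_k^2}+T_{k,k+1}^2\,l_{k,k+1}+\frac{l_{k+1,k+2}}{r_{k+1}^2}=0$.
\item Applying $\omega(-,m_0)$ shows that the right-hand side is orthogonal to every entry of $T'$ not adjacent to $T'_{j-1,j}$. For $T'_{n-1,0}=T_{n-1,0}+T_{0,1}$ you also use $\omega(l_{n-1,0},m_0)=\omega(l_{0,1},m_0)$.
\item These $n-3$ entries are linearly independent, since a relation among them would give $x\in L^*$ vanishing on $l_{j-1,j}$ and $l_{j,j+1}$. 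So their orthogonal complement in $\Pic(X)_\RR$ is a line, and it contains $T'_{j-1,j}$.
\item The scalar is fixed by $T_{j-1,j}\cdot T'_{j-1,j}=1/r_j^2$, together with the relation from step 1 at $k=j-1$.
\end{enumerate}

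What each approach buys: the paper's route needs no computation beyond the single braid formula, but pays with the differential-topological Lemma~\ref{lem:families}. Yours is purely algebraic and uses only that $\EE$ and $\EE'$ consist of bundles. As a by-product it shows that $\omega(l_{j,j+1},m_0)\neq 0$, so the generalised mutation is automatically defined.
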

\begin{proof}
  By applying appropriate rotations we may assume that $i=0$ so that we can use the formula \eqref{eq:iis0}. If $j=1$ then the result follows from \eqref{eq:rem:geommut} (see also Figure \ref{fig:braidmutleft} for the dual picture involving a left mutation).
  If all $\tilde{\sigma}^{-1}_k\cdots \tilde{\sigma}_1^{-1}\EE$
  are exceptional collections of vector bundles, then the claim follows from the compatibility of generalised right mutations with composition (see Remark \ref{rem:composition}).

  Sadly if some intermediate mutation involves a torsion sheaf, then things are more complicated. One possibility is to extend the machinery of toric systems to this setting
  as is done in \cite[Definition 5.5]{Perling}. Here we choose a different route however.
  We will replace $\EE$, $\EE'$ by the corresponding (real) numerical
  exceptional collections $\ee$ and $\ee'$ in $K_0(X)_\RR$.  

  According to Lemma \ref{lem:families} below we may construct an embedded smooth curve $\ee_t:(-\epsilon,\epsilon)\r \Mscr$ such that $\ee_0=\ee$
  and such that for $t\neq 0$ none of the $\sigma_k^{-1}\cdots\sigma_1^{-1} (\ee_t)$ contains
  objects of rank $0$ for $t\neq 0$. Let $\ell_t$ be the Gale dual of the toric system (the latter defined via \eqref{eq:real_toric})  corresponding
  to\footnote{It is easy to see that $\ell_t$ can be constructed in such a way that it depends smoothly on $t$, e.g.\ using the formula \eqref{eq:formula}.} $\ee_t$.
  Now let $\ell'_t$ be obtained  by the generalised right mutation of $\ell_t$ at $(m_i)_t$ with respect to $(l_{j,j+1})_t$. For $t\neq 0$ it follows from
  compatibility with compositions as in the first paragraph of this proof, that $\ell'_t$ is the Gale dual of $\ee'_t:=\tilde{\sigma}_j^{-1}\cdots\tilde{\sigma}_1^{-1}\ee_t$.
  Taking the limit $t\r 0$ finishes the proof as it is clear that Definition \ref{def:gale_dual}\eqref{it:def:gale_dual_3} is compatible with limits.
\end{proof}
\begin{remark}
  \label{lem:block}
  A useful special case of this lemma is given when $l_{j,j+1}$ is on the same long edge as $m_i$. Then $E_i,\ldots,E_j$ are orthogonal and the generalised
  right mutation at $m_i$ with respect to $l_{j,j+1}$ corresponds to the permutation $E_i,\ldots,E_j\mapsto E_{j+1},\ldots,E_j,E_i$.
  \end{remark}
We now give the results that were used in the proof of Lemma \ref{lem:generalised_right}.
\begin{lemma}[{See also \cite[Theorem 3.2]{BondalSymplectic}}]
  Let $V$ be a real vector space of dimension~$n$ equipped with a bilinear form $\chi$.
  Let $\Mscr_u$ be the real algebraic subvariety of $V^u$ consisting of numerical exceptional collections of length $u$ in $V$ (with $\Mscr_{0}$ being just a point). Then $\Mscr_u$ is a submanifold
  of $V^u$. Moreover the map $\pi:\Mscr_{u+1}\r \Mscr_u$ which drops the last component is a fiber bundle over a Zariski
  open subset of $\Mscr_u$. 
\end{lemma}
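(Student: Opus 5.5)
We argue by induction on $u$, and we prove both claims together. The idea is to describe $\Mscr_{u+1}$ as the solution set of a system of equations in the last vector, parametrised by a point of $\Mscr_u$, and then use the implicit function theorem.

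First the set-up. A point of $\Mscr_{u+1}$ is a pair $(\ee,v)$ with $\ee=(e_1,\ldots,e_u)\in\Mscr_u$ and $v\in V$ such that
\[
\chi(v,e_k)=0 \quad (k=1,\ldots,u),\qquad \chi(v,v)=1 .
\]
We write $W_\ee:=\{v\in V\mid \chi(v,e_k)=0 \text{ for all } k\}$. This is the common kernel of the linear functionals $\chi(-,e_k)$. These functionals are linearly independent, because a numerical exceptional collection is linearly independent and its Gram matrix is unitriangular. So $\dim W_\ee=n-u$, and $W_\ee$ depends smoothly (indeed algebraically) on $\ee$. Locally on $\Mscr_u$ we choose a smooth frame $w_1(\ee),\ldots,w_{n-u}(\ee)$ of $W_\ee$; for instance, we solve the linear system by Cramer's rule on an open set where a fixed minor is nonzero. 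This trivialises the vector bundle $W\to\Mscr_u$ locally. In these coordinates the fibre of $\pi$ over $\ee$ is the quadric
\[
Q_\ee=\{x\in\RR^{n-u}\mid q_\ee(x)=1\},\qquad q_\ee(x):=\chi\Bigl(\sum x_aw_a(\ee),\sum x_aw_a(\ee)\Bigr).
\]
Here $q_\ee$ is the quadratic form of the symmetrisation $\chi^+(a,b)=\chi(a,b)+\chi(b,a)$ restricted to $W_\ee$, up to a factor $2$.

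Second, the submanifold claim. The map $(\ee,x)\mapsto q_\ee(x)-1$ has value $1$ at the relevant points, and its $x$-derivative at a solution is $2\chi^+|_{W_\ee}(x,-)$. Euler's identity gives $dq_\ee(x)\cdot x=2q_\ee(x)=2\neq 0$ whenever $q_\ee(x)=1$. Hence $1$ is a regular value of $q_\ee$ for every fixed $\ee$, and a fortiori the full map is a submersion near every point of $\Mscr_{u+1}$. The total space $\Mscr_{u+1}$ is therefore, locally over the chart of $\Mscr_u$, the preimage of a regular value. Using the induction hypothesis that $\Mscr_u$ is a manifold, we conclude that $\Mscr_{u+1}$ is a submanifold of $V^{u+1}$. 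The base case $u=0$ is the same argument with $W=V$.

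Third, the fibre bundle claim, which I expect to be the main obstacle. Since $\pi$ is a submersion, Ehresmann's theorem would give local triviality if $\pi$ were proper, but the fibres $Q_\ee$ are non-compact when $\chi^+|_{W_\ee}$ is indefinite. Instead we trivialise by linear algebra. Consider the Zariski open subset $U\subset\Mscr_u$ where $\chi^+|_{W_\ee}$ is nondegenerate, i.e.\ where $\det(\chi^+(w_a,w_b))\neq 0$; this condition is algebraic in $\ee$ and independent of the chosen frame up to a nonzero factor. On each connected component of $U$ the signature is constant. We apply Gram--Schmidt to the smooth frame $w_a(\ee)$ with respect to $\chi^+$, after permuting the frame locally to avoid isotropic pivots. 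This produces a smooth local frame in which $q_\ee$ is a fixed diagonal form $\mathrm{diag}(\pm1)$. Over that chart, $Q_\ee$ is then identified with a fixed quadric, which gives a local trivialisation of $\pi^{-1}(U)\to U$.

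The points to check in this step are that $U$ is Zariski open (it is defined by the non-vanishing of a polynomial, namely the discriminant computed in a local algebraic frame, after covering by charts) and that the diagonalisation can be done smoothly. For the latter, the local choice of a non-isotropic pivot basis persists under small perturbation.
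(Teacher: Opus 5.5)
Your proof is correct and follows the same inductive scheme as the paper. In both, the fibre of $\pi$ over $\ee$ is the quadric $\{v\in W_\ee : \chi(v,v)=1\}$ in the $(n-u)$-dimensional space $W_\ee={}^{\perp_\chi}\ee$, and the Zariski open set is cut out by a condition on $\chi^+|_{W_\ee}$. The paper's proof is only a sketch. You supply the regular-value computation it leaves implicit: $\chi^+(v,v)=2\neq 0$ on the fibre, so $\chi^+(v,-)$ never lies in the span of the $\chi(-,e_k)$. This gives the submanifold statement and also the submersivity of $\pi$, which is what the subsequent corollary and Lemma~\ref{lem:families} rely on.

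The one real difference is the choice of open set. The paper asks only that ${}^{\perp_\chi}\ee$ not be isotropic for $\chi^+$; that describes where the fibre is nonempty over $\CC$. You require $\chi^+|_{W_\ee}$ to be nondegenerate, and this is the condition that local triviality actually needs over $\RR$. Where the restricted form degenerates, the fibre type can change: compare $x_1^2+tx_2^2=1$ for $t>0$, $t=0$ and $t<0$. Moreover, a nonzero negative definite restriction is non-isotropic yet has empty fibre.

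Two small repairs are needed. First, permuting the frame does not always avoid isotropic pivots. If $W_{\ee_0}$ is a hyperbolic plane spanned by two isotropic vectors, both orderings begin with an isotropic vector. Instead, replace the frame by a constant linear combination that is $\chi^+$-orthogonal at $\ee_0$. Its leading principal minors are then nonzero on a neighbourhood of $\ee_0$, so Gram--Schmidt with normalisation by $|\chi^+(f,f)|^{-1/2}$ is smooth there. Second, two slips: the $x$-derivative of $q_\ee$ is $\chi^+(v,-)$, not $2\chi^+(v,-)$, and the level set is $q_\ee=1$, not $q_\ee-1=1$. Neither affects the argument.
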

\begin{proof}
  Let $\chi^+$ be the symmetrization of $\chi$.
  Clearly $\Mscr_{-1}$ is a smooth submanifold of $V^0$ since both sets are just single points. Assume that we have shown that $\Mscr_u$ is a submanifold of $V^u$.
  Then the image of $\pi:\Mscr_{u+1}\r \Mscr_u$ consists of the numerical exceptional collections $(e_0,\ldots,e_{u-1})$ which can be extended to a numerical
  exceptional collection $(e_0,\ldots,e_{u-1},e_u)$. This is possible when $L={}^{\perp_{\chi}}(e_0,\ldots,e_{u-1})$ is not isotropic for the quadratic form $\chi^+$, which
  is clearly a Zariski open condition. Moreover, the fiber $\pi^{-1}(e_0,\ldots,e_{u-1})$ corresponds to the $e_{u}$ such that $\chi^+(e_{u},e_{u})=1$,  which is a submanifold of $V$.
  From this one deduces that $\Mscr_{u+1}$ is a submanifold of $V^{u+1}$.
\end{proof}
\begin{corollary} If $v\ge u$ then the projection map $\Mscr_v\r \Mscr_u$ is a submersion.
  \end{corollary}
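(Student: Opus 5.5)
The corollary follows by composing projections $\Mscr_v\r\Mscr_{v-1}\r\cdots\r\Mscr_u$, since a composition of submersions is a submersion. So it suffices to treat $v=u+1$ and show that the map $\pi:\Mscr_{u+1}\r\Mscr_u$ dropping the last component is a submersion at every point of $\Mscr_{u+1}$, not only over the Zariski open set where the previous lemma gives a fibre bundle.

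For $v=u+1$, fix $(e_0,\ldots,e_u)\in\Mscr_{u+1}$ and a tangent vector $(\delta_0,\ldots,\delta_{u-1})$ to $\Mscr_u$ at $(e_0,\ldots,e_{u-1})$. I want a lift $\delta_u\in V$ with $(\delta_0,\ldots,\delta_u)$ tangent to $\Mscr_{u+1}$. Differentiating the defining equations $\chi(e_u,e_k)=0$ for $k<u$ and $\chi(e_u,e_u)=1$ shows that $\delta_u$ must satisfy
\[
\chi(\delta_u,e_k)=-\chi(e_u,\delta_k)\quad (k<u),\qquad \chi^+(e_u,\delta_u)=0 .
\]

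This is $u+1$ linear conditions on $\delta_u$, namely the linear functionals $\chi(-,e_k)$ for $k<u$ and $\chi^+(e_u,-)$. I will show these functionals are linearly independent.
\begin{itemize}
\item Suppose $\sum_k c_k\chi(-,e_k)+c\,\chi^+(e_u,-)=0$.
\item Evaluating at $e_u$ kills every term with $k<u$, because $\chi(e_u,e_k)=0$ for $k<u$, and leaves $2c=0$.
\item With $c=0$, evaluating at $e_j$ for $j<u$ gives a triangular system: $\chi(e_j,e_k)=0$ for $j>k$ and $\chi(e_j,e_j)=1$.
\item Taking $j$ from the largest index downward then forces all $c_k=0$.
\end{itemize}

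Independence is the step I expect to need care: a numerical exceptional collection is automatically linearly independent, since its Gram matrix is unitriangular. That is what makes the functionals independent even where $\pi$ is not locally trivial. Given independence, the system above is solvable for $\delta_u$, since $u+1\le n$ because exceptional collections are linearly independent.

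To conclude, the same independence shows that the defining equations of $\Mscr_{u+1}$ have surjective differential there. Hence the tangent space is exactly the solution set of the linearized equations, which is consistent with the previous lemma's manifold structure. Therefore $d\pi$ is surjective, and the corollary follows by composition.
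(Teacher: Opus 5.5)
Your proof is correct, but it takes a different route from the paper. The paper gives no separate argument for this corollary. It is meant to follow from the preceding lemma, which exhibits $\pi:\Mscr_{u+1}\r\Mscr_u$ as a fibre bundle over an open set containing its image. The fibre over $(e_0,\ldots,e_{u-1})$ is the quadric $\{x\in L\mid \chi(x,x)=1\}$ in $L={}^{\perp_\chi}(e_0,\ldots,e_{u-1})$. So $\pi$ is a submersion, and one composes.

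You instead argue infinitesimally via the regular value theorem. The defining equations $\chi(e_j,e_k)=0$ $(j>k)$ and $\chi(e_j,e_j)=1$ have a block lower-triangular Jacobian. Its $j$-th diagonal block $\delta_j\mapsto\bigl((\chi(\delta_j,e_k))_{k<j},\chi^+(e_j,\delta_j)\bigr)$ is surjective by your unitriangularity argument applied to $(e_0,\ldots,e_j)$.

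Your bullet points only treat the block $j=u$. The phrase ``the same independence shows that the defining equations have surjective differential'' should therefore be spelled out as this induction over $j\le u$. With it, $\Mscr_v$ is a regular level set whose tangent space is the kernel of the linearisation, and lifting $(\delta_0,\ldots,\delta_{u-1})$ by solving for $\delta_u$ is legitimate. The remark $u+1\le n$ is superfluous, since independence of the functionals already gives solvability.

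What your approach buys is self-containedness. It reproves that each $\Mscr_u$ is a submanifold and gives surjectivity of $d\pi$ everywhere. It does this without identifying the image of $\pi$ (over $\RR$, the open set where $\chi(x,x)$ takes a positive value on $L$) and without proving local triviality over it, which is the sketchiest part of the paper's lemma. The paper's route yields more global structure, but the corollary and its use in Lemma \ref{lem:families} only need the infinitesimal statement you prove.

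Your worry about points lying outside the open set where the bundle structure holds is unnecessary. That set is described in the paper as the image of $\pi$, so it contains $\pi(\Mscr_{u+1})$.
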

  \begin{lemma}
    \label{lem:families}
    Let $V=K_0(X)_\RR$  and let  $\ee\in \Mscr_n$. Choose a generic tangent vector $v$ at $\ee$ in $\Mscr_n$
    and let $\ee_t:(-\epsilon,\epsilon)\r \Mscr_n$ be an embedded smooth curve such that $\ee_0=\ee$
  and $(d\ee_t/dt)_{t=0}=v$. Then possibly after shrinking $\epsilon$, none of the numerical exceptional collections $\sigma_k^{-1}\cdots\sigma_1^{-1} (\ee_t)$ contains
  objects of rank $0$ for $t\neq 0$.
\end{lemma}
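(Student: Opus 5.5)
The plan is to show that the ``bad'' set of tangent vectors is contained in a finite union of proper linear subspaces of $T_{\ee}\Mscr_n$. A generic $v$ avoids such a union. Fix $k\in\{1,\ldots,n-1\}$ and consider the composite
\[
\Phi_k:=\sigma_k^{-1}\cdots\sigma_1^{-1}:\Mscr_n\r\Mscr_n .
\]
Each numerical braid mutation is given by polynomial formulas in the entries: for instance
\[
\sigma_{i}^{-1}(\ldots,e_{i-1},e_i,\ldots)=(\ldots,e_i,\ \chi(e_{i-1},e_i)e_i-e_{i-1},\ldots)
\]
(up to the sign convention fixed by Convention \ref{conv:tilde}; only the linear-algebraic shape matters here). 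Hence $\Phi_k$ is a real-algebraic, in fact polynomial, self-map of $\Mscr_n$, and it is a diffeomorphism because the braid action is invertible. The only new object in $\Phi_k(\ee)$ is its $k$-th entry, which is the iterated right mutation of $e_0$ through $e_1,\ldots,e_k$. We will also record that all other entries are original $e_j$'s, and these have nonzero rank whenever $\ee$ comes from vector bundles.

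For each $k$ we define the smooth function $f_k:=r\circ\mathrm{pr}_k\circ\Phi_k$ on $\Mscr_n$, where $\mathrm{pr}_k$ picks out the new object. There are two cases. If $f_k(\ee)\neq 0$, continuity gives $f_k(\ee_t)\neq 0$ for small $|t|$ and there is nothing to do. If $f_k(\ee)=0$, then $f_k(\ee_t)=t\,df_k(v)+O(t^2)$. It therefore suffices to prove $df_k|_{\ee}\neq 0$; after that, requiring $v\notin\ker df_k|_{\ee}$ for the finitely many relevant $k$ is a generic condition, and shrinking $\epsilon$ finishes the argument. If the objects of $\ee$ itself might have rank zero, we include $k=0$ and treat it the same way.

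The main obstacle is proving $df_k|_{\ee}\neq 0$. Since $\Phi_k$ is a diffeomorphism, this is equivalent to showing that the function $\ee'\mapsto r(e'_k)$ has nonzero differential on $\Mscr_n$ at $\ee'=\Phi_k(\ee)$. I would show this by an explicit deformation that moves only the $k$-th entry within the fibre structure. By the preceding Lemma, the projections $\Mscr_n\r\Mscr_u$ are submersions, and the set of admissible choices of one entry, with the others fixed up to compatible adjustment, is locally the quadric $\chi^+(x,x)=1$ intersected with linear orthogonality conditions. Concretely, I would use the rotation and reordering symmetry: first rotate so that the new object sits in the last position, which is allowed because rotation $e\mapsto e\otimes\omega_X^{\mp1}$ preserves rank. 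The fibre of $\Mscr_n\r\Mscr_{n-1}$ is then $\{x\in\ell:\chi^+(x,x)=1\}$, where $\ell$ is the line ${}^{\perp_\chi}(e_0,\ldots,e_{n-2})$. This fibre is discrete, so instead I would deform the preceding entries.

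More robustly, I would use a scaling trick. The full collection of $K_0(X)_\RR$ allows the family in which the Gram matrix stays fixed but the classes change by an isometry of $(V,\chi)$. The isometry group acts transitively enough, for example via automorphisms of the form $x\mapsto x+s\,\chi(\cdot)$ or by exponentiating elements of its Lie algebra. The rank functional $r=\chi(\cdot,[\Oscr_p])$ has nonzero differential along such an orbit at a class $e$ unless $e$ is fixed by the whole Lie algebra, and that would force $e=0$, which is impossible since $\chi(e,e)=1$. Concretely, the Lie algebra of $O(\chi)$ maps $e$ onto ${}^{\perp}$-type complements of dimension at least $1$, and one exhibits an element $A$ with $r(Ae)\neq 0$. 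This shows $df_k\neq 0$ and completes the argument.
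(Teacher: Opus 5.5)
Your reduction is correct and matches the paper's. Since $\Phi_k$ and rotations are rank-preserving diffeomorphisms of $\Mscr_n$, it suffices to show that at any $\ee'$ whose $l$-th entry has rank $0$, the function $\ee'\mapsto r(e'_l)$ has nonzero differential on $\Mscr_n$; genericity of $v$ and shrinking $\epsilon$ then finish.

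\textbf{The gap.} The argument in your last paragraph for this nonvanishing fails. You deform along orbits of the isometry group $O(\chi)$, i.e.\ keeping the Gram matrix fixed, and claim some $A\in\mathfrak{o}(\chi)$ has $r(Ae)\neq 0$. This is false for every $e$:
\begin{itemize}
\item Let $\kappa=[-\otimes\omega_X]$, so $\chi(x,y)=\chi(y,\kappa x)$. Any isometry $g$ of $\chi$ commutes with $\kappa$.
\item One has $(\kappa-1)^2x=K_X^2\,r(x)[\Oscr_p]$, so $g[\Oscr_p]=\lambda[\Oscr_p]$ for some $\lambda$.
\item Since $r=\chi(-,[\Oscr_p])$, we get $r(gx)=\lambda^{-1}r(x)$.
\item The identity $g(\kappa-1)^2=(\kappa-1)^2g$ then forces $\lambda=\lambda^{-1}=\pm1$.
\end{itemize}
Hence $r\circ g=\pm r$, so $r$ is constant on orbits of the identity component, and $r(Ae)=0$ for all $A\in\mathfrak{o}(\chi)$. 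The obvious one-parameter isometries, real twists $\mathrm{ch}\mapsto \mathrm{ch}\cdot e^{tD}$ and rotations of $K_X^\perp$ acting on $c_1$, visibly preserve rank; compare Proposition \ref{prop:symmetry}. The inference ``nonzero differential unless $e$ is fixed by the whole Lie algebra'' is also a non sequitur: $\mathfrak{o}(\chi)e\neq 0$ says nothing about $r$ being nonzero on it. Any valid argument must move the Gram matrix.

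\textbf{The fix.} Your abandoned first attempt was close; you rotated the wrong way. Put the relevant object in position $0$, not $n-1$. Then use that the projection $\Mscr_n\r\Mscr_1$, $\ee'\mapsto e'_0$, is a submersion, which is the corollary you already cited. It then suffices that $r$ has nonzero differential on $\Mscr_1=\{x:\chi(x,x)=1\}$ at a point $e$ with $r(e)=0$. The tangent space there is $\{w:\chi^+(e,w)=0\}$, and $\chi^+(e,-)\neq 0$ since $\chi^+(e,e)=2$. If $r$ vanished on this tangent space, then $r=c\,\chi^+(e,-)$ for some $c$. Evaluating at $e$ gives $0=2c$, so $r\equiv 0$, which is absurd. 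This is exactly the paper's proof: it takes the curve $(e_0+t[\Oscr_X])/\sqrt{\chi(e_0+t[\Oscr_X],e_0+t[\Oscr_X])}$, whose rank has derivative $1$ at $t=0$ when $r(e_0)=0$, and lifts it to $\Mscr_n$ through the submersion.
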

\begin{proof} It is sufficient to show that for any $k,l$ the object $(\sigma_k^{-1}\cdots\sigma_1^{-1} (\ee_t))_l$ does not have rank zero for $t\neq 0$ for $(\ee_t)_t$
  a particular embedded curve. Since $\sigma_k^{-1}\cdots\sigma_1^{-1}$ is a diffeomorphism $\Mscr_n\r \Mscr_n$, we may assume $k=0$. Using a suitable rotation we may assume $l=0$.
  If $\ee=(e_0,\ldots,e_{n-1})$, then we put $(e'_0)_t=e_0+t[\Oscr_X]$ and $(e_0)_t=(e'_0)_t/\sqrt{\chi((e_0')_t,(e_0')_t)}$. Then $(e_0)_0=e_0$ and for $t$ small, $(e_0)_t$ is exceptional and has non-zero rank if $t\neq 0$.
  Then, since $\Mscr_n\r \Mscr_1$ is a submersion, a corresponding lift $(\ee_t)_t$ with the required properties exists.
  \end{proof}
Recall that by Proposition \ref{prop:clusterVSregularmut} a quiver mutation can be expressed as a composition of a sequence of adjacent braid mutations.  Hence
quiver mutations of very strong exceptional collections can be viewed as generalised mutations of the corresponding convex polygons. Below we make this explicit.
\begin{definition} \label{def:opposing}
  Let $H_i$ denote the line parallel to $m_i$ and such that it is the furthest from $m_i$ but still intersects the polygon $\conv(l_{01},\ldots,l_{n-1,0})$. The vertices vertices $l_{j,j+1}$ on the line $H_i$ are called \emph{opposing vertices} for $m_i$. The earliest $l_{j,j+1}$ after $m_i$ in the cyclic ordering
  is called the \emph{earliest opposing vertex} for $m_i$.
\end{definition}
\begin{proposition}
  \label{prop:lm:clustergeneralised} Let $\EE=(E_0,\ldots,E_{n-1})$ be a very strong exceptional collection with corresponding Gale dual
  $(l_{01},\ldots,l_{n-1,0})$.
  Then the Gale dual of the toric
  system associated to the quiver mutation at $E_i$ is given by
  \[
    (l_{0,1}, \dots, l_{i-1,i}, A(l_{i+1,i+2}),\ldots,A(l_{j,j+1}),l_{j,j+1},\ldots,l_{n-1,0}),
  \]
with  $A:=A_{l_{i,i+1},l_{i-1,i}}$ (with cyclic indices) and $l_{j,j+1}$ the earliest opposing vertex for $m_i$.
\end{proposition}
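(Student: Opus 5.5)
By Proposition \ref{prop:clusterVSregularmut}, the quiver mutation of $\EE$ at $E_i$ is a generalised right mutation $\EE'$ of $\EE$ at $E_i$ with respect to some pair $(E_j,E_{j+1})$. Proposition \ref{prop:rem:cluster} (in the form we are proving) pins $j$ down as the smallest index for which $\EE'$ is very strong and $\Hom_X(E_i,E_j)\neq 0$. Both $\EE$ and $\EE'$ are very strong, so by Proposition \ref{pr:onlybundles} they consist of vector bundles. Hence Lemma \ref{lem:generalised_right} applies: the Gale dual of $\EE'$ is the generalised right mutation of $\ell=(l_{01},\ldots,l_{n-1,0})$ at $m_i$ with respect to $l_{j,j+1}$. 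This is exactly the displayed formula with $A=A_{l_{i,i+1},l_{i-1,i}}$. The proof therefore reduces to a purely geometric statement: among the candidate vertices $l_{j,j+1}$, identify the one produced by this recipe as the earliest opposing vertex of Definition \ref{def:opposing}. After a rotation I will assume $i=0$.

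\textbf{Step 1 (convexity criterion).} By Proposition \ref{prop:convex_perling}, $\EE'$ is very strong if and only if the mutated polygon $P'$ is convex. The shear $A$ fixes pointwise the line $\ell_0$ through the origin parallel to $m_0$. It moves every point parallel to $m_0$ by an amount proportional to its signed distance from $\ell_0$. The polygon $P'$ has vertices $l_{-1,0},A(l_{12}),\ldots,A(l_{j,j+1}),l_{j,j+1},\ldots$, and its new edge $[A(l_{j,j+1}),l_{j,j+1}]$ is parallel to $m_0$. Since $A$ is affine, convexity at the images $A(l_{k,k+1})$ for $1\le k<j$ is inherited from $P$. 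Convexity of $P'$ therefore reduces to three local turning conditions:
\begin{enumerate}
\item at $l_{-1,0}$, between $m_{-1}$ and $A(m_1)$;
\item at $A(l_{j,j+1})$;
\item at $l_{j,j+1}$.
\end{enumerate}
Conditions (2) and (3) together say that the line through $l_{j,j+1}$ parallel to $m_0$ supports $P$, with $P$ on the same side as the origin, and that the new edge is oriented as $-m_0$. I would check this by computing signs of $\omega$ using \eqref{eq:A}. The conclusion is that $l_{j,j+1}$ lies on $H_0$, i.e.\ $l_{j,j+1}$ is an opposing vertex. Condition (1) should then follow automatically: $A$ maps the side of the long edge through $m_0$ into the half-plane determined by $m_{-1}$, precisely because $A(l_{10})=l_{-1,0}$.

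\textbf{Step 2 (choosing among opposing vertices).} There are at most two opposing vertices: either a single vertex on $H_0$, or the two endpoints of a long edge parallel to $m_0$. Both endpoints give convex $P'$. By Remark \ref{lem:block}, mutating further along that long edge only permutes an orthogonal block. The objects strictly on that parallel edge have slopes in the relation that makes them orthogonal to the preceding object, so the condition $\Hom(E_0,E_j)\neq0$ together with minimality of $j$ selects the earliest opposing vertex. I would verify this using \eqref{eq:additivity} and Proposition \ref{pr:slopes} to translate $\Hom$-nonvanishing into a strict sign of $\omega(m_0,\cdot)$. Finally, any $j$ before the earliest opposing vertex fails the convexity of Step 1, so the earliest opposing vertex is indeed the first admissible index.

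I expect the main obstacle to be the sign bookkeeping in Step 1. The difficulty is showing that convexity at the two new vertices forces $l_{j,j+1}\in H_0$, while convexity at $l_{-1,0}$ imposes no further constraint. I would first check this on explicit coordinates with $m_0$ horizontal, where $A$ becomes an ordinary horizontal shear.
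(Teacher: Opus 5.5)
Your route is the paper's. Proposition \ref{prop:clusterVSregularmut} supplies some $j$ with $\EE'$ very strong and $\mu_0B(\EE)\cong B(\EE')$. Lemma \ref{lem:generalised_right} identifies $P'$ as the generalised right mutation of $P$ at $m_0$ with respect to $l_{j,j+1}$. Convexity then locates $l_{j,j+1}$, and Remark \ref{lem:block} absorbs the remaining freedom.

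Your Step 1 is a reasonable way to make the paper's ``direct inspection'' precise. With $m_0=(1,0)$ on the line $y=-h$ one gets $A(x,y)=(x+y/h,y)$, and the two new corners force $l_{j,j+1}$ to be a local extremum of the height. That computation has a second solution you did not record: $l_{j,j+1}$ on the long edge through $m_0$ itself. There $A$ shifts each vertex back by exactly one segment, so $P'=P$ and $\EE'$ is only a permutation of the block of $E_0$. Such $j$ are excluded because they do not realise $\mu_0$. Excluding them is the role of the clause $\Hom_X(E_0,E_j)\neq 0$ in Proposition \ref{prop:rem:cluster}; it does not separate one opposing vertex from another.

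Condition (1) also needs no separate geometric argument. For every $j\ge 1$ the neighbours of $l_{-1,0}$ in $P'$ are $l_{-2,-1}$ and $A(l_{1,2})$. So convexity there holds simply because the $j$ from Proposition \ref{prop:clusterVSregularmut} gives a convex $P'$.

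The genuine problem is how you select $j$. You cannot invoke Proposition \ref{prop:rem:cluster} to assert that $j$ is minimal. In the paper that proposition is proved as a rephrasing of the very statement you are proving, so using it is circular. Proposition \ref{prop:clusterVSregularmut} only gives the existence of \emph{some} suitable $j$.

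The argument that actually closes the proof is the one you mention only in passing:
\begin{enumerate}
\item By Step 1, the $j$ from Proposition \ref{prop:clusterVSregularmut} is an opposing vertex.
\item Generalised mutations are compatible with composition (Remark \ref{rem:composition}, resting on Lemma \ref{lem:switch}). Hence the mutation with respect to any opposing vertex equals the mutation with respect to the earliest opposing vertex, followed by a generalised mutation inside the long edge on $H_0$.
\item By Remark \ref{lem:block}, that last step only permutes a block and leaves the rolled-up helix algebra unchanged.
\end{enumerate}
Hence the earliest opposing vertex also realises $\mu_0B(\EE)$.

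Your sentence about $\Hom$ and minimality should be dropped, for two reasons. First, by \eqref{eq:additivity} one has $\chi(E_0,E_j)>0$ for every $E_j$ in the block on $H_0$, so the $\Hom$ condition does not distinguish those vertices. Second, there can be more than two opposing vertices, since the straight vertices on a long edge parallel to $m_0$ also lie on $H_0$.
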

\begin{proof}
  As usual we may assume $i=0$.
  By Proposition \ref{prop:clusterVSregularmut} the quiver mutation at $0$ of $\EE$ is of the form 
  \[
    \begin{aligned}
      \EE'&=\tilde{\sigma}_j^{-1}\cdots\tilde{\sigma}_2^{-1}\tilde{\sigma}_1^{-1}\EE\\
                     &=(E_{1},E_{2},\ldots, E_{j} ,\widetilde{R_{E_j}}\cdots \widetilde{R_{E_{1}}} E_0,E_{j+1},\ldots,E_{n-1}).
    \end{aligned}
  \]
  for a suitable $j\in \{1,\ldots,n-1\}$ such that  $\EE'$ is very strong. 

 Because both $\EE$ and $\EE'$ are very strong, the corresponding polygons $P$, $P'$ are convex.
 Moreover by Lemma \ref{lem:generalised_right} $P'$ is obtained from $P$ by the generalised right mutation at $m_0$ with respect to $l_{j,j+1}$.
 Direct inspection of Figure \ref{fig:quiver_mutation} shows that the set of $j$'s such that this generalised right mutation preserves convexity consists of
the opposing vertices for $m_i$.
Let $l_{j,j+1}$ be the earliest opposing vertex. By compatibility with composition, the generalised right mutation at $m_0$ with respect to another opposing vertex $l_{j',j'+1}$ is the generalised right mutation with respect to $l_{j,j+1}$ followed by a generalised right mutation within a long edge. By Remark \ref{lem:block} this corresponds to permuting the elements of a block, which has no effect on the resulting rolled-up helix algebra.
\end{proof}

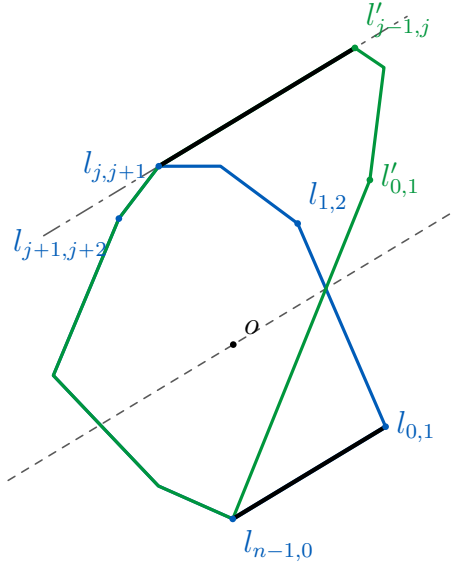
\begin{figure}[h!]\label{fig:clustermut}
\centering
\caption{The green polygon $(l_{0,1}',\dots,l_{j-1,j}',l_{j,j+1},\dots, l_{n-1,0})$ is obtained from the blue polygon $(l_{0,1},\dots,l_{n-1,0})$ by the generalised right mutation at $m_0$ with respect to $l_{j,j+1}$.  Here this is also the quiver mutation at $m_0$. }
\label{fig:quiver_mutation}
\begin{tikzpicture}[scale=1.05, line join=round, line cap=round]
  \definecolor{orig}{RGB}{0,92,184}
  \definecolor{mut}{RGB}{0,150,60} 

  \coordinate (o)  at (0,0);

  \coordinate (l0) at (1.9281,-1.0401);
  \coordinate (l1) at (0.8151, 1.5339);
  \coordinate (l2) at (-0.1612, 2.2602);
  \coordinate (l3) at (-0.9429, 2.2595);  
  \coordinate (l4) at (-1.4482, 1.5966);  
  \coordinate (l5) at (-2.2779,-0.3910);
  \coordinate (l6) at (-0.9475,-1.7911);
  \coordinate (l7) at (-0.0065,-2.2083);  

  \coordinate (lp0) at (1.7293, 2.0859);  
  \coordinate (lp1) at (1.9079, 3.5097);  
  \coordinate (lp2) at (1.5398, 3.7587);  

  \coordinate (axisA) at (-2.8500,-1.7210);
  \coordinate (axisB) at ( 2.8500, 1.7210);

  \coordinate (supA)  at (-2.4000, 1.3795);
  \coordinate (supB)  at ( 2.2000, 4.1574);

  \draw[black!65, dashed, line width=0.6pt] (axisA) -- (axisB);
  \draw[black!65, dashed, dash pattern=on 7pt off 3pt on 1.5pt off 3pt, line width=0.6pt] (supA) -- (supB);

  \draw[orig, very thick]
    (l0)--(l1)--(l2)--(l3)--(l4)--(l5)--(l6)--(l7)--cycle;

  \draw[mut, very thick]
    (lp0)--(lp1)--(lp2)--(l3)--(l4)--(l5)--(l6)--(l7)--cycle;

  \draw[orig, ultra thick] (l7) -- (l0);
  \draw[black, ultra thick] (l7) -- (l0);

  \draw[mut, ultra thick] (lp2) -- (l3);
  \draw[black, ultra thick] (lp2) -- (l3);

  \fill[black] (o) circle (1.2pt);

  \fill[orig] (l0) circle (1.2pt);
  \fill[orig] (l1) circle (1.2pt);
  \fill[orig] (l7) circle (1.2pt);
  \fill[orig] (l3) circle (1.2pt);
  \fill[orig] (l4) circle (1.2pt);

  \fill[mut] (lp0) circle (1.2pt);
  \fill[mut] (lp2) circle (1.2pt);

  \node[above right, text=black] at (o) {$o$};

  \node[right, text=orig]        at (l0) {$l_{0,1}$};
  \node[above right, text=orig]  at (l1) {$l_{1,2}$};
  \node[below right, text=orig]  at (l7) {$l_{n-1,0}$};

  \node[left, text=orig]         at (l3) {$l_{j,j+1}$};
  \node[below left, text=orig]   at (l4) {$l_{j+1,j+2}$};

  \node[right, text=mut]         at (lp0) {$l_{0,1}'$};
  \node[above right, text=mut]   at (lp2) {$l_{j-1,j}'$};
\end{tikzpicture}
\end{figure}

\subsection{Reduction of ranks and area change}

As before,  let $\EE = (E_0,\dots, E_{n-1})$ be a very strong exceptional collection and $P = \conv (l_{0,1},\dots,l_{n-1,0})$ the corresponding polygon.   
Assume that $\omega$ is normalised. By Theorem  \ref{thm:achievement}\eqref{it:thm2_3} (see also \cite[Proposition 4.17]{KuznetsovToric}) we have 
\[
2\area(P)=\sum_{i=0}^{n-1} \omega(l_{i-1,i},l_{i,i+1}) =  \sum_{i=0}^{n-1} r(E_i)^2,
\]
  Hence a quiver mutation reduces the sum of the ranks of an exceptional collection (\emph{the total rank} of an exceptional collection) if and only if it reduces the area of the corresponding polygon.

Let $P' = \conv (l_{0,1}, \dots, l_{i-1,i}, l_{i,i+1}',\dots,l_{j-1,j}',l_{j,j+1},\dots, l_{n-1,0})$ be the convex polygon obtained from $P$ by the quiver mutation at $i$. 
\begin{lemma} \label{lem:area_reduction}
  $\area(P')-\area(P) < 0$ if and only if
    \[
      \omega(m_i,l_{j,j+1}+l_{i,i+1}) < 0
      \]
\end{lemma}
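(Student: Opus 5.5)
The plan is to compute both areas with the fan decomposition from the origin and to use that the shear $A$ preserves $\omega$. Write $v_k:=l_{k,k+1}$, with cyclic indices, so that $m_i=v_i-v_{i-1}$. Take the orientation for which $\omega(v_{k-1},v_k)>0$ for all $k$, so that $2\area(P)=\sum_k\omega(v_{k-1},v_k)$. By Proposition \ref{prop:lm:clustergeneralised}, $P'$ is obtained from $P$ by replacing the chain
\[
v_{i-1},v_i,v_{i+1},\ldots,v_j,v_{j+1}
\]
with the chain
\[
v_{i-1},Av_{i+1},\ldots,Av_j,v_j,v_{j+1},
\]
where $A=A_{v_i,v_{i-1}}$. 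Since $P'$ is again convex and contains the origin (it is the polygon of a very strong collection), the same fan formula computes $2\area(P')$.

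First, by \eqref{eq:A}, $A$ is linear: $A(x)=x-\frac{\omega(x,m_i)}{\omega(v_i,m_i)}\,m_i$. This is a transvection fixing the line $\RR m_i$ through the origin, so $\det A=1$ and $\omega(Ax,Ay)=\omega(x,y)$. Using $Av_i=v_{i-1}$, we get
\[
\omega(v_{i-1},Av_{i+1})=\omega(Av_i,Av_{i+1})=\omega(v_i,v_{i+1}).
\]
More generally $\omega(Av_k,Av_{k+1})=\omega(v_k,v_{k+1})$ for $i+1\le k<j$. Hence all terms of the fan sums cancel except two, and
\[
2\bigl(\area(P')-\area(P)\bigr)=\omega(Av_j,v_j)-\omega(v_{i-1},v_i).
\]

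Next I would evaluate the two remaining terms. Put $a:=\omega(v_i,m_i)$ and $b:=\omega(m_i,v_j)$. Since $v_{i-1}=v_i-m_i$, we have $\omega(v_{i-1},v_i)=\omega(v_i,m_i)=a$, and $a>0$ by the choice of orientation. From the formula for $A$,
\[
\omega(Av_j,v_j)=-\frac{\omega(v_j,m_i)}{a}\,\omega(m_i,v_j)=\frac{b^2}{a}.
\]
Therefore
\[
2\bigl(\area(P')-\area(P)\bigr)=\frac{b^2-a^2}{a}=\frac{(b-a)(b+a)}{a}.
\]
Here $b-a=\omega(m_i,v_j+v_i)$, which is exactly the quantity in the statement, and $b+a=\omega(m_i,v_j-v_i)$.

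The main obstacle is to check carefully that $b+a>0$, which is where the choice of $l_{j,j+1}$ as an opposing vertex (Definition \ref{def:opposing}) is used. With the counterclockwise orientation, the interior of the convex polygon $P$ lies to the left of the edge $m_i$, so $\omega(m_i,x-v_i)\ge 0$ for all $x\in|P|$. Moreover, $v_j$ lies on the supporting line $H_i$ farthest from $m_i$. This line differs from the line through $m_i$, since $P$ is non-degenerate because it has positive area. Hence $\omega(m_i,v_j-v_i)>0$. Combined with $a>0$, the sign of $\area(P')-\area(P)$ is the sign of $\omega(m_i,l_{j,j+1}+l_{i,i+1})$, which gives the lemma. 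Along the way I would double-check the sign conventions in \eqref{eq:A}, since an error there flips the conclusion; the sanity check is $A(v_i)=v_{i-1}$.
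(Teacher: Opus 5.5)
Your proof is correct, and it reaches the same closed formula as the paper through a different decomposition.

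The paper first rotates to $i=0$. It then splits $P$ along the two diagonals from $l_{j,j+1}$ into $P_0\cup T\cup P_1$, where $T=\conv(l_{n-1,0},l_{0,1},l_{j,j+1})$. It notes that $P'=P_0\cup T'\cup A(P_1)$ with $T'=\conv(l_{n-1,0},l'_{j-1,j},l_{j,j+1})$, and uses that the shear preserves area. So only the two triangles sharing the side $[l_{n-1,0},l_{j,j+1}]$ need to be compared. This gives $2(\area(T')-\area(T))=\frac{\omega(l_{j,j+1}+l_{0,1},m_0)}{\omega(l_{0,1},m_0)}\,\omega(l_{j,j+1}-l_{n-1,0},m_0)$. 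That is exactly your $(b-a)(b+a)/a$, once you observe that $\omega(m_0,l_{j,j+1}-l_{n-1,0})=\omega(m_0,l_{j,j+1}-l_{0,1})$.

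You instead use the shoelace sum centred at the origin and apply $\omega(Ax,Ay)=\omega(x,y)$ term by term. The cancellation then becomes a telescoping identity rather than a statement about tilings.

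Your route is purely algebraic, and this buys two things:
\begin{enumerate}
\item You do not need the tiling of $P'$ by $P_0$, $T'$ and $A(P_1)$, which relies on convexity of $P'$ and is read off a figure. Your computation only needs $P'$ to be a positively oriented polygon, so the appeal to convexity of $P'$ is more than you use.
\item You actually prove the two sign facts $a>0$ and $\omega(m_i,l_{j,j+1}-l_{i,i+1})>0$, which the paper asserts by inspecting the picture.
\end{enumerate}
The paper's route, in exchange, makes it geometrically visible that the whole area change comes from replacing one triangle by another with a common side.
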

\begin{proof}
To simplify the notations and without loss of generality we can assume $i=0$.  Divide the polygon $P$ into three parts 
 \[
 P=P_0\cup T \cup P_1, \]
where 
we put $P_0 := \conv(l_{j,j+1},l_{j+1,j+2},\ldots,l_{n-1,0})$, $P_1 := \conv(l_{0,1},l_{1,2},\dots, l_{j,j+1})$ and $T := \conv(l_{n-1,0},l_{0,1},l_{j,j+1})$ (see the left part of Figure \ref{fig:areachange}).
Then for the mutated polygon $P'$ we have 
\[
P'=P_0\cup T'\cup P'_1, 
\]
where $T' = \conv(l_{n-1,0},l_{j-1,j}',l_{j,j+1})$ and $P'_1 = \conv(l_{n-1,0},l'_{0,1},\ldots,l'_{j-1,j})$ (see the right part of Figure \ref{fig:areachange}).

We have $\area(P'_1)=\area(A(P_1)) = \area(P_0)$, since $A$ is a shear mapping and hence preserves the area.  Therefore $\area(P')-\area(P)=\area(T')-\area(T)$.  On the other hand, we have 
\[
2\area(T)=\omega(m_0, l_{j,j+1} - l_{n-1,0})
\]
and
\begin{align*}
2\area(T')&=\omega(l_{j,j+1} - l_{n-1,0},l_{j,j+1}-l'_{j-1,j})\\
&=\frac{\omega(l_{j,j+1},m_0)}{\omega(l_{0,1},m_0)}\omega(l_{j,j+1} - l_{n-1,0},m_0)
\end{align*}
Hence
\begin{align*}
2(\area(T')-\area(T))&=\left(\frac{\omega(l_{j,j+1},m_0)}{\omega(l_{0,1},m_0)}+1\right)\omega(l_{j,j+1} - l_{n-1,0},m_0)\\
&=\frac{\omega(l_{j,j+1}+l_{0,1},m_0)}{\omega(l_{0,1},m_0)}\omega(l_{j,j+1} - l_{n-1,0},m_0)
\end{align*}
One can see from the picture that $\omega(l_{j,j+1} - l_{n-1,0},m_0)<0$ and $\omega(l_{0,1},m_0)>0$.  Hence the sign of
$\area(T')-\area(T)$ is indeed the same as the sign of $\omega(m_0,l_{j,j+1}+l_{0,1})$.  
\end{proof}
\begin{corollary} No quiver mutation reduces the total rank of $\EE$ if and only if for all $i$
  \begin{equation}
    \label{eq:area_reduction}
    \omega(m_i,l_{j,j+1}+l_{i,i+1}) \ge 0
  \end{equation}
  where $j$ is any opposing vertex for $m_i$.
\end{corollary}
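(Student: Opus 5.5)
The corollary follows from Lemma \ref{lem:area_reduction}, the identity $2\area(P)=\sum_i r(E_i)^2$, and Proposition \ref{prop:lm:clustergeneralised}. The identity shows that a quiver mutation reduces the total rank exactly when it reduces the area of the polygon. So "no quiver mutation reduces the total rank" is equivalent to $\area(P')\ge \area(P)$ for the quiver mutation at every $E_i$. Here we may identify the total rank with $\sum r(E_i)^2$ for the purpose of comparing areas, in the sense in which the paper uses it.

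Fix $i$ and rotate so that $i=0$. By Proposition \ref{prop:lm:clustergeneralised}, the quiver mutation at $E_0$ is the generalised right mutation at $m_0$ with respect to the earliest opposing vertex $l_{j,j+1}$. Lemma \ref{lem:area_reduction}, applied with this $j$, gives $\area(P')<\area(P)$ if and only if $\omega(m_0,l_{j,j+1}+l_{0,1})<0$. Negating, the quiver mutation at $E_0$ does not reduce the rank if and only if $\omega(m_0,l_{j,j+1}+l_{0,1})\ge 0$ for the earliest opposing vertex.

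It remains to see that the condition may be checked at \emph{any} opposing vertex. All opposing vertices lie on the line $H_0$, which is parallel to $m_0$. If $l_{j,j+1}$ and $l_{j',j'+1}$ both lie on $H_0$, their difference is a multiple of $m_0$, so $\omega(m_0,l_{j,j+1}-l_{j',j'+1})=0$. Hence the quantity $\omega(m_0,l_{j,j+1}+l_{0,1})$ does not depend on which opposing vertex is chosen.

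Ranging over all $i$ then gives the stated equivalence. The only subtle point is the independence of the choice of opposing vertex, and it is immediate from the parallelism of $H_i$ and $m_i$.
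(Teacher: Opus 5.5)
Your proof is correct and follows the paper's own (implicit) argument: the area identity $2\area(P)=\sum_i r(E_i)^2$, Lemma \ref{lem:area_reduction} applied at the earliest opposing vertex supplied by Proposition \ref{prop:lm:clustergeneralised}, and independence of the choice of opposing vertex because every opposing vertex lies on $H_i$, which is parallel to $m_i$.

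The one step to tighten is the passage from $\sum_i r(E_i)^2$ to the total rank. Instead of ``identifying'' the two quantities, observe that a quiver mutation replaces a single object by one new object and leaves the rest unchanged. Hence $\sum r$ and $\sum r^2$ change with the same sign.
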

We will now rewrite \eqref{eq:area_reduction} in a way that is affinely invariant. In other words we will consider $L$ as an affine plane instead of as
a 2-dimensional vector space. Since an affine plane does not have a fixed origin, the origin becomes part of the data. Then \eqref{eq:area_reduction}
becomes
\[
      \omega(m_i,l_{j,j+1}+l_{i,i+1}-2o) \ge 0
\]  
where $o$ is the origin. This can be rewritten as
\begin{equation}
  \label{eq:half_plane}
  \omega(m_i,o-l_{i,i+1})\le \frac{1}{2}\omega(m_i,l_{j,j+1}-l_{i,i+1}),
\end{equation}
which gives an equation for the origin.

The line $H_i$ (see Definition \ref{def:opposing}) has equation
\[
  \omega(m_i,x-l_{i,i+1})=\omega(m_i,l_{j,j+1}-l_{i,i+1})
\]
and the corresponding half plane $H^+$ containing $l_{i,i+1}$ is given by
\[
    \omega(m_i,x-l_{i,i+1})\le \omega(m_i,l_{j,j+1}-l_{i,i+1}).
  \]
  Comparing this with \eqref{eq:half_plane} we see that \eqref{eq:half_plane} holds if and only if $o$ is in the half plane ${}^{1/2} H_i^+$ containing $l_{i,i+1}$, where ${}^{1/2}H_i$ is
  the line parallel to $m_i$ half way between $l_{i,i+1}$ and $H_i$. This leads to the following definition.
  \begin{definition} \label{def:forbidden_region} The \emph{forbidden region} of $P$ is defined as $\bigcap_i {}^{1/2} H^+_i$. 
  \end{definition}
  \begin{corollary}
    \label{cor:forbidden}
    No quiver mutation reduces the total rank of $\EE$ if and only if $o$ is in the forbidden region of $P$.
  \end{corollary}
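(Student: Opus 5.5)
The argument is a direct assembly of the preceding results. No quiver mutation reduces the total rank of $\EE$ if and only if no quiver mutation reduces the area of $P$. This holds because $2\area(P)=\sum_i r(E_i)^2$ for the normalised volume form, and the same formula applies to the mutated polygon $P'$. Rank reduction and area reduction are therefore equivalent. Strictly, the total rank is $\sum r_i$ and not $\sum r_i^2$. Since a quiver mutation replaces a single $E_i$ by one new object and leaves all other ranks unchanged, however, $\sum r_i$ decreases exactly when $\sum r_i^2$ decreases. This is the first point I would make explicit.

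Next I invoke Lemma \ref{lem:area_reduction}. By Proposition \ref{prop:lm:clustergeneralised}, the quiver mutation at $E_i$ corresponds to the generalised right mutation at $m_i$ with respect to the earliest opposing vertex $l_{j,j+1}$. Lemma \ref{lem:area_reduction} then says the area drops if and only if $\omega(m_i,l_{j,j+1}+l_{i,i+1})<0$. Consequently, no mutation reduces area exactly when \eqref{eq:area_reduction} holds for every $i$. The quantity $\omega(m_i,l_{j,j+1})$ is the same for all opposing vertices $l_{j,j+1}$, since they all lie on the line $H_i$ parallel to $m_i$. So the choice among opposing vertices is irrelevant.

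Finally I translate \eqref{eq:area_reduction} into the affine half-plane condition \eqref{eq:half_plane}, as in the computation preceding Definition \ref{def:forbidden_region}. The line $H_i$ is described by $\omega(m_i,x-l_{i,i+1})=\omega(m_i,l_{j,j+1}-l_{i,i+1})$. The midline ${}^{1/2}H_i$ is the set where the left side equals half of this value. The half-plane ${}^{1/2}H_i^+$ is the side containing $l_{i,i+1}$, on which $\omega(m_i,x-l_{i,i+1})$ is at most that half value. For this to be the correct side, I must check that $\omega(m_i,l_{j,j+1}-l_{i,i+1})\ge 0$, i.e.\ that $H_i$ lies on the positive side of the edge line through $m_i$. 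This follows from convexity and orientation, since $\omega(l_{i-1,i},l_{i,i+1})>0$ and the polygon lies to one side of its edge. Substituting $x=o$ shows that \eqref{eq:half_plane} for all $i$ is equivalent to $o\in\bigcap_i {}^{1/2}H_i^+$.

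The main obstacle is keeping the sign conventions consistent, namely the orientation of $m_i$ relative to $\omega$ and the signs used in the proof of Lemma \ref{lem:area_reduction}. A degenerate case, where $\omega(m_i,l_{j,j+1}-l_{i,i+1})=0$, cannot occur because $P$ has nonempty interior.
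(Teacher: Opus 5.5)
Your proposal is correct and follows the same route as the paper. The paper also passes from total rank to area via $2\area(P)=\sum_i r(E_i)^2$, applies Lemma \ref{lem:area_reduction} through Proposition \ref{prop:lm:clustergeneralised}, and rewrites \eqref{eq:area_reduction} as the half-plane condition \eqref{eq:half_plane} defining the forbidden region. Your remarks that a quiver mutation changes only one rank (so $\sum r_i$ and $\sum r_i^2$ drop together) and that $\omega(m_i,l_{j,j+1}-l_{i,i+1})>0$ spell out points the paper leaves implicit.
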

  \begin{remark} \label{rem:straight}
    Clearly the notions we have defined above (such as the forbidden region) make sense for arbitrary convex polygons, and we will use them as such
    without further comment. Note also that the forbidden region of a polygon $P$ only depends on $|P|$ (see \S\ref{sec:notation}).
    \end{remark} 
    In Figure \ref{fig:forbidden} we illustrate the forbidden region for a polygon with respectively three and four non-straight vertices.
    \begin{definition} \label{def:admissible}
We say that $l_{j,j+1}$ is \emph{admissible} if it is an opposing vertex of some $m_i$.
In that case we will also call the  edge $m_i$ an \emph{opposing edge} for $l_{j,j+1}$.  
\end{definition}
The significance of admissibility lies in the following easy result which we leave as an exercise for the reader.
\begin{lemma}
  \label{lem:nonadm_bigger}
  Deleting  (see \S\ref{sec:notation}) a non-admissible vertex from a polygon enlarges the forbidden region. 
\end{lemma}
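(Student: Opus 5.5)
Let $P$ be a convex polygon and $y$ a non-admissible non-straight vertex, and let $P^-$ be the polygon obtained by deleting $y$. By Definition \ref{def:forbidden_region} and Remark \ref{rem:straight}, the forbidden region is $\bigcap_i {}^{1/2}H_i^+$, where $i$ runs over the long edges of $|P|$. So we compare the half planes ${}^{1/2}H^+$ attached to the long edges of $P$ with those attached to the long edges of $P^-$, and show that every half plane of $P^-$ contains the intersection of the half planes of $P$. (If $y$ is straight, $|P^-|=|P|$ and there is nothing to prove.) We may assume $P^-$ is still a nondegenerate convex polygon, i.e.\ $P$ has at least four non-straight vertices.

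Let $x$ and $z$ be the neighbours of $y$. Deleting $y$ removes the two long edges $[x,y]$ and $[y,z]$ and introduces one new long edge $[x,z]$; all other long edges are unchanged. For an unchanged long edge $e$ we must check that its opposing line $H_e$ is unchanged. Since $|P^-|\subset|P|$, the support line of $|P^-|$ parallel to $e$ can only move closer to $e$. It stays the same precisely unless every vertex on $H_e$ equals $y$, i.e.\ unless $y$ is the unique opposing vertex of $e$. Non-admissibility of $y$ rules this out. Hence $H_e$ and ${}^{1/2}H_e^+$ are unchanged for every surviving long edge, and these half planes are common to $P$ and $P^-$.

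It then remains to show that the half plane ${}^{1/2}H^+_{[x,z]}$ for the new edge contains $\bigcap_i{}^{1/2}H_i^+$ for $P$. That would give that the forbidden region of $P^-$ contains that of $P$. We also lose the two constraints from $[x,y]$ and $[y,z]$, which can only enlarge the region.

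To handle the new edge, let $w$ be an opposing vertex of $[x,z]$ in $|P^-|$; it is also a vertex of $P$ different from $y$. Then ${}^{1/2}H^+_{[x,z]}$ is the half plane, containing the line $xz$, bounded by the line parallel to $xz$ through the midpoint of the strip between $xz$ and $w$. I would try to show that the intersection of ${}^{1/2}H^+_{[x,y]}$ and ${}^{1/2}H^+_{[y,z]}$ already lies in this half plane, using only local data.

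The half plane ${}^{1/2}H^+_{[x,y]}$ is bounded by the line parallel to $xy$ halfway to its opposing vertex $w_1$, and similarly for $[y,z]$ with opposing vertex $w_2$. The two boundary lines meet at a point $q$. Both half planes contain the edge $[x,z]$, so their intersection is a wedge with apex $q$ opening towards $xz$. It therefore suffices to show that $q$ lies in ${}^{1/2}H^+_{[x,z]}$, i.e.\ that the signed distance of $q$ from line $xz$ is at most half the distance of $w$. Since $w_1$ and $w_2$ are the farthest points of $|P|$ in the directions normal to $xy$ and $yz$, the point $q$ is the image under the homothety of ratio $1/2$ centred at $y$... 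I expect this step to be the main obstacle.

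A cleaner route may be to characterise ${}^{1/2}H^+_e$ as the set of points $o$ such that the reflection of $e$ through $o$, i.e.\ $2o-e$, does not lie strictly beyond $H_e$. Equivalently, $o$ lies in ${}^{1/2}H^+_e$ iff $2o-l\in$ the strip of $|P|$ in direction normal to $e$, for $l$ on $e$. Under this description the forbidden region is $\{o : 2o - v \text{ stays within the width of } |P| \text{ in each edge normal}\}$.

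The needed inclusion then reduces to a convexity statement: the width constraint in the normal direction of $xz$ is implied by the width constraints in the normal directions of $xy$ and $yz$. The normal of $xz$ lies between those two normals, so it is a positive combination of them, and the support function and the linear constraint interpolate.

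If this interpolation is not exact, I will fall back to a direct computation. I will place $y$ at the origin and let $x,z$ span coordinates. The three half-plane conditions become linear inequalities whose constants are the heights of $w_1,w_2,w$ above the respective lines. I will use that $w$ lies in both strips, which bounds its height above $xz$ in terms of those of $w_1,w_2$.
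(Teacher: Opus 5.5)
Your treatment of the surviving long edges is correct. Since $y$ is an opposing vertex of no edge, each surviving edge keeps both its own supporting line and its opposing line $H_e$, so ${}^{1/2}H^+_e$ is unchanged. Your reduction for the new edge to the apex $q$ is also sound. Write $n_1,n_2$ for the inward normals of $[x,y],[y,z]$; the inward normal of $[x,z]$ is then $n=an_1+bn_2$ with $a,b>0$, so the wedge lies in ${}^{1/2}H^+_{[x,z]}$ once $q$ does. Your side remark that both half planes contain $[x,z]$ is false in general, but it is not needed.

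The gap is that $q\in{}^{1/2}H^+_{[x,z]}$ is never proved, and both routes you sketch point the wrong way. Let $h$ be the support function of $|P|$; in direction $n$ it coincides with that of $|P^-|$. The two old constraints give $\langle n,2o-y\rangle\le a\,h(n_1)+b\,h(n_2)$, whereas you need $\langle n,2o-x\rangle\le h(n)$. Convexity of $h$ only gives $h(n)\le a\,h(n_1)+b\,h(n_2)$, which is the opposite of what is required. Likewise, ``$w$ lies in both strips'' bounds the height of $w$ from above, but you need a lower bound.

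In fact the local claim fails in general. Take the convex pentagon $(-1,0),(0,-\tfrac1{10}),(1,0),(2,1),(-2,1)$ with $y=(0,-\tfrac1{10})$. The apex is $q=(0,\tfrac{11}{20})$, while the new edge demands $o_2\le\tfrac12$ for $o=(o_1,o_2)$. Here $y$ is the opposing vertex of the top edge, which is exactly the situation your argument has to exclude.

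The missing idea is to use non-admissibility of $y$ a second time. It forces $h$ to be linear on the cone spanned by $n_1,n_2$; concretely, $[x,y]$ and $[y,z]$ have a common opposing vertex $w$. Suppose $w_1$ is opposing for $[x,y]$ but not for $[y,z]$. The outward normal cone of $|P|$ at $w_1$ contains $n_1$ but not $n_2$. Since $n_2$ is obtained from $n_1$ by a counterclockwise rotation of less than $\pi$, the edge leaving $w_1$ counterclockwise has outward normal $\nu$ in the arc $[n_1,n_2)$. But $y$ minimises $\langle\nu,\cdot\rangle$ on $|P|$ for every such $\nu$. So $y$ lies on the opposing line of that edge, i.e.\ $y$ is admissible, a contradiction.

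Once $w_1=w_2=w$, both boundary lines pass through $(y+w)/2$, so $q=(y+w)/2$; this is the homothety image you began to write. Moreover $w$ also maximises $\langle n,\cdot\rangle=a\langle n_1,\cdot\rangle+b\langle n_2,\cdot\rangle$, hence ${}^{1/2}H^+_{[x,z]}=\{o:\langle n,o\rangle\le\tfrac12(\langle n,x\rangle+\langle n,w\rangle)\}$. Since $\langle n,y\rangle<\langle n,x\rangle$, the point $q$ lies strictly inside this half plane, and with this addition your argument is complete. The paper leaves this lemma as an exercise, so there is no printed proof to compare with.
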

In Figure \ref{fig:forbidden}, $l_{3,0}$ is the only non-admissible vertex in the right polygon. One checks
that deleting it indeed enlarges the forbidden region.
\begin{figure}[h!]
\caption{}\label{fig:areachange}

\begin{minipage}{0.48\textwidth}
\centering
\begin{tikzpicture}[scale=1.05, line join=round, line cap=round]
  \definecolor{orig}{RGB}{0,92,184}
  \definecolor{mut}{RGB}{0,150,60}

  \colorlet{colP0}{yellow!55}
  \colorlet{colP1}{cyan!40}
  \colorlet{colT}{magenta!30}

  \path[use as bounding box] (-2.75,-2.55) rectangle (2.35,4.10);

  \coordinate (o)  at (0,0);

  \coordinate (l0) at ( 1.9281,-1.0401);
  \coordinate (l1) at ( 0.8151, 1.5339);
  \coordinate (l2) at (-0.1612, 2.2602);
  \coordinate (l3) at (-0.9429, 2.2595);
  \coordinate (l4) at (-1.4482, 1.5966);
  \coordinate (l5) at (-2.2779,-0.3910);
  \coordinate (l6) at (-0.9475,-1.7911);
  \coordinate (l7) at (-0.0065,-2.2083);

  \coordinate (lp0) at (1.7293, 2.0859);
  \coordinate (lp1) at (1.9079, 3.5097);
  \coordinate (lp2) at (1.5398, 3.7587);

  \fill[colP0, opacity=0.30] (l3)--(l4)--(l5)--(l6)--(l7)--cycle;
  \fill[colP1, opacity=0.30] (l0)--(l1)--(l2)--(l3)--cycle;
  \fill[colT,  opacity=0.30] (l7)--(l0)--(l3)--cycle;

  \node at (-1.55, 0.15) {$P_0$};
  \node at ( 0.50, 1.35) {$P_1$};   
  \node at ( 0.55,-0.35) {$T$};     

  \draw[orig, very thick]
    (l0)--(l1)--(l2)--(l3)--(l4)--(l5)--(l6)--(l7)--cycle;

  \draw[mut, very thick]
    (lp0)--(lp1)--(lp2)--(l3)--(l4)--(l5)--(l6)--(l7)--cycle;

  \draw[orig, ultra thick]  (l7) -- (l0);
  \draw[black, ultra thick] (l7) -- (l0);

  \draw[mut, ultra thick]   (lp2) -- (l3);
  \draw[black, ultra thick] (lp2) -- (l3);

  \fill[black] (o) circle (1.2pt);

  \fill[orig] (l0) circle (1.2pt);
  \fill[orig] (l1) circle (1.2pt);
  \fill[orig] (l7) circle (1.2pt);
  \fill[orig] (l3) circle (1.2pt);
  \fill[orig] (l4) circle (1.2pt);

  \fill[mut] (lp0) circle (1.2pt);
  \fill[mut] (lp2) circle (1.2pt);

  \node[above right, text=black] at (o) {$o$};

  \node[right, text=orig]        at (l0) {$l_{0,1}$};
  \node[above right, text=orig]  at (l1) {};
  \node[below right, text=orig]  at (l7) {$l_{n-1,0}$};

  \node[left, text=orig]         at (l3) {$l_{j,j+1}$};
  \node[below left, text=orig]   at (l4) {$l_{j+1,j+2}$};

  \node[right, text=mut]         at (lp0) {$l_{0,1}'$};
  \node[above right, text=mut]   at (lp2) {$l_{j-1,j}'$};
\end{tikzpicture}
\end{minipage}
\hfill
\begin{minipage}{0.48\textwidth}
\centering
\begin{tikzpicture}[scale=1.05, line join=round, line cap=round]
  \definecolor{orig}{RGB}{0,92,184}
  \definecolor{mut}{RGB}{0,150,60}

  \colorlet{colP0}{yellow!55}
  \colorlet{colP1}{cyan!40}
  \colorlet{colT}{magenta!30}

  \path[use as bounding box] (-2.75,-2.55) rectangle (2.35,4.10);

  \coordinate (o)  at (0,0);

  \coordinate (l0) at ( 1.9281,-1.0401);
  \coordinate (l1) at ( 0.8151, 1.5339);
  \coordinate (l2) at (-0.1612, 2.2602);
  \coordinate (l3) at (-0.9429, 2.2595);
  \coordinate (l4) at (-1.4482, 1.5966);
  \coordinate (l5) at (-2.2779,-0.3910);
  \coordinate (l6) at (-0.9475,-1.7911);
  \coordinate (l7) at (-0.0065,-2.2083);

  \coordinate (lp0) at (1.7293, 2.0859);
  \coordinate (lp1) at (1.9079, 3.5097);
  \coordinate (lp2) at (1.5398, 3.7587);

  \fill[colP0, opacity=0.30] (l3)--(l4)--(l5)--(l6)--(l7)--cycle;
  \fill[colT,  opacity=0.30] (l7)--(lp2)--(l3)--cycle;
  \fill[colP1, opacity=0.30] (l7)--(lp0)--(lp1)--(lp2)--cycle;

  \node at (-1.55, 0.15) {$P_0$};
  \node at ( 0.25, 1.10) {$T'$};
  \node at ( 1.25, 1.75) {$P_1'$}; 

  \draw[orig, very thick]
    (l0)--(l1)--(l2)--(l3)--(l4)--(l5)--(l6)--(l7)--cycle;

  \draw[mut, very thick]
    (lp0)--(lp1)--(lp2)--(l3)--(l4)--(l5)--(l6)--(l7)--cycle;

  \draw[orig, ultra thick]  (l7) -- (l0);
  \draw[black, ultra thick] (l7) -- (l0);

  \draw[mut, ultra thick]   (lp2) -- (l3);
  \draw[black, ultra thick] (lp2) -- (l3);

  \fill[black] (o) circle (1.2pt);

  \fill[orig] (l0) circle (1.2pt);
  \fill[orig] (l1) circle (1.2pt);
  \fill[orig] (l7) circle (1.2pt);
  \fill[orig] (l3) circle (1.2pt);
  \fill[orig] (l4) circle (1.2pt);

  \fill[mut] (lp0) circle (1.2pt);
  \fill[mut] (lp2) circle (1.2pt);

  \node[above right, text=black] at (o) {$o$};

  \node[right, text=orig]        at (l0) {$l_{0,1}$};
  \node[above right, text=orig]  at (l1) {};
  \node[below right, text=orig]  at (l7) {$l_{n-1,0}$};

  \node[left, text=orig]         at (l3) {$l_{j,j+1}$};
  \node[below left, text=orig]   at (l4) {$l_{j+1,j+2}$};

  \node[right, text=mut]         at (lp0) {$l_{0,1}'$};
  \node[above right, text=mut]   at (lp2) {$l_{j-1,j}'$};
\end{tikzpicture}
\end{minipage}

\end{figure}

\begin{figure}
\centering
\caption{}\label{fig:forbidden}
\begin{minipage}{0.48\textwidth}
\centering
\begin{tikzpicture}[scale=0.95,line cap=round,line join=round]

\coordinate (X) at (0.3,0.2);    
\coordinate (Y) at (2.8,4.1);    
\coordinate (Z) at (7.0,3.7);    

\coordinate (Mxy) at ($(X)!0.5!(Y)$);
\coordinate (Myz) at ($(Y)!0.5!(Z)$);
\coordinate (Mzx) at ($(Z)!0.5!(X)$);

\draw[black,very thick] (X)--(Y)--(Z)--cycle;

\fill[red,opacity=0.35] (Mxy)--(Myz)--(Mzx)--cycle;
\draw[red,dotted, very thick] (Mxy)--(Myz)--(Mzx)--cycle;

\newcommand{\tick}[4]{%
  \draw[red,very thick]
    let \p1 = ($(#1)!#3!(#2)$),
        \p2 = ($(#2)-(#1)$),
        \n1 = {veclen(\x2,\y2)}
    in
      ({\x1 - \y2/\n1*#4},{\y1 + \x2/\n1*#4}) --
      ({\x1 + \y2/\n1*#4},{\y1 - \x2/\n1*#4});
}

\node[below left] at (X) {$l_{2,0}$};
\node at ($(Y)+(-0.15,0.2)$) {$l_{1,2}$};
\node[right]      at (Z) {$l_{0,1}$};

\end{tikzpicture}
\end{minipage}
\begin{minipage}{0.48\textwidth}
\centering
\begin{tikzpicture}[scale=0.95,line cap=round,line join=round]

\coordinate (A) at (0,0);        
\coordinate (B) at (3.6,-0.6);   
\coordinate (C) at (6.2,4.8);    
\coordinate (D) at (-1.0,2.3);   

\coordinate (P) at (1.30,0.85);
\coordinate (Q) at (3.4494,1.5963);
\coordinate (R) at (3.10,2.40);
\coordinate (S) at (2.1246,2.5626);

\begin{scope}
  \clip (A)--(B)--(C)--(D)--cycle;
  \draw[red,dotted,very thick] ($(P)!-0.45!(Q)$) -- ($(P)!1.45!(Q)$);
  \draw[red,dotted,very thick] ($(Q)!-0.55!(R)$) -- ($(Q)!1.55!(R)$);
  \draw[red,dotted,very thick] ($(R)!-0.55!(S)$) -- ($(R)!1.55!(S)$);
  \draw[red,dotted,very thick] ($(S)!-0.45!(P)$) -- ($(S)!1.45!(P)$);
\end{scope}

\fill[red,opacity=0.35] (P)--(Q)--(R)--(S)--cycle;

\draw[black,very thick] (A)--(B)--(C)--(D)--cycle;

\node[below left]  at (A) {$l_{3,0}$};
\node[below right] at (B) {$l_{0,1}$};
\node at ($(C)+(0.18,0.16)$) {$l_{1,2}$};
\node[left]        at (D) {$l_{2,3}$};

\end{tikzpicture}
\end{minipage}\hfill

\end{figure}
\section{Polygon shapes and quiver shapes}
\subsection{Setting}
\label{sec:quiver_setting}
In this section $X$ is a del Pezzo surface
and $\EE$ is a very strong exceptional collection on $X$.
After performing a rotation we may, and we will, assume that $\EE$ is the underlying exceptional collection of a block exceptional collection
without broken blocks,
which we denote by the same symbol.
\[
  \EE=((E_0^{(0)},\ldots, E_0^{(\alpha_0-1)}) , 
\ldots,
(E_{k-1}^{(0)},\ldots, E_{k-1}^{(\alpha_{k-1}-1)} ))
\]
We will discuss some restrictions on the shape of the quiver $Q$ and the polygon $P$ associated
to $\EE$. Recall that the shape of $P$ was described in Lemma \ref{lem:very_useful}.
\begin{definition} 
We say that $\EE$  or $Q$ is \emph{block-complete} if $Q_{\red}$ is a complete quiver.\end{definition} 

Our results will be mainly for the block-complete case,  but this is not really a restriction as we show in the next section.
\subsection{Reduction to the block-complete case}
\label{sec:block_complete}
 \begin{lemma} \label{lem:no_parallel}
   $\EE$ is block-complete if and only if $P$ has no parallel long edges.
\end{lemma}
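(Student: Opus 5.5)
The plan is to translate block-completeness into a statement about the vectors $m_i$ using Theorem \ref{thm:achievement}\eqref{it:thm2_5}, and then read parallelism off the polygon via Lemma \ref{lem:very_useful}.

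\emph{Step 1: blocks and long edges.} By Lemma \ref{lem:very_useful}, since $\EE$ is very strong without broken blocks, the long edges $e_0,\ldots,e_{k-1}$ of $P$ correspond bijectively to the blocks $\EE_0,\ldots,\EE_{k-1}$. For every $E_a^{(s)}$ in block $\EE_a$, the corresponding edge vector $m$ is a positive multiple of the direction of the long edge $e_a$.

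\emph{Step 2: arrow counts.} Take $u=E_a^{(s)}$ and $v=E_b^{(t)}$ with $a\neq b$. By Theorem \ref{thm:achievement}\eqref{it:thm2_5}, the signed number of arrows between $u$ and $v$ in $Q$ is $\omega(m_u,m_v)/(r_ur_v)$. Since $r_u,r_v>0$, this is zero if and only if $\omega(m_u,m_v)=0$. Because $m_u$ and $m_v$ are nonzero, that happens exactly when they are parallel, and by Step 1 this means exactly that the long edges $e_a$ and $e_b$ are parallel. So $Q_{\red}$ has an arrow between $a$ and $b$ if and only if $e_a\nparallel e_b$.

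\emph{Step 3: conclusion.} Block-completeness means that every pair of distinct vertices of $Q_{\red}$ is joined by at least one arrow. By Step 2 this is equivalent to no two distinct long edges being parallel.

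\emph{Subtle point.} The argument needs that two distinct long edges can only be parallel, never collinear. This holds because $P$ is convex by Proposition \ref{prop:convex_perling}, and distinct edges of the convex region $|P|$ cannot lie on the same line. Parallel edges, which are then necessarily opposite, are therefore the only source of vanishing $\omega$. I expect the rest to be routine bookkeeping, provided the sign convention in Theorem \ref{thm:achievement}\eqref{it:thm2_5} is applied consistently.
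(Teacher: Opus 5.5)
Your proof is correct and follows the paper's route. The paper simply says the lemma follows immediately from Theorem \ref{thm:achievement}\eqref{it:thm2_5}; you have made explicit the identification of blocks with long edges via Lemma \ref{lem:very_useful}, and the fact that the signed arrow count $\omega(m_u,m_v)/(r_ur_v)$ vanishes exactly when the edge vectors are parallel.
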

\begin{proof} This follows immediately from Theorem \ref{thm:achievement}\eqref{it:thm2_5}.
\end{proof}
\begin{corollary}
  \label{cor:block_complete}
  If   $\EE$ is not block complete then it can be tranformed, via a sequence of quiver mutations, into a block-complete very strong exceptional collection
  which reduces the number of blocks and does not increase the total rank.
\end{corollary}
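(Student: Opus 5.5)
We work with the polygon $P$ attached to $\EE$ and use Lemma \ref{lem:no_parallel}. If $\EE$ is not block-complete, then $|P|$ has two parallel long edges, say $e_a$ and $e_b$, belonging to blocks $\EE_a$ and $\EE_b$. Since $|P|$ is convex, the two lines carrying $e_a$ and $e_b$ are the two support lines of $P$ in the direction of $m$ for any edge $m$ on $e_a$. In the notation of Definition \ref{def:opposing}, $H_i$ for an edge $m_i$ of $e_a$ is therefore the line through $e_b$. So every vertex of $e_b$ is an opposing vertex for the edges of $e_a$, and vice versa. The plan is to perform the block quiver mutation at $\EE_a$ or at $\EE_b$. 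We will show that one of the two choices does not increase the area, that it reduces the number of blocks, and that repeating the procedure terminates.

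\textbf{Step 1 (effect on the polygon).} By Proposition \ref{prop:lm:clustergeneralised}, together with Remark \ref{rem:block_quiver} and compatibility with composition (Remark \ref{rem:composition}), the block mutation at $\EE_a$ acts on $P$ in three ways. It collapses the long edge $e_a$. It shears the chain of edges between $e_a$ and the earliest opposing vertex, which lies on $e_b$, by $A = A_{l_{i,i+1},l_{i-1,i}}$. It inserts a new long edge parallel to $e_a$ at that opposing vertex. The opposing line is the line through $e_b$, and the new edge is parallel to it. Hence the new edge is collinear with $e_b$ and merges with it into a single long edge. Before the mutation there were $k$ long edges, and $e_a$ has disappeared. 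The sheared chain keeps its non-parallel long edges. The new edge joins $e_b$. So the mutated polygon has $k-1$ long edges, and by Lemma \ref{lem:very_useful} the resulting very strong collection has $k-1$ blocks. For this to hold I must check that the vertex of $e_b$ where the new edge is attached is an endpoint of $e_b$, so that the new edge really extends $e_b$. This is a short convexity check.

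\textbf{Step 2 (area).} By Lemma \ref{lem:area_reduction}, applied edge by edge, or more simply by computing $\area(T')-\area(T)$ for the whole block, the change in area when mutating at $e_a$ has the sign of $\omega(m_a, l_{b}+l_{a}-2o)$. Here $l_a$ is a point of $e_a$ and $l_b$ is a point of $e_b$, so the sign measures on which side of the midline between the two parallel lines the origin $o$ lies. The symmetric mutation at $e_b$ has the opposite sign, because $m_b$ is antiparallel to $m_a$. Hence at least one of the two block mutations has non-positive area change. By the identity $2\area(P)=\sum r_i^2$ it does not increase the total rank, measured by the sum of squares of the ranks, which is the quantity the paper actually controls. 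Since the number of blocks strictly decreases at each step, iterating terminates after finitely many steps in a configuration with no parallel long edges, that is, a block-complete collection by Lemma \ref{lem:no_parallel}.

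\textbf{Main obstacle.} The delicate point is the bookkeeping in Step 2 for a block mutation. It consists of several successive single mutations, and only the composite is guaranteed to return to a convex polygon. I would handle this by using the compatibility of generalised mutations with composition: the block mutation at $e_a$ is then a single shear of the chain followed by the creation of the merged edge. This lets me redo the computation in the proof of Lemma \ref{lem:area_reduction} with $m_i$ replaced by the full long edge $e_a$. In that computation the triangle $T$ becomes a trapezoid-like region, but the same sign analysis applies.
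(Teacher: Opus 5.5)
Your proposal is correct and follows the paper's argument: you collapse one of two parallel long edges by the successive quiver mutations at its subedges (a block quiver mutation), the newly created edges merge into the opposite parallel edge so the number of blocks drops, and Lemma \ref{lem:area_reduction} shows that the sign of the area change depends on which side of the midline the origin $o$ lies, so one of the two choices does not increase the total rank; iterating terminates. Your hedge about sum of squares versus sum of ranks is unnecessary: each single quiver mutation replaces only one object, so the two quantities change with the same sign at every step, and this is exactly how the paper passes from area to total rank.
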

\begin{proof}
  If $\EE$ is not block complete then by Lemma \ref{lem:no_parallel}
  this implies it has parallel long edges.
  We have to prove that $P$ can be mutated into a polygon without parallel long edges and fewer long edges, and this without increasing total rank.

  If $e$, $e'$ are parallel long edges
 respectively divided into subedges $e^{(0)},\ldots,e^{(\alpha-1)}$, $e^{\prime (0)},\ldots,e^{\prime (\alpha'-1)}$ then by performing quiver mutations at $e^{(\alpha-1)},\ldots, e^{(0)}$ as described
 in Proposition \ref{prop:lm:clustergeneralised} (see Figure \ref{fig:clustermut} for an illustration) we see that $e$ shrinks to a point and $e'$ is replaced by a long edge divided into $\alpha+\alpha'$ edges. In other words
 the resulting polygon has fewer long edges. Moreover it follows from Lemma \ref{lem:area_reduction} that the effect on the total rank of this procedure depends on the
 position of the origin with respect to the line half way between $e$ and $e'$. It follows that if the procedure increases the total rank then executing the same procedure but using $e'$ instead of $e$ will reduce the total rank. Hence we may assune that the prodcedure does not increase total rank.
 By repeating the procedure we end up with a polygon without parallel edges without increasing the total rank.
\end{proof}
\begin{remark} On the level of $\EE$, the sequence of quiver mutations that eliminates parallel edges $P$ is a suitable block quiver mutation as explained in Remark \ref{rem:block_quiver}. So the sequence of quiver mutations we have constructed in the proof of Corollary \ref{cor:block_complete} can be written as a sequence of block quiver mutations.
\end{remark}
\subsection{Quadratic quiver relations}
\begin{lemma} Assume $E_0,E_1,E_2,E_3\in \Dscr(Y)$ for a smooth projective surface $Y$. Let $\chi^-$ be the anti-symmetrization of $\chi$. Then
\begin{equation}
\label{eq:plucker}
\chi^-(E_0,E_1)\chi^-(E_2,E_3)-\chi^-(E_0,E_2)\chi^-(E_1,E_3)+\chi^-(E_0,E_3)\chi^-(E_1,E_2)=0
\end{equation}
\end{lemma}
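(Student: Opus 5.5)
The plan is to reduce \eqref{eq:plucker} to the Plücker relation for $2\times 2$ minors, using the fact that on a surface the antisymmetrised Euler form only depends on a two-dimensional quotient of $K_0$. First I will compute $\chi^-$ via Riemann--Roch. For $E,F\in\Dscr(Y)$ with $Y$ a smooth projective surface, Hirzebruch--Riemann--Roch gives $\chi(E,F)=\int_Y \mathrm{ch}(E)^\vee\mathrm{ch}(F)\mathrm{td}(Y)$. Write $\mathrm{ch}(E)=(r_E,c_E,\mathrm{ch}_2(E))$ and $\mathrm{td}(Y)=(1,-K_Y/2,\mathrm{td}_2)$. Expanding, the terms symmetric in $E,F$ cancel in $\chi(E,F)-\chi(F,E)$. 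The terms $r_E\mathrm{ch}_2(F)$ and $r_F\mathrm{ch}_2(E)$ pair up symmetrically, and so does $-c_E c_F$. Only the term involving $K_Y$ survives, giving
\[
\chi^-(E,F)=\chi(E,F)-\chi(F,E)=-K_Y\cdot(r_E c_F - r_F c_E)
\]
(up to an overall constant factor and sign, which are irrelevant for \eqref{eq:plucker} since that identity is homogeneous of degree two). This is essentially the computation behind \eqref{eq:chi_rr}. I will verify it with that formula: $\chi(e,f)-\chi(f,e)=r(f)(K\cdot c_1(e))-r(e)(K\cdot c_1(f))$. Hence $\chi^-(E,F)=r_E d_F - r_F d_E$ with $d=-K_Y\cdot c_1$. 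In other words, $\chi^-(E,F)=\det\begin{pmatrix} r_E & r_F\\ d_E& d_F\end{pmatrix}$ is the determinant of the $2\times2$ matrix built from the vectors $v_E=(r_E,d_E)\in\RR^2$.

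Next I will apply the Plücker relation. For any four vectors $v_0,\dots,v_3\in\RR^2$, with $[ij]=\det(v_i,v_j)$, one has $[01][23]-[02][13]+[03][12]=0$. This is the Grassmann--Plücker relation for $\mathrm{Gr}(2,4)$. Concretely, it is the expansion of the vanishing $4\times4$ determinant of the matrix with rows $(v_0,v_1,v_2,v_3)$ repeated twice (two copies of the $2\times 4$ matrix), by Laplace expansion along the first two rows. Alternatively, it can be checked directly as a multilinear identity: it is alternating in the four vectors, and an alternating multilinear form on four vectors in a two-dimensional space vanishes. Substituting $v_i=v_{E_i}$ gives \eqref{eq:plucker}.

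The only step needing care is the Riemann--Roch bookkeeping, to confirm that the $\mathrm{ch}_2$ and $c_1\cdot c_1$ terms are symmetric and therefore drop out. This is immediate from \eqref{eq:chi_rr}, or from its general-surface analogue with $K_X$ replaced by $K_Y$, which holds by the same derivation. I expect no real obstacle here.
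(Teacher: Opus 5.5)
Your proof is correct, but it takes a different route from the paper's. The paper never computes $\chi^-$ explicitly. It uses Serre duality, $\chi(E,F)=\chi(F,sE)$ with $s$ the action of the Serre functor on $K_0(Y)$, to see that $\chi^-(E,F)=\chi(F,(s-1)E)$, so $\ker(s-1)$ lies in the radical of $\chi^-$. It then cites \cite[Proposition B]{dTdVVdB2} for $\rk(s-1)\le 2$, so $\chi^-$ factors through a space of dimension at most two, and the Plücker identity finishes. You instead make that two-dimensional quotient explicit with Hirzebruch--Riemann--Roch. The $\mathrm{ch}_2$, $c_1\cdot c_1$ and $\mathrm{td}_2$ contributions are symmetric, which leaves $\chi^-(E,F)=r_Ed_F-r_Fd_E$ with $d=-K_Y\cdot c_1$. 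In other words, $\chi^-$ is the pullback of $\det$ on $\ZZ^2$ along $E\mapsto(r(E),d(E))$. Your sign check against \eqref{eq:chi_rr} is right. The antisymmetric part does not involve $\chi(\Oscr_Y)$, so the fact that this formula is stated in the paper for del Pezzo surfaces does no harm. Your argument is self-contained and elementary: for commutative surfaces it in effect reproves the rank bound that the paper cites. It also identifies the quotient with the (rank, degree) vectors that reappear in the paper as classes in $K_0^{\num}$ of a smooth anticanonical curve (Theorem \ref{th:gale}, Lemma \ref{lm:arrowschiY}); on such a curve $\chi$ is exactly $r_Ed_F-r_Fd_E$. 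The paper's route is shorter given the citation. It uses only the Serre functor and the bound $\rk(s-1)\le 2$, with no Riemann--Roch formula, so it carries over verbatim to any category with a Serre functor where that bound is known.
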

\begin{proof} Let $s$ be the action of the Serre functor on $K_0(Y)$. It is easy to see that $\ker (s-1)$ is contained in the radical of $\chi^-(-,-)$.
By \cite[Proposition B]{dTdVVdB2}, $\rk(s-1)\le 2$. Hence $\chi^-(-,-)$ factors through a vector space of dimension $\le 2$. Then \eqref{eq:plucker} follows from the Pl{\"u}cker identity.
\end{proof}
\begin{corollary} Assume $(E_0,E_1,E_2,E_3)$ is an exceptional collection in $\Dscr^b(Y)$ where $Y$ is as above.  Then
\begin{equation}
\label{eq:plucker1}
\chi(E_0,E_1)\chi(E_2,E_3)-\chi(E_0,E_2)\chi(E_1,E_3)+\chi(E_0,E_3)\chi(E_1,E_2)=0
\end{equation}
\end{corollary}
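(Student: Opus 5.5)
The plan is to derive \eqref{eq:plucker1} directly from \eqref{eq:plucker}. We do this by comparing $\chi$ with its anti-symmetrization $\chi^-$ on pairs of objects of the collection. Write $\chi^-(A,B)=\chi(A,B)-\chi(B,A)$ (the normalisation, e.g.\ whether one includes a factor $\tfrac12$, is irrelevant because \eqref{eq:plucker} is homogeneous quadratic). For an exceptional collection $(E_0,E_1,E_2,E_3)$ and indices $i<j$ we have $\RHom(E_j,E_i)=0$ by Definition \ref{def:exceptional}. Hence $\chi(E_j,E_i)=0$, and therefore $\chi^-(E_i,E_j)=\chi(E_i,E_j)$ for all $i<j$.

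Next I substitute this into \eqref{eq:plucker}. Every pair appearing there is of the form $(E_i,E_j)$ with $i<j$: namely $(0,1),(2,3),(0,2),(1,3),(0,3),(1,2)$. So each $\chi^-$ may simply be replaced by $\chi$ with the same arguments, which gives \eqref{eq:plucker1} verbatim.

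The only point to check is that the preceding lemma applies. This is immediate, since it is stated for arbitrary objects $E_0,\dots,E_3\in\Dscr(Y)$ on a smooth projective surface, and an exceptional collection consists of such objects. There is no real obstacle here. The substantive input, namely that $\chi^-$ factors through a space of dimension at most $2$ via $\rk(s-1)\le 2$, has already been absorbed into the lemma.
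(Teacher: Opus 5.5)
Your proposal is correct and is essentially the paper's proof. The paper likewise observes that $\chi^-(E,F)=\chi(E,F)$ for an exceptional pair $(E,F)$, and then substitutes this into \eqref{eq:plucker}.
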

\begin{proof} If $(E,F)$ is an exceptional pair then $\chi^-(E,F)=\chi(E,F)$.
\end{proof}
\begin{corollary} Assume that $\EE=(E_0,\ldots,E_{n-1})$ is a very strong exceptional collection on $X$. 
Let $c$ be the incidence matrix of the corresponding quiver $Q$, i.e. $c_{ij}=|Q(i,j)|$ if there are arrows $i \r j$ and $c_{ij}=-|Q(j,i)|$ if there are arrows $j\r i$.
Then for all 4-tuples of vertices $(i_0,i_1,i_2,i_3)$ of $Q$ we have
\begin{equation}
\label{eq:plucker2} 
c_{i_0i_1}c_{i_2i_3}-c_{i_0i_2}c_{i_1i_3}+c_{i_0i_4}c_{i_1i_2}=0
\end{equation}
\end{corollary}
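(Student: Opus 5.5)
The identity follows from the Plücker relation \eqref{eq:plucker}, once we express the incidence matrix $c$ through the antisymmetrised Euler form of a single family of objects. (The index $i_4$ in the last term of \eqref{eq:plucker2} is evidently a typo for $i_3$.)

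The natural source of such objects is Lemma \ref{lm:arrowschiY}. It says that for a smooth anticanonical divisor $Y\subset X$,
\[
c_{ij}=\chi_Y(\widecheck{E_{i,Y}},\widecheck{E_{j,Y}}).
\]
The sign convention there (negative means arrows $j\to i$) agrees with that of $c$. The form $\chi_Y$ on $K_0^{\num}(Y)$ is antisymmetric, since $Y$ is an elliptic curve and Serre duality gives $\chi_Y(a,b)=-\chi_Y(b,a)$. The key point is that $K^{\num}_0(Y)_\RR$ is $2$-dimensional, so $\chi_Y$ is, up to a scalar, the determinant form on a plane.

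Concretely, write $v_i=[\widecheck{E_{i,Y}}]\in K^{\num}_0(Y)_\RR\cong\RR^2$, so that $c_{ij}=\omega(v_i,v_j)$ for the volume form $\omega=\chi_Y$. For any four vectors $v_0,v_1,v_2,v_3$ in a $2$-dimensional space one has the Plücker identity
\[
\omega(v_0,v_1)\omega(v_2,v_3)-\omega(v_0,v_2)\omega(v_1,v_3)+\omega(v_0,v_3)\omega(v_1,v_2)=0 .
\]
To check it, fix $v_0,v_1,v_2$. The left-hand side is then a linear functional in $v_3$, and it vanishes for $v_3=v_0$ and for $v_3=v_1$ by antisymmetry. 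If $v_0,v_1$ are independent this finishes the argument. If they are dependent, we get the identity by continuity, or by checking directly that each term vanishes or that the terms cancel. Substituting $v_{i_k}$ for $v_k$ gives \eqref{eq:plucker2}.

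The same conclusion also follows from \eqref{eq:plucker} applied to the dual objects $\widecheck{E_i}$ on $X$, together with the first identity in the proof of Lemma \ref{lm:arrowschiY}. That identity expresses $c_{ij}$ as $\pm\chi$ of the duals, which equals the antisymmetrised $\chi^-$ up to the sign conventions. However, the route via $Y$ avoids any case distinction on the relative order of $i_0,\dots,i_3$, so I would use it.

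The only step that needs care is matching signs. We must confirm that $c_{ij}=\chi_Y(v_i,v_j)$ holds for all ordered pairs $(i,j)$, including $i=j$ (where both sides are $0$) and both orderings. This is exactly the final antisymmetry observation in the proof of Lemma \ref{lm:arrowschiY}. Alternatively, with Theorem \ref{thm:achievement}\eqref{it:thm2_5} one can take $v_i=m_i/r_i$, which gives the same planar argument.
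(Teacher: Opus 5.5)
Your proof is correct, and you are right that $i_4$ should read $i_3$. Your route differs from the paper's in where the two-dimensional space comes from.

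The paper applies \eqref{eq:plucker1} to the dual collection, implicitly using the first display in the proof of Lemma \ref{lm:arrowschiY} to identify $c_{ij}$ with $\chi^-(\widecheck{E_i},\widecheck{E_j})$. That identity rests on \eqref{eq:plucker}: on any smooth projective surface, $\chi^-$ factors through a space of dimension $\le 2$, because the Serre operator $s$ satisfies $\rk(s-1)\le 2$ \cite{dTdVVdB2}.

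You instead realise that plane concretely as $K_0^{\num}(Y)_\RR$ for the elliptic anticanonical curve. There $\chi_Y$ is literally the determinant of the (rank, degree) vectors, and you quote the finished statement of Lemma \ref{lm:arrowschiY}.

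The two arguments use the same mechanism seen from different sides. Since $[\Oscr_X]-[\omega_X]=[\Oscr_Y]$, Serre duality and adjunction give $\chi^-_X(F,G)=\chi_Y(F_Y,G_Y)$ for all $F,G$. So restriction to $Y$ is an explicit factorisation of $\chi^-$.

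What your version buys:
\begin{enumerate}
\item It is self-contained, with no appeal to the rank bound on $s-1$.
\item It treats every 4-tuple of vertices uniformly. By contrast, \eqref{eq:plucker1} is stated for an ordered exceptional collection, so applying it to arbitrary vertices tacitly uses either \eqref{eq:plucker} or the fact that the left side of \eqref{eq:plucker2} is a Pfaffian, which only changes sign under permutations.
\end{enumerate}

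What the paper's version buys: it gives the identity for arbitrary objects on any smooth projective surface, not only on a del Pezzo surface with a smooth anticanonical curve.
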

\begin{proof} This follows by applying \eqref{eq:plucker1} to the dual exceptional collection of $\EE$.
\end{proof}
\begin{remark}
 \label{rem:impossible}
For use below we make this a bit more concrete. Assume a subquiver of $Q$ spanned by 4 vertices  is as in Figure \ref{fig:square}, with a negative multiplicity indicating that the arrow goes
in the opposite direction. 
\begin{figure}
\centering
\caption{}\label{fig:square}
\begin{tikzpicture}[scale=1.0,line cap=round,line join=round,>=stealth]

  \coordinate (TL) at (0,3);
  \coordinate (TR) at (5,3);
  \coordinate (BR) at (5,0);
  \coordinate (BL) at (0,0);

  \draw[black,very thick] (TL)--(TR)--(BR)--(BL)--cycle;
  \draw[black,very thick] (TL)--(BR);
  \draw[black,very thick] (BL)--(TR);

  \fill (TL) circle (3pt);
  \fill (TR) circle (3pt);
  \fill (BR) circle (3pt);
  \fill (BL) circle (3pt);

  \draw[->,very thick] ($(BL)!0.52!(TL)$) -- ($(BL)!0.68!(TL)$); 
  \draw[->,very thick] ($(TR)!0.48!(BR)$) -- ($(TR)!0.64!(BR)$); 
  \draw[->,very thick] ($(TL)!0.40!(TR)$) -- ($(TL)!0.56!(TR)$); 
  \draw[->,very thick] ($(BR)!0.40!(BL)$) -- ($(BR)!0.56!(BL)$); 
  \draw[->,very thick] ($(BL)!0.28!(TR)$) -- ($(BL)!0.40!(TR)$); 
  \draw[->,very thick] ($(TL)!0.32!(BR)$) -- ($(TL)!0.44!(BR)$); 

  \node[left]  at ($(BL)!0.50!(TL)+(-0.25,0)$) {$a$};
  \node[right] at ($(TR)!0.50!(BR)+(0.2,0)$) {$b$};
  \node[above] at ($(TL)!0.65!(TR)+(0,0.12)$) {$c$};
  \node[below] at ($(BL)!0.35!(BR)+(0,-0.15)$) {$d$};
  \node[right] at ($(BL)!0.60!(TR)+(0.3,0.12)$) {$e$};
  \node[right] at ($(TL)!0.62!(BR)+(0.2,-0.10)$) {$f$};

\end{tikzpicture}
\end{figure}

Then \eqref{eq:plucker2} becomes
\begin{equation}
  \label{eq:square2}
ab-ef-cd=0.
\end{equation}
This implies e.g. that if $a,b,c,d,e,f \geq 1$ then $a=b=1$ is not possible. Note that it does not matter how we order the vertices,  we always get the same equation. 

\end{remark}
\subsection{Quiver shapes in the block-complete case}
\begin{lemma} \label{lem:tool4}
  Let $(E,(F^{(0)},\ldots, F^{(\alpha-1)}),G)$, $\alpha\ge 1$ be a strong\footnote{According to Convention \ref{conv:terminology} this means that the underlying exceptional
    collection is \emph{strong}, i.e.\ there are no higher $\Ext$'s.}
  block exceptional collection of vector bundles on $X$ (in particular $r(F^{(0)})=\cdots=r(F^{(\alpha-1)})$) such that the 
quiver corresponding to $\End_X(E\oplus F^{(0)}\oplus\cdots\oplus F^{(\alpha-1)}\oplus G)$ is given by
\begin{equation}
  \label{eq:quiver_shape}
\begin{tikzcd}
& F^{(0)} \arrow[rddd, "b"]&\\
& \vdots&\\
& F^{(\alpha-1)} \arrow[rd, "b"]&\\
E\arrow[rr,"c"']\arrow[ru,"a"]\arrow[ruuu,"a"]& & G
\end{tikzcd}
\end{equation}
where $a,b,c$ are the multiplicities of the corresponding arrows. Assume that
${a,b,c\ge 1}$.
Then we have
\begin{equation}
\label{eq:max_inequality}
r(F^{(i)})\le \max(r(E),r(G))
\end{equation}
and moreover equality happens if and only if all of the following conditions hold
\begin{equation}
\label{eq:equality}
\begin{gathered}
r(E)=r(G)\\
\alpha=1\\
a=1\text{ or }b=1\\
c=1
\end{gathered}
\end{equation}
\end{lemma}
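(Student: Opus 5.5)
The plan is to turn the quiver data into linear relations among ranks and $\chi$'s, using the slope formula \eqref{eq:chi_slope}, and to bound $\dim\Hom(E,G)$ from below using the composition maps through the block. Since the collection is strong and the quiver has only the indicated arrows, there are no irreducible maps between the $F^{(i)}$. Hence $\Hom(E,F^{(i)})$ consists of irreducible maps, so $\chi(E,F^{(i)})=a$, and likewise $\chi(F^{(i)},G)=b$. Write $r_E,r_F,r_G$ for the ranks. By \eqref{eq:chi_slope} we have $\mu(E,F)=a/(r_Er_F)$ and $\mu(F,G)=b/(r_Fr_G)$, so additivity of $\mu$ (as in \eqref{eq:additivity}) gives
\[
\chi(E,G)=r_Er_G\,\mu(E,G)=\frac{a\,r_G+b\,r_E}{r_F}.
\]

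Next I will bound $\dim\Hom(E,G)=\chi(E,G)$ from below. It contains the span of the composites $\Hom(F^{(i)},G)\circ\Hom(E,F^{(i)})$, and the $c$ irreducible arrows $E\to G$ give a complement to $\sum_i\Hom(F^{(i)},G)\circ\Hom(E,F^{(i)})$. The key claim is that for each $i$ the composition pairing $\Hom(E,F^{(i)})\times\Hom(F^{(i)},G)\to\Hom(E,G)$ has no zero divisors. That is, $g\circ f\neq 0$ for nonzero $f,g$, which I would prove from $\mu$-stability (Theorem \ref{thm:mustable}) together with $\mu(E)<\mu(F)<\mu(G)$ (Proposition \ref{pr:slopes}). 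Given this, Hopf's lemma (a nondegenerate bilinear map $\CC^a\times\CC^b\to\CC^N$ without zero divisors has $N\ge a+b-1$) shows that each block member contributes a subspace of dimension at least $a+b-1$. I also expect the contributions of distinct $F^{(i)}$ to be independent, because the $F^{(i)}$ are mutually orthogonal; this could be checked via the right mutation $R_{\FF}E$ of Lemma \ref{lem:dual_block_exceptional}, using that the $\chi$'s are additive. Altogether we get
\[
\frac{a\,r_G+b\,r_E}{r_F}=\chi(E,G)\ \ge\ c+\alpha(a+b-1).
\]

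Now suppose $r_F\ge \max(r_E,r_G)$. Then the left side is at most $a+b$, while the right side is at least $1+(a+b-1)=a+b$, since $c,\alpha\ge1$. This proves \eqref{eq:max_inequality}. In the equality case every inequality in the chain is tight. Since $a,b\ge1$, tightness forces $r_F=r_E=r_G$, and moreover $c=1$ and $\alpha=1$. Tightness in Hopf's bound, $\dim\Hom(E,G)=a+b$ with composition image of dimension exactly $a+b-1$, should force $a=1$ or $b=1$. For the converse direction, the conditions \eqref{eq:equality} give $\chi(E,G)=(a+b)r_E/r_F$, and the stated $a+b$ count shows $r_F=r_E$.

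The main obstacle is the lower bound on the composition image: the no-zero-divisor property and the independence across the block, and then extracting ``$a=1$ or $b=1$'' in the equality case. For the first, I would try showing that a nonzero map between exceptional bundles with increasing slopes is injective as a sheaf map with torsion-free cokernel outside codimension two; then $g\circ f=0$ would force $g$ to factor through $\coker f$, which I expect to contradict stability. If the $a=1$ or $b=1$ part cannot be extracted from the Hopf bound, I would fall back on the Plücker relation \eqref{eq:square2} applied to the dual collection to exclude $a,b\ge2$ in the equality case.
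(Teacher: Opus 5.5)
Your derivation of the identity $\chi(E,G)\,r(F^{(i)})=a\,r(G)+b\,r(E)$ from \eqref{eq:chi_slope} is exactly how the paper begins. The gap is the lower bound $\chi(E,G)\ge c+\alpha(a+b-1)$, and the route you propose for it cannot work.

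The no-zero-divisor claim does not follow from $\mu$-stability and $\mu(E)<\mu(F)<\mu(G)$, because it is false in that generality. On $\PP^2=\PP(V)$, the collection $(\Oscr,T_{\PP^2}(-1),\Oscr(1))$ is strong exceptional with slopes $0<\tfrac12<1$. Here $\Hom(\Oscr,T_{\PP^2}(-1))\cong\Hom(T_{\PP^2}(-1),\Oscr(1))\cong V$, and composition is the wedge product $V\times V\to\Lambda^2V\cong\Hom(\Oscr,\Oscr(1))$, which kills $(v,v)$. Your cokernel argument finds nothing to contradict: $\coker(v)\cong I_p(1)$, and $g$ simply factors through $I_p(1)\subset\Oscr(1)$.

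In this example $a=b=3$ but $\dim\Hom(E,G)=3<a+b-1$. The composites span $\Hom(E,G)$, so $c=0$, and $r(F)=2>\max(r(E),r(G))$. So the hypothesis $c\ge1$ must enter the estimate of $\dim\Hom(E,G)$ itself, not only the final count as in your chain.

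The remaining steps have the same problem. Independence of the contributions of different $F^{(i)}$ does not follow from orthogonality alone. In the equality case, tightness of Hopf's bound cannot force $a=1$ or $b=1$: multiplication of linear polynomials $\CC^2\times\CC^2\to\CC^3$ is nonsingular. The Plücker fallback \eqref{eq:square2} needs a fourth vertex, which you do not have when $\alpha=1$.

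What is missing is that the hypothesis gives the exact count $\chi(E,G)=\alpha ab+c$, and this is what the paper uses. The algebra $\End_X(E\oplus\bigoplus_iF^{(i)}\oplus G)$ is the path algebra of \eqref{eq:quiver_shape} without relations. In the application (Lemma \ref{lem:one_narrowing}) this is the absence of reverse arrows in $Q$. Intrinsically: the simples $S_E,S_G$ correspond to an exceptional pair of shifted sheaves, so by Proposition \ref{pr:slopes} their $\Ext^\bullet$ is concentrated in a single degree, as in the proof of Lemma \ref{lm:arrowschiY}. Since $c\ge1$ gives $\Ext^1\neq0$, there is no $\Ext^2$, i.e.\ no relations from $E$ to $G$.

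Hence $\Hom(E,G)\cong\CC^c\oplus\bigoplus_i\Hom(F^{(i)},G)\otimes\Hom(E,F^{(i)})$, and $r(F^{(i)})=\frac{a\,r(G)+b\,r(E)}{\alpha ab+c}$ with $\frac{a+b}{\alpha ab+c}=1-\frac{(\alpha-1)ab+(a-1)(b-1)+c-1}{\alpha ab+c}\le1$. This gives \eqref{eq:max_inequality} and the equality conditions at once. The term $(a-1)(b-1)$ produces ``$a=1$ or $b=1$'', and $r(E)\neq r(G)$ forces strict inequality. No Hopf-type argument is needed.
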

\begin{proof} From \eqref{eq:chi_slope} we have the identity
\[
\chi(E,G)r(F^{(i)})=\chi(E,F^{(i)})r(G)+\chi(F^{(i)},G)r(E)
\]
where
\begin{align*}
\chi(E,F^{(i)})&=a\\
\chi(F^{(i)},G)&=b\\
\chi(E,G)&=\alpha ab+c
\end{align*}
Hence
\begin{equation}
\label{eq:r_identity}
r(F^{(i)})=\frac{a}{\alpha ab+c}r(G)+\frac{b}{\alpha ab+c}r(E)
\end{equation}
We have
\begin{align*}
\frac{a}{\alpha ab+c}+\frac{b}{\alpha ab+c}&=\frac{a+b}{\alpha ab+c}\\
&=1-\frac{\alpha ab+c-a-b}{\alpha ab+c}\\
&=1-\frac{(\alpha-1)ab+(a-1)(b-1)+c-1}{\alpha ab+c}\\
&\le 1
\end{align*}
which implies \eqref{eq:max_inequality}. By revisiting the
computation
it is easy to see that equality in \eqref{eq:max_inequality}
can only happen under the conditions listed in \eqref{eq:equality}.
\end{proof}
We now revert to our general setting introduced in \S\ref{sec:quiver_setting}.   All indices are taken $\mod k$.
\begin{definition}
  A long edge in $P$ is \emph{narrowing} if the sum of its interior angles with the adjacent edges is $<\pi$.
\end{definition}
\begin{lemma} \label{lem:narrowing}
  The polygon $P$ cannot have two non-adjacent narrowing long edges.
\end{lemma}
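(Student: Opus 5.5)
The statement is purely a statement about convex polygons. I plan to prove it by counting exterior angles, so the exceptional collection plays no role beyond guaranteeing convexity.

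\emph{Setup.} Since $\EE$ is very strong, $P$ is convex by Proposition \ref{prop:convex_perling}. By Remark \ref{rem:straight} only $|P|$ matters, so I work with the convex polygon $|P|$. Its edges are the long edges of $P$ and its vertices are the non-straight vertices of $P$. At each vertex $v$ of $|P|$, let $\theta_v\in(0,\pi)$ be the interior angle and $\varepsilon_v=\pi-\theta_v\in(0,\pi)$ the exterior angle. The basic fact I will use is the turning-angle identity for a convex polygon,
\[
\sum_{v} \varepsilon_v = 2\pi ,
\]
with all $\varepsilon_v> 0$ because straight vertices have been removed.

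\emph{Translating ``narrowing''.} Let $e=[v,w]$ be a long edge. Its interior angles with the adjacent long edges are $\theta_v$ and $\theta_w$. Then $e$ is narrowing exactly when $\theta_v+\theta_w<\pi$, which is equivalent to
\[
\varepsilon_v+\varepsilon_w>\pi .
\]

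\emph{Counting.} Suppose $e=[v,w]$ and $e'=[v',w']$ are two non-adjacent narrowing long edges. Non-adjacent means they share no endpoint, so $v,w,v',w'$ are four distinct vertices of $|P|$. Since every exterior angle is positive, summing over just these four vertices gives
\[
2\pi=\sum_u\varepsilon_u\ \ge\ \varepsilon_v+\varepsilon_w+\varepsilon_{v'}+\varepsilon_{w'}\ >\ \pi+\pi=2\pi,
\]
which is a contradiction.

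\emph{Expected obstacle.} The only delicate point is bookkeeping. One has to confirm that ``adjacent edges'' in the definition of narrowing means the adjacent \emph{long} edges, so that the relevant angles are genuine vertex angles of $|P|$. One also has to confirm that ``non-adjacent'' excludes a shared endpoint; when $|P|$ is a triangle, any two edges are adjacent and the statement is vacuous. Once these are fixed, the argument is just the exterior angle sum above.
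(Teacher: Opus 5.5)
Your proposal is correct and is essentially the paper's proof: the exterior angles between consecutive long edges sum to $2\pi$, and a narrowing long edge forces the two exterior angles at its endpoints to sum to more than $\pi$. You also state explicitly that two non-adjacent long edges involve four distinct exterior angles, a step the paper leaves implicit.
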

\begin{proof}
  Let the long edges be $e_0,\cdots,e_{k-1}$. Then we have
  \begin{equation}
    \label{eq:exterior_angle}
    \sum_{i=0}^{k-1}\angle(e_i,e_{i+1})=2\pi
  \end{equation}
  (here $\angle(e_i,e_{i+1})$ is the exterior angle from $e_i$ to $e_{i+1}$).
  On the other hand if $e_j$ is a narrowing long edge then $\angle(e_{j-1},e_j)+\angle(e_j,e_{j+1})>\pi$. This finishes the proof.
\end{proof}
The following lemma is absolutely crucial for our classification results in \S\ref{sec:minimal}.
\begin{lemma}
  \label{lem:one_narrowing}
    Assume $\EE$ is block-complete. Then $P$ has at least one narrowing long edge.
\end{lemma}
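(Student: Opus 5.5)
Assume for contradiction that no long edge of $P$ is narrowing. Let the long edges be $e_0,\ldots,e_{k-1}$, corresponding to the blocks $\EE_0,\ldots,\EE_{k-1}$, with primitive direction vectors $u_i$ (so $m$ restricted to the long edge $e_i$ is a positive multiple of $u_i$, by Lemma \ref{lem:very_useful}). The exterior angles $\theta_i=\angle(e_{i-1},e_i)\in(0,\pi)$ sum to $2\pi$ by \eqref{eq:exterior_angle}. Block-completeness means no two long edges are parallel (Lemma \ref{lem:no_parallel}), so $\omega(u_i,u_j)\neq 0$ for $i\neq j$. By Theorem \ref{thm:achievement}\eqref{it:thm2_5} the sign of $\omega(u_i,u_j)$ fixes the orientation of the arrows between blocks $i$ and $j$, and its absolute value fixes their multiplicity.

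\textbf{Reformulating non-narrowing.} The long edge $e_j$ is not narrowing exactly when $\theta_j+\theta_{j+1}\le\pi$. This says that $u_{j-1}$ and $u_{j+1}$ do not cross the direction $-u_j$, i.e.\ $\omega(u_{j-1},u_{j+1})\ge 0$, with equality excluded by block-completeness. So the hypothesis becomes
\[
\omega(u_{j-1},u_{j+1})>0 \qquad\text{for all } j .
\]
In quiver terms, for every $j$ the arrows go from block $j-1$ to block $j+1$, with the same orientation as the arrows $j-1\to j$ and $j\to j+1$. Cyclically this produces a ``triangle'' pattern of arrows from $\EE_{j-1}$ through $\EE_j$ to $\EE_{j+1}$, together with arrows from $\EE_{j-1}$ to $\EE_{j+1}$. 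Since the dual collection is strong, this is exactly the shape \eqref{eq:quiver_shape} with $a,b,c\ge1$, at least in the non-cyclic-wraparound case. Lemma \ref{lem:tool4} then gives
\[
r_j\le \max(r_{j-1},r_{j+1}) .
\]
Here the middle block $\EE_j$ plays the role of $F$. One has to check that the rotated triple $(\EE_{j-1},\EE_j,\EE_{j+1})$ is a strong block collection, which holds by rotating the helix (very strong implies that any $k$ consecutive blocks form a strong collection).

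\textbf{Deriving a contradiction.} Apply this to a block $j$ of maximal rank. It forces a neighbour to have the same maximal rank, and then equality holds in Lemma \ref{lem:tool4}. So $r_{j-1}=r_{j+1}$ for that triple, $\alpha_j=1$, and the $c=1$ condition applies; propagating, all ranks are equal and all $\alpha_j=1$, with $c=1$ everywhere. Now use the area/angle identities of Theorem \ref{thm:achievement}. With all ranks equal to $r$, we get $\omega(m_i,m_{i+1})=r^2\chi_{i,i+1}$ and $|\omega(u_{j-1},u_{j+1})|=1$ for each $j$. A cleaner route is the Plücker relation \eqref{eq:plucker2} (Remark \ref{rem:impossible}) applied to four consecutive blocks. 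The non-narrowing condition gives sign patterns with $a,b,c,d,e,f\ge1$, while the multiplicities $c=1$ (and $a=1$ or $b=1$) from the equality case are incompatible with $ab-ef-cd=0$. If $k\le 3$ the argument is direct: for $k=3$ the three exterior angles sum to $2\pi$ with each pair $\le\pi$, which is impossible since $\theta_1+\theta_2+\theta_3\le \tfrac32\pi$. So only $k\ge4$ needs the quiver argument.

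\textbf{Main obstacle.} The delicate step is the equality/propagation case in Lemma \ref{lem:tool4}, together with the bookkeeping of wrap-around, where $\EE_{k}=\omega_X^{-1}\otimes\EE_0$ and the relevant triple spans the helix seam. I would first try to close the equal-ranks case using Remark \ref{rem:impossible} on four consecutive blocks. Failing that, I would compute $\sum\theta_i$ from the lattice vectors $u_i$, all having $\omega(u_{j-1},u_{j+1})=1$ and equal $r$, to show that the polygon cannot close up.
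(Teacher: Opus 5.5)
Your reduction is the paper's own up to one point. Non-narrowing gives the quiver shape \eqref{eq:quiver_shape}. Lemma \ref{lem:tool4} then makes all ranks equal and puts every triple in the equality case, so $\alpha_j=1$, the diagonal multiplicities are $1$, and for every $j$ either $|Q(j-1,j)|=1$ or $|Q(j,j+1)|=1$. Your separate handling of $k=3$ is also fine.

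The gap is the last step. Use the notation of Figure \ref{fig:square} for four consecutive blocks $i-1,i,i+1,i+2$:
\begin{itemize}
\item $a,c,b$ are the consecutive arrows $i-1\to i\to i+1\to i+2$;
\item $e,f$ are the two diagonals;
\item $d$ is the arrow between $i+2$ and $i-1$.
\end{itemize}
For an arbitrary window, the equality conditions only give $e=f=1$, ($a=1$ or $c=1$), and ($c=1$ or $b=1$). These are not incompatible with $ab-ef-cd=0$. For example $a=b=2$, $c=e=f=1$, $d=3$ satisfies all of them.

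Your claim that non-narrowing forces $d\ge 1$ is also false once $k\ge 5$. The sign of $d$ is governed by whether $\theta_i+\theta_{i+1}+\theta_{i+2}$ is below or above $\pi$, and the pairwise bounds do not control this. In a regular pentagon and a regular heptagon all consecutive pairs of exterior angles are $<\pi$, yet $d$ has opposite signs. The paper says explicitly that the sign of $d$ is unknown.

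Your fallback of computing angles of the lattice vectors $u_i$ does not escape this. For vectors in a plane the Plücker relation holds automatically, so that computation encodes the same constraint and needs the same choice.

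The missing idea is to choose the window.
\begin{itemize}
\item If some consecutive multiplicity $|Q(i,i+1)|\neq 1$, take the blocks $i-1,i,i+1,i+2$. The equality case for the triples $(i-1,i,i+1)$ and $(i,i+1,i+2)$ then forces $|Q(i-1,i)|=|Q(i+1,i+2)|=1$. So $a=b=1$ and $e=f=1$, and \eqref{eq:square2} reduces to $cd=0$, hence $d=0$.
\item If all consecutive multiplicities equal $1$, any window gives $d=0$ in the same way.
\end{itemize}
In both cases $d=0$ contradicts block-completeness, whatever the sign of $d$ would have been. This is exactly how the paper closes the argument.
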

\begin{proof}
  Assume first that there is no narrowing long edge.  By Theorem \ref{thm:achievement}\eqref{it:thm2_5}  
  this implies that the subquiver spanned by all $E_{i-1}^{(\alpha_{i-1}-1)},E_i^{(0)},\ldots,E_i^{(\alpha_i-1)}, E_{i+1}^{(0)}$ is as in \eqref{eq:quiver_shape}.

  By Lemma \ref{lem:tool4} this implies that
  $r(E_i^{(?)})\le \max (r(E_{i-1}^{(?)}),r(E_{i+1}^{(?)}))$ for all $i$ (indices $\mod k$), with equality if and only if $r(E_{i-1}^{(?)})=r(E_{i}^{(?)})=r(E_{i+1}^{(?)})$.
  This quickly implies that all $r(E_i^{(?)})$ are equal. 
  Assume there is an $i$ such that $|Q(i,i+1)|\neq 1$. Then again by Lemma \ref{lem:tool4} $|Q(i-1,i)|=1$, $|Q(i+1,i+2)|=1$, $|Q(i-1,i+1)|=1$, $|Q(i,i+2)|=1$.
  This is like in Figure \ref{fig:square} except that we do not know the sign of $d$. However \eqref{eq:square2} yields $d=0$ which contradicts block-completeness.
  Hence for all $i$ we have $|Q(i,i+1)|=1$ and also $|Q(i,i+2)|=1$ (again using Lemma \ref{lem:tool4}). But this is also impossible, for exactly the same reason.
This contradiction finishes the proof.
\end{proof}
  \begin{corollary}
    \label{cor:quiver_pos}
    Assume $\EE$ is block-complete.
    Then exactly one of the following is true.
  \begin{enumerate}
  \item \label{it:cor_quiver_pos1} $P$ has a single narrowing long edge.
  \item \label{it:cor_quiver_pos2} $P$ has exactly two narrowing long edges. Moreover these long edges are adjacent.
    \item $P$ is a triangle (i.e.\ it has exactly 3 long edges and they are all narrowing).
  \end{enumerate}
\end{corollary}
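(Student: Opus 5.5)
By Lemma \ref{lem:one_narrowing}, $P$ has at least one narrowing long edge, and by Lemma \ref{lem:narrowing} any two narrowing long edges are adjacent. The plan is therefore to count how many long edges can be narrowing when every pair of them is adjacent in the cyclic order of the $k$ long edges $e_0,\ldots,e_{k-1}$ of $P$.

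First I fix the geometry. Since $P$ is convex (Proposition \ref{prop:convex_perling}), it has $k\ge 3$ long edges. The exterior angles $\theta_i=\angle(e_i,e_{i+1})$ lie in $(0,\pi)$ and sum to $2\pi$ by \eqref{eq:exterior_angle}. In these terms $e_j$ is narrowing if and only if $\theta_{j-1}+\theta_j>\pi$.

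Next comes the combinatorial case split. Let $N$ be the set of narrowing long edges, which is nonempty.
\begin{enumerate}
\item If $|N|=1$ we are in case \eqref{it:cor_quiver_pos1}.
\item If $|N|=2$, the two edges must be adjacent by Lemma \ref{lem:narrowing}, so we are in case \eqref{it:cor_quiver_pos2}.
\item If $|N|\ge 3$, the edges are pairwise adjacent in a cycle of length $k$. For $k\ge 4$ this is impossible, since $e_i,e_{i+1},e_{i+2}$ would force $e_i$ and $e_{i+2}$ to be adjacent, i.e.\ $k=3$. Hence $k=3$, $N$ consists of all three long edges, and $P$ is a triangle.
\end{enumerate}

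For the converse direction of the third case, I check that in a triangle every long edge is automatically narrowing. The two interior angles adjacent to any side sum to $\pi$ minus the third angle, which is $<\pi$. So a triangle always has exactly three narrowing edges, and it can never fall into the first two cases.

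The three cases are mutually exclusive because they are distinguished by $|N|\in\{1,2,3\}$. This gives "exactly one" of the alternatives.

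There is no real obstacle here. The only point needing care is the small cyclic-adjacency argument for $|N|\ge 3$, together with the triangle check that guarantees exclusivity.
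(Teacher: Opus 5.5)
Your proposal is correct and follows the paper's own proof: Lemma \ref{lem:one_narrowing} gives at least one narrowing long edge, Lemma \ref{lem:narrowing} forces any two narrowing long edges to be adjacent, and you sort the cases by how many narrowing edges there are. You also spell out two points the paper leaves implicit: the cyclic-adjacency argument that forces $k=3$ when there are three or more narrowing edges, and the check that every edge of a triangle is narrowing, which is what makes the three alternatives mutually exclusive.
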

\begin{proof}
By Lemma \ref{lem:one_narrowing} $P$ has at least on narrowing long edge.
  Using Lemma \ref{lem:narrowing}  we deduce that this yields the possibilities listed in the corollary.
\end{proof}
\begin{lemma}
  \label{lem:three_adm}
  Assume $\EE$ is block complete and let $e$ be a narrowing long edge of $P$ with adjacent vertices $u$, $v$. Then $u$, $v$ are admissible (see Definition \ref{def:admissible}) and there is exactly one
  other admissible vertex. In particular the total number of admissible vertices is three.
\end{lemma}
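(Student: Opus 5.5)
The plan is to turn admissibility into a statement about outward normals of the convex polygon $|P|$, and then use the narrowing condition to pin down which vertices can be antipodal to an edge. Throughout I work with $|P|$, whose edges are the long edges of $P$ and whose vertices are the non-straight vertices (Remark \ref{rem:straight}).

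\textbf{Step 1: admissibility as antipodality.} Since $\EE$ is block-complete, Lemma \ref{lem:no_parallel} says $P$ has no parallel long edges. For an edge $m_i$ lying on a long edge $h$, the line $H_i$ of Definition \ref{def:opposing} is the supporting line of $|P|$ parallel to $h$ on the far side. Because no other long edge is parallel to $h$, $H_i$ meets $|P|$ in exactly one non-straight vertex. Moreover, all edges $m_i$ on the same long edge have the same opposing vertex. Hence the admissible vertices are exactly the vertices $x$ of $|P|$ that are antipodal to some long edge $h$. In terms of the outward unit normal $n_h$ of $h$, this means $-n_h$ lies in the normal cone $N(x)$. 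These normal cones are the closed arcs between the normals of the two long edges adjacent to $x$, and together they partition the unit circle. I expect this step to be routine but fiddly, since one has to check that straight vertices never lie on $H_i$.

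\textbf{Step 2: the narrowing edge and its two neighbours.} Let $f$ be the long edge ending at $u$, $e=[u,v]$, and $g$ the long edge starting at $v$. Since $e$ is narrowing, the exterior angles at $u$ and $v$ sum to more than $\pi$ (this is the computation from Lemma \ref{lem:narrowing}). Equivalently, the arc $N(u)\cup N(v)$, running from $n_f$ through $n_e$ to $n_g$, has length $>\pi$, while each of $N(u)$, $N(v)$ has length $<\pi$.

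First, $-n_f$ lies in this arc: it is at distance $\pi$ from $n_f$ and the arc has length $>\pi$. It is not in $N(u)$, since $N(u)$ has length $<\pi$ and starts at $n_f$. So $-n_f\in N(v)$, and $v$ is admissible. The same argument for $g$ gives $-n_g\in N(u)$, so $u$ is admissible. Geometrically, $|P|$ sits inside the triangle cut out by the lines of $f,e,g$, and $v$ is the vertex of that triangle farthest from the line of $f$.

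Next, $-n_e$ is at distance $\pi$ from $n_e$. Each of the sub-arcs $[n_f,n_e]$ and $[n_e,n_g]$ has length $<\pi$, so $-n_e\notin N(u)\cup N(v)$. Hence the opposing vertex $w$ of $e$ is a third admissible vertex.

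\textbf{Step 3: no further admissible vertices.} Every other long edge $h$ has its normal in the complementary arc from $n_g$ to $n_f$, which has length $<\pi$. Therefore $-n_h$ lies in the antipodal arc from $-n_g$ to $-n_f$. Since $-n_g\in N(u)$ and $-n_f\in N(v)$, this antipodal arc is contained in $N(u)\cup N(v)$. So the antipodal vertex of $h$ is $u$ or $v$. The one boundary case, $-n_h=n_e$, would make $h$ parallel to $e$, which Step 1 excludes.

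Thus the admissible vertices are exactly $u$, $v$ and $w$. The main obstacle is keeping the arc bookkeeping correct in the degenerate boundary cases. That is precisely where block-completeness, through the absence of parallel long edges, is used.
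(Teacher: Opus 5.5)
Your proof is correct and is essentially the paper's argument in the language of normal cones. Your observation that the normals of all long edges other than $e$ lie in an arc of length $<\pi$ is the paper's positivity statement $\omega(m_p,m_q)\ge 0$, derived from the exterior angle sum, which it uses to show that every edge off $e$ has opposing vertex $u$ or $v$. The minor differences are that you get admissibility of $u,v$ directly from $-n_f\in N(v)$ and $-n_g\in N(u)$, where the paper notes that an edge's own endpoint cannot be its opposing vertex, and that for edges on $e$ it suffices that $u,v$ lie on the line of $e$ itself.
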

\begin{proof} Note that since $P$ has no parallel edges by Lemma \ref{lem:no_parallel} a straight vertex can never be admissible. Also for the same reason the opposing
  vertex to an edge is unique.

  Let the vertices of $P$ be numbered as $l_{-1,0},\ldots, l_{\alpha_0-1,\alpha_0},\ldots $ with $u=l_{-1,0}$, $v=l_{\alpha_0-1,\alpha_0}$ (indices modulo $n:=\sum_i \alpha_i$).
  We first claim that if $[l_{i-1,i},l_{i,i+1}]$ is an edge not on $e$ then  the opposing vertex is either $u$ or $v$.
  Recall that by Definition \ref{def:admissible} the opposing vertex $l_{j,j+1}$ is characterised by the fact that $\delta_i(x) = \omega(m_i,x-l_{i,i+1})$ takes maximal
  value for $x=l_{j,j+1}$. By considering $x=l_{j-1,j}$ and $x=l_{j+1,j+2}$ we see that this implies $\omega(m_i,m_j)>0$, $\omega(m_i,m_{j+1})<0$. By \eqref{eq:exterior_angle}
  we have
  \begin{equation}
    \label{eq:positivity}
    \omega(m_p,m_q)\ge 0\quad \text{for $m_p$ not on $e$ and $q=p+1,\ldots, -1$}.
  \end{equation}
  Hence $\omega(m_i,m_{j+1})<0$ implies $j+1=0$ (since $l_{j,j+1}$ must be a straight vertex) or $j+1=\alpha_0,\ldots,i-1$. But if $j+1=\alpha_0+1,\ldots, i-1$ then by \eqref{eq:positivity} $\omega(m_j,m_i)\ge 0$, contradicting $\omega(m_i,m_j)>0$.

  The opposing vertex for $m_{-1}$ cannot be the adjacent vertex $l_{-1,0}$. Hence it must be $v=l_{\alpha_0-1,\alpha_0}$. So $v$ is admissible. With a similar argument we see that $u$ is admissible.

  We have determined that the opposing vertex for all edges except those on $e$ is either $u$ or $v$. On the other hand the opposing vertex for an edge on $e$ cannot be $u$ or $v$. Hence it is the third admissible vertex, whose existence is asserted in the statement of the lemma. This finishes the proof. 
  \end{proof}
\begin{example}
  The two possible polygon shapes and the corresponding reduced quiver shapes for $k=5$ (computed via Theorem \ref{thm:achievement}\eqref{it:thm2_5}) are given in Figure
  \ref{fig:red_5_blocks}. The picture on the left corresponds to the case that there are two narrowing long edges while in the picture on the right there is only a single
  one (i.e.\ these cases correspond to the possibilities \ref{it:cor_quiver_pos2} and \ref{it:cor_quiver_pos1} in Corollary \ref{cor:quiver_pos} respectively). The meaning of
  the flat blue dots will be explained below.
\end{example}
The examples in Figure \ref{fig:red_5_blocks} suggest the following result which follows easily from Lemma \ref{lem:three_adm}.
\begin{lemma}
  In the block complete case
  for $k\ge 4$ there are $k-3$ possible reduced quiver shapes,  namely, a single possibility with two narrowing long edges and $k-4$ possibilities with
  one narrowing long edge. These $k-4$ possibilities are further distinguished by the location of the unique admissible vertex which is not on the narrowing long edge.
\end{lemma}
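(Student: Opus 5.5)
The plan is to read the reduced quiver off the shape of $|P|$ and to use Corollary \ref{cor:quiver_pos} together with Lemma \ref{lem:three_adm}. By Theorem \ref{thm:achievement}\eqref{it:thm2_5}, the arrow between blocks $p$ and $q$ goes from $p$ to $q$ exactly when $\omega(m_p,m_q)>0$, where $m_p,m_q$ are edges on the long edges $e_p,e_q$. Block-completeness (Lemma \ref{lem:no_parallel}) ensures that no such value vanishes. The reduced quiver $Q_{\red}$ is therefore the tournament on the cyclically ordered directions of the long edges of $P$, and this tournament is determined by the following data:
\begin{itemize}
\item the cyclic order of the edge directions;
\item for each long edge $e_i$, the set of subsequent long edges whose direction lies within angle $<\pi$ of it.
\end{itemize}
Equivalently, $Q_{\red}$ is determined by the opposing vertices. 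For a long edge $e_i$, the opposing vertex $l_{j,j+1}$ is precisely where the sign of $\omega(m_i,m_q)$ switches from positive to negative as $q$ runs cyclically after $i$. This is the computation already made in the proof of Lemma \ref{lem:three_adm}.

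By Lemma \ref{lem:three_adm} there are exactly three admissible vertices. Two of them, $u$ and $v$, are the endpoints of a narrowing long edge $e$, say $e=e_0$. The third, $w$, is the opposing vertex for $e$. The same proof shows that every long edge other than $e$ has opposing vertex $u$ or $v$, and which one is forced by where the edge sits. Label the long edges $e_0=e,e_1,\ldots,e_{k-1}$ cyclically, and let $w$ be the vertex between $e_s$ and $e_{s+1}$. Then:
\begin{itemize}
\item the edges $e_1,\ldots,e_s$ have opposing vertex $u$;
\item the edges $e_{s+1},\ldots,e_{k-1}$ have opposing vertex $v$;
\item $e_0$ has opposing vertex $w$.
\end{itemize}
The whole sign pattern, and hence $Q_{\red}$, is thus determined by $s$. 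Since $w\neq u,v$, the parameter satisfies $1\le s\le k-2$.

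The case split on $s$ goes as follows. If $s=1$ or $s=k-2$, then $w$ is adjacent to $e$'s neighbour. In this situation the edge adjacent to $e$ between $e$ and $w$ also becomes narrowing, because its opposing vertex is its far endpoint's neighbour. This yields case \eqref{it:cor_quiver_pos2} of Corollary \ref{cor:quiver_pos}. The two values $s=1$ and $s=k-2$ produce the same configuration up to cyclic relabelling, since the roles of the two narrowing edges are swapped; this gives the single two-narrowing shape. For $2\le s\le k-3$ exactly one edge is narrowing, which gives $k-4$ possibilities. The triangle case only occurs for $k=3$, so it is excluded by $k\ge4$. The distinct values of $s$ give non-isomorphic tournaments: in the one-narrowing cases $e$ is canonically determined, and $s$ can be recovered as the position of $w$, that is, from the out-degree of the vertex of $e$ in $Q_{\red}$.

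The main obstacle is twofold. First, I must verify that every value of $s$ is realizable and gives a well-defined shape. Second, I must check the claim that the two endpoint values of $s$ coincide up to rotation and relabelling while the interior values do not. For realizability, I would try an explicit convex polygon with the prescribed angular pattern. For the identification, I would compare score sequences of the tournaments.
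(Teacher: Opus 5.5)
Your proposal is correct and is essentially the paper's argument made explicit. The paper only notes that the lemma follows easily from Lemma \ref{lem:three_adm}; your reduction of the whole sign pattern of $\omega(m_p,m_q)$ to the position $s$ of the third admissible vertex, with $s\in\{1,k-2\}$ giving the single rotation-equivalent two-narrowing shape and $2\le s\le k-3$ the one-narrowing ones, is exactly that remark unpacked.

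One caution: score sequences will not separate $s$ from $k-1-s$, since both give the multiset $\{1,\ldots,k-2\}\cup\{s,k-1-s\}$. Base distinctness on your other invariant instead, namely the out-degree of the vertex of the unique narrowing long edge. Realizability is also not needed, since the lemma only restricts the possible shapes.
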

We also obtain the following result.
\begin{proposition}
  \label{prop:herzog}
  Assume $\EE$ is block complete.
  Draw the reduced quiver $Q_{\red}$ inside $P$ as in Figure \ref{fig:red_5_blocks}. Then it is possible to find a polygon $\Sigma$ bounded by arrows of $Q_{\red}$ so that
  all arrows in $Q_{\red}$ travel in the counter clockwise direction around $\Sigma$. Moreover $\Sigma$ is adjacent to a vertex of $Q_{\red}$.
\end{proposition}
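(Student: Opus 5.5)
We place the vertices of $Q_{\red}$ at the midpoints $v_0,\ldots,v_{k-1}$ of the long edges $e_0,\ldots,e_{k-1}$ of $P$ (numbered counterclockwise). Each arrow is drawn as the straight chord $[v_i,v_j]$. By Theorem \ref{thm:achievement}\eqref{it:thm2_5} the arrows between $v_i$ and $v_j$ go $i\to j$ exactly when $\omega(m_i,m_j)>0$, where $m_i,m_j$ are edges on $e_i$, $e_j$. By Lemma \ref{lem:no_parallel} every pair of vertices is joined. So $Q_{\red}$ is a tournament whose orientation is read off from the angles between long edges.

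Using Corollary \ref{cor:quiver_pos}, fix a narrowing long edge $e=e_0$. By Lemma \ref{lem:three_adm} there is a unique third admissible vertex $w$, say between $e_a$ and $e_{a+1}$ with $1\le a\le k-2$. From \eqref{eq:positivity} (the exterior-angle argument in the proof of Lemma \ref{lem:three_adm}), all arrows among $v_1,\ldots,v_{k-1}$ go from the smaller to the larger index. For arrows at $v_0$ we argue from the opposing-vertex characterisation of $w$: an edge $m_0$ on $e_0$ has $\omega(m_0,m_j)>0$ for $j\le a$ and $<0$ for $j\ge a+1$, since $w$ is where $\delta_0(x)=\omega(m_0,x-l)$ attains its maximum. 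This gives arrows $v_0\to v_j$ for $1\le j\le a$ and $v_j\to v_0$ for $a+1\le j\le k-1$. This sign pattern is the step I expect to need the most care, in particular checking that no arrow at $v_0$ changes direction elsewhere; it is exactly where narrowness of $e_0$ and Lemma \ref{lem:three_adm} enter.

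Next we define $\Sigma$ as the cell of the chord arrangement inside $|P|$ that is adjacent to $v_0$ and lies in the wedge between $[v_0,v_a]$ and $[v_0,v_{a+1}]$. These are consecutive chords at $v_0$, so this wedge is a face at $v_0$, which gives the ``adjacent to a vertex'' clause. The chords bounding $\Sigma$ are of three kinds:
\begin{enumerate}
\item the outgoing chord $v_0\to v_a$;
\item the incoming chord $v_{a+1}\to v_0$;
\item chords $v_p\to v_q$ with $1\le p\le a<q\le k-1$, which are the only other chords crossing the wedge.
\end{enumerate}

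The orientation check is elementary convex geometry. For a chord $v_p\to v_q$ of a convex polygon, one side contains the boundary arc from $p$ to $q$ and the other side contains the complementary arc. One checks in each of the three cases that $\Sigma$ lies on the side of the arc through $e_0$, or of the arc between $a$ and $a+1$ as appropriate. The conclusion is that $\Sigma$ lies on the same side of every bounding chord, so all arrows travel around $\Sigma$ in one rotational direction. If this turns out to be clockwise rather than counterclockwise with the paper's orientation of $\omega$, we reverse the numbering (equivalently use $-\omega$) to match the convention of Figure \ref{fig:red_5_blocks}.

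The triangle case of Corollary \ref{cor:quiver_pos} is covered by the same argument, and is trivial anyway: there $Q_{\red}$ is an oriented 3-cycle.
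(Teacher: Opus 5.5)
Your proof is correct and is essentially the argument the paper leaves implicit: the tournament structure comes from Lemma \ref{lem:three_adm} and \eqref{eq:positivity}, and your cell at the narrowing edge between the last outgoing and the first incoming chord is exactly the $\Sigma$ marked in both pictures of Figure \ref{fig:red_5_blocks}. Two small simplifications are possible: the single sign change of $\omega(m_0,m_j)$ needs no narrowness, since a linear functional is unimodal along the boundary of a convex polygon; and the closing orientation hedge can be dropped, since a directed chord $v_p\to v_q$ of the $\omega$-counterclockwise ordered $v_i$ has $v_{q+1},\dots,v_{p-1}$ on its left, so $\Sigma$ lies to the left of every bounding arrow.
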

In Figure \ref{fig:red_5_blocks} the distinguished polygons $\Sigma$ are marked by fat blue dots.
\begin{remark}
In \cite{Herzog} Herzog informally conjectured that a polygon like $\Sigma$ should exist\footnote{In the middle of \cite[p16]{Herzog} is it stated: ``That well split implies the existence of such a polygon is not trivial, and
  we leave it as a conjecture.''} and Proposition \ref{prop:herzog} in fact proves this conjecture in the block complete case. Note however that, as stated in the proposition,
the location of $\Sigma$ is more constrained than suggested in loc.\ cit.
In particular the quiver
in the top left of Figure 1 in loc.\ cit.\ cannot
be the reduced quiver of a rolled-up helix algebra on a del Pezzo surface when all the arrow multiplicities are non-zero.
\end{remark}
\begin{figure}[ht]
\centering
\caption{Reduced quivers for 5 blocks.}
\label{fig:red_5_blocks}
\vspace*{2mm}

\begin{minipage}[t]{0.48\textwidth}
  \strut\vspace*{-\baselineskip}\newline
\centering
\begin{tikzpicture}[scale=0.95,line cap=round,line join=round,>=Stealth]
  \definecolor{myblue}{RGB}{25,20,150}
  \definecolor{mydot}{RGB}{30,120,255}

  \newcommand{\arr}[4]{%
    \draw[myblue,line width=0.95pt,-{Stealth[length=6pt,width=6pt]}]
      ($(#1)!#3!(#2)$) -- ($(#1)!#4!(#2)$);
  }

  \coordinate (P0) at (0.0,2.5);
  \coordinate (P1) at (1.8,3.7);
  \coordinate (P2) at (3.2,3.95);
  \coordinate (P4) at (6.25,2.82);
  \coordinate (P5) at (3.10,-1.90);

  \draw[black,very thick] (P0)--(P1)--(P2)--(P4)--(P5)--cycle;

  \coordinate (A) at ($(P0)!0.5!(P1)$);
  \coordinate (B) at ($(P1)!0.5!(P2)$);
  \coordinate (C) at ($(P2)!0.5!(P4)$);
  \coordinate (D) at ($(P4)!0.5!(P5)$);
  \coordinate (E) at ($(P5)!0.5!(P0)$);

  \draw[myblue,line width=0.95pt] (A)--(B)--(C)--(D)--(E)--(A);
  \draw[myblue,line width=0.95pt] (C)--(E);
  \draw[myblue,line width=0.95pt] (D)--(B);
  \draw[myblue,line width=0.95pt] (B)--(E);
  \draw[myblue,line width=0.95pt] (D)--(A);
  \draw[myblue,line width=0.95pt] (C)--(A);

  \arr{E}{D}{0.40}{0.56} 
  \arr{D}{C}{0.40}{0.58} 
  \arr{C}{B}{0.18}{0.35} 
  \arr{B}{A}{0.18}{0.35} 
  \arr{A}{E}{0.40}{0.58} 

  \arr{C}{E}{0.39}{0.54} 
  \arr{D}{B}{0.42}{0.58} 
  \arr{D}{A}{0.41}{0.57} 
  \arr{C}{A}{0.44}{0.60} 
  \arr{B}{E}{0.45}{0.62} 

  \coordinate (Oleft) at ($(E)!0.46!(D)+(0,0.5)$);
  \fill[mydot] (Oleft) circle (4.5pt);
\end{tikzpicture}
\end{minipage}\hfill
\begin{minipage}[t]{0.48\textwidth}
  \strut\vspace*{-\baselineskip}\newline
\centering
\begin{tikzpicture}[scale=0.95,line cap=round,line join=round,>=Stealth]
  \definecolor{myblue}{RGB}{25,20,150}
  \definecolor{mydot}{RGB}{30,120,255}

  \newcommand{\arr}[4]{%
    \draw[myblue,line width=0.95pt,-{Stealth[length=6pt,width=6pt]}]
      ($(#1)!#3!(#2)$) -- ($(#1)!#4!(#2)$);
  }

  \coordinate (Q0) at (0.0,-1.45);   
  \coordinate (Q1) at (1.55,0.70);  
  \coordinate (Q2) at (4.05,1.95);   
  \coordinate (Q3) at (5.95,0.85);   
  \coordinate (Q4) at (7.85,-1.25);  

  \draw[black,very thick] (Q0)--(Q1)--(Q2)--(Q3)--(Q4)--cycle;

  \coordinate (A) at ($(Q0)!0.5!(Q1)$);
  \coordinate (B) at ($(Q1)!0.5!(Q2)$);
  \coordinate (C) at ($(Q2)!0.5!(Q3)$);
  \coordinate (D) at ($(Q3)!0.5!(Q4)$);
  \coordinate (E) at ($(Q4)!0.5!(Q0)$);

  \draw[myblue,line width=0.95pt] (A)--(B)--(C)--(D)--(E)--(A);
  \draw[myblue,line width=0.95pt] (D)--(A);
  \draw[myblue,line width=0.95pt] (E)--(C);
  \draw[myblue,line width=0.95pt] (C)--(A);
  \draw[myblue,line width=0.95pt] (B)--(E);
  \draw[myblue,line width=0.95pt] (D)--(B);

  \arr{B}{A}{0.22}{0.40} 
  \arr{C}{B}{0.22}{0.40} 
  \arr{D}{C}{0.38}{0.56} 
  \arr{E}{D}{0.46}{0.64} 
  \arr{A}{E}{0.43}{0.61} 

  \arr{D}{A}{0.44}{0.62} 
  \arr{E}{C}{0.53}{0.69} 
  \arr{C}{A}{0.44}{0.60} 
  \arr{B}{E}{0.24}{0.38} 
  \arr{D}{B}{0.22}{0.36} 

  \coordinate (Oright) at ($(E)!0.45!(C)+(-0.4,-0.4)$);
  \fill[mydot] (Oright) circle (4.5pt);
\end{tikzpicture}
\end{minipage}

\end{figure}

\section{All NCCRs are related by mutations}
\begin{definition}
  \label{def:equivalence}
  We say that two geometric helices (see \S\ref{sec:geom_helix}) on a del Pezzo surface are \emph{equivalent} if they can be transformed into each other by simultaneous shifts in the derived category, tensoring by line bundles, reordering of objects in orthogonal blocks and rotations (shifting the labels of objects). Two very strong exceptional collections are equivalent if the corresponding geometric helices are equivalent.
  \end{definition}
\begin{theorem}[See also \cite{Bousseau}]\label{thm:helixrelated} All geometric helices on a del Pezzo surface can be transformed into each other by quiver mutations, up to equivalence.
\end{theorem}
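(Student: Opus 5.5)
The strategy is a descent on total rank, following the outline in \S\ref{sec:method}. Given a geometric helix, choose a thread $\EE$; by Remark~\ref{rem:helixshift} we may shift so that it consists of sheaves, and by Proposition~\ref{pr:onlybundles} these are vector bundles. The total rank $\sum_i r(E_i)$ is a positive integer, and by the area formula $2\area(P)=\sum_i r(E_i)^2$ it is controlled by the area of the HP-polygon. Hence any sequence of rank-reducing quiver mutations terminates, and every very strong collection is connected by quiver mutations to a \emph{minimal} one. By Corollary~\ref{cor:block_complete} we may further assume that this minimal collection is block-complete; that step does not increase the total rank and strictly reduces the number of blocks, so the combined descent still terminates. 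The theorem therefore reduces to two tasks: (a) classify the minimal block-complete very strong collections up to equivalence, and (b) connect the finitely many resulting classes on each del Pezzo surface by mutations.

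For (a) I would use Corollary~\ref{cor:forbidden}: minimality means that the origin $o$ lies in the forbidden region $\bigcap_i {}^{1/2}H_i^+$. By Lemma~\ref{lem:three_adm}, only three vertices of $P$ are admissible. By Lemma~\ref{lem:nonadm_bigger}, deleting the non-admissible vertices only enlarges the forbidden region, so $o$ lies in the forbidden region of the triangle spanned by the three admissible vertices. That region is the medial triangle (Figure~\ref{fig:forbidden}).

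This gives strong lattice constraints. The vertices $l_{i,i+1}$ are primitive in $L_\ZZ$ (Theorem~\ref{thm:achievement}\eqref{it:thm2_2}), and the edge vectors $m_i$ are $r_i$ times primitive vectors. The normalised areas $\omega(l_{i-1,i},l_{i,i+1})=r_i^2$ are squares, and the number of long edges equals the number of blocks. I would then show that $o$ being in the medial triangle forces the area, and hence the total rank, to be bounded in terms of $n=\rk K_0(X)=12-K_X^2$. Concretely, each of the three triangles formed by $o$ with a pair of admissible vertices should have bounded lattice area once $o$ is central. Combined with Corollary~\ref{cor:quiver_pos}, this leaves finitely many possible polygon shapes and Gram matrices, obtained through \eqref{eq:additivity} and \eqref{eq:chi_T}, which one enumerates case by case for each $K_X^2\in\{1,\dots,9\}$. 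I expect this to be the main obstacle: turning the forbidden-region condition into an explicit finite list requires a careful case analysis over the positions of the three admissible vertices along the narrowing edge, and I would supplement it with a computer enumeration of the candidate lattice polygons.

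For (b), Proposition~\ref{prop:numericalcol} and Proposition~\ref{prop:symmetry} imply that a Gram matrix determines the collection up to tensoring with a line bundle and the action of $S(X)=W(R)$. So I need two things. First, different minimal Gram matrices must be linked by explicit mutation sequences, which can be found by a computer search in the finite list. Second, the Weyl group action must be realised by quiver mutations: for each simple root reflection $s_\alpha$ from Theorem~\ref{thm:manin} and the lemma following it, I would exhibit a mutation sequence taking a fixed minimal collection $\EE$ to $s_\alpha(\EE)$ up to equivalence. I would again find these sequences by computer search on the Gram and numerical data. Since the simple reflections generate $S(X)$, this connects all collections in a given minimal class, and the theorem follows.
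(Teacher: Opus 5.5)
Your overall architecture is the paper's: a descent on total rank interleaved with Corollary~\ref{cor:block_complete}, then a classification of minimal block-complete collections, then explicit mutation sequences linking the minimal Gram matrices, plus sequences realising generators of $S(X)$ on one reference collection. Your part~(b) is essentially the paper's ``relations'' and ``certificates''.

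The genuine gap is part~(a), which is where the real work lies. You claim that once $o$ lies in the medial triangle of the three admissible vertices, the lattice area of $P$ (hence the total rank) is bounded. Nothing you cite gives this. The forbidden-region condition is affine, so it only controls \emph{ratios} of areas such as $\Delta(oab)/\Delta(abc)$. Equivalently it bounds quantities like $S_{i,i+1}/(\alpha_i r_i^2)$, $r_0/r_3$ or $\chi_{i,i+1}^2\alpha_i\alpha_{i+1}$, never the size of the $r_i$ themselves. Your specific mechanism (``each triangle formed by $o$ and two admissible vertices has bounded lattice area once $o$ is central'') already fails for primitive vertices. Take $u=(N,1)$, $v=(-N-1,1)$, $w=(1,-2)$: they are primitive, have the origin as centroid, and $\omega(u,v)=\omega(v,w)=\omega(w,u)=2N+1$. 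Similarly, in the three-block case the Markov-type equations have infinitely many solutions. What singles out the minimal ones is the descent argument of \cite{KarpovNogin}, not centrality.

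What you are missing is the global Diophantine relation tying the polygon to $K_X^2$, namely \eqref{eq:work_horse}: $\sum_i \chi_{i,i+1}/(r_ir_{i+1})=K_X^2$, equivalently $\sum_i S_{i,i+1}/(R_iR_{i+1})=K_X^2$. Combined with integrality of the $\chi_{i,i+1}$ and the ratio bounds from the forbidden region, this is what actually bounds the ranks.

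Even with that relation, the paper needs substantial further work that your proposal does not anticipate:
\begin{itemize}
\item Lemma~\ref{lem:prelim} shows that two consecutive non-admissible vertices force a rigid configuration: all $r_i=\alpha_i=1$ and $o$ at a prescribed point.
\item This excludes $\ge 6$ blocks outright. It also excludes one 5-block shape, whose only numerical solution has the non-primitive vertex $(2,2)$.
\item For four blocks and the remaining five-block shape, the quadrangle inequalities of \S\ref{sec:geometric} are fed into \eqref{eq:work_horse} to get explicit bounds. These are the Markov-type equation \eqref{eq:markov_var} when $o$ lies on the diagonal, and $\chi_{12}=1$, $r_0r_3\le 22\chi_{30}$ otherwise.
\item Only after these bounds can a computer search finish the enumeration.
\end{itemize}
Without an argument of this kind, step~(a) restates what has to be proved rather than proving it.
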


\begin{proof}
  According to Remark \ref{rem:helixshift} we may assume that a
  geometric helix consist of sheaves and hence that it is obtained from
  a very strong exceptional collection. By alternatingly applying Corollary \ref{cor:block_complete}
  and reduction of total rank by block quiver mutations,
every very strong exceptional collection can be reduced 
to  a block complete very strong exceptional collection whose total rank cannot be further reduced
by block quiver mutations. Section \S\ref{sec:minimal_collections} gives a complete list of the Gram
  matrices of such collections, up to 
  rotation. This list if obtained in \S\ref{sec:minimal}.

  Hence it remains to show that all the
  very strong exceptional  collections with these Gram matrices  can be transformed into each other by quiver mutations, up to equivalence.
  This explained in detail in \S\ref{sec:minimal_collections:intro}.
\end{proof} 

\begin{theorem}\label{thm:nccrsrelated} All basic graded NCCRs of $R_X =  \bigoplus_{k \leq 0} \Gamma(X, \omega_X^{\otimes k})$ (alternatively, NCCRs of the completion $\widehat{R_X}$) for $X$ a del Pezzo surface are related by sequences of mutations (up to regrading in the graded case).  \end{theorem}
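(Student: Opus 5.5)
The plan is to deduce Theorem \ref{thm:nccrsrelated} from the classification Theorem \ref{thm:nccrfromFEC} and the helix statement Theorem \ref{thm:helixrelated}. I will do this by checking that each operation appearing in the equivalence relation of Definition \ref{def:equivalence} leaves the associated NCCR unchanged up to regrading. I will also check that quiver mutations of very strong exceptional collections realise NCCR mutations.

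\emph{Step 1.} Let $\Lambda$ and $\Lambda'$ be basic graded NCCRs of $R_X$. By Theorem \ref{thm:nccrfromFEC} there are very strong exceptional collections $\EE$ and $\EE'$ such that $\Lambda$, $\Lambda'$ are regradings of $B(\EE)$, $B(\EE')$. In the complete case I first use Proposition \ref{prop:completion} to write an NCCR of $\widehat{R_X}$ as the completion of a graded NCCR. Completion commutes with mutation, since approximations can be chosen graded and then completed. So it suffices to treat the graded case.

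\emph{Step 2.} By Theorem \ref{thm:helixrelated}, $\EE$ and $\EE'$ are connected by a chain of quiver mutations and equivalences. For a quiver mutation, Proposition \ref{prop:clusterVSregularmut} (together with rotation for mutations at other vertices) gives $B(\EE_{\mathrm{new}})\cong\mu_i B(\EE)$, up to regrading. This is an NCCR mutation in the sense of Definition \ref{def:NCCRmut}, by Theorem \ref{thm:KYIW=DWZ} and Theorem \ref{thm:IWmutations}(3). The one point to watch is that Keller--Yang is phrased for the endomorphism ring of the mutated projective $\Lambda$-module. Under the Morita/derived identification $\Lambda=\End_R(M)$, $P_j=\Hom_R(M,M_j)$, this module corresponds to $\mu_i(M)$, because $\Hom_R(M,-)$ is an equivalence between $\add M$ and projective $\Lambda$-modules and it preserves approximations.

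\emph{Step 3.} I then check that the equivalences do not change $B(\EE)$ beyond regrading.
\begin{itemize}
\item Tensoring all $E_i$ by a line bundle $\Lscr$ gives $\End_Y(\pi^*(\Escr\otimes\Lscr))\cong\End_Y(\pi^*\Escr)$.
\item A simultaneous shift does not change endomorphism rings.
\item Reordering objects inside an orthogonal block does not change the direct sum $\Escr$.
\item A rotation replaces $E_{n-1}$ by $E_{n-1}\otimes\omega_X$. On $Y$ we have $\pi^*\omega_X\cong\Oscr_Y$ as a sheaf but with the $G_m$-weight shifted by one. So $B(\EE)$ changes only by regrading, as already noted in \S\ref{sec:geom_helix}.
\end{itemize}
Hence the chain from Step 2 yields a chain of NCCR mutations from $\Lambda$ to $\Lambda'$ up to regrading.

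The main obstacle is Step 2's bookkeeping. Specifically, I must make sure that Proposition \ref{prop:clusterVSregularmut} identifies $\mu_iB(\EE)$ with $B(\EE')$ as the mutation of $\End_R(M)$ at the summand corresponding to $E_i$, rather than merely giving an abstract isomorphism. I would handle this by noting that mutations of $R$-modules are determined by the approximation sequence. The triangle defining the right mutations, pulled back to $Y$ and pushed to $R$, supplies exactly the required $\add(\oplus_{j\ne i}M_j)$-approximation sequence.
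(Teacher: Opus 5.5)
Your route is the paper's: classify via Theorem~\ref{thm:nccrfromFEC} (with Proposition~\ref{prop:completion} for $\widehat{R_X}$), connect the collections by Theorem~\ref{thm:helixrelated}, identify quiver mutations with NCCR mutations via Proposition~\ref{prop:clusterVSregularmut}, and note that equivalent helices give the same NCCR up to regrading. The paper's proof is just this reduction, and your outline of it works. However, one step in your Step~3 rests on a false claim.

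\textbf{The rotation step.} The sheaf $\pi^*\omega_X$ is not isomorphic to $\Oscr_Y$. If $i\colon X\to Y$ is the zero section, then $\pi\circ i=\Id_X$, so $i^*\pi^*\omega_X\cong\omega_X\not\cong\Oscr_X$. In fact $\pi^*\omega_X\cong\Oscr_Y(i(X))$, and the tautological section trivialises it only on $Y\setminus i(X)\cong Z\setminus\{0\}$. Moreover, rotation twists only $E_{n-1}$, not all of $\Escr$, so no autoequivalence argument applies on $Y$. You therefore need another reason why $\End_Y(\pi^*\Escr)$ only gets regraded. Any of the following works.
\begin{itemize}
\item Compute directly. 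We have $\Hom_Y(\pi^*A,\pi^*B)=\bigoplus_{k\ge 0}\Hom_X(A,B\otimes\omega_X^{-k})$. Replacing $E_{n-1}$ by $E_{n-1}\otimes\omega_X$ adds the terms $\Hom_X(E_i,E_{n-1}\otimes\omega_X)\cong\Ext^2_X(E_{n-1},E_i)^*$ and removes the terms $\Hom_X(E_{n-1},E_i)$, for $i<n-1$. All of these vanish because $\EE$ is exceptional. Multiplication by the tautological section then gives an algebra isomorphism that shifts degrees.
\item Pass to $R$. As in the proof of Theorem~\ref{thm:nccrfromFEC}, $\End_Y(\pi^*\Escr)\cong\End_R(\Gamma^*(X,\Escr))$. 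Also $\Gamma^*(X,E\otimes\omega_X)$ is $\Gamma^*(X,E)$ with its grading shifted by one.
\item Use the thread-independence of the rolled-up helix algebra of Bridgeland--Stern, which is what the paper relies on.
\end{itemize}

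\textbf{The ``main obstacle''.} This is not actually an obstacle. Proposition~\ref{prop:clusterVSregularmut} already gives $\mu_0B(\EE)\cong B(\EE')$, where $\mu_0$ is the NCCR mutation. Mutation of a basic NCCR is determined by the algebra together with its vertex idempotents, as your own use of Theorem~\ref{thm:KYIW=DWZ} shows. So an abstract isomorphism is enough to chain the steps. Your sketched triangle argument would in any case need real work: a quiver mutation is a composite of several braid moves, and morphisms over $R$ involve all twists $E_k\otimes\omega_X^{-m}$, not just $\Hom_X(E_k,E_l)$. You can simply drop it.
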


\begin{proof} By Theorem \ref{thm:nccrfromFEC} every basic graded NCCR of $R_X$ is obtained from a very strong exceptional collection on $X$ (equivalently, a geometric helix of sheaves). Moreover, mutations of NCCRs can be viewed as quiver mutations of underlying helices (see \S \ref{sec:mutnccr}). The result now follows from Theorem \ref{thm:helixrelated}
  since equivalent geometric helices have isomorphic NCCRs.
\end{proof}
\section{Minimal block-complete very strong exceptional collections}
\label{sec:minimal_collections}
\subsection{Introduction}
\label{sec:minimal_collections:intro}
Below we provide the data backing the proof of Theorem \ref{thm:helixrelated}.
The data is grouped by surface where $X_i$ is the surface obtained by blowing up $\PP^2$ in $i$ points.

First of all, for each surface we provide the list of Gram matrices of the block-complete very strong exceptional collections of minimal total rank, up to rotation.
Each Gram matrix has a unique label of the form $(b,n)$ where
$b$ is the number of blocks (which must be 3 or 4) and $n$ is an integer.
The block sizes and the ranks are
respectively denoted by $(\alpha_i)_i$ and $(r_i)_i$. The matrix $M_/$ is the reduced Gram matrix and $Q_/$ is the associated reduced quiver, which depends only on $M_/$
and $(\alpha_i)_i$ (see \S\ref{sec:blockquiver}). For each listed Gram matrix we provide an example of a very strong exceptional collection, appropriately broken into blocks, that realises it. Each object is written in the form\footnote{It follows from the Riemann-Roch formula \eqref{eq:chi_rr} that an exceptional class is uniquely determined by
  $[r(E),c_1(E)]$, provided $r(E)\neq 0$.} $[r(E),c_1(E)]$, where 
$c_1(E) \in \Pic(X)$ is given by coordinates in the standard basis
(see \S\ref{sec:symmetrygroup}). The fact that the collection is numerically exceptional and has the given Gram matrix can be verified using the formula \eqref{eq:riemann_roch}.
The fact that it is uniquely realised by an actual exceptional collection follows from Proposition \ref{prop:numericalcol}.
The fact that our list is complete in the 3-block case follows directly from \cite{KarpovNogin}. The fact that it is complete in the 4-block case was verified by computer
as explained in \S\ref{sec:4block} below. The fact that no higher block counts occur is explained in \S\ref{sec:5blocks} and \S\ref{sec:6ormoreblocks}.

\medskip

For each surface, in a section entitled ``Relations'', we provide sequences of quiver mutations, applied from left to right,
indexed in such a way that $i$  stands for the left quiver mutation
that starts with $\tilde{\sigma}_i$  (see Remark \ref{rem:left:quiver}),
that connect all the different Gram matrices, up to rotation. A label $(3,\ast)$ refers to an unspecified $3$-block Gram matrix for that surface. Note
that this may not be the Gram matrix of a minimal collection, in which case it is not in the list. However thanks to \cite{KarpovNogin} it can be further reduced to a minimal one.

For each surface we also provide a proof (see below), for one of the listed exceptional
collections, that the action of every element of the symmetry group (see
\S\ref{sec:symmetrygroup}) yields an exceptional collection that is reachable by quiver
mutations, up to equivalence (see Definition \ref{def:equivalence}).

\medskip

So, since for every surface all Gram matrices are connected by quiver mutations, and since for one of the Gram matrices all collections with that Gram matrix are also connected by quiver mutations, up to equivalence,
this concludes the proof that for each surface all collections are connected by quiver mutations, up to equivalence.

\medskip

The proof that every exceptional collection in the symmetry group orbit of a particular very strong exceptional collection $\EE$ is reachable by quiver mutations, up to equivalence,
involves what we call a \emph{certificate}.
A certificate is a set of pairs $(w, m)$ where $w$ is a sequence
of simple reflections indexed as in SageMath (see below) and
$m$ is a sequence of left quiver mutations indexed in the same way as above.
The pairs $(w, m)$ satisfy $w(\EE) \sim m(\EE)$ where all sequences (both simple reflections and quiver mutations) are applied from left to right. 
Moreover the list of Weyl group element represented by the $w$'s
generates the whole group.  

Our certificates are generated by computer search and it is possible, and even likely, that much shorter ones exist. In particular if
the symmetry group for a surface is trivial
 or if the action of every reflection yields an equivalent collection (see Definition \ref{def:equivalence}) then there is a particularly trivial certificate
and therefore we flag these cases separately.

\medskip

Finally, as explained above, in the certificate we index the simple reflections as in \cite{sagemath}.
We will clarify this now. If $(s_i)_i$ are the simple reflections associated with the simple roots as listed in Theorem \ref{thm:manin} and $(s'_i)_i$ are the simple reflections
used by SageMath then $s'_i=s_i$ except for the following modifications:
\[
\begin{cases}
   s'_1,s'_3=s_3,s_1 &\text{for $X_4$,}\\
s'_0, s'_1, s'_2, s'_3 = s_1, s_2, s_3, s_0&\text{for $X_5$,}\\
 s'_0, s'_1= s_1,s_0 &\text{for $X_n$ for $n\ge 6$}.
  \end{cases}
\]
\newcommand{\Qthree}[3]{%
\vcenter{\hbox{%
\begin{tikzpicture}[
  baseline=(current bounding box.center),
  x=0.88em,y=0.88em,
  line width=.6pt,
  >=Latex,
  qvtx/.style={font=\fontsize{4.6}{4.6}\selectfont\bfseries\sffamily,inner sep=0pt},
  qelab/.style={font=\scriptsize,inner sep=.35pt}
]
\coordinate (v0) at (0,1.25);
\coordinate (v1) at (-1.35,0);
\coordinate (v2) at (1.35,0);
\fill (v0) circle (1.05pt);
\fill (v1) circle (1.05pt);
\fill (v2) circle (1.05pt);
\node[qvtx,above=1.2pt] at (v0) {0};
\node[qvtx,left=1.2pt] at (v1) {1};
\node[qvtx,right=1.2pt] at (v2) {2};
\ifnum#1>0\relax
  \draw[-{Latex[length=2.1mm,width=1.45mm]},shorten >=0.5pt,shorten <=0.5pt]
    (v0) -- (v1) node[pos=.54,above,xshift=-1.4pt,qelab] {$#1$};
\else\ifnum#1<0\relax
  \draw[-{Latex[length=2.1mm,width=1.45mm]},shorten >=0.5pt,shorten <=0.5pt]
    (v1) -- (v0) node[pos=.54,above,xshift=-2.2pt,qelab] {$\number\numexpr-#1\relax$};
\fi\fi
\ifnum#2>0\relax
  \draw[-{Latex[length=2.1mm,width=1.45mm]},shorten >=0.5pt,shorten <=0.5pt]
    (v0) -- (v2) node[pos=.54,above,xshift=2.1pt,qelab] {$#2$};
\else\ifnum#2<0\relax
  \draw[-{Latex[length=2.1mm,width=1.45mm]},shorten >=0.5pt,shorten <=0.5pt]
    (v2) -- (v0) node[pos=.54,above,xshift=2.1pt,qelab] {$\number\numexpr-#2\relax$};
\fi\fi
\ifnum#3>0\relax
  \draw[-{Latex[length=2.1mm,width=1.45mm]},shorten >=0.5pt,shorten <=0.5pt]
    (v1) -- (v2) node[pos=.48,below,qelab] {$#3$};
\else\ifnum#3<0\relax
  \draw[-{Latex[length=2.1mm,width=1.45mm]},shorten >=0.5pt,shorten <=0.5pt]
    (v2) -- (v1) node[pos=.48,below,qelab] {$\number\numexpr-#3\relax$};
\fi\fi
\end{tikzpicture}%
}}%
}

\newcommand{\Qfour}[6]{%
\vcenter{\hbox{%
\begin{tikzpicture}[
  baseline=(current bounding box.center),
  x=0.9em,y=0.9em,
  line width=.6pt,
  >=Latex,
  qvtx/.style={font=\fontsize{4.6}{4.6}\selectfont\bfseries\sffamily,inner sep=0pt},
  qelab/.style={font=\scriptsize,inner sep=.35pt}
]
\coordinate (v0) at (-1.35,1.15);
\coordinate (v1) at (-1.35,-1.15);
\coordinate (v2) at (1.35,-1.15);
\coordinate (v3) at (1.35,1.15);
\fill (v0) circle (1.05pt);
\fill (v1) circle (1.05pt);
\fill (v2) circle (1.05pt);
\fill (v3) circle (1.05pt);
\node[qvtx,above left=1.0pt and 1.0pt] at (v0) {0};
\node[qvtx,below left=1.0pt and 1.0pt] at (v1) {1};
\node[qvtx,below right=1.0pt and 1.0pt] at (v2) {2};
\node[qvtx,above right=1.0pt and 1.0pt] at (v3) {3};
\ifnum#1>0\relax
  \draw[-{Latex[length=2.1mm,width=1.45mm]},shorten >=0.5pt,shorten <=0.5pt]
    (v0) -- (v1) node[pos=.52,left,qelab] {$#1$};
\else\ifnum#1<0\relax
  \draw[-{Latex[length=2.1mm,width=1.45mm]},shorten >=0.5pt,shorten <=0.5pt]
    (v1) -- (v0) node[pos=.52,left,qelab] {$\number\numexpr-#1\relax$};
\fi\fi
\ifnum#2>0\relax
  \draw[-{Latex[length=2.1mm,width=1.45mm]},shorten >=0.5pt,shorten <=0.5pt]
    (v0) -- (v2) node[pos=.24,below,xshift=-1.0pt,yshift=-0.1pt,qelab] {$#2$};
\else\ifnum#2<0\relax
  \draw[-{Latex[length=2.1mm,width=1.45mm]},shorten >=0.5pt,shorten <=0.5pt]
    (v2) -- (v0) node[pos=.24,below,xshift=-1.0pt,yshift=-0.1pt,qelab] {$\number\numexpr-#2\relax$};
\fi\fi
\ifnum#3>0\relax
  \draw[-{Latex[length=2.1mm,width=1.45mm]},shorten >=0.5pt,shorten <=0.5pt]
    (v0) -- (v3) node[pos=.52,above,yshift=1.0pt,qelab] {$#3$};
\else\ifnum#3<0\relax
  \draw[-{Latex[length=2.1mm,width=1.45mm]},shorten >=0.5pt,shorten <=0.5pt]
    (v3) -- (v0) node[pos=.52,above,yshift=1.0pt,qelab] {$\number\numexpr-#3\relax$};
\fi\fi
\ifnum#4>0\relax
  \draw[-{Latex[length=2.1mm,width=1.45mm]},shorten >=0.5pt,shorten <=0.5pt]
    (v1) -- (v2) node[pos=.52,below,qelab] {$#4$};
\else\ifnum#4<0\relax
  \draw[-{Latex[length=2.1mm,width=1.45mm]},shorten >=0.5pt,shorten <=0.5pt]
    (v2) -- (v1) node[pos=.52,below,qelab] {$\number\numexpr-#4\relax$};
\fi\fi
\ifnum#5>0\relax
  \draw[-{Latex[length=2.1mm,width=1.45mm]},shorten >=0.5pt,shorten <=0.5pt]
    (v1) -- (v3) node[pos=.24,below,xshift=2.3pt,yshift=1.1pt,qelab] {$#5$};
\else\ifnum#5<0\relax
  \draw[-{Latex[length=2.1mm,width=1.45mm]},shorten >=0.5pt,shorten <=0.5pt]
    (v3) -- (v1) node[pos=.24,below,xshift=2.3pt,yshift=1.1pt,qelab] {$\number\numexpr-#5\relax$};
\fi\fi
\ifnum#6>0\relax
  \draw[-{Latex[length=2.1mm,width=1.45mm]},shorten >=0.5pt,shorten <=0.5pt]
    (v2) -- (v3) node[pos=.52,right,qelab] {$#6$};
\else\ifnum#6<0\relax
  \draw[-{Latex[length=2.1mm,width=1.45mm]},shorten >=0.5pt,shorten <=0.5pt]
    (v3) -- (v2) node[pos=.52,right,qelab] {$\number\numexpr-#6\relax$};
\fi\fi
\end{tikzpicture}%
}}%
}

\subsection{Minimal collections for \!{\boldmath$\mathbb{P}^2$}}
\subsubsection{Label (3, 1)}
\[
\begin{aligned}
\alpha_0,\alpha_1,\alpha_2&=1, 1, 1\\*[-1mm]
r_0,r_1,r_2&=1, 1, 1\\*[-1mm]
\end{aligned}\quad
M_/=\left(\begin{array}{rrr}
1 & 3 & 6 \\
0 & 1 & 3 \\
0 & 0 & 1
\end{array}\right)\quad
Q_/=\Qthree{3}{-3}{3}
\]
{\tiny \raggedright Collection: \hl{[1, (0)]},\ \ \hl{[1, (1)]},\ \ \hl{[1, (2)]}

\bigskip

\noindent The full orbit is reachable: the symmetry group is trivial.

}

\subsection{Minimal collections for \!{\boldmath$\mathbb{P}^1\times \mathbb{P}^1$}}
\subsubsection{Label (3, 2)}
\[
\begin{aligned}
\alpha_0,\alpha_1,\alpha_2&=1, 2, 1\\*[-1mm]
r_0,r_1,r_2&=1, 1, 1\\*[-1mm]
\end{aligned}\quad
M_/=\left(\begin{array}{rrr}
1 & 2 & 4 \\
0 & 1 & 2 \\
0 & 0 & 1
\end{array}\right)\quad
Q_/=\Qthree{2}{-4}{2}
\]
{\tiny \raggedright Collection: \hl{[1, (0, 0)]},\ \ \hl{[1, (0, 1)], [1, (1, 0)]},\ \ \hl{[1, (1, 1)]}

\bigskip

\noindent The full orbit is reachable: all reflections give equivalent collections.

}

\subsection{Minimal collections for \!{\boldmath$X_1$}}
\subsubsection{Label (4, 1)}
\[
\begin{aligned}
\alpha_0,\alpha_1,\alpha_2,\alpha_3&=1, 1, 1, 1\\*[-1mm]
r_0,r_1,r_2,r_3&=1, 1, 1, 1\\*[-1mm]
\end{aligned}\quad
M_/=\left(\begin{array}{rrrr}
1 & 2 & 3 & 5 \\
0 & 1 & 1 & 3 \\
0 & 0 & 1 & 2 \\
0 & 0 & 0 & 1
\end{array}\right)\quad
Q_/=\Qfour{2}{1}{-3}{1}{1}{2}
\]
{\tiny \raggedright Collection: \hl{[1, (1, 0)]},\ \ \hl{[1, (2, -1)]},\ \ \hl{[1, (2, 0)]},\ \ \hl{[1, (3, -1)]}

\bigskip

\noindent The full orbit is reachable: the symmetry group is trivial.

}

\subsection{Minimal collections for \!{\boldmath$X_2$}}
\subsubsection{Label (4, 2)}
\[
\begin{aligned}
\alpha_0,\alpha_1,\alpha_2,\alpha_3&=1, 1, 1, 2\\*[-1mm]
r_0,r_1,r_2,r_3&=1, 1, 1, 1\\*[-1mm]
\end{aligned}\quad
M_/=\left(\begin{array}{rrrr}
1 & 3 & 4 & 5 \\
0 & 1 & 1 & 2 \\
0 & 0 & 1 & 1 \\
0 & 0 & 0 & 1
\end{array}\right)\quad
Q_/=\Qfour{3}{1}{-2}{1}{1}{1}
\]
{\tiny \raggedright Collection: \hl{[1, (-1, -1, 0)]},\ \ \hl{[1, (0, -1, 0)]},\ \ \hl{[1, (1, -2, -1)]},\ \ \hl{[1, (1, -1, -1)], [1, (1, -2, 0)]}

\bigskip

\noindent The full orbit is reachable: all reflections give equivalent collections.

}

\subsubsection{Label (4, 3)}
\[
\begin{aligned}
\alpha_0,\alpha_1,\alpha_2,\alpha_3&=2, 1, 1, 1\\*[-1mm]
r_0,r_1,r_2,r_3&=1, 1, 1, 1\\*[-1mm]
\end{aligned}\quad
M_/=\left(\begin{array}{rrrr}
1 & 1 & 2 & 5 \\
0 & 1 & 1 & 4 \\
0 & 0 & 1 & 3 \\
0 & 0 & 0 & 1
\end{array}\right)\quad
Q_/=\Qfour{1}{1}{-2}{1}{1}{3}
\]
{\tiny \raggedright Collection: \hl{[1, (-1, -1, 0)], [1, (-1, 0, -1)]},\ \ \hl{[1, (-1, 0, 0)]},\ \ \hl{[1, (0, -1, -1)]},\ \ \hl{[1, (1, -1, -1)]}

}

\subsubsection{Relations}
\[\begin{array}{cc}%
(4, 2)\rightarrow(4, 3) & [2]
\end{array}\]\subsection{Minimal collections for \!{\boldmath$X_3$}}
\subsubsection{Label (3, 3)}
\[
\begin{aligned}
\alpha_0,\alpha_1,\alpha_2&=1, 2, 3\\*[-1mm]
r_0,r_1,r_2&=1, 1, 1\\*[-1mm]
\end{aligned}\quad
M_/=\left(\begin{array}{rrr}
1 & 3 & 4 \\
0 & 1 & 1 \\
0 & 0 & 1
\end{array}\right)\quad
Q_/=\Qthree{3}{-2}{1}
\]
{\tiny \raggedright Collection: \hl{[1, (0, 0, 0, 0)]},\ \ \hl{[1, (1, 0, 0, 0)], [1, (2, -1, -1, -1)]},\ \ \hl{[1, (2, 0, -1, -1)], [1, (2, -1, -1, 0)], [1, (2, -1, 0, -1)]}

\bigskip

\noindent The full orbit is reachable: all reflections give equivalent collections.

}

\subsubsection{Label (3, 4)}
\[
\begin{aligned}
\alpha_0,\alpha_1,\alpha_2&=2, 1, 3\\*[-1mm]
r_0,r_1,r_2&=1, 1, 1\\*[-1mm]
\end{aligned}\quad
M_/=\left(\begin{array}{rrr}
1 & 3 & 5 \\
0 & 1 & 2 \\
0 & 0 & 1
\end{array}\right)\quad
Q_/=\Qthree{3}{-1}{2}
\]
{\tiny \raggedright Collection: \hl{[1, (-1, 0, 1, 1)], [1, (0, -1, 0, 0)]},\ \ \hl{[1, (1, -1, 0, 0)]},\ \ \hl{[1, (2, -1, -1, 0)], [1, (2, -1, 0, -1)], [1, (2, -2, 0, 0)]}

}

\subsubsection{Relations}
\[\begin{array}{cc}%
(3, 3)\rightarrow(3, 4) & [3, 4, 5]
\end{array}\]\subsection{Minimal collections for \!{\boldmath$X_4$}}
\subsubsection{Label (3, 5)}
\[
\begin{aligned}
\alpha_0,\alpha_1,\alpha_2&=1, 1, 5\\*[-1mm]
r_0,r_1,r_2&=2, 1, 1\\*[-1mm]
\end{aligned}\quad
M_/=\left(\begin{array}{rrr}
1 & 5 & 9 \\
0 & 1 & 2 \\
0 & 0 & 1
\end{array}\right)\quad
Q_/=\Qthree{5}{-1}{2}
\]
{\tiny \raggedright Collection: \hl{[2, (-1, -1, 1, 1, 1)]},\ \ \hl{[1, (1, -1, 0, 0, 0)]},\ \ \hl{[1, (2, -1, -1, 0, 0)], [1, (2, -1, 0, -1, 0)], [1, (2, -1, 0, 0, -1)], [1, (3, -2, -1, -1, -1)], [1, (2, -2, 0, 0, 0)]}

\bigskip

\noindent The full orbit is reachable: all reflections give equivalent collections.

}

\subsubsection{Label (3, 6)}
\[
\begin{aligned}
\alpha_0,\alpha_1,\alpha_2&=1, 1, 5\\*[-1mm]
r_0,r_1,r_2&=1, 2, 1\\*[-1mm]
\end{aligned}\quad
M_/=\left(\begin{array}{rrr}
1 & 5 & 3 \\
0 & 1 & 1 \\
0 & 0 & 1
\end{array}\right)\quad
Q_/=\Qthree{5}{-2}{1}
\]
{\tiny \raggedright Collection: \hl{[1, (1, 0, 0, 0, 0)]},\ \ \hl{[2, (5, -1, -1, -1, -1)]},\ \ \hl{[1, (3, -1, -1, -1, 0)], [1, (3, -1, -1, 0, -1)], [1, (3, -1, 0, -1, -1)], [1, (3, 0, -1, -1, -1)], [1, (2, 0, 0, 0, 0)]}

}

\subsubsection{Label (4, 4)}
\[
\begin{aligned}
\alpha_0,\alpha_1,\alpha_2,\alpha_3&=1, 1, 1, 4\\*[-1mm]
r_0,r_1,r_2,r_3&=2, 1, 1, 1\\*[-1mm]
\end{aligned}\quad
M_/=\left(\begin{array}{rrrr}
1 & 3 & 7 & 9 \\
0 & 1 & 2 & 3 \\
0 & 0 & 1 & 1 \\
0 & 0 & 0 & 1
\end{array}\right)\quad
Q_/=\Qfour{3}{1}{-1}{2}{1}{1}
\]
{\tiny \raggedright Collection: \hl{[2, (-2, 1, 1, 1, 2)]},\ \ \hl{[1, (0, 0, 0, 0, 1)]},\ \ \hl{[1, (1, 0, 0, 0, 0)]},\ \ \hl{[1, (2, -1, 0, -1, 0)], [1, (2, 0, -1, -1, 0)], [1, (1, 0, 0, 0, 1)], [1, (2, -1, -1, 0, 0)]}

}

\subsubsection{Label (4, 5)}
\[
\begin{aligned}
\alpha_0,\alpha_1,\alpha_2,\alpha_3&=4, 1, 1, 1\\*[-1mm]
r_0,r_1,r_2,r_3&=1, 1, 1, 2\\*[-1mm]
\end{aligned}\quad
M_/=\left(\begin{array}{rrrr}
1 & 1 & 3 & 9 \\
0 & 1 & 2 & 7 \\
0 & 0 & 1 & 3 \\
0 & 0 & 0 & 1
\end{array}\right)\quad
Q_/=\Qfour{1}{1}{-1}{2}{1}{3}
\]
{\tiny \raggedright Collection: \hl{[1, (0, 0, 0, 0, 1)], [1, (0, -1, 0, 1, 1)], [1, (0, 0, 0, 1, 0)], [1, (1, -1, -1, 0, 0)]},\ \ \hl{[1, (1, -1, 0, 0, 0)]},\ \ \hl{[1, (2, -1, -1, 0, 0)]},\ \ \hl{[2, (6, -3, -2, -1, -1)]}

}

\subsubsection{Label (4, 6)}
\[
\begin{aligned}
\alpha_0,\alpha_1,\alpha_2,\alpha_3&=2, 1, 1, 3\\*[-1mm]
r_0,r_1,r_2,r_3&=1, 1, 1, 1\\*[-1mm]
\end{aligned}\quad
M_/=\left(\begin{array}{rrrr}
1 & 2 & 3 & 4 \\
0 & 1 & 1 & 2 \\
0 & 0 & 1 & 1 \\
0 & 0 & 0 & 1
\end{array}\right)\quad
Q_/=\Qfour{2}{1}{-1}{1}{1}{1}
\]
{\tiny \raggedright Collection: \hl{[1, (-1, 0, 1, 1, 1)], [1, (0, 0, 0, 0, 0)]},\ \ \hl{[1, (1, -1, 0, 0, 0)]},\ \ \hl{[1, (1, 0, 0, 0, 0)]},\ \ \hl{[1, (2, -1, 0, -1, 0)], [1, (2, -1, 0, 0, -1)], [1, (2, -1, -1, 0, 0)]}

}

\subsubsection{Label (4, 7)}
\[
\begin{aligned}
\alpha_0,\alpha_1,\alpha_2,\alpha_3&=3, 1, 1, 2\\*[-1mm]
r_0,r_1,r_2,r_3&=1, 1, 1, 1\\*[-1mm]
\end{aligned}\quad
M_/=\left(\begin{array}{rrrr}
1 & 1 & 2 & 4 \\
0 & 1 & 1 & 3 \\
0 & 0 & 1 & 2 \\
0 & 0 & 0 & 1
\end{array}\right)\quad
Q_/=\Qfour{1}{1}{-1}{1}{1}{2}
\]
{\tiny \raggedright Collection: \hl{[1, (-1, 1, 1, 1, 1)], [1, (0, 0, 0, 0, 1)], [1, (0, 0, 0, 1, 0)]},\ \ \hl{[1, (0, 0, 0, 1, 1)]},\ \ \hl{[1, (1, 0, 0, 0, 0)]},\ \ \hl{[1, (2, 0, -1, 0, 0)], [1, (2, -1, 0, 0, 0)]}

}

\subsubsection{Relations}
\[\begin{array}{cc}%
(3, 5)\rightarrow(3, 6) & [2, 3, 4, 5, 6] \\
(4, 4)\rightarrow(3, *) & [2, 1] \\
(4, 5)\rightarrow(3, *) & [4, 6] \\
(4, 6)\rightarrow(3, *) & [3, 1, 2] \\
(4, 7)\rightarrow(3, *) & [3, 5, 6]
\end{array}\]\subsection{Minimal collections for \!{\boldmath$X_5$}}
\subsubsection{Label (3, 7)}
\[
\begin{aligned}
\alpha_0,\alpha_1,\alpha_2&=2, 2, 4\\*[-1mm]
r_0,r_1,r_2&=1, 1, 1\\*[-1mm]
\end{aligned}\quad
M_/=\left(\begin{array}{rrr}
1 & 2 & 3 \\
0 & 1 & 1 \\
0 & 0 & 1
\end{array}\right)\quad
Q_/=\Qthree{2}{-1}{1}
\]
{\tiny \raggedright Collection: \hl{[1, (0, 0, 0, 0, 0, 1)], [1, (0, 0, 0, 0, 1, 0)]},\ \ \hl{[1, (1, 0, 0, 0, 0, 0)], [1, (2, -1, -1, -1, 0, 0)]},\ \ \hl{[1, (2, -1, 0, -1, 0, 0)], [1, (2, 0, -1, -1, 0, 0)], [1, (2, -1, -1, 0, 0, 0)], [1, (3, -1, -1, -1, -1, -1)]}

\bigskip

\noindent The full orbit is reachable: certificate = [([4], []), ([2, 3, 1, 2, 4, 2, 3, 1, 2], []), ([1, 2, 3, 0, 1, 2, 4, 2, 3, 1, 2, 0, 1], []), ([2, 3, 0, 1, 2, 4, 2, 3, 1, 2, 1, 0], []), ([3], []), ([4, 1, 2, 3, 0, 1, 2, 4, 0, 1, 2, 3, 0], [4, 5, 8, 1])]

}

\subsubsection{Label (4, 8)}
\[
\begin{aligned}
\alpha_0,\alpha_1,\alpha_2,\alpha_3&=1, 1, 1, 5\\*[-1mm]
r_0,r_1,r_2,r_3&=2, 1, 1, 1\\*[-1mm]
\end{aligned}\quad
M_/=\left(\begin{array}{rrrr}
1 & 3 & 5 & 7 \\
0 & 1 & 1 & 2 \\
0 & 0 & 1 & 1 \\
0 & 0 & 0 & 1
\end{array}\right)\quad
Q_/=\Qfour{3}{2}{-1}{1}{1}{1}
\]
{\tiny \raggedright Collection: \hl{[2, (-1, 0, 1, 1, 1, 1)]},\ \ \hl{[1, (1, -1, 0, 0, 0, 0)]},\ \ \hl{[1, (1, 0, 0, 0, 0, 0)]},\ \ \hl{[1, (2, -1, 0, 0, -1, 0)], [1, (2, -1, 0, 0, 0, -1)], [1, (3, -1, -1, -1, -1, -1)], [1, (2, -1, -1, 0, 0, 0)], [1, (2, -1, 0, -1, 0, 0)]}

}

\subsubsection{Label (4, 9)}
\[
\begin{aligned}
\alpha_0,\alpha_1,\alpha_2,\alpha_3&=5, 1, 1, 1\\*[-1mm]
r_0,r_1,r_2,r_3&=1, 1, 1, 2\\*[-1mm]
\end{aligned}\quad
M_/=\left(\begin{array}{rrrr}
1 & 1 & 2 & 7 \\
0 & 1 & 1 & 5 \\
0 & 0 & 1 & 3 \\
0 & 0 & 0 & 1
\end{array}\right)\quad
Q_/=\Qfour{1}{1}{-1}{1}{2}{3}
\]
{\tiny \raggedright Collection: \hl{[1, (-1, 0, 1, 1, 1, 1)], [1, (0, 0, 0, 0, 0, 1)], [1, (0, 0, 0, 0, 1, 0)], [1, (0, 0, 0, 1, 0, 0)], [1, (0, 0, 1, 0, 0, 0)]},\ \ \hl{[1, (1, -1, 0, 0, 0, 0)]},\ \ \hl{[1, (1, 0, 0, 0, 0, 0)]},\ \ \hl{[2, (5, -2, -1, -1, -1, -1)]}

}

\subsubsection{Relations}
\[\begin{array}{cc}%
(4, 8)\rightarrow(3, *) & [3, 4, 3, 1, 1] \\
(4, 9)\rightarrow(3, *) & [1, 1, 1, 2, 1, 2]
\end{array}\]\subsection{Minimal collections for \!{\boldmath$X_6$}}
\subsubsection{Label (3, 8)}
\[
\begin{aligned}
\alpha_0,\alpha_1,\alpha_2&=3, 3, 3\\*[-1mm]
r_0,r_1,r_2&=1, 1, 1\\*[-1mm]
\end{aligned}\quad
M_/=\left(\begin{array}{rrr}
1 & 1 & 2 \\
0 & 1 & 1 \\
0 & 0 & 1
\end{array}\right)\quad
Q_/=\Qthree{1}{-1}{1}
\]
{\tiny \raggedright Collection: \hl{[1, (1, 0, 0, 0, 0, 0, 1)], [1, (1, 0, 0, 0, 0, 1, 0)], [1, (1, 0, 0, 0, 1, 0, 0)]},\ \ \hl{[1, (2, -1, 0, 0, 0, 0, 0)], [1, (2, 0, -1, 0, 0, 0, 0)], [1, (2, 0, 0, -1, 0, 0, 0)]},\ \ \hl{[1, (4, -1, -1, -1, -1, -1, -1)], [1, (2, 0, 0, 0, 0, 0, 0)], [1, (3, -1, -1, -1, 0, 0, 0)]}

\bigskip

\noindent The full orbit is reachable: certificate = [([5, 4, 3, 0, 2, 1, 3, 4, 5, 0, 2, 3, 4, 1, 3], []), ([0], []), ([5], []), ([5, 3, 4, 2, 3, 0, 2, 1, 3, 4, 5, 4, 3, 2, 1, 3, 4, 0, 2, 3], []), ([2], []), ([1], []), ([4, 5, 4], []), ([3, 4, 5, 1, 3, 4, 2, 3, 0, 2, 1, 3, 4, 5, 2, 3, 0, 2, 1, 3, 4, 3, 2, 1, 3, 2], [9, 3, 7, 7])]

}

\subsubsection{Label (3, 9)}
\[
\begin{aligned}
\alpha_0,\alpha_1,\alpha_2&=2, 1, 6\\*[-1mm]
r_0,r_1,r_2&=1, 2, 1\\*[-1mm]
\end{aligned}\quad
M_/=\left(\begin{array}{rrr}
1 & 3 & 2 \\
0 & 1 & 1 \\
0 & 0 & 1
\end{array}\right)\quad
Q_/=\Qthree{3}{-1}{1}
\]
{\tiny \raggedright Collection: \hl{[1, (1, 0, 0, 0, 0, 0, 0)], [1, (-1, 1, 1, 1, 1, 1, 1)]},\ \ \hl{[2, (3, 0, 0, 0, 0, 0, 0)]},\ \ \hl{[1, (2, 0, -1, 0, 0, 0, 0)], [1, (2, 0, 0, -1, 0, 0, 0)], [1, (2, 0, 0, 0, -1, 0, 0)], [1, (2, 0, 0, 0, 0, -1, 0)], [1, (2, 0, 0, 0, 0, 0, -1)], [1, (2, -1, 0, 0, 0, 0, 0)]}

}

\subsubsection{Label (3, 10)}
\[
\begin{aligned}
\alpha_0,\alpha_1,\alpha_2&=1, 2, 6\\*[-1mm]
r_0,r_1,r_2&=2, 1, 1\\*[-1mm]
\end{aligned}\quad
M_/=\left(\begin{array}{rrr}
1 & 3 & 5 \\
0 & 1 & 1 \\
0 & 0 & 1
\end{array}\right)\quad
Q_/=\Qthree{3}{-1}{1}
\]
{\tiny \raggedright Collection: \hl{[2, (-1, 1, 1, 1, 1, 2, 2)]},\ \ \hl{[1, (1, 0, 0, 0, 0, 0, 1)], [1, (1, 0, 0, 0, 0, 1, 0)]},\ \ \hl{[1, (2, 0, -1, 0, 0, 0, 0)], [1, (2, 0, 0, -1, 0, 0, 0)], [1, (2, 0, 0, 0, -1, 0, 0)], [1, (1, 0, 0, 0, 0, 1, 1)], [1, (3, -1, -1, -1, -1, 0, 0)], [1, (2, -1, 0, 0, 0, 0, 0)]}

}

\subsubsection{Relations}
\[\begin{array}{cc}%
(3, 8)\rightarrow(3, 9) & [8, 1, 1, 2] \\
(3, 8)\rightarrow(3, 10) & [2, 7, 8, 9]
\end{array}\]\subsection{Minimal collections for \!{\boldmath$X_7$}}
\subsubsection{Label (3, 11)}
\[
\begin{aligned}
\alpha_0,\alpha_1,\alpha_2&=1, 1, 8\\*[-1mm]
r_0,r_1,r_2&=2, 2, 1\\*[-1mm]
\end{aligned}\quad
M_/=\left(\begin{array}{rrr}
1 & 4 & 3 \\
0 & 1 & 1 \\
0 & 0 & 1
\end{array}\right)\quad
Q_/=\Qthree{4}{-1}{1}
\]
{\tiny \raggedright Collection: \hl{[2, (0, 1, 1, 1, 1, 1, 1, 1)]},\ \ \hl{[2, (3, 0, 0, 0, 0, 0, 0, 0)]},\ \ \hl{[1, (2, 0, 0, 0, -1, 0, 0, 0)], [1, (2, 0, 0, 0, 0, -1, 0, 0)], [1, (2, 0, 0, 0, 0, 0, -1, 0)], [1, (2, 0, 0, 0, 0, 0, 0, -1)], [1, (4, -1, -1, -1, -1, -1, -1, -1)], [1, (2, -1, 0, 0, 0, 0, 0, 0)], [1, (2, 0, -1, 0, 0, 0, 0, 0)], [1, (2, 0, 0, -1, 0, 0, 0, 0)]}

}

\subsubsection{Label (3, 12)}
\[
\begin{aligned}
\alpha_0,\alpha_1,\alpha_2&=4, 2, 4\\*[-1mm]
r_0,r_1,r_2&=1, 2, 1\\*[-1mm]
\end{aligned}\quad
M_/=\left(\begin{array}{rrr}
1 & 1 & 1 \\
0 & 1 & 1 \\
0 & 0 & 1
\end{array}\right)\quad
Q_/=\Qthree{1}{-1}{1}
\]
{\tiny \raggedright Collection: \hl{[1, (1, 0, 0, 0, 0, 0, 0, 1)], [1, (1, 0, 0, 0, 0, 0, 1, 0)], [1, (1, 0, 0, 0, 0, 1, 0, 0)], [1, (1, 0, 0, 0, 1, 0, 0, 0)]},\ \ \hl{[2, (3, 0, 0, 0, 0, 0, 0, 0)], [2, (4, -1, -1, -1, 0, 0, 0, 0)]},\ \ \hl{[1, (2, 0, -1, 0, 0, 0, 0, 0)], [1, (2, 0, 0, -1, 0, 0, 0, 0)], [1, (4, -1, -1, -1, -1, -1, -1, -1)], [1, (2, -1, 0, 0, 0, 0, 0, 0)]}

\bigskip

\noindent The full orbit is reachable: certificate = [([6], []), ([6, 2, 3, 4, 5, 1, 3, 4, 2, 3, 0, 2, 1, 3, 4, 5, 6, 5, 4, 3, 2, 1, 3, 4, 5, 0, 2, 1, 3, 4, 1, 3, 0, 2, 0], []), ([5, 6, 5], []), ([0, 2], []), ([1], []), ([3, 4, 5, 1, 3, 4, 2, 3, 0, 2, 1, 3, 4, 5, 6, 5, 4, 3, 2, 1, 3, 4, 5, 0, 2, 1, 3, 4, 1, 3], []), ([4, 1], []), ([5, 4, 3, 2, 1, 3, 4, 5, 6, 0, 2, 3, 4, 5, 1, 3, 4, 2, 3, 0, 2, 1, 3, 4, 5, 4, 2], [4, 5]), ([3, 4, 5, 6, 0, 2, 3, 4, 5, 1, 3, 4, 2, 3, 0, 2, 1, 3, 4, 5, 6, 3, 4, 2, 3, 0, 2, 1, 3, 4, 5, 3, 0, 2, 1, 3, 4, 1, 3, 0, 2, 1, 0], [10, 4, 7, 7])]

}

\subsubsection{Label (3, 13)}
\[
\begin{aligned}
\alpha_0,\alpha_1,\alpha_2&=3, 6, 1\\*[-1mm]
r_0,r_1,r_2&=1, 1, 3\\*[-1mm]
\end{aligned}\quad
M_/=\left(\begin{array}{rrr}
1 & 1 & 4 \\
0 & 1 & 1 \\
0 & 0 & 1
\end{array}\right)\quad
Q_/=\Qthree{1}{-2}{1}
\]
{\tiny \raggedright Collection: \hl{[1, (-1, 1, 1, 1, 1, 1, 1, 1)], [1, (1, 0, 0, 0, 0, 0, 0, 1)], [1, (1, 0, 0, 0, 0, 0, 1, 0)]},\ \ \hl{[1, (1, 0, 0, 0, 0, 0, 1, 1)], [1, (2, -1, 0, 0, 0, 0, 0, 0)], [1, (2, 0, -1, 0, 0, 0, 0, 0)], [1, (2, 0, 0, -1, 0, 0, 0, 0)], [1, (2, 0, 0, 0, -1, 0, 0, 0)], [1, (2, 0, 0, 0, 0, -1, 0, 0)]},\ \ \hl{[3, (7, -1, -1, -1, -1, -1, 0, 0)]}

}

\subsubsection{Label (3, 14)}
\[
\begin{aligned}
\alpha_0,\alpha_1,\alpha_2&=1, 6, 3\\*[-1mm]
r_0,r_1,r_2&=3, 1, 1\\*[-1mm]
\end{aligned}\quad
M_/=\left(\begin{array}{rrr}
1 & 1 & 4 \\
0 & 1 & 1 \\
0 & 0 & 1
\end{array}\right)\quad
Q_/=\Qthree{1}{-2}{1}
\]
{\tiny \raggedright Collection: \hl{[3, (4, 0, 0, 0, 0, 0, 1, 1)]},\ \ \hl{[1, (2, -1, 0, 0, 0, 0, 0, 0)], [1, (1, 0, 0, 0, 0, 0, 1, 1)], [1, (2, 0, -1, 0, 0, 0, 0, 0)], [1, (2, 0, 0, -1, 0, 0, 0, 0)], [1, (2, 0, 0, 0, -1, 0, 0, 0)], [1, (2, 0, 0, 0, 0, -1, 0, 0)]},\ \ \hl{[1, (4, -1, -1, -1, -1, -1, 0, -1)], [1, (2, 0, 0, 0, 0, 0, 0, 0)], [1, (4, -1, -1, -1, -1, -1, -1, 0)]}

}

\subsubsection{Relations}
\[\begin{array}{cc}%
(3, 11)\rightarrow(3, 12) & [2, 3, 4, 5, 10] \\
(3, 11)\rightarrow(3, 13) & [4, 5, 6, 4, 1] \\
(3, 11)\rightarrow(3, 14) & [4, 5, 6, 10, 4]
\end{array}\]\subsection{Minimal collections for \!{\boldmath$X_8$}}
\subsubsection{Label (3, 15)}
\[
\begin{aligned}
\alpha_0,\alpha_1,\alpha_2&=8, 2, 1\\*[-1mm]
r_0,r_1,r_2&=1, 2, 4\\*[-1mm]
\end{aligned}\quad
M_/=\left(\begin{array}{rrr}
1 & 1 & 3 \\
0 & 1 & 2 \\
0 & 0 & 1
\end{array}\right)\quad
Q_/=\Qthree{1}{-1}{2}
\]
{\tiny \raggedright Collection: \hl{[1, (-1, 1, 1, 1, 1, 1, 1, 1, 1)], [1, (1, 0, 0, 0, 0, 0, 0, 1, 1)], [1, (2, -1, 0, 0, 0, 0, 0, 0, 0)], [1, (2, 0, -1, 0, 0, 0, 0, 0, 0)], [1, (2, 0, 0, -1, 0, 0, 0, 0, 0)], [1, (2, 0, 0, 0, -1, 0, 0, 0, 0)], [1, (2, 0, 0, 0, 0, -1, 0, 0, 0)], [1, (2, 0, 0, 0, 0, 0, -1, 0, 0)]},\ \ \hl{[2, (6, -1, -1, -1, -1, -1, -1, -1, 0)], [2, (6, -1, -1, -1, -1, -1, -1, 0, -1)]},\ \ \hl{[4, (15, -3, -3, -3, -3, -3, -3, -2, -2)]}

\bigskip

\noindent The full orbit is reachable: certificate = [([1, 3, 4, 2, 3, 0, 2, 1, 3, 4, 5, 4, 3, 2, 1, 3, 4, 0, 2, 3, 1], []), ([2], []), ([5], []), ([5, 1, 3, 4, 2, 3, 0, 2, 1, 3, 4, 5, 4, 2, 3, 0, 2, 1, 3, 4, 0, 2, 1, 3, 1], []), ([4, 3, 0, 2, 1, 3, 4, 5, 6, 7, 6, 5, 4, 3, 2, 1, 3, 4, 5, 6, 0, 2, 3, 4, 5, 1, 3, 4, 2, 3, 0, 2, 1, 3, 4, 5, 6, 7, 0, 2, 3, 4, 5, 1, 3, 4, 2, 3, 0, 2, 1, 3, 4, 5, 6, 4, 3, 2, 1, 3, 4, 5, 2, 1, 3, 4, 2, 3, 2, 1, 0], []), ([7], []), ([3, 2, 1, 3, 4, 5, 6, 0, 2, 3, 4, 5, 1, 3, 4, 2, 3, 0, 2, 1, 3, 4, 5, 6, 7, 6, 5, 4, 3, 2, 1, 3, 4, 5, 6, 3, 0, 2, 1, 3, 4, 5, 2, 1, 3, 4, 2, 3, 0, 2, 1, 3, 2], []), ([3, 4, 5, 2, 3, 4, 0, 2, 3], []), ([1, 3, 4, 5, 6, 2, 3, 4, 5, 1, 3, 4, 2, 3, 0, 2, 1, 3, 4, 5, 6, 7, 6, 5, 4, 3, 2, 1, 3, 4, 5, 6, 2, 1, 3, 4, 5, 4, 3, 2, 1, 3, 4, 2, 3, 0, 2, 1], []), ([7, 6, 5, 4, 3, 2, 1, 3, 4, 5, 6, 0, 2, 3, 4, 5, 1, 3, 4, 2, 3, 0, 2, 1, 3, 4, 5, 6, 7, 6, 5, 4, 3, 2, 1, 3, 4, 5, 6, 0, 2, 3, 4, 5, 1, 3, 4, 2, 3, 0, 2, 1, 3, 4, 5, 6, 7], [11, 1, 8, 1, 11, 1])]

}

\subsubsection{Label (3, 16)}
\[
\begin{aligned}
\alpha_0,\alpha_1,\alpha_2&=8, 1, 2\\*[-1mm]
r_0,r_1,r_2&=1, 4, 2\\*[-1mm]
\end{aligned}\quad
M_/=\left(\begin{array}{rrr}
1 & 1 & 1 \\
0 & 1 & 2 \\
0 & 0 & 1
\end{array}\right)\quad
Q_/=\Qthree{1}{-1}{2}
\]
{\tiny \raggedright Collection: \hl{[1, (-1, 1, 1, 1, 1, 1, 1, 1, 1)], [1, (1, 0, 0, 0, 0, 0, 0, 1, 1)], [1, (2, -1, 0, 0, 0, 0, 0, 0, 0)], [1, (2, 0, -1, 0, 0, 0, 0, 0, 0)], [1, (2, 0, 0, -1, 0, 0, 0, 0, 0)], [1, (2, 0, 0, 0, -1, 0, 0, 0, 0)], [1, (2, 0, 0, 0, 0, -1, 0, 0, 0)], [1, (2, 0, 0, 0, 0, 0, -1, 0, 0)]},\ \ \hl{[4, (9, -1, -1, -1, -1, -1, -1, 0, 0)]},\ \ \hl{[2, (6, -1, -1, -1, -1, -1, -1, -1, 0)], [2, (6, -1, -1, -1, -1, -1, -1, 0, -1)]}

}

\subsubsection{Label (3, 17)}
\[
\begin{aligned}
\alpha_0,\alpha_1,\alpha_2&=9, 1, 1\\*[-1mm]
r_0,r_1,r_2&=1, 3, 3\\*[-1mm]
\end{aligned}\quad
M_/=\left(\begin{array}{rrr}
1 & 1 & 2 \\
0 & 1 & 3 \\
0 & 0 & 1
\end{array}\right)\quad
Q_/=\Qthree{1}{-1}{3}
\]
{\tiny \raggedright Collection: \hl{[1, (-1, 1, 1, 1, 1, 1, 1, 1, 1)], [1, (2, -1, 0, 0, 0, 0, 0, 0, 0)], [1, (2, 0, -1, 0, 0, 0, 0, 0, 0)], [1, (2, 0, 0, -1, 0, 0, 0, 0, 0)], [1, (2, 0, 0, 0, -1, 0, 0, 0, 0)], [1, (2, 0, 0, 0, 0, -1, 0, 0, 0)], [1, (2, 0, 0, 0, 0, 0, -1, 0, 0)], [1, (2, 0, 0, 0, 0, 0, 0, -1, 0)], [1, (2, 0, 0, 0, 0, 0, 0, 0, -1)]},\ \ \hl{[3, (8, -1, -1, -1, -1, -1, -1, -1, -1)]},\ \ \hl{[3, (11, -2, -2, -2, -2, -2, -2, -2, -2)]}

}

\subsubsection{Label (3, 18)}
\[
\begin{aligned}
\alpha_0,\alpha_1,\alpha_2&=3, 6, 2\\*[-1mm]
r_0,r_1,r_2&=2, 1, 3\\*[-1mm]
\end{aligned}\quad
M_/=\left(\begin{array}{rrr}
1 & 1 & 5 \\
0 & 1 & 1 \\
0 & 0 & 1
\end{array}\right)\quad
Q_/=\Qthree{1}{-1}{1}
\]
{\tiny \raggedright Collection: \hl{[2, (0, 1, 1, 1, 1, 1, 1, 1, 2)], [2, (0, 1, 1, 1, 1, 1, 1, 2, 1)], [2, (3, 0, 0, 0, 0, 0, 0, 0, 0)]},\ \ \hl{[1, (2, -1, 0, 0, 0, 0, 0, 0, 0)], [1, (2, 0, -1, 0, 0, 0, 0, 0, 0)], [1, (2, 0, 0, -1, 0, 0, 0, 0, 0)], [1, (2, 0, 0, 0, -1, 0, 0, 0, 0)], [1, (2, 0, 0, 0, 0, -1, 0, 0, 0)], [1, (2, 0, 0, 0, 0, 0, -1, 0, 0)]},\ \ \hl{[3, (10, -2, -2, -2, -2, -2, -2, -1, -1)], [3, (8, -1, -1, -1, -1, -1, -1, -1, -1)]}

}

\subsubsection{Label (3, 19)}
\[
\begin{aligned}
\alpha_0,\alpha_1,\alpha_2&=3, 2, 6\\*[-1mm]
r_0,r_1,r_2&=2, 3, 1\\*[-1mm]
\end{aligned}\quad
M_/=\left(\begin{array}{rrr}
1 & 1 & 1 \\
0 & 1 & 1 \\
0 & 0 & 1
\end{array}\right)\quad
Q_/=\Qthree{1}{-1}{1}
\]
{\tiny \raggedright Collection: \hl{[2, (0, 1, 1, 1, 1, 1, 1, 1, 2)], [2, (2, 0, 0, 0, 0, 0, 1, 1, 1)], [2, (3, 0, 0, 0, 0, 0, 0, 0, 0)]},\ \ \hl{[3, (4, 0, 0, 0, 0, 0, 0, 1, 1)], [3, (4, 0, 0, 0, 0, 0, 1, 0, 1)]},\ \ \hl{[1, (2, -1, 0, 0, 0, 0, 0, 0, 0)], [1, (2, 0, -1, 0, 0, 0, 0, 0, 0)], [1, (2, 0, 0, -1, 0, 0, 0, 0, 0)], [1, (2, 0, 0, 0, -1, 0, 0, 0, 0)], [1, (2, 0, 0, 0, 0, -1, 0, 0, 0)], [1, (4, -1, -1, -1, -1, -1, -1, -1, 0)]}

}

\subsubsection{Label (3, 20)}
\[
\begin{aligned}
\alpha_0,\alpha_1,\alpha_2&=1, 5, 5\\*[-1mm]
r_0,r_1,r_2&=5, 1, 2\\*[-1mm]
\end{aligned}\quad
M_/=\left(\begin{array}{rrr}
1 & 2 & 9 \\
0 & 1 & 1 \\
0 & 0 & 1
\end{array}\right)\quad
Q_/=\Qthree{2}{-1}{1}
\]
{\tiny \raggedright Collection: \hl{[5, (4, 1, 1, 1, 1, 1, 2, 2, 2)]},\ \ \hl{[1, (2, -1, 0, 0, 0, 0, 0, 0, 0)], [1, (2, 0, -1, 0, 0, 0, 0, 0, 0)], [1, (2, 0, 0, -1, 0, 0, 0, 0, 0)], [1, (2, 0, 0, 0, -1, 0, 0, 0, 0)], [1, (2, 0, 0, 0, 0, -1, 0, 0, 0)]},\ \ \hl{[2, (6, -1, -1, -1, -1, -1, -1, -1, 0)], [2, (6, -1, -1, -1, -1, -1, -1, 0, -1)], [2, (6, -1, -1, -1, -1, -1, 0, -1, -1)], [2, (8, -2, -2, -2, -2, -2, -1, -1, -1)], [2, (9, -2, -2, -2, -2, -2, -2, -2, -2)]}

}

\subsubsection{Label (3, 21)}
\[
\begin{aligned}
\alpha_0,\alpha_1,\alpha_2&=5, 1, 5\\*[-1mm]
r_0,r_1,r_2&=1, 5, 2\\*[-1mm]
\end{aligned}\quad
M_/=\left(\begin{array}{rrr}
1 & 2 & 1 \\
0 & 1 & 1 \\
0 & 0 & 1
\end{array}\right)\quad
Q_/=\Qthree{2}{-1}{1}
\]
{\tiny \raggedright Collection: \hl{[1, (2, -1, 0, 0, 0, 0, 0, 0, 0)], [1, (2, 0, -1, 0, 0, 0, 0, 0, 0)], [1, (2, 0, 0, -1, 0, 0, 0, 0, 0)], [1, (2, 0, 0, 0, -1, 0, 0, 0, 0)], [1, (2, 0, 0, 0, 0, -1, 0, 0, 0)]},\ \ \hl{[5, (16, -3, -3, -3, -3, -3, -2, -2, -2)]},\ \ \hl{[2, (6, -1, -1, -1, -1, -1, -1, 0, -1)], [2, (8, -2, -2, -2, -2, -2, -1, -1, -1)], [2, (6, -1, -1, -1, -1, -1, 0, -1, -1)], [2, (6, -1, -1, -1, -1, -1, -1, -1, 0)], [2, (9, -2, -2, -2, -2, -2, -2, -2, -2)]}

}

\subsubsection{Relations}
\[\begin{array}{cc}%
(3, 15)\rightarrow(3, 16) & [10] \\
(3, 18)\rightarrow(3, 16) & [2, 10, 11] \\
(3, 18)\rightarrow(3, 17) & [9, 1, 1, 2] \\
(3, 21)\rightarrow(3, 18) & [7, 8, 7] \\
(3, 21)\rightarrow(3, 19) & [5, 6, 8, 11] \\
(3, 21)\rightarrow(3, 20) & [5]
\end{array}\]

\section{Derivation of the list of minimal collections}
\label{sec:minimal}
\subsection{Main result}
\begin{theorem}
  \label{thm:minimal_list}
  Let $X$ be a del Pezzo surface and let $\EE$ be a very strong block exceptional collection on $X$ without broken blocks (see Definition \ref{def:block_form}).
  Assume that $\EE$ is block complete (see \S\ref{sec:quiver_setting}) and ``minimal'' in the sense that its total rank cannot be reduced by a block quiver mutation
  \footnote{Since the forbidden region does not depend on the straight vertices (see Remark \ref{rem:straight}) this is equivalent to saying that no single quiver mutation reduces rank.} (see \S\ref{sec:mutnccr}). Then the number of blocks in $\EE$ is $\le 4$. All possible reduced Gram matrices and reduced quivers are listed
  in \S\ref{sec:minimal_collections}.
\end{theorem}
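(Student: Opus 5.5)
We translate everything into the geometry of the HP-polygon $P$ and then classify. By Proposition \ref{prop:convex_perling} $P$ is convex. By Lemma \ref{lem:very_useful} its long edges $e_0,\ldots,e_{k-1}$ correspond to the blocks, and by Lemma \ref{lem:no_parallel} no two long edges are parallel. Minimality is equivalent, by Corollary \ref{cor:forbidden} together with Remark \ref{rem:straight}, to the origin $o$ lying in the forbidden region $\bigcap_i {}^{1/2}H_i^+$. Here the $H_i$ are now determined by the long edges: $H_i$ is the supporting line parallel to $e_i$ through its (unique) opposing vertex.

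By Lemma \ref{lem:three_adm} exactly three vertices $u,v,w$ of $|P|$ are admissible, and $u,v$ are the endpoints of a narrowing long edge $e$. Every long edge has its opposing vertex among $\{u,v,w\}$. So the forbidden region of $P$ coincides with the forbidden region of the triangle-like configuration cut out by these three vertices. More precisely, by Lemma \ref{lem:nonadm_bigger} deleting non-admissible vertices only enlarges the region, so it is contained in the forbidden region of the polygon with vertices $u,v,w$ plus whatever is forced. For a triangle this region is the medial triangle (Figure \ref{fig:forbidden}).

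The first key step is to bound $k$. The half-planes ${}^{1/2}H_i^+$ for long edges $e_i$ adjacent to a non-admissible vertex cut off additional regions. I will show that if $k\ge5$ these constraints force $o$ onto the boundary or outside $|P|$. The approach is to use that $o$ must lie in the interior of $|P|$ (the fan is complete) and that the $l_{i,i+1}$ are primitive lattice points (Theorem \ref{thm:achievement}).

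Concretely, for each non-admissible vertex $x$ between two admissible ones, the two adjacent long edges $e_p,e_{p+1}$ have opposing vertices among $u,v,w$. The intersection of their half-planes with the medial region shrinks it toward the midpoint structure. With two or more such vertices, I expect the region to become empty or to reduce to points excluded by integrality. I would argue this via the exterior-angle identity \eqref{eq:exterior_angle} and the positivity \eqref{eq:positivity}. This is the step I expect to be the main obstacle, since it mixes convex geometry with lattice constraints. If a purely geometric argument fails, I would fall back on the lattice data: using Theorem \ref{thm:achievement}(\ref{it:thm2_3}) to write $r_i^2$ as areas and $\chi$ via \eqref{eq:additivity}, I would derive a contradiction with the quadratic relations \eqref{eq:square2} and with Lemma \ref{lem:tool4}. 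For $k\ge 6$ I expect the purely geometric argument to suffice. The case $k=5$ would be handled by the two explicit shapes of Figure \ref{fig:red_5_blocks}.

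Once $k\le4$, the classification proceeds in two cases. For $k=3$, $P$ is a lattice triangle with $o$ in its medial triangle. The Gram data are the Markov-type solutions, and I would invoke \cite{KarpovNogin} for the list of 3-block collections and then keep only the minimal ones. For $k=4$, the polygon is a quadrilateral with exactly one non-admissible vertex. Writing the vertices in lattice coordinates, the conditions are:
\begin{itemize}
\item $\sum r_i^2=2\area(P)$ equals the total rank sum, which is bounded via $K_0$ rank and Remark \ref{rem:verystrongslopes};
\item the origin lies in the forbidden region;
\item primitivity holds.
\end{itemize}
Together these give finitely many candidates. I would enumerate them by computer and check realisability through Proposition \ref{prop:numericalcol}, obtaining the 4-block entries of \S\ref{sec:minimal_collections}.
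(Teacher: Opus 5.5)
Your outline has the right scaffolding: the HP-polygon, the origin in the forbidden region, the three admissible vertices, Karpov--Nogin for $k=3$ and a computer enumeration for $k=4$. But the step that bounds $k$ is missing its key idea, and the route you propose for it cannot work. Convex geometry alone does not bound the number of long edges. Start from a triangle $uvw$ and push the two sides other than the narrowing edge $uv$ slightly outward into convex chains. This gives arbitrarily many long edges, no parallel edges, exactly three admissible vertices $u,v,w$, and a forbidden region that is a small perturbation of the medial triangle of $uvw$, hence nonempty. Figures \ref{fig:5block_case2} and \ref{fig:6blocks} likewise show nonempty forbidden regions for $k=5,6$, so ``forcing $o$ onto the boundary'' already fails for $k=6$. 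Your fallback to \eqref{eq:square2} points the right way, but it only bites after Lemma \ref{lem:prelim}, which you do not have. That lemma says the following. For any two long edges $m_u,m_v$ incident to a run of consecutive non-admissible vertices, the forbidden-region condition gives $\omega(m_u,m_v)^2<4R_uR_v$, strictly once the run has length $\ge 2$. By \eqref{eq:quiver_arrows}, integrality and block-completeness, this forces $c_{uv}^2\alpha_u\alpha_v\le 3$, i.e.\ $c_{uv}=1$. The Pl\"{u}cker relation \eqref{eq:plucker2} then excludes runs of length $\ge 3$. For a run of length $2$, a lattice computation forces all ranks and multiplicities to be $1$ and puts $o$ at the midpoint of the segment joining the two flanking admissible vertices. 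This kills $k\ge 6$, since that midpoint lies outside the forbidden region. It also kills the two-narrowing-edge case of $k=5$, which reduces to $4=(K_X^2-3)(x+y)$; the only solution $x=y=2$ contradicts Theorem \ref{thm:achievement}\eqref{it:thm2_2}. The one-narrowing-edge case of $k=5$ genuinely has a nonempty forbidden region. It needs a chain of area estimates to become finite and then an exhaustive search; naming the shapes of Figure \ref{fig:red_5_blocks} does not exclude it.

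Your finiteness claim for $k=4$ is also unsupported. The rank of $K_0(X)$ bounds the number of objects, $\sum_i\alpha_i=12-K_X^2$, but not their ranks. Remark \ref{rem:verystrongslopes} is only a slope condition, and $2\area(P)=\sum_i\alpha_i r_i^2$ is an identity, not a bound. The remaining conditions you list (origin in the forbidden region, primitivity) do not involve $X$ and do not reduce the quadrilaterals to finitely many. The missing input is \eqref{eq:work_horse}, $\sum_i \chi_{i,i+1}/(r_ir_{i+1})=K_X^2$, equivalently $\sum_i S_{i,i+1}/(R_iR_{i+1})=K_X^2$. This must be played off against the quadrangle estimates of \S\ref{sec:geometric} (Lemma \ref{lem:basic_bounds}, \eqref{eq:inequalities}, \eqref{eq:r_red}). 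If $o$ lies on the diagonal $[l_{0,1},l_{2,3}]$, it is the midpoint and you get the Markov-type equation \eqref{eq:markov_var}. Otherwise $\chi_{12}=1$, $\chi_{30}$ is bounded, $r_0r_3\le 22\chi_{30}$, and \eqref{eq:r12_bound} bounds $r_1,r_2$. Only then is the enumeration finite.
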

The fact that \S\ref{sec:minimal_collections} lists all the Gram matrices in the 3-block case follows directly from \cite{KarpovNogin}. In the subsequent sections with will treat the cases
of four blocks, five blocks and six or more blocks separately. We will start however with a discussion of the methods that we use.
In general they are a combination of plane geometry arguments, to establish various inequalities relating the areas of regions inside the polygon~$P$ associated on $\EE$,  and``integrality
constraints'', reflecting the fact that various
numbers computed from $P$ need to be integral.

\medskip Some of the discussions below must be completed by computer computations. The accompanying scripts can be found at \cite{GHrepo}.
\subsection{General approach}
\label{sec:approach}
Let
\[
\EE=((E_0^{(0)},\ldots, E_0^{(\alpha_0-1)}), 
\ldots,
(E_{k-1}^{(0)},\ldots, E_{k-1}^{(\alpha_{k-1}-1)}))
\]
be as in the statement of Theorem \ref{thm:minimal_list}.
As usual  $P$ is the polygon corresponding to $\EE$ and $L$ is its ambient space.
  We will equip $L$  with the lattice $L_{\ZZ}$ generated by the vertices of $P$ and with the normalised volume form $\omega$ (see Definition \ref{def:normalized}). The quiver associated to $B(\EE)$ is denoted by $Q$.
\begin{convention}
  Whenever something depends only on the block, but not on the choice of an object in that block,  we will index it by block number,  i.e.  throughout we write
  $\chi_{ij}=\chi(E_i^{(?)},E_j^{(?)})$ and $r_i=r(E_i^{(?)})$. The vectors $(l_{i,i+1})_i$ will
  denote the non-straight vertices of $P$ and $(m_i)_i$ will be the long edges corresponding to the blocks $(\EE_i)_i$.
\end{convention}
We state some relevant facts.
\begin{enumerate}
  \item
By Lemma \ref{lem:no_parallel} the polygon $P$ has no parallel edges.
Moreover by  Lemma \ref{lem:very_useful} its long edges correspond to the blocks in $\EE$ and they are subdivided into $\alpha_i$ equal segments
corresponding respectively to $E_i^{(j)}$ for $j=0,\ldots,\alpha_i-1$.
\item The fact that $\EE$ is assumed to be block complete means that the shape of $P$ is very constrained by Corollary \ref{cor:quiver_pos}.
\item
  The hypothesis that the total rank of $\EE$ cannot be a reduced by a quiver mutation implies that the origin $o$ is in the forbidden region (see Definition \ref{def:forbidden_region}) in $P$.
\item
  We frequently need to make assertions about the forbidden region and for this Lemma \ref{lem:nonadm_bigger} is very useful as it allows us to delete vertices from $|P|$ (recall that by Remark \ref{rem:straight} the forbidden region only depends on $|P|$).
  \item
We have
\begin{equation}
\label{eq:K_theory}
\alpha_0+\alpha_1+\cdots+\alpha_{k-1}+K^2_X=12
\end{equation}
To see this note that this equation is correct for $\PP^1 \times \PP^1$ and $\PP^2$ and it is preserved under blow up and blow down.
\item
From \eqref{eq:chi_slope} we obtain
\begin{equation}
\label{eq:work_horse}
\frac{\chi_{01}}{r_0r_1}+\frac{\chi_{12}}{r_1r_2}+\cdots+\frac{\chi_{k-1,0}}{r_{k-1}r_0}=K_X^2
\end{equation}
where for reasons of symmetry we  commit an abuse of notation by writing $\chi_{k-1,0}:=\chi(E_{k-1}^{(?)}, \omega_X^{-1}\otimes E_0^{(?)})$.
Our main approach will be to use plane geometry to carefully bound the various terms in \eqref{eq:work_horse} e.g.\ using the formula \eqref{eq:area_formula}.
\item
The data $(r_i)_i$, $(\chi_{i,i+1})_i$  determines the full reduced Gram matrix of $\EE$ via
\begin{equation}
  \label{eq:full_gram}
  \frac{\chi_{ij}}{r_ir_j}=\frac{\chi_{i,i+1}}{r_ir_{i+1}}+\frac{\chi_{i+1,i+2}}{r_{i+1}r_{i+2}}+
  \cdots+\frac{\chi_{j-1,j}}{r_{j-1}r_{j}}
\end{equation}
for $1\le i<j\le k-1$ which also follows from \eqref{eq:chi_slope}.
\item It will be convenient to write $R_i=\omega(l_{i-1,i},l_{i,i+1})$. Then from
  \ref{thm:achievement}\eqref{it:thm2_4} we obtain
  \begin{equation}
    \label{eq:Rformula}
    R_i=\alpha_i r_i^2
    \end{equation}
\item
Likewise  it will be convenient to write. 
  \begin{equation}
    \label{eq:Sformula}
S_{i,i+1}=2\Delta(l_{i-1,i},l_{i,i+1},l_{i+1,i+2})=\omega(m_i,m_{i+1})
\end{equation}
Then 
from \ref{thm:achievement}\eqref{it:thm2_4} we obtain
\begin{equation}
\label{eq:chi_formula}
\chi_{i,i+1}=\frac{S_{i,i+1}}{\alpha_i \alpha_{i+1} r_i r_{i+1}}
\end{equation}
\item Let $c_{ij}$ be the number of arrows from $i$ to $j$ in $Q_{\red}$ with the usual convention that a negative number denotes
  $-c_{ij}$ arrows in the reverse direction. Then with a short computation using Theorem \ref{thm:achievement}(\ref{it:thm2_3},\ref{it:thm2_5}) we find
  \begin{equation}
    \label{eq:quiver_arrows}
    c_{ij}^2\alpha_i\alpha_j  =
    \frac{\omega(m_i,m_j)^2}{\omega(l_{i-1,j},l_{i,i+1})\omega(l_{j-1,j},l_{j,j+1})}
  \end{equation}
  Note that $c_{i,i+1}=\chi_{i,i+1}$ and in that case \eqref{eq:quiver_arrows} is a rewrite of \eqref{eq:chi_formula}.
\item
Assuming we know $(\alpha_i)_i$, the reduced Gram matrix determines the Gram matrix~$M$. This can be used to further prune the solutions since
the ``Serre matrix'' $s=M^{-1}M^t$ satisfies some very strong conditions (see e.g.\ \cite[Definition 3.2.1]{dTdVVdB}). I.e.\
\begin{enumerate}
\item $s$ is unipotent.
\item $\rk(s-1)\le 2$.
\end{enumerate}
\item
To ultimately validate the existence of a particular Gram matrix one may supply an explicit exceptional collection realising that Gram matrix. 
\item
To manipulate sums of reciprocals as in \eqref{eq:work_horse} we may use the following lemma:
\end{enumerate}
\begin{lemma}
\label{lem:weird_lemma}
Let $a>0$ and assume that $(S_i)_{i=1}^{n}$ are subsets of $\RR_{>0}$ so that for all $i$ and for all $b>0$ we have that $S_i\cap [b,\infty)$ is finite.
Then the set
\[
S=\{s_1+\cdots+s_n\mid \forall i: s_i \in S_i, s_1+\cdots+s_n<a\}
\]
has a maximum for all $a>0$. Moreover the set
\[
T=\{s_1,s_2,\ldots, s_n\mid \forall i: s_i \in S_i, s_1+\cdots+s_n=a\}
\]
is finite.
\end{lemma}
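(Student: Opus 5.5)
The plan is an induction on $n$, the number of summands, with the key observation that each $S_i$ has no accumulation point in $(0,\infty)$ and, crucially, only finitely many elements exceeding any fixed $b>0$. First treat the finiteness statement for $T$, since the maximum statement will follow from a closely related argument.

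\emph{Finiteness of $T$.} Suppose $s_1+\cdots+s_n=a$ with $s_i\in S_i$. Then $\max_i s_i\ge a/n$. So some index $i$ has $s_i\in S_i\cap[a/n,\infty)$, which is a finite set. For each of the finitely many choices of $i$ and of such a value $s_i$, the remaining $n-1$ terms satisfy $\sum_{j\neq i}s_j=a-s_i$. If $a-s_i\le 0$ there are no solutions when $n\ge 2$ (all terms are positive); if $n=1$ the statement is trivial. Otherwise apply the induction hypothesis to the $n-1$ sets $(S_j)_{j\neq i}$ and the positive target $a-s_i$. This gives finitely many tuples in each case, hence finitely many overall. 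The base case $n=1$ is clear: $T\subset\{a\}$.

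\emph{Existence of the maximum of $S$.} I would again induct on $n$, but it is cleaner to prove the slightly stronger statement that for every $a>0$ and every $\epsilon>0$ the set of tuples with $a-\epsilon\le s_1+\cdots+s_n< a$ is finite when $\epsilon<a$. Given this, either $S$ is empty (then there is no maximum; I expect the intended reading assumes $S\neq\emptyset$, or one says the sup is attained whenever $S$ is nonempty), or pick any element $s_0\in S$ and take $\epsilon=a-s_0$; then all elements of $S$ that are $\ge s_0$ come from finitely many tuples, so the maximum exists. The strengthened claim is proved exactly as before. If the sum is at least $a-\epsilon>0$, then some $s_i\ge (a-\epsilon)/n$, leaving finitely many choices of $(i,s_i)$. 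The remaining terms then lie in the window $[a-\epsilon-s_i,\,a-s_i)$. If $a-s_i\le 0$ this is impossible for $n\ge 2$. Otherwise, the lower end of the window may be $\le 0$, which is the main obstacle: the induction hypothesis then gives no finiteness.

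To handle this I would replace the window argument by repeated extraction. All terms are bounded below only by $0$, so instead note that the finitely many tuples are not needed for small terms. A cleaner route is to order the terms decreasingly, $s_{\sigma(1)}\ge\cdots\ge s_{\sigma(n)}$, and use that the top term lies in a finite set. Then the second largest lies in $S_{\sigma(2)}\cap[(a-\epsilon-s_{\sigma(1)})/(n-1),\infty)$, which is finite provided $a-\epsilon-s_{\sigma(1)}>0$. If that quantity is not positive, the sum of the tail is bounded by $a-s_{\sigma(1)}$, and I would instead directly maximise. Concretely, define $M_n(a)=\sup S$ and prove by induction that it is attained via
\[
M_n(a)=\max_{i,\ s\in S_i\cap[\,\cdot\,)}\bigl(s+M_{n-1}(a-s)\bigr),
\]
where, for each $i$, it suffices to let $s$ range over the finite set $S_i\cap[c,\infty)$ for a suitable $c>0$ determined by a fixed element of $S$. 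The point is that an element of $S$ exceeding a fixed $s_0$ must have its largest term at least $s_0/n$. The maximum over this finite set of values of $s$, each of which is attained by induction, is then attained. Checking the details of this reduction is the step I expect to need the most care.
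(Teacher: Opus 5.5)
Your argument for the finiteness of $T$ is correct, and so is the recursion you end with for the maximum. However, the detour through the ``slightly stronger statement'' should be deleted: that statement is false, not merely hard to prove. Take $n=2$, $S_1=S_2=\{1/k\mid k\ge 1\}$, $a=2$, $\epsilon=1$. Then the tuples $(1,1/k)$, $k\ge 2$, are infinitely many, all with sum in $[1,2)$. So the obstacle you hit, where the window for the remaining terms reaches down to $0$, is fatal for that route.

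What does work is your last step. Fix $s_0\in S$; the lemma tacitly needs $S\neq\emptyset$, as you observed, and the paper glosses over this too. Let $F$ be the finite set of pairs $(i,s)$ with $s\in S_i\cap[s_0/n,a)$. Every element of $S$ that is $\ge s_0$ has its largest summand $s_i$ with $(i,s_i)\in F$. For each $(i,s)\in F$, apply the induction hypothesis to the family $(S_j)_{j\neq i}$ and the target $a-s>0$. Whenever the corresponding set is nonempty, this gives a largest remaining sum $M^{(i)}(a-s)$, and $s+M^{(i)}(a-s)\in S$. Hence $\max S$ is the largest of these finitely many elements of $S$; note that $s_0$ itself guarantees at least one of them exists.

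Compared with the paper, both proofs induct on $n$, but the paper always peels off the fixed first coordinate $s_1$. It lets $m$ be the maximum of the sums $s_2+\cdots+s_n<a$ and splits $S_1\cap(0,a)$ at the threshold $a-m$. Above the threshold there are only finitely many $s$. Below it, the inner maximum $m_s$ is the constant $m$, so the case $n=1$ finishes. The paper then deduces finiteness of $T$ from $s_1\ge a-m$, so its second claim depends on the first.

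Your pigeonhole on the largest summand, which must lie in some $S_i\cap[c,\infty)$ for an explicit $c>0$, treats both claims uniformly, with no case distinction. It also makes the finiteness of $T$ self-contained. The paper's version keeps the peeled-off coordinate fixed at the cost of the threshold case split.
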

\begin{proof} We will start with proving the first claim using induction on $n$. The case $n=1$ follows from the fact that by assumption $S_1\cap [s,\infty)$ is finite for any $s\in S_1$.

Now assume $n>1$. Let $m$ be the maximum of
\[
\{s_2+\cdots+s_n\mid \forall i\ge 2: s_i \in S_i, s_2+\cdots+s_n<a\}
\]

and for any $s\in S_1\cap [0,a)$ let $m_s$ be the maximum of
\[
\{s_2+\cdots+s_n\mid \forall i\ge 2: s_i \in S_i, s_2+\cdots+s_n<a-s\}
\]
We need to prove that $s+m_s$ has a maximum for $s\in S_1$.

Clearly if $a-s>m$ then $m_s=m$ and so $s+m_s=s+m$. Moreover $s<a-m$ and by the case $n=1$ the set $\{s<a-m\}$ has a maximum.

If $a-s\le m$ then $s\ge a-m$ and this can only happen for a finite number of $s\in S_1$. So we get
\[
\max S= \max\left(\max_{s\in S_1\cap [a-m,a)}(s+m_s), (\max_{s\in S_1\cap [0,a-m)} s)+m\right)
\]
Now we discuss the second claim. If $(s_i)_i\in T$ then with notations as above we have $s_1\ge a-m$. Hence there are only a finite number of possibilities for $s_1$.
We now use induction.
\end{proof}
\begin{remark}
The proof clearly leads to an algorithm for computing $\max S$. However it seems to be very slow. 
  \end{remark}
  \begin{remark}
    \label{rem:typical}
  A typical application of the lemma would be to take $S_i=\{a_i/k\mid k\in \NN_0\}$ for fixed $(a_i)_i$. This implies for example that \eqref{eq:work_horse} has a finite number of solutions for $(r_i)_i$ for given
  $(\chi_{i,i+1})_i$.
\end{remark}
\subsection{Restrictions on non-admissible vertices}
Lemma \ref{lem:three_adm} imposes strong restrictions on the non-admissible vertices of $P$.
The following preliminary result imposes further restrictions in the context of Theorem \ref{thm:minimal_list}.
\begin{lemma} \label{lem:prelim}
  Let $P$ be as above. Then $P$ can have at most two consecutive
  non-admissible non-straight  vertices. Moreover if $P$ has two consecutive
  non-admissible non-straight vertices
  $l_{i,i+1}$, $l_{i+1,l+2}$ then $r_i=r_{i+1}=r_{i+2}=1$, $\alpha_i=\alpha_{i+1}=\alpha_{i+2}=1$,
  \begin{equation}
    \label{eq:chi_dec}
    \chi_{i,i+1}=c_{i,i+1}=1, \quad
    \chi_{i+1,i+2}=c_{i+1,i+2}=1, \quad
    c_{i,i+2}=1,
  \end{equation}
  and with respect to suitable coordinates the configuration is as in Figure \ref{fig:5bl_basic_case_prelim},  where $o$ denotes the origin.
  
  \begin{figure}[h!]
  \centering
  \caption{} \label{fig:5bl_basic_case_prelim}
  \begin{tikzpicture}[line cap=round,line join=round,>=Stealth]
  \definecolor{myblue}{RGB}{150,180,220}
  \definecolor{myred}{RGB}{200,40,40}

  \def\r{2.0}
  \coordinate (A) at (0,{2*\r});    
  \coordinate (B) at (0,{\r});      
  \coordinate (C) at ({\r},0);      
  \coordinate (D) at ({2*\r},0);    
  \coordinate (O) at ({\r},{\r});   

  \draw[black,line width=0.95pt,->] (A) -- node[midway,left=3pt] {$m_i$} (B);
  \draw[black,line width=0.95pt,->] (B) -- node[midway,below left=2pt] {$m_{i+1}$} (C);
  \draw[black,line width=0.95pt,->] (C) -- node[midway,below=3pt] {$m_{i+2}$} (D);

  \draw[myblue,line width=0.70pt] (A)--(D);
  \draw[myblue,line width=0.70pt] (B)--(O);
  \draw[myblue,line width=0.70pt] (O)--(C);

  \fill[black] (O) circle (2.1pt);
  \node[below=2pt] at (O) {$o$};

  \node[above right=1pt] at (A) {$l_{i-1,i}$};
  \node[right=3pt]       at (B) {$l_{i,i+1}$};
  \node[above right=2pt]       at (C) {$l_{i+1,i+2}$};
  \node[below right=2pt] at (D) {$l_{i+2,i+3}$};

  \node[myred,left=6pt]  at (A) {$(0,2)$};
  \node[myred,left=6pt]  at (B) {$(0,1)$};
  \node[myred,below=6pt] at (C) {$(1,0)$};
  \node[myred,right=6pt] at (O) {$(1,1)$};
  \node[myred,right=12pt] at (D) {$(2,0)$};

\end{tikzpicture}
  \end{figure}
\end{lemma}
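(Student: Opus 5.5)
The plan is to combine three constraints: the origin $o$ lies in the forbidden region (Corollary \ref{cor:forbidden}), the lattice $L_{\ZZ}$ is integral with the normalised form $\omega$, and the shape constraints of Lemma \ref{lem:three_adm}. First I prove the statement about a pair $l_{i,i+1}$, $l_{i+1,i+2}$ of consecutive non-admissible non-straight vertices. Then I deduce the bound ``at most two'' by applying the pair statement to two overlapping pairs.

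\emph{The pair case.} Let $l_{i,i+1}$, $l_{i+1,i+2}$ be non-admissible and non-straight. By Lemma \ref{lem:three_adm}, the three admissible vertices are the two endpoints $u,v$ of a narrowing long edge $e$ together with one further vertex. The proof of that lemma also shows that every long edge not on $e$ has opposing vertex $u$ or $v$. I will first argue that none of $m_i,m_{i+1},m_{i+2}$ can lie on $e$. Then their opposing vertices lie among $u,v$, and they lie on the far side of the chain $l_{i-1,i},\dots,l_{i+2,i+3}$.

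For each of the three edges, the half-plane condition \eqref{eq:half_plane} says that $o$ is at most half the ``width'' of $P$ in the direction normal to that edge away from it. The width is realised at $u$ or $v$. I then express the relevant areas in terms of $o$:
\begin{itemize}
\item $R_j=\omega(l_{j-1,j},l_{j,j+1})=2\Delta(o,l_{j-1,j},l_{j,j+1})$ for $j=i,i+1,i+2$;
\item $S_{i,i+1}$ and $S_{i+1,i+2}$, which are the doubled areas of the triangles cut off at $l_{i,i+1}$ and $l_{i+1,i+2}$.
\end{itemize}
Using convexity and non-admissibility (the supporting lines through $l_{i,i+1}$ and $l_{i+1,i+2}$ never realise a maximal width), I aim for an inequality of the form
\[
R_{i+1}\le S_{i,i+1}, \qquad R_{i+1}\le S_{i+1,i+2}, \qquad S_{i,i+1}\,S_{i+1,i+2}\le R_i R_{i+1}R_{i+2}.
\]
Heuristically, the origin is pinned close to the middle edge $m_{i+1}$, while the two corners are ``shallow''.

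By \eqref{eq:chi_formula} we have $\chi_{j,j+1}=S_{j,j+1}/\sqrt{R_jR_{j+1}}$, and $\chi_{j,j+1}$ is a positive integer. Together with $R_j=\alpha_j r_j^2\ge 1$ this should force $\chi_{i,i+1}=\chi_{i+1,i+2}=1$ and $R_i=R_{i+1}=R_{i+2}=1$, hence all $\alpha_j=r_j=1$. I expect finding the correct form of this inequality to be the main obstacle. My first attempt would be to choose affine lattice coordinates in which $m_{i+1}$ is horizontal, and write the half-plane conditions for $m_i,m_{i+1},m_{i+2}$ explicitly.

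Once all the $R_j$ equal $1$, the triangles $(o,l_{j-1,j},l_{j,j+1})$ are unimodular lattice triangles. So we may take coordinates with $o=(1,1)$, $l_{i,i+1}=(0,1)$, $l_{i+1,i+2}=(1,0)$. The conditions $R_i=R_{i+2}=1$, $S=1$ and convexity then force $l_{i-1,i}=(0,2)$ and $l_{i+2,i+3}=(2,0)$, which is the configuration of Figure \ref{fig:5bl_basic_case_prelim}. Finally, $c_{i,i+2}=1$ follows from \eqref{eq:quiver_arrows}, since $\omega(m_i,m_{i+2})=1$ and the denominators equal $1$.

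\emph{Three in a row.} Suppose $l_{i,i+1}$, $l_{i+1,i+2}$, $l_{i+2,i+3}$ are all non-admissible and non-straight. Apply the pair result to both overlapping pairs. The first pair puts $o$ at the lattice point $(1,1)$ determined by $l_{i,i+1},l_{i+1,i+2}$. The second pair puts $o$ at the analogous point for $l_{i+1,i+2},l_{i+2,i+3}$, obtained from the first configuration by a lattice map. This gives $l_{i+3,i+4}=(3,1)$ or similar, and the two prescribed positions of $o$ are incompatible; alternatively, $m_{i+3}$ would be forced parallel to $m_{i+1}$, which contradicts Lemma \ref{lem:no_parallel}. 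I will simply check this in coordinates.
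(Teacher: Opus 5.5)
\textbf{There is a genuine gap in the pair case.} It sits exactly at the step you flag as the main obstacle, and the route you sketch would not close it.

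The half-plane conditions \eqref{eq:half_plane} for $m_i,m_{i+1},m_{i+2}$ themselves only say that $o$ is not too deep inside $P$ as seen from these edges. Since $R_u=\omega(m_u,o-l_{u-1,u})$, they bound the $R_u$ only from \emph{above}. What has to be bounded above, however, is $\omega(m_u,m_v)^2/(R_uR_v)=c_{uv}^2\alpha_u\alpha_v$, by \eqref{eq:quiver_arrows}. That requires \emph{lower} bounds on the $R_u$, i.e.\ that $o$ is far from the chain. (Note also that \eqref{eq:chi_formula} and \eqref{eq:Rformula} give $S_{u,u+1}/\sqrt{R_uR_{u+1}}=\chi_{u,u+1}\sqrt{\alpha_u\alpha_{u+1}}$, not $\chi_{u,u+1}$.) Your target inequalities do not provide this:
\begin{itemize}
\item The third one is not homogeneous under rescaling $\omega$, so it cannot follow from convexity and non-admissibility alone.
\item Even granted all three, they force nothing. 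Taking all $\alpha=1$, $R_i=R_{i+2}=4$, $R_{i+1}=1$ and $\chi_{i,i+1}=\chi_{i+1,i+2}=2$ satisfies every one of them.
\end{itemize}

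\textbf{The missing idea is this lower bound.} Take a maximal run $l_{i,i+1},\dots,l_{j-1,j}$ of non-admissible vertices. All chain edges $m_u$, $i\le u\le j$, have the same opposing vertex, namely the third admissible vertex $w$. Deleting non-admissible vertices via Lemma \ref{lem:nonadm_bigger}, $o$ lies in the medial triangle of $\conv(l_{i-1,i},l_{j,j+1},w)$. Hence $o$ lies in the cone at the midpoint of $[l_{i-1,i},l_{j,j+1}]$ bounded by the lines parallel to $m_i$ and $m_j$, on the side away from the chain.

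Now take affine coordinates in which $m_u$ runs from $(0,b+1)$ to $(0,b)$ and $m_v$ from $(a,0)$ to $(a+1,0)$. The cone condition gives $o\ge((a+1)/2,(b+1)/2)$. Hence $\omega(m_u,m_v)^2<4R_uR_v$ for \emph{every} pair $u<v$ in the run, adjacent or not; equality is excluded by a short check. Integrality then gives $c_{uv}=1$ and $\alpha_u\alpha_v\le 3$. Four chain edges with all $c_{uv}=1$ would violate the Plücker relation \eqref{eq:plucker2}, and this is how the paper bounds the run length.

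For a run of two, the paper finishes as follows:
\begin{itemize}
\item The primitive vectors $m_u/(\alpha_u r_u)$ have pairwise $\omega=1$, which fixes them as $(0,-1),(1,-1),(1,0)$ in suitable lattice coordinates.
\item This gives $o=(r_i,r_{i+2})$ and $r_{i+1}=(r_i+r_{i+2})/(\alpha_{i+1}+1)$.
\item The forbidden-region inequalities then force all $\alpha=1$ and $r_i=r_{i+2}$.
\item Finally all $r=1$ by Theorem \ref{thm:achievement}\eqref{it:thm2_2}.
\end{itemize}

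Your closing coordinate computation is fine once the pair result holds for every consecutive pair, not just a maximal run of two. The same applies to your overlapping-pairs argument against three in a row, which in fact yields $m_{i+3}=(0,1)$, antiparallel to $m_i$, contradicting Lemma \ref{lem:no_parallel}. Without the estimate above, however, none of this is established.
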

\begin{proof} Assume that $P$ has $\ge 2$ consecutive non-admissible  vertices $l_{i,i+1},\ldots, l_{j-1,j}$ (i.e.\ $j\ge i+2$) with $l_{i-1,i}$, $l_{j,j+1}$ being admissible.
  Then the situation is like in Figure \ref{fig:multiple_blocks_prelim}, where the origin is in the region marked $F$ bounded by two light blue lines, respectively parallel to $m_i$ and $m_j$. 
  
  \begin{figure}[h!]
  \centering
  \caption{}\label{fig:multiple_blocks_prelim}
 \begin{tikzpicture}[scale=0.8,line cap=round,line join=round,>=Stealth]
  \usetikzlibrary{decorations.pathmorphing,calc}
  \definecolor{myblue}{RGB}{150,180,220}

  \coordinate (L0) at (-4,0);   
  \coordinate (R0) at ( 4,0);   
  \coordinate (M)  at ( 0,0);   

  \coordinate (L1) at (-3.40,1.20);
  \coordinate (L2) at (-2.50,2.15);

  \coordinate (R1) at ( 3.32,1.45);  
  \coordinate (R2) at ( 2.45,2.30);

  \draw[myblue,line width=0.8pt] (L0)--(M);
  \draw[myblue,line width=0.8pt] (M)--(R0);

\draw[myblue,line width=0.8pt]
  ($(L0)!0.44!(M)$)
    .. controls ($(L0)!0.47!(M)+(0,0.16)$) and ($(L0)!0.50!(M)+(0,-0.16)$)
  .. ($(L0)!0.53!(M)$);

\draw[myblue,line width=0.8pt]
  ($(M)!0.47!(R0)$)
    .. controls ($(M)!0.50!(R0)+(0,0.16)$) and ($(M)!0.53!(R0)+(0,-0.16)$)
  .. ($(M)!0.56!(R0)$);

  \draw[black,line width=0.95pt] (L0)--(L1) node[midway,left=3pt] {$m_j$};
  \draw[black,line width=0.95pt] (L1)--(L2) node[midway,above left=0.8pt] {$m_{j-1}$};

  \draw[black,line width=0.95pt] (R2)--(R1) node[midway,above right=0.5pt] {$m_{i+1}$};
  \draw[black,line width=0.95pt] (R1)--(R0) node[midway,right=3pt] {$m_i$};

  \draw[black,dashed,line width=0.9pt]
    (L2) .. controls (-0.9,3.1) and (0.9,3.1) .. (R2);
    
  \def\k{2.2} 
  \coordinate (D1) at ($(M) + \k*(L0) - \k*(L1)$); 
  \coordinate (D2) at ($(M) + \k*(R0) - \k*(R1)$); 
  \draw[myblue,line width=0.8pt] (M)--(D1);
  \draw[myblue,line width=0.8pt] (M)--(D2);

  \node[myblue,scale=1.4] at (0,-1.7) {$F$};

  \node[below left=0.5pt]  at (L0) {$l_{j,j+1}$};
  \node[right=1pt] at (L1) {$l_{j-1,j}$};

  \node[left=1pt]  at (R1) {$l_{i,i+1}$};
  \node[below right=0.5pt] at (R0) {$l_{i-1,i}$};
\end{tikzpicture}
  \end{figure}
  
  We claim first that for $i\le u<v\le j$
we have
\begin{equation}
  \label{eq:omega_claim}
\frac{\omega(m_u,m_v)^2}{\omega(l_{u-1,u},l_{u,u+1})\omega(l_{v-1,v},l_{v,v+1})}<4.
\end{equation}
We may make an
affine coordinate change so that $m_u=(0,-1)$ and $m_v=(0,1)$. Then the picture becomes as in Figure \ref{fig:5bl_after_coordinates_prelim}.

\begin{figure}[h!]
\centering 
\caption{} \label{fig:5bl_after_coordinates_prelim}
\begin{tikzpicture}[line cap=round,line join=round,>=Stealth]
  \definecolor{myblue}{RGB}{150,180,220}
  \def\a{3.2}   
  \def\b{1.7}   

  \coordinate (U) at (0,{ \b+2});     
  \coordinate (L) at (0,{ \b});        
  \coordinate (A) at (\a,0);           
  \coordinate (B) at ({\a+2},0);     

  \draw[myblue,line width=0.75pt] (U)--(B);

  \coordinate (P) at ($(U)!0.46!(B)$);

  \draw[myblue,line width=0.75pt] (P) -- ++(0,1.55);
  \draw[myblue,line width=0.75pt] (P) -- ++(2.25,0);

  \node[myblue,scale=1.5] at ($(P)+(1.15,0.75)$) {$\widetilde{F}$};

  \draw[black,line width=0.95pt,->] (U) -- node[midway,left=4pt] {$m_u$} (L);

  \draw[black,line width=0.95pt,->] (A) -- node[midway,below=3pt] {$m_v$} (B);

  \node[left=4pt]  at (U) {$(0,b+1)$};
  \node[left=4pt]  at (L) {$(0,b)$};
  \node[below=4pt] at (A) {$(a,0)$};
  \node[below right=4pt] at (B) {$(a+1,0)$};
\end{tikzpicture}
\end{figure}
  Let $o$ be the origin. It is in the area marked by $\tilde{F}$, which contains the original $F$. I.e.
\begin{equation}
\label{eq:origin}
o=\left(\ge \frac{a+1}{2}, \ge \frac{b+1}{2}\right)
\end{equation}
Scaling $\omega$ such that $\omega(m_u,m_v)=1$ we then obtain
\begin{equation}
\label{eq:final_estimate}
\frac{\omega(m_u,m_v)^2}{\omega(l_{u-1,u},l_{u,u+1})\omega(l_{u-1,u},l_{u,u+1})}\le \frac{1}{(a+1)/2\cdot (b+1)/2}=\frac{4}{(a+1)(b+1)}\le 4.
\end{equation}
Let us now analyse when we have equality.  By \eqref{eq:final_estimate} this implies $a=b=0$, i.e. $v=u+1$.  Inspecting how we obtain the first inequality in \eqref{eq:final_estimate} we see the inequalities in \eqref{eq:origin} must also be equalities.
In other words $o$ is the midpoint of interval $[l_{u-1,u},l_{v,v+1}]$, which 
is not in $F$, unless $u=i$, $v=j$. But then there is only a single
non-admissible vertex, namely $l_{i,i+1}$, contradicting our assumption. Hence the inequality in \eqref{eq:final_estimate} is strict.

Since $c_{uv}$ is integral we obtain from \eqref{eq:omega_claim} together with \eqref{eq:quiver_arrows}
\begin{equation}
  \label{eq:c_bound}
  c_{uv}^2\alpha_u\alpha_v\le 3
\end{equation}
which implies
\begin{equation}
  \label{eq:c_bound2}
  c_{uv}=1\text{ for $i\le u<v\le j$}.
  \end{equation} If $j\ge i+3$ then this contradicts \eqref{eq:plucker2}. Hence $j=i+2$. In the sequel to simplify the notations let
us put $i=1$, $j=3$.  Then \eqref{eq:c_bound} combined with \eqref{eq:c_bound2} yields
\begin{equation}
  \label{eq:alpha_bound2}
  \alpha_1\alpha_2\le 3,\quad \alpha_2\alpha_3\le 3, \quad  \alpha_1\alpha_3\le 3.
  \end{equation}

Put
\[
\tilde{m}_i=\frac{m_i}{\alpha_i r_i}
\]
According to Theorem \ref{thm:achievement}\eqref{it:thm2_1} one has $\tilde{m}_i\in L_\ZZ$
and by Theorem \ref{thm:achievement}\eqref{it:thm2_5}
we have from $c_{uv}=1$:
\begin{align*}
\omega(\tilde{m}_1,\tilde{m}_2)&=1\\
\omega(\tilde{m}_2,\tilde{m}_3)&=1\\
\omega(\tilde{m}_1,\tilde{m}_3)&=1
\end{align*}
The only solution for this is (after choosing suitable coordinates for $L_\ZZ$)
\begin{align*}
\tilde{m}_1&=(0,-1)\\
\tilde{m}_2&=(1,-1)\\
\tilde{m}_3&=(1,0)
\end{align*}
So the situation is as in Figure \ref{fig:5bl_basic_case}. 

\begin{figure}
\centering
\caption{} \label{fig:5bl_basic_case}
\begin{tikzpicture}[scale=0.8,line cap=round,line join=round,>=Stealth]
 
  \coordinate (P0) at (0,4);    
  \coordinate (P1) at (0,2.2);  
  \coordinate (P2) at (2.2,0);  
  \coordinate (P3) at (6.6,0);  

  \draw[black,line width=0.95pt,->] (P0) -- node[midway,right=3pt] {$m_1$} (P1);
  \draw[black,line width=0.95pt,->] (P1) -- node[midway,above right=2pt] {$m_2$} (P2);
  \draw[black,line width=0.95pt,->] (P2) -- node[midway,above=3pt] {$m_3$} (P3);

  \node[left=8pt] at (P0) {$(0,\alpha_2 r_2+\alpha_1 r_1)$};
  \node[left=8pt] at (P1) {$(0,\alpha_2 r_2)$};
  \node[below=7pt] at (P2) {$(\alpha_2 r_2,0)$};
  \node[below=7pt] at (P3) {$(\alpha_2 r_2+\alpha_3 r_3,0)$};

  \coordinate (O) at (6.7,3.2);
  \fill[black] (O) circle (2.1pt);
  \node[above=2pt] at (O) {$o=(x,y)$};
\end{tikzpicture}
\end{figure}

It follows from Theorem \ref{thm:achievement}\eqref{it:thm2_3}, applied 
respectively to the pairs $(l_{01},l_{12})$ and $(l_{23}, l_{34})$, 
that $o$ lies on the lines
$x=r_1$, $y=r_3$. In other words $o=(r_1,r_3)$. Moreover Theorem \ref{thm:achievement}\eqref{it:thm2_3} applied to the pair $(l_{12}, l_{23})$
yields
\begin{equation}
\label{eq:r2expression}
r_2=\frac{r_1+r_3}{\alpha_2+1}
\end{equation}
The fact that $o$ is in the forbidden region implies ${x\ge (\alpha_2 r_2+\alpha_3 r_3)/2}$ and ${y\ge (\alpha_2 r_2+\alpha_3 r_1)/2}$.
From the first inequality we get
\begin{align*}
r_1&\ge \frac{1}{2}(\alpha_2 r_2+\alpha_3 r_3)\\
&=\frac{1}{2}\left(\frac{\alpha_2}{\alpha_2+1}(r_1+r_2)+\alpha_3 r_3          \right)\\
&=\frac{1}{2}\left(\frac{\alpha_2}{\alpha_2+1}r_1+\left(\frac{\alpha_2}{\alpha_2+1} +\alpha_3 \right)r_3          \right)
\end{align*}
which yields
\begin{equation}
\label{eq:bound1_}
\left(2-  \frac{\alpha_2}{\alpha_2+1}  \right)r_1 \ge
\left(     \frac{\alpha_2}{\alpha_2+1}+\alpha_3        \right)r_3
\end{equation}
Likewise
\begin{equation}
\label{eq:bound2_}
\left(2-  \frac{\alpha_2}{\alpha_2+1}  \right)r_3 \ge
\left(     \frac{\alpha_2}{\alpha_2+1}+\alpha_1        \right)r_1
\end{equation}
Multiplying both formulas and cancelling $r_1r_3$ yields
\begin{equation}
\label{eq:alpha_bound}
\left(2-  \frac{\alpha_2}{\alpha_2+1}  \right)^2
\ge \left(     \frac{\alpha_2}{\alpha_2+1}+\alpha_1        \right)\left(     \frac{\alpha_2}{\alpha_2+1}+\alpha_3        \right)
\end{equation}
From \eqref{eq:alpha_bound2} we get $\alpha_2\le 3$. Assume first $\alpha_2=1$. Then \eqref{eq:alpha_bound} becomes
\[
\frac{3}{2}\times \frac{3}{2} \ge \left(     \frac{1}{2}+\alpha_1        \right)\left(     \frac{1}{2}+\alpha_3        \right)
\]
whose only solution is $\alpha_1=\alpha_3=1$. From \eqref{eq:bound1_}\eqref{eq:bound2_}
we then get $r_1=r_3$ and from \eqref{eq:r2expression} we get $r_2=r_1=r_3$. From Theorem \ref{thm:achievement}\eqref{it:thm2_2} we
then obtain $r_1=r_2=r_3=1$.

If $\alpha_2= 2$ then from \eqref{eq:c_bound} we get $\alpha_1=\alpha_3=1$ and \eqref{eq:alpha_bound} becomes
\[
\frac{4}{3}\times \frac{4}{3}\ge \left(\frac{2}{3}+1\right)\left(\frac{2}{3}+1\right),
\]
which is false. Similarly,  if $\alpha_2=3$ then we get again $\alpha_1=\alpha_3=1$ and 
\[
\frac{5}{4}\times \frac{5}{4}\ge \left(\frac{3}{4}+1\right)\left(\frac{3}{4}+1\right),
\]
which is again false. This finishes the proof of the lemma.
\end{proof}
\begin{remark}
  The second part of Lemma \ref{lem:prelim} will ultimately turn out to be
  empty since, as asserted in Theorem \ref{thm:minimal_list}, $P$ can have at most 4 non-straight vertices.
  However Lemma \ref{lem:prelim} will be an intermediate step in establishing this.
\end{remark}
\subsection{Geometric lemmas for quadrangles}
\label{sec:geometric}
We consider some ad hoc geometric lemmas for convex quadrangles in the plane which will be used to bound the terms in \eqref{eq:work_horse}
during the treatment of 4- and 5-block collections.  First assume that we are in the situation of Figure \ref{fig:4bl_1},  where $P=\conv(abcd)$ is a convex polygon,  the vertex $d$ is non-admissible (see Definition \ref{def:admissible}) and the ``origin'' $o$ is in intersection of the forbidden region (see Definition \ref{def:forbidden_region}) and the interior of $P$. 

\begin{figure}[h!]
\centering
\caption{}\label{fig:4bl_1}
\begin{tikzpicture}[scale=0.5,line cap=round,line join=round]
  \definecolor{mygreen}{RGB}{0,128,72}

  \coordinate (c) at (0,0);
  \coordinate (d) at (1.15,-3.0);
  \coordinate (a) at (5.55,-3.35);
  \coordinate (b) at (7.85,4.15);
  \coordinate (o) at ($(d)!0.42!(b)$);

  \draw[black,line width=1pt] (c)--(b)--(a)--(d)--cycle;
  \draw[black,line width=1pt] (c)--(a);

  \draw[mygreen,line width=1pt] (d)--(o)--(b);
  \draw[mygreen,line width=1pt] (c)--(o)--(a);
  \fill[mygreen] (o) circle (3pt);

  \node at ($(c)+(-0.52,0.08)$) {$c$};
  \node at ($(d)+(-0.40,-0.52)$) {$d$};
  \node at ($(a)+(0.22,-0.35)$) {$a$};
  \node at ($(b)+(0.42,0.20)$) {$b$};
  \node[text=mygreen] at ($(o)+(0.42,-0.22)$) {$o$};
\end{tikzpicture}
\end{figure}

Note that the non-admissible vertex is unique and its existence implies that $P$ has no parallel edges.

\begin{lemma} \label{lem:forbidden_area}
The assumption that the origin is in the forbidden region is equivalent to the following conditions:
\begin{align*}
\frac{\area(oab)}{\area(abc)}&\le \frac{1}{2}&
\frac{\area(obc)}{\area(abc)}&\le \frac{1}{2}\\
\frac{\area(oda)}{\area(abd)}&\le \frac{1}{2}&
\frac{\area(ocd)}{\area(bcd)}&\le \frac{1}{2}
\end{align*}
\end{lemma}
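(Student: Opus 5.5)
The plan is to unwind Definition \ref{def:forbidden_region} edge by edge for the quadrangle $P=\conv(abcd)$. Since $P$ has no parallel edges, each edge has a unique opposing vertex (Definition \ref{def:opposing}). The forbidden region is the intersection over the four edges $m$ of the half planes ${}^{1/2}H^+_m$, and ${}^{1/2}H^+_m$ is the set of points whose distance to the line through $m$ is at most half the distance from that line to the opposing vertex of $m$. The first step is to identify the opposing vertex of each edge. Because $d$ is non-admissible, $d$ is not opposing to any edge, and a vertex is never opposing to an edge it lies on. Hence:
\begin{itemize}
\item the edge $[a,b]$ has opposing vertex $c$;
\item the edge $[b,c]$ has opposing vertex $a$;
\item the edge $[c,d]$ has opposing vertex $a$ or $b$;
\item the edge $[d,a]$ has opposing vertex $b$ or $c$.
\end{itemize}

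For the two edges on the non-admissible vertex I would decide which vertex is furthest by a convexity argument. Take $[c,d]$. If $a$ were strictly further from the line $cd$ than $b$, then $a$ would be the opposing vertex of $[c,d]$. Using convexity and the fact that $d$ is not opposing to anything, one should be able to rule this out via the configuration of Figure \ref{fig:4bl_1}, giving opposing vertex $b$ for $[c,d]$. The symmetric argument gives opposing vertex $b$ for $[d,a]$. This is the step I expect to be the main obstacle: I have to check that non-admissibility of $d$ (for instance, that $d$ is not opposing to $[a,b]$ or $[b,c]$) forces exactly these choices, and not, say, $c$ for $[d,a]$. If that fails, I would instead phrase the condition as a maximum over the two candidate vertices. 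The reduction of Lemma \ref{lem:nonadm_bigger} suggests that the relevant triangles are $abd$ and $bcd$, which is consistent with $b$ being opposing to both edges at $d$.

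Once the opposing vertices are fixed, each half-plane condition becomes an area ratio, since signed distances to a line through an edge $[u,v]$ are proportional to the areas of triangles with base $[u,v]$. The four conditions translate as follows:
\begin{itemize}
\item $o$ lies in ${}^{1/2}H^+_{[a,b]}$ iff $\area(oab)\le \frac12\area(abc)$;
\item for $[b,c]$ with opposing vertex $a$, iff $\area(obc)\le\frac12\area(abc)$;
\item for $[c,d]$ with opposing vertex $b$, iff $\area(ocd)\le\frac12\area(bcd)$;
\item for $[d,a]$ with opposing vertex $b$, iff $\area(oda)\le\frac12\area(abd)$.
\end{itemize}
Each equivalence uses that $o$ lies in the interior of $P$, so that all these triangle areas are positive. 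It then only remains to note that the forbidden region is the intersection of exactly these four half planes.
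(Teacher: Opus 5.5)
You are unwinding the definition of the forbidden region edge by edge and converting each half-plane condition into a ratio of triangle areas with a common base, using that $o$ is interior to $P$. That is the right approach; the paper gives no proof of this lemma and evidently intends the same unwinding. There is, however, a genuine gap. The only step with real content is that $b$ is the opposing vertex of both edges $[c,d]$ and $[d,a]$, and you do not prove it. You write that one should be able to rule out the alternatives via the figure, and you note that Lemma \ref{lem:nonadm_bigger} is consistent with this; neither is an argument. Your fallback, a maximum over the two candidate vertices, would not prove the lemma as stated, because the stated inequalities use $\area(bcd)$ and $\area(abd)$ specifically.

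The missing step is a one-line area identity. Both diagonals cut the convex quadrangle into two triangles, so
\[
\area(abc)+\area(acd)=\area(P)=\area(abd)+\area(bcd).
\]
Since $d$ is non-admissible, it does not lie on the far support line of any edge. So $c$ is strictly farther from the line $ab$ than $d$, i.e.\ $\area(abc)>\area(abd)$. By the identity, $\area(bcd)>\area(acd)$, so $b$ is strictly farther from the line $cd$ than $a$, and $b$ is the opposing vertex of $[c,d]$. Likewise, $a$ opposing $[b,c]$ gives $\area(abc)>\area(bcd)$, hence $\area(abd)>\area(acd)$. So $b$ is strictly farther from the line $da$ than $c$ and is the opposing vertex of $[d,a]$.

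With this inserted, your translation into the four area ratios goes through as written. The same computation also justifies the paper's remark that the non-admissible vertex is unique: $b$ is opposing to two edges, and $a$ and $c$ to one each.
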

\begin{lemma}
\label{lem:mainlem}
We have the following inequalities
\begin{align}
\label{eq:basic_eq}
\frac{\area(cda)}{\area(oda)}&\le \frac{2\area(oab)+2\area(obc)-\area(abc)}{\area(oab)}\\
\label{eq:basic_eq_sub}
\frac{\area(cda)}{\area(ocd)}&\le \frac{2\area(oab)+2\area(obc)-\area(abc)}{\area(obc)}
\end{align}
\end{lemma}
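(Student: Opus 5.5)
The plan is to reduce everything to barycentric coordinates with respect to the triangle $abc$. By the symmetry of the configuration of Figure~\ref{fig:4bl_1} under the relabelling $a\leftrightarrow c$ (with $b,d$ fixed), \eqref{eq:basic_eq_sub} follows from \eqref{eq:basic_eq}. This relabelling reverses orientation, but all quantities are unsigned areas. The hypotheses also match: non-admissibility of $d$ is preserved, and so is the list of conditions in Lemma~\ref{lem:forbidden_area}, since its two rows are exchanged. So I will only treat \eqref{eq:basic_eq}.

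Write $o=\lambda_a a+\lambda_b b+\lambda_c c$ with $\lambda_a+\lambda_b+\lambda_c=1$. Then $\area(obc)=\lambda_a\area(abc)$ and $\area(oab)=\lambda_c\area(abc)$. Since $o$ is interior, these are positive areas and $\lambda_a,\lambda_c>0$; note that $\lambda_b$ may have either sign, depending on which side of the diagonal $ca$ the point $o$ lies. The map $x\mapsto \area(xda)/\area(cda)$ is affine in $x$ on the side of $da$ containing $P$. It vanishes on $a$, equals $1$ at $c$, and equals $\beta:=\area(abd)/\area(acd)>0$ at $b$. Hence $\area(oda)/\area(cda)=\lambda_c+\beta\lambda_b$. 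Moreover $2\area(oab)+2\area(obc)-\area(abc)=(2\lambda_c+2\lambda_a-1)\area(abc)=(\lambda_a+\lambda_c-\lambda_b)\area(abc)$. So \eqref{eq:basic_eq} becomes the polynomial inequality
\[
\lambda_c\le (\lambda_c+\beta\lambda_b)(\lambda_a+\lambda_c-\lambda_b),
\]
in which both factors on the right should be positive.

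Next I will translate the hypotheses into linear constraints on $(\lambda_a,\lambda_b,\lambda_c)$, with $\beta$ as a parameter. By Lemma~\ref{lem:forbidden_area}, $\lambda_a\le\tfrac12$, $\lambda_c\le \tfrac12$, $\lambda_c+\beta\lambda_b\le \tfrac12\area(abd)/\area(cda)$, and there is an analogous bound for $\area(ocd)/\area(bcd)$, which is again affine in the $\lambda$'s. Non-admissibility of $d$ means that $d$ is not the opposing vertex of any edge. In particular the opposing vertex of $ab$ is $c$ and that of $bc$ is $a$, which gives $\area(abd)\le\area(abc)$ and $\area(bcd)\le\area(abc)$. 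These yield bounds on $\beta$ and on the position of $d$ in terms of $\area(abc)$ and $\area(acd)$.

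The inequality is then a quadratic statement on a convex polygonal region in $(\lambda_a,\lambda_c)$-space, with $\lambda_b=1-\lambda_a-\lambda_c$. I will check that the quadratic $(\lambda_c+\beta\lambda_b)(\lambda_a+\lambda_c-\lambda_b)-\lambda_c$ has no interior minimum on that region. For instance, its Hessian is indefinite or it is monotone in a suitable direction, so it suffices to verify nonnegativity on the boundary edges, where it becomes a one-variable quadratic. The main obstacle is this last step: correctly identifying which of the forbidden-region and non-admissibility constraints are active, and checking the edge cases. I expect the binding case to be $o$ on the line ${}^{1/2}H$ parallel to $da$, where $\lambda_c+\beta\lambda_b$ equals half its maximal value. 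I would first try to show that the expression is increasing as $o$ moves toward $b$, which reduces the check to that single boundary line.
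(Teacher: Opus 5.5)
Your reduction is correct. For $o$ in the interior of $P$ one has $\area(oab)=\lambda_c\area(abc)$, $\area(obc)=\lambda_a\area(abc)$ and $\area(oda)=(\lambda_c+\beta\lambda_b)\area(cda)$, and the numerator in \eqref{eq:basic_eq} equals $(1-2\lambda_b)\area(abc)$. Multiplying \eqref{eq:basic_eq} by $\area(oda)\area(oab)>0$ shows it is equivalent to $F:=(\lambda_c+\beta\lambda_b)(1-2\lambda_b)-\lambda_c\ge 0$, with no sign assumption on the second factor. The $a\leftrightarrow c$ symmetry argument for \eqref{eq:basic_eq_sub} is also fine.

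However, the proposal stops exactly where the content of the lemma lies: $F\ge 0$ is never established. What you offer instead (no interior minimum, boundary edges, active constraints, monotonicity toward $b$) is explicitly left as ``the main obstacle'', and parts of it are off. First, $\lambda_b$ cannot be negative: your own constraints $\lambda_a,\lambda_c\le\tfrac12$ force $\lambda_b\ge 0$. Second, $F$ is not increasing as $o$ moves toward $b$ (see below). As written, this is a reduction, not a proof.

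The missing observation is that the quadratic factors. Using $\lambda_a+\lambda_c=1-\lambda_b$,
\[
F=(\lambda_c+\beta\lambda_b)(1-2\lambda_b)-\lambda_c=2\lambda_b\Bigl(\tfrac{\beta}{2}-\lambda_c-\beta\lambda_b\Bigr).
\]
The first factor is $\ge 0$ by $\lambda_a,\lambda_c\le\tfrac12$. The second is $\ge 0$ because your condition $\lambda_c+\beta\lambda_b\le\tfrac12\area(abd)/\area(cda)$ is literally $\lambda_c+\beta\lambda_b\le\beta/2$, the forbidden-region condition for the edge $da$. This proves \eqref{eq:basic_eq} in one line. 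It needs no Hessian or boundary analysis, no non-admissibility bounds on $\beta$, and no condition for $cd$.

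The factorization also confirms your guess that equality occurs on the half-way line parallel to $da$. Moreover $F$ vanishes on that line and is negative just beyond it on the side of $b$. So near that line $F$ decreases, rather than increases, as $o$ moves toward $b$.

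For comparison, the paper keeps $o$ fixed and moves $d$. In coordinates $a=(0,1)$, $b=(0,0)$, $c=(1,0)$ it shows $d\in\conv(c,o',a)$ with $o'=2o-b$. It then argues geometrically that $\area(cda)/\area(oda)$ is maximal when $d$ lies on the ray $ao'$, and checks that \eqref{eq:basic_eq} is an equality at $d=o'$. Your barycentric route, once completed by this factorization, is shorter and makes explicit which forbidden-region conditions are actually used.
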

\begin{proof} The proofs of \eqref{eq:basic_eq} and \eqref{eq:basic_eq_sub} are similar, so we concentrate on \eqref{eq:basic_eq}.
We put:
\[
x=\frac{\area(obc)}{\area(abc)}, \qquad y =\frac{\area(oab)}{\area(abc)}
\]
After applying an affine transformation we may assume that $a = (0,1),b = (0,0),c=(1,0),d = (u,v),o=(x,y)$,  as
in Figure \ref{fig:coordinates}
\begin{figure}[!]
\centering
\caption{}\label{fig:coordinates}
\begin{tikzpicture}[x=6.2cm,y=6.2cm,line cap=round,line join=round]

\definecolor{mygreen}{RGB}{0,128,70}
\definecolor{myblue}{RGB}{150,180,210}
\definecolor{myred}{RGB}{245,55,35}

\coordinate (A) at (0,1);
\coordinate (B) at (0,0);
\coordinate (C) at (1,0);

\coordinate (O) at (0.34,0.36);

\coordinate (Op) at ($(B)!2!(O)$);

\coordinate (D) at (0.52,0.7);

\coordinate (Mab) at ($(A)!0.5!(B)$);
\coordinate (Mbc) at ($(B)!0.5!(C)$);
\coordinate (Mac) at ($(A)!0.5!(C)$);

\draw[black,line width=0.9pt] (B)--(A)--(C)--cycle;
\draw[black,line width=0.9pt] (A)--(D)--(C);
\draw[black,line width=0.9pt] (A)--(C);

\draw[myred,line width=0.9pt] (Mab)--(Mac)--(Mbc)--cycle;

\draw[myblue,line width=0.9pt] (A)--(Op)--(C);
\draw[myblue,line width=0.9pt] (Mab)--(O)--(Mbc);
\fill[myblue] (Op) circle (2.2pt);

\draw[mygreen,line width=0.9pt] (A)--(O)--(B);
\draw[mygreen,line width=0.9pt] (O)--(C);
\draw[mygreen,line width=0.9pt] (O)--(D);
\fill[mygreen] (O) circle (2.2pt);

\node at ($(A)+(-0.12,0.04)$) {$a(0,1)$};
\node at ($(B)+(-0.11,-0.06)$) {$b(0,0)$};
\node at ($(C)+(0.06,-0.05)$) {$c(1,0)$};
\node[text=mygreen] at ($(O)+(0.12,0.02)$) {$o(x,y)$};
\node at ($(Op)+(0.15,0.02)$) {$o'(2x,2y)$};
\node at ($(D)+(+0.1,-0.08)$) {$d(u,v)$};

\end{tikzpicture}
\end{figure}

The fact that $o$ is in the forbidden region implies by Lemma \ref{lem:nonadm_bigger} that it is in the red triangle, whose vertices are the midpoints of the edges of $\conv(abc)$. 
We construct an auxiliary point $o'=(2x,2y)$ as indicated by the light blue lines. The line $ao'$ is parallel to the line through $o$ and the midpoint of $[ab]$.  Likewise the line $co'$ is parallel with the line through $o$ and the midpoint of $[bc]$.
The fact that $o$ is in the forbidden region implies that $d$ is in the triangle $\conv(co'a)$. 

We claim first that
\begin{equation}
\label{eq:move_d}
\frac{\area(cda)}{\area(oda)}\le \frac{\area(co'a)}{\area(oo'a)}.
\end{equation}
The right hand side does not depend on $d$ so we must analyse how the expression on the left changes if we move $d$. To this end we consider the following picture in Figure \ref{fig:4bl_2},  where the line $cc'$ is parallel to the line $ad$.

\begin{figure}[h!]
\centering
\caption{}\label{fig:4bl_2}
\begin{tikzpicture}[scale=0.6,line cap=round,line join=round]
  \definecolor{mygreen}{RGB}{0,128,70}
  \definecolor{myblue}{RGB}{150,180,210}

  \coordinate (a) at (0,5.2);
  \coordinate (o) at (4.0,-0.1);
  \coordinate (d) at (6.25,3.7);

  \coordinate (cp) at ($(a)!0.58!(o)$);

  \coordinate (c) at ($(cp)+1.02*(d)-1.02*(a)$);

  \draw[black,line width=1.2pt] (a)--(d)--(c)--cycle;
  \draw[black,line width=1.2pt] (a)--(c);

  \draw[mygreen,line width=1.2pt] (a)--(o)--(d);
  \draw[mygreen,line width=1.2pt] (a)--(o);

  \draw[myblue,line width=1.2pt] (cp)--(c);
  \fill[myblue] (cp) circle (2.2pt);

  \fill[mygreen] (o) circle (2.4pt);

  \node at ($(a)+(-0.55,0.10)$) {$a$};
  \node at ($(d)+(0.35,0.30)$) {$d$};
  \node at ($(c)+(0.35,-0.15)$) {$c$};
  \node[text=mygreen] at ($(o)+(0.42,0.10)$) {$o$};
  \node[text=myblue] at ($(cp)+(-0.55,-0.10)$) {$c'$};
\end{tikzpicture}
\end{figure}

One has
\[
\frac{\area(cda)}{\area(oda)}=\frac{|ac'|}{|ao|}.
\]
Here the right hand side does not change if $d$ moves on the line $ad$ but it increases if $\angle dac$ increases. 
Since $d$ is constrained to the region $\conv(co'a)$ we see that the maximum of the left hand side of \eqref{eq:move_d} is achieved when $d=o'$.
This proves \eqref{eq:move_d}. Hence it is sufficient to prove \eqref{eq:basic_eq} with $d=o'$ since the right hand side of \eqref{eq:basic_eq} does not depend
on $d$.  We compute
\begin{align*}
\area(co'a)&=(2x+2y-1)/2\\
\area(oo'a) & = x/2\\
\area(oab)&=x/2\\
\area(obc)&=y/2\\
\area(abc) & = 1/2
\end{align*}
Then \eqref{eq:basic_eq} becomes
\[
\frac{(2x+2y-1)/2}{x/2}\le \frac{2(x/2)+2(y/2)-1/2}{x/2}
\]
and we see that it is in fact an equality.
\end{proof}
\begin{lemma}
\label{lem:basic_bounds}
One has
\begin{equation}
\label{eq:bound1}
\frac{\area(cda)^2}{\area(oda)\area(ocd)}\le 4
\end{equation}
with equality if and only if $o$ is the midpoint of $[ac]$. Furthermore, 
\begin{equation}
\label{eq:bound2}
 \frac{\area(abc)^2}{\area(obc)\area(oab)}\ge 4.
\end{equation}
with equality if and only if $o$ is the midpoint of $[ac]$. Finally,
\begin{equation}
\label{eq:chi_rel1}
\frac{\area(cda)^2}{\area(oda)\area(ocd)}
 \cdot \frac{\area(abc)^2}{\area(oab)\area(obc)}\le 16.
\end{equation}
\end{lemma}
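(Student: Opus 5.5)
We work in the affine coordinates of the proof of Lemma \ref{lem:mainlem}: $a=(0,1)$, $b=(0,0)$, $c=(1,0)$, $o=(x,y)$ and $d=(u,v)$. The forbidden-region hypothesis, through Lemma \ref{lem:nonadm_bigger}, puts $o$ in the medial triangle of $\conv(abc)$. This gives $x\le 1/2$, $y\le 1/2$ and $x+y\ge 1/2$. It also puts $d$ in $\conv(co'a)$ with $o'=(2x,2y)$.

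\emph{Proof of \eqref{eq:bound2}.} Here $\area(abc)=1/2$, $\area(obc)=y/2$ and $\area(oab)=x/2$. The quotient is therefore $1/(xy)$, and since $x,y\le 1/2$ we get $1/(xy)\ge 4$. Equality forces $x=y=1/2$, i.e.\ $o$ is the midpoint of $[ac]$. Conversely, if $o$ is that midpoint then $x=y=1/2$ and equality holds.

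\emph{Proof of \eqref{eq:bound1}.} Multiply \eqref{eq:basic_eq} and \eqref{eq:basic_eq_sub} from Lemma \ref{lem:mainlem}. With $s:=2\area(oab)+2\area(obc)-\area(abc)=(2x+2y-1)/2$ this gives
\[
\frac{\area(cda)^2}{\area(oda)\area(ocd)}\le \frac{s^2}{(x/2)(y/2)}=\frac{(2x+2y-1)^2}{xy}.
\]
It remains to show $(2x+2y-1)^2\le 4xy$ on the medial triangle. Write $p=2x$ and $q=2y$ with $p,q\in[0,1]$ and $p+q\ge 1$. The inequality becomes $(p+q-1)^2\le pq$, which expands to $p^2+q^2+pq-2p-2q+1\le 0$. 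The left side is a convex quadratic, so it suffices to check the three vertices $(1,0)$, $(0,1)$, $(1,1)$. It equals $0$ at each of them, so it is $\le 0$ on the whole triangle.

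\emph{Equality in \eqref{eq:bound1}.} This is the step needing care. The quadratic bound is tight exactly at the three vertices, i.e.\ $o\in\{(1/2,0),(0,1/2),(1/2,1/2)\}$. At the first two, $o$ lies on the boundary of $P$, which is excluded because $o$ is in the interior. So equality in the quadratic bound forces $o$ to be the midpoint of $[ac]$. For the multiplied bound from Lemma \ref{lem:mainlem} to also be tight, $d$ must equal $o'$ by the monotonicity argument in the proof of \eqref{eq:move_d}. When $o$ is the midpoint, $o'=(1,1)$, and $(1,1)\in\conv(co'a)$ is the only remaining choice for $d$. Conversely, if $o$ is the midpoint of $[ac]$ then $o'=(1,1)$ and $\conv(co'a)$ is a triangle with $[ac]$ as a side. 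I would verify directly that $\area(oda)=\area(ocd)=\area(cda)/2$ for all such $d$, since $o$ lies on $[ac]$. This gives equality.

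\emph{Proof of \eqref{eq:chi_rel1}.} Multiply the two bounds just obtained:
\[
\frac{(2x+2y-1)^2}{xy}\cdot\frac{1}{xy}=\left(\frac{2x+2y-1}{xy}\right)^2.
\]
So it suffices to show $2x+2y-1\le 4xy$. This is equivalent to $(1-2x)(1-2y)\ge 0$, which holds because $x,y\le 1/2$.
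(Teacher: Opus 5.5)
Your argument is correct. For \eqref{eq:bound2} it is exactly the paper's computation, but for the other two claims your route differs.

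\textbf{Proof of \eqref{eq:bound1}.} The paper proves this directly, in affine coordinates adapted to the triangle $cda$. There the ratio becomes $1/(pq)$ with $p=\area(ocd)/\area(acd)$ and $q=\area(oda)/\area(cda)$, and the forbidden region gives $p,q\ge 1/2$. That step tacitly uses that $b$, rather than $a$ or $c$, is the vertex farthest from the lines $cd$ and $da$; this is where the non-admissibility of $d$ enters. You instead multiply \eqref{eq:basic_eq} and \eqref{eq:basic_eq_sub}. In other words you start from the inequality \eqref{eq:lemma_eq1}, which the paper only uses for the third claim, and then prove $(2x+2y-1)^2\le 4xy$ on the medial triangle by convexity. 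This lets Lemma \ref{lem:mainlem}, which already packages the constraint $d\in\conv(co'a)$, carry both \eqref{eq:bound1} and \eqref{eq:chi_rel1}. The paper's proof of \eqref{eq:bound1} is shorter and independent of that lemma.

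\textbf{Proof of \eqref{eq:chi_rel1}.} Both proofs reduce to showing that $f(x,y)=(2x+2y-1)^2-uxy\ge 0$ forces $xy\ge u/16$. The paper reads this off the picture of the ellipse $f=0$ in Figure \ref{fig:4bl_extremal}. Your observation $0\le 2x+2y-1\le 4xy$, equivalently $(1-2x)(1-2y)\ge 0$, gives $u\,xy\le (2x+2y-1)^2\le 16x^2y^2$ in one line. That is a cleaner and fully rigorous replacement for the graphical step.

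\textbf{A false remark in the equality case.} One sentence in your equality discussion for \eqref{eq:bound1} is wrong and should be removed: tightness of the bound from Lemma \ref{lem:mainlem} does not force $d=o'$. When $o$ is the midpoint of $[ac]$, the point $c'$ in the proof of \eqref{eq:move_d} coincides with $c$. Hence $\area(cda)/\area(oda)=2$ for every $d$, and the monotonicity used there is not strict. Your own converse shows that equality holds for all such $d$. Moreover $d=o'=(1,1)$ is actually excluded, since it would make $cd$ parallel to $ab$. Nothing is lost by deleting the remark: the \emph{only if} direction needs only the tightness of the quadratic bound at the vertices of the medial triangle, which you have already established.
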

\begin{proof} Both \eqref{eq:bound1} and \eqref{eq:bound2} are easy.  To prove \eqref{eq:bound1} we choose a coordinate system such that $c=(0,1)$, $a=(0,0)$, $d=(1,0)$
and we put $o=(p,q)$. Then the left hand side \eqref{eq:bound1} becomes $\frac{(1/4)}{(p/2)(q/2)}=\frac{1}{pq}$.
The fact that $o$ is in the forbidden region implies $p\ge 1/2$, $q\ge 1/2$ which yields what we want. To prove \eqref{eq:bound2} we use a coordinate system as in Figure \ref{fig:coordinates}.

Now we concentrate on \eqref{eq:chi_rel1}. Put
\[
u=\frac{\area(cda)^2}{\area(oda)\area(ocd)}.
\]
Hence $u\le 4$ by \eqref{eq:bound1}.
Multiplying \eqref{eq:basic_eq} and \eqref{eq:basic_eq_sub} yields:
\begin{equation}
\label{eq:lemma_eq1}
u\le \frac{(2\area(oab)+2\area(obc)-\area(abc))^2}{\area(oab)\area(obc)}
\end{equation}
We claim that this implies \eqref{eq:chi_rel1}.
To prove this we put ourselves in the setting of Figure \ref{fig:coordinates}. Then \eqref{eq:lemma_eq1}
is equivalent to
\[
f(x,y):=(2x+2y-1)^2-uxy\ge 0
\]
We have to prove that this implies $xy\ge u/16$ under the conditions $0<x\le 1/2$, $0<y\le 1/2$, $2x+2y-1\ge 0$, which express that $o$ is in the red
triangle in Figure \ref{fig:coordinates} as well as in the interior of $\conv(abcd)$.

The curve $C := \{f(x,y)=0\}$ is an ellipse and $f$ takes negative values on the interior of the region bounded by $C$.
Clearly,  $C$ is invariant under the reflection $x\leftrightarrow y$, moreover, it intersects the lines $x=1/2$, $y=1/2$ respectively
in the points $\{(1/2,0), (1/2,u/8)\}$ and $\{(0,1/2), (u/8,1/2)\}$.  So the situation is like depicted in Figure \ref{fig:4bl_extremal}. 

\begin{figure}[ht]
\centering
\caption{}\label{fig:4bl_extremal}
\begin{tikzpicture}
\begin{axis}[
    width=11cm,
    height=8.2cm,
    xmin=-0.01, xmax=0.51,
    ymin=-0.01, ymax=0.51,
    axis equal image,
    axis lines=box,
    xtick={0,0.1,...,0.5},
    ytick={0,0.1,...,0.5},
    minor tick num=4,
    tick label style={font=\small},
    clip=false,
    samples=300,
    domain=0:0.5,
]

\addplot[black, thick] {0.5 - x};

\addplot[blue, thick, domain=0:0.5]
    {(2 - 3*x - sqrt(x*(4 - 7*x)))/4};

\addplot[blue, thick, domain=0.25:0.5]
    {(2 - 3*x + sqrt(x*(4 - 7*x)))/4};

\addplot[red, thick, domain=0.25:0.5]
    {1/(8*x)};

\node[blue] at (axis cs:0.09,0.35) {$f<0$};
\node[blue] at (axis cs:0.42,0.43) {$f>0$};
\node[blue] at (axis cs:0.08,0.05) {$f>0$};
\node[red] at (axis cs:0.30,0.36) {$xy=u/16$};

\end{axis}
\end{tikzpicture}
\end{figure}

In blue we have drawn an implicit plot of the curve $C$ and we have indicated the sign of $f$ on the complement of this curve.  We are only interested in the region $f\ge 0$ to the right of the black line. We have to understand the curves of the
form $xy=\lambda$ which intersect this region.  As the picture shows, this happens
for $\lambda\ge u/16$ which gives what we want.
\end{proof}
\begin{lemma}
We have
\begin{equation}
\label{eq:inequalities}
2\le \frac{\area(bcd)}{\area(ocd)}\le \frac{1}{\frac{1}{2}-2 
\frac{\area(obc)\area(oab)}{\area(abc)^2}}
\end{equation}
\end{lemma}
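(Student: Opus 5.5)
The plan is to prove the left inequality directly from Lemma \ref{lem:forbidden_area}, and to reduce the right inequality to a lower bound on $\area(ocd)/\area(bcd)$. That bound follows by extremising a ratio of affine functions of $d$ over the region where $d$ is allowed to lie.

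\emph{Left inequality.} It is just a restatement of the condition $\area(ocd)/\area(bcd)\le \frac12$ from Lemma \ref{lem:forbidden_area}. So the content is the right inequality.

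\emph{Reduction of the right inequality.} Place ourselves in the coordinates of Figure \ref{fig:coordinates}: $a=(0,1)$, $b=(0,0)$, $c=(1,0)$, $o=(x,y)$. With these coordinates $\area(oab)/\area(abc)=x$ and $\area(obc)/\area(abc)=y$. As established in the proof of Lemma \ref{lem:mainlem}:
\begin{itemize}
\item $o$ lies in the medial triangle, so $0<x,y\le \frac12$ and $x+y\ge \frac12$;
\item $d$ lies in the triangle $\conv(c\,o'\,a)$ with $o'=(2x,2y)$.
\end{itemize}
The right inequality of \eqref{eq:inequalities} is then equivalent to
\[
\frac{\area(ocd)}{\area(bcd)}\ \ge\ \frac12-2xy ,
\]
provided $\frac12-2xy>0$. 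This positivity holds because $xy\le \frac14$, with equality only when $o=(\frac12,\frac12)$, the midpoint of $[ac]$. That point lies on the boundary of $P$ and not in its interior, so it is excluded.

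\emph{Minimising the ratio.} Both $\area(ocd)$ and $\area(bcd)$ are linear in $d-c$, since each is half a determinant with $d-c$ as one column. Hence their ratio is constant along rays from $c$ and depends only on the direction of $d-c$. As $d$ ranges over $\conv(c\,o'\,a)$, this direction sweeps the angle between $ca$ and $co'$. A ratio of two linear forms on such an angular sector is monotone in the direction, so the minimum is attained at one of the two boundary rays. We evaluate it there:
\begin{itemize}
\item For $d=a$ the ratio is $\area(oca)/\area(abc)=1-x-y$.
\item For $d=o'$ a short determinant computation gives $\area(oco')=y/2$ and $\area(bco')=y$, so the ratio is $\frac12$.
\end{itemize}
It then remains to check that both values are $\ge\frac12-2xy$. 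The value $\frac12$ is clear since $xy\ge0$. For the other, $1-x-y-(\frac12-2xy)=\frac12(1-2x)(1-2y)\ge0$ because $x,y\le\frac12$.

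\emph{Main obstacle.} The main point needing care is the monotonicity/extremal step, namely justifying that the minimum over the admissible region for $d$ is attained on the rays through $a$ and $o'$. A cleaner way to handle it is to observe that the condition $\area(ocd)\ge \lambda\,\area(bcd)$ is a half-plane condition on $d$ for fixed $\lambda$. Such a half-plane contains the convex set $\conv(c\,o'\,a)$ as soon as it contains the three vertices. At $d=c$ both sides of the condition vanish, so it suffices to check $d=a$ and $d=o'$, which is exactly the computation above.
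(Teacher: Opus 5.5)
Your main argument is correct, and in substance it is the paper's. The paper draws the lines through $o$ parallel to $cd$ and to $ac$, and uses only that $d$ lies beyond the line $ac$ to get $\area(ocd)/\area(bcd)\ge 1-x-y$. It then bounds $1-x-y\ge \frac12-2xy$ on the medial triangle using the hyperbola picture in Figure~\ref{fig:4bl_3}. Your extreme-ray (or half-plane) argument over $\conv(c\,o'\,a)$ reaches the same quantity $1-x-y$, and your factorisation $\frac12(1-2x)(1-2y)$ is a cleaner way to finish. The ray through $o'$ is never the binding one: $x+y\ge\frac12$ gives $\frac12\ge 1-x-y$. That is why the paper can drop the triangle constraint and use only convexity.

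There is, however, a false claim in your positivity step. The midpoint of $[ac]$ is the midpoint of a diagonal of the convex quadrilateral $\conv(abcd)$, so it lies in the \emph{interior} of $P$, not on its boundary. It can also lie in the forbidden region. In your coordinates take $d=(u,v)$ with $u,v<1<u+v$ and $o=(\frac12,\frac12)$. Then all four conditions of Lemma~\ref{lem:forbidden_area} hold (they reduce to $u\le 1$ and $v\le 1$), $P$ is convex without parallel edges, and $d$ is non-admissible. This configuration is exactly the equality case of Lemma~\ref{lem:basic_bounds} and Case 1 of \S\ref{sec:4block}.

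So $\frac12-2xy=0$ does occur. The right-hand side of \eqref{eq:inequalities} is then $1/0$, and your ``equivalent to'' step breaks down. The correct treatment is one of two fixes:
\begin{itemize}
\item read the bound as $+\infty$, and hence vacuous, in that case; or
\item state the lemma under the hypothesis $\area(abc)^2>4\,\area(obc)\area(oab)$, i.e.\ $o$ is not the midpoint of $[ac]$.
\end{itemize}
The second is how the paper actually uses it: in Case 2 of \S\ref{sec:4block} with $\chi_{30}^2\alpha_3\alpha_0\ge 5$, and in the five-block case with the ratio $\ge 16/3$. With this case handled separately, the rest of your proof goes through.
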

\begin{proof}
Consider the following picture in Figure \ref{fig:second_bound}, where $oo'$ is parallel to $cd$ and $oo''$ is parallel to $ac$.
\begin{figure}[h!]
\centering
\caption{}\label{fig:second_bound}
\begin{tikzpicture}[scale=0.5,line cap=round,line join=round]
  \definecolor{mygreen}{RGB}{0,128,72}
  \definecolor{myblue}{RGB}{150,180,210}

  \coordinate (c) at (0,0);
  \coordinate (d) at (1.15,-3.0);
  \coordinate (a) at (5.55,-3.35);
  \coordinate (b) at (7.85,4.15);
  \coordinate (o) at ($(d)!0.42!(b)$);

  \coordinate (op) at ($(c)!0.42!(b)$);

  \coordinate (opp) at ($(c)!0.26953!(b)$);

  \draw[black,line width=1pt] (c)--(b)--(a)--(d)--cycle;
  \draw[black,line width=1pt] (c)--(a);

  \draw[mygreen,line width=1pt] (d)--(o)--(b);
  \draw[mygreen,line width=1pt] (c)--(o)--(a);
  \fill[mygreen] (o) circle (3pt);

  \draw[myblue,line width=1pt] (o)--(op);
  \draw[myblue,line width=1pt] (o)--(opp);
  \fill[myblue] (op) circle (2.4pt);
  \fill[myblue] (opp) circle (2.4pt);

  \node at ($(c)+(-0.52,0.08)$) {$c$};
  \node at ($(d)+(-0.40,-0.52)$) {$d$};
  \node at ($(a)+(0.22,-0.35)$) {$a$};
  \node at ($(b)+(0.42,0.20)$) {$b$};
  \node[text=mygreen] at ($(o)+(0.42,-0.22)$) {$o$};

  \node[text=myblue] at ($(op)+(0.02,0.45)$) {$o'$};
  \node[text=myblue] at ($(opp)+(-0.28,0.28)$) {$o''$};
\end{tikzpicture}
\end{figure}

 We find
\[
\frac{\area(bcd)}{\area(ocd)}=\frac{|cb|}{|co'|}\le \frac{|cb|}{|co''|}
\]
We first prove the upper bound in \eqref{eq:inequalities}.
Put
\begin{equation}
\label{eq:vdef}
v =\frac{\area(abc)^2}{\area(obc)\area(oab)}
\end{equation}
We need to find an upper bound for $|cb|/|co''|$ in terms of $v$. To this end we choose the coordinates as in Figure \ref{fig:coordinates}.  This gives us the picture in Figure \ref{fig:4bl_3}, where the purple curve expresses \eqref{eq:vdef} and the indicated $o''$ is the one such that $|co''|$ is minimal.
\begin{figure} 
\centering
\caption{}\label{fig:4bl_3}
\begin{tikzpicture}[x=5.5cm,y=5.5cm,line cap=round,line join=round]
  \definecolor{mypurple}{RGB}{140,85,210}
  \definecolor{myred}{RGB}{245,55,35}
  \definecolor{myblue}{RGB}{150,180,210}

  \pgfmathsetmacro{\vinv}{0.132361586}
  \pgfmathsetmacro{\twovinv}{2*\vinv}
  \pgfmathsetmacro{\xleft}{(1 - sqrt(1 - 4*\vinv))/2}
  \pgfmathsetmacro{\xright}{(1 + sqrt(1 - 4*\vinv))/2}

  \coordinate (A) at (0,1);
  \coordinate (B) at (0,0);
  \coordinate (C) at (1,0);

  \coordinate (Mab) at ($(A)!0.5!(B)$);
  \coordinate (Mac) at ($(A)!0.5!(C)$);
  \coordinate (Mbc) at ($(B)!0.5!(C)$);

  \coordinate (P) at ({2*\vinv},0.5);      
  \coordinate (Q) at (0.5,{2*\vinv});      
  \coordinate (R) at ({0.5+2*\vinv},0);    

  \draw[black,line width=1pt] (A)--(B)--(C)--cycle;

  \draw[myred,line width=1pt] (Mab)--(Mac)--(Mbc)--cycle;

  \draw[mypurple,line width=1pt,smooth,domain=\xleft:\xright,samples=200]
    plot (\x,{\vinv/\x});

  \draw[myblue,line width=1pt] (Q)--(R);
  \fill[myblue] (R) circle (2.5pt);

  \node at ($(A)+(-0.08,0.05)$) {$a(0,1)$};
  \node at ($(B)+(-0.02,-0.08)$) {$b(0,0)$};
  \node at ($(C)+(0.12,-0.03)$) {$c(1,0)$};

  \node[text=mypurple,font=\small] at ($(P)+(-0.03,0.10)$) {$xy=\dfrac{1}{v}$};

  \node[text=mypurple,font=\small] at ($(Q)+(0.11,0.09)$) {$\left(\dfrac12,\dfrac{2}{v}\right)$};
  \node at ($(R)+(0.04,0.05)$) {$o''$};
  \node[font=\small] at ($(R)+(0.00,-0.09)$) {$\left(\dfrac12+\dfrac{2}{v},0\right)$};
  
  \fill[mypurple] (P) circle (2.5pt);
  \fill[mypurple] (Q) circle (2.5pt);

\end{tikzpicture}
\end{figure} 

We see that
\[
|co''|\ge \frac{1}{2}-\frac{2}{v }
\]
so that
\[
\frac{|cb|}{|co''|}\le \frac{1}{\frac{1}{2}-\frac{2}{v }}
\]
which finishes the proof of the upper bound in \eqref{eq:inequalities}.
The lower bound follows from Lemma \ref{lem:forbidden_area}.
\end{proof}
\begin{lemma}
One has
\begin{equation}
\label{eq:r_red}
\area(oda)+\area(ocd)\le \area(oab)+\area(obc)
\end{equation}
\end{lemma}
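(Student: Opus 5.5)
The plan is to reduce \eqref{eq:r_red} to a comparison of areas across the diagonal $[ac]$. We then work in the coordinates of Figure \ref{fig:coordinates} already used in the proof of Lemma \ref{lem:mainlem}, where the constraints coming from the forbidden region have been made explicit.

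\emph{Decomposition along $[ac]$.} Since $d$ is non-admissible, Lemma \ref{lem:nonadm_bigger} lets us delete $d$. As in the proof of Lemma \ref{lem:mainlem}, $o$ then lies in the triangle spanned by the midpoints of the edges of $\conv(abc)$. In particular $o$ lies in $\conv(abc)$, on the same side of $[ac]$ as $b$, while $d$ lies on the opposite side. Because $P$ is convex and $o$ is interior, we have
\[
\area(oda)+\area(ocd)=\area(acd)+\area(oac),\qquad \area(oab)+\area(obc)=\area(abc)-\area(oac).
\]
Hence \eqref{eq:r_red} is equivalent to
\[
\area(acd)+2\area(oac)\le \area(abc).
\]

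\emph{Coordinates.} Put $a=(0,1)$, $b=(0,0)$, $c=(1,0)$ and $o=(x,y)$. Membership of $o$ in the midpoint triangle means $0<x\le 1/2$, $0<y\le 1/2$ and $x+y\ge 1/2$. We then have $\area(abc)=1/2$ and $\area(oac)=(1-x-y)/2$. The proof of Lemma \ref{lem:mainlem} shows that the forbidden-region condition forces $d$ into the triangle $\conv(co'a)$ with $o'=(2x,2y)$. This gives the bound
\[
\area(acd)\le \area(aco')=(2x+2y-1)/2.
\]

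\emph{Conclusion.} Adding these,
\[
\area(acd)+2\area(oac)\le \tfrac{2x+2y-1}{2}+(1-x-y)=\tfrac{x+y}{2}\le \tfrac12=\area(abc),
\]
where the last inequality uses $x,y\le 1/2$.

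The only step needing care is the localisation $d\in\conv(co'a)$, together with the sidedness claims used in the decomposition. Both are already established in the proof of Lemma \ref{lem:mainlem}: the two forbidden-region half-planes attached to the edges $[ab]$ and $[bc]$ pass through $o$ and are parallel to the lines $ao'$ and $co'$. I therefore expect no real obstacle beyond restating that argument.
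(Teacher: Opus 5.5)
Your argument is correct in substance, but it cuts $P$ along the other diagonal from the paper. The paper cuts along $[bd]$:
\[
\area(oda)+\area(oab)+\area(obc)+\area(ocd)=\area(abd)+\area(bcd)\ge 2\area(oda)+2\area(ocd).
\]
Here the inequality is exactly the two conditions $\area(oda)\le\tfrac12\area(abd)$ and $\area(ocd)\le\tfrac12\area(bcd)$ of Lemma \ref{lem:forbidden_area}. Rearranging gives \eqref{eq:r_red} in one line, with no coordinates, no auxiliary point $o'$, and no need to know on which side of $[ac]$ the origin lies.

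Cutting along $[ac]$ instead, you reduce to $\area(acd)\le\area(abc)-2\area(oac)$. Since $o'=2o-b$, one has the exact identity $\area(aco')=\area(abc)-2\area(oac)$. So your proof writes the difference of the two sides of \eqref{eq:r_red} as $\area(aco')-\area(acd)$, which makes it transparent that equality holds iff $d=o'$. This is, however, the same information the paper uses. The condition $d\in\conv(a,c,o')$ is just a restatement of the two conditions above: the homothety of ratio $2$ centred at $b$ turns them into the statement that $o'$ lies on the side of the lines $ad$ and $cd$ away from $b$.

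For the same reason, your closing justification names the wrong half-planes. The localisation of $d$ comes from the forbidden-region conditions for the edges $[da]$ and $[cd]$, whose opposing vertex is $b$, not from those for $[ab]$ and $[bc]$. The lines through $o$ and the midpoints of $[ab]$, $[bc]$ are merely the preimages of $ao'$, $co'$ under that homothety. The conditions for $[ab]$ and $[bc]$ are what you use, correctly, to place $o$ on $b$'s side of $[ac]$.

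There is also an arithmetic slip in your final display: $\tfrac{2x+2y-1}{2}+(1-x-y)=\tfrac12$, not $\tfrac{x+y}{2}$. Your intermediate bound $\tfrac{x+y}{2}$ would in fact fail for $d$ close to $o'$ when $x+y<1$. The conclusion $\area(acd)+2\area(oac)\le\tfrac12=\area(abc)$ survives unchanged, and the appeal to $x,y\le\tfrac12$ at that step is unnecessary.
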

\begin{proof}
We have 
\begin{align*}
\area(oda)+\area(oab)+\area(obc)+\area(ocd)&= \area(bcd)+\area(bda)\\
&\ge 2\area(ocd)+2\area(oda)\qquad (\text{by Lemma \ref{lem:forbidden_area})}.
\end{align*}
This implies \eqref{eq:r_red}.
\end{proof}
\subsection{Four blocks}
\label{sec:4block}
We let the setting be as in \S\ref{sec:approach},
i.e.\  up to rotation we are in the situation as in Figure \ref{fig:remainingcase}, where the long edge labelled by $E_i$ is subdivided into $\alpha_i$ equal segments
corresponding respectively to $E_i^{(j)}$ for $j=0,\ldots,\alpha_i-1$. We have fixed the rotation by assuming that the vertex $l_{12}$ corresponding to the intersection of the edges associated to $E_1^{(\alpha_1-1)}$, $E_2^{(0)}$ is not admissible (see Definition \ref{def:admissible}).  The directed graph in blue is the associated reduced quiver (see \S\ref{sec:blocks_and_block_quivers}).
\begin{figure}[h!]
\centering
\caption{}\label{fig:remainingcase}
\begin{tikzpicture}[scale=0.8,line cap=round,line join=round]
  \definecolor{myblue}{RGB}{30,80,200}

  \tikzset{
    midarrow/.style={
      draw=myblue,
      line width=1pt,
      postaction={decorate},
      decoration={
        markings,
        mark=at position 0.58 with {\arrow{Stealth[length=9pt,width=7pt]}}
      }
    }
  }
  \coordinate (L01) at (0,0.1);
  \coordinate (L12) at (0.85,-2.75);
  \coordinate (L23) at (5.0,-3.05);
  \coordinate (L30) at (7.1,4.15);

  \coordinate (E0) at ($(L01)!0.5!(L30)$);
  \coordinate (E1) at ($(L01)!0.5!(L12)$);
  \coordinate (E2) at ($(L12)!0.5!(L23)$);
  \coordinate (E3) at ($(L23)!0.5!(L30)$);

  \draw[black,very thick] (L01)--(L30)--(L23)--(L12)--cycle;
  
  \draw[black,dashed,thick] (L01)--(L23);

  \draw[midarrow] (E0)--(E1);
  \draw[midarrow] (E1)--(E2);
  \draw[midarrow] (E2)--(E3);
  \draw[midarrow] (E3)--(E0);
  \draw[midarrow] (E0)--(E2);
  \draw[midarrow] (E1)--(E3);

  \node at ($(L01)+(-0.5,-0.05)$) {$l_{0,1}$};
  \node at ($(L12)+(0.05,-0.42)$) {$l_{1,2}$};
  \node at ($(L23)+(0.38,-0.20)$) {$l_{2,3}$};
  \node at ($(L30)+(0.4,0.08)$) {$l_{3,0}$};

  \node at ($(E0)+(-0.45,0.55)$) {$E_0$};
  \node at ($(E1)+(-0.70,-0.20)$) {$E_1$};
  \node at ($(E2)+(0.00,-0.65)$) {$E_2$};
  \node at ($(E3)+(0.78,0.45)$) {$E_3$};

  \coordinate (O) at ($(E0)!0.34!(E2)!0.36!(E3)$);
\fill[black] (O) circle (2.2pt);
\node at ($(O)+(0.22,0.10)$) {$o$};

\end{tikzpicture}
\end{figure} 

We will frequently use the geometric results from \S\ref{sec:geometric}. To
apply them in our current situation we observe that by \eqref{eq:quiver_arrows} applied for $(i,j)=(i,i+1)$
we have
\begin{equation}
\label{eq:ratio_formula}
\chi^2_{i,i+1}\alpha_i\alpha_{i+1}=\frac{\area(abc)^2}{\area(oab)\area(obc)}
\end{equation}
with notations as Figure \ref{fig:bl4_4}. 
\begin{figure}[h!]
\centering
\caption{}\label{fig:bl4_4}
\begin{tikzpicture}[scale=0.6,line cap=round,line join=round]
  \definecolor{mygreen}{RGB}{0,128,72}

  \coordinate (O) at (0,0);
  \coordinate (C) at (1.9,4.8);
  \coordinate (B) at (6.0,5.1);
  \coordinate (A) at (6.6,2.7);

  \draw[black,very thick] (O)--(C)--(B)--(A)--cycle;

  \draw[black,very thick] (O)--(B);
  \draw[black,very thick] (C)--(A);

  \node at ($(O)+(-0.05,-0.45)$) {$o$};
  \node at ($(C)+(-0.35,0.15)$) {$c$};
  \node at ($(B)+(0.25,0.10)$) {$b$};
  \node at ($(A)+(0.35,-0.20)$) {$a$};

  \node[text=mygreen] at ($(C)!0.28!(B)+(0.00,0.65)$) {$E_{i+1}$};
  \node[text=mygreen] at ($(C)!0.68!(B)+(0.10,0.62)$) {$\alpha_{i+1}$};

  \node[text=mygreen] at ($(B)!0.40!(A)+(1.00,0.25)$) {$E_i$};
  \node[text=mygreen] at ($(B)!0.62!(A)+(0.95,-0.15)$) {$\alpha_i$};
\end{tikzpicture}

\end{figure}

Lemma \ref{lem:basic_bounds} combined with \eqref{eq:ratio_formula} yields
\begin{align}
\chi_{12}^2\alpha_1\alpha_2&\le 4\notag\\
\chi_{30}^2\alpha_3\alpha_0&\ge 4\label{eq:chi30bound}\\
\chi^2_{30}\chi_{12}^2\alpha_0\alpha_1\alpha_2\alpha_3 &\le 16\notag
\end{align}
where the first two inequalities are equalities if the origin lies on the dotted line in Figure \ref{fig:remainingcase}.
From  \eqref{eq:work_horse} and \eqref{eq:Sformula} we obtain
\begin{equation}
\label{eq:raw_version}
\frac{S_{01}}{\alpha_0\alpha_1 r_0^2r_1^2}+\frac{S_{12}}{\alpha_1\alpha_2 r_1^2r_2^2}+\frac{S_{23}}{\alpha_2 \alpha_3 r_2^2r_3^2}+ \frac{S_{30}}{\alpha_3\alpha_0 r_3^2r_0^2}=K_X^2
\end{equation}
Combining Theorem \ref{thm:achievement}\eqref{it:thm2_3}
with Lemma \ref{lem:forbidden_area} we see that the assumption that the origin is in the forbidden region is equivalent to 
\begin{align}
\frac{S_{30}}{\alpha_0 r_0^2}&\ge 2 & \frac{S_{30}}{\alpha_3 r_3^2}&\ge 2\\
\frac{S_{23}}{\alpha_2 r_2^2}&\ge 2 &\frac{S_{01}}{\alpha_1 r_1^2}&\ge 2 \label{eq:Sbound}
\end{align}
or, rewritten using \eqref{eq:chi_formula},
\begin{align}
\frac{r_0}{r_3}&\le \frac{\chi_{30}\alpha_3}{2} & \frac{r_3}{r_0}&\le \frac{\chi_{30}\alpha_0}{2}\label{eq:forbidden}\\
\frac{r_2}{r_3}&\le \frac{\chi_{23}\alpha_3}{2}&\frac{r_1}{r_0}&\le \frac{\chi_{01}\alpha_0}{2}
\end{align}
Finally,  we note that \eqref{eq:r_red} can be rewritten as
\begin{equation}
\label{eq:r12_bound}
\alpha_1 r_1^2+\alpha_2 r_2^2\le \alpha_0 r_0^2+\alpha_3 r_3^2.
\end{equation}
It is clear from \eqref{eq:K_theory} that there are only a finite number of possibilities for $(\alpha_i)_i$.
Likewise from \eqref{eq:chi30bound} we obtain that there is also a finite number of possibilities for $\chi_{12}$, $\chi_{30}$.

We will now prove that there are only a finite number of possibilities for $r_0$, $r_1$, $r_2$, $r_3$.  By  \eqref{eq:work_horse} this 
implies that there are also only a finite number of possibilities for $\chi_{01}$, $\chi_{23}$.

We consider the following two cases separately. 
\subsubsection{Case 1}
First we assume that the origin lies on the dotted interval in Figure \ref{fig:remainingcase}. 
Since the origin
lies in the forbidden region, it must be necessarily the midpoint of this interval. Hence by
 Theorem \ref{thm:achievement}\eqref{it:thm2_3}
we get 
\begin{align*}
\alpha_2 r_2^2&=\alpha_1 r_1^2\\
\alpha_3 r_3^2&=\alpha_0 r_0^2
\end{align*}
So it is sufficient to show that there are only a finite number of possibilities for $r_0$, $r_1$.  Using \eqref{eq:raw_version} we obtain
\begin{align*}
K^2_X&=\frac{S_{01}}{\alpha_0 \alpha_1 r_0^2r_1^2}+\frac{S_{12}}{\alpha_1\alpha_0 r_1^2r_0^2}+\frac{S_{23}}{\alpha_1 \alpha_0 r_1^2r_0^2}+ \frac{S_{30}}{\alpha_1\alpha_0 r_1^2r_0^2}\\
&=\frac{S_{01}+S_{12}+S_{23}+S_{30}}{\alpha_0 \alpha_1 r_0^2r_1^2}\\
&=\frac{2(\alpha_0r_0^2+\alpha_1r_1^2+\alpha_2r_2^2+\alpha_3 r_3^2)}{\alpha_0\alpha_1 r_0^2r_1^2},
\end{align*}
where in the last line we have used that $S_{01}+S_{12}+S_{23}+S_{30}=4\Delta(P)$ together with Theorem \ref{thm:achievement}\eqref{it:thm2_3}. 

So we finally obtain a Markov type equation
\begin{equation}
\label{eq:markov_var}
\alpha_0 r_0^2+\alpha_1 r_1^2=\frac{K^2_X\alpha_0\alpha_1}{4} r_0^2r_1^2.
\end{equation}
This has a finite number of solutions. Indeed, after dividing by $r_0^2r_1^2$ we may apply Remark \ref{rem:typical}.
\subsubsection{Case 2}
Now we assume that the origin does not lie on the dotted interval in Figure \ref{fig:remainingcase}.  
Since the first two inequalities in \eqref{eq:chi30bound} become strict, they can
be strengthened to
\begin{align}
\chi_{12}^2\alpha_1\alpha_2&\le 3\label{eq:chi12bound1}\\
\chi_{30}^2\alpha_3\alpha_0&\ge 5\label{eq:chi30bound1}
\end{align}
From \eqref{eq:chi12bound1} we get 
\begin{equation}
\label{eq:chivalue}
\chi_{12}=1
\end{equation}
From \eqref{eq:inequalities},  \eqref{eq:ratio_formula} and \eqref{eq:chi30bound1} combined with  Theorem \ref{thm:achievement}\eqref{it:thm2_3}
we get
\begin{equation}
\label{eq:tricky_bound1}
2\le \frac{S_{01}}{\alpha_1 r_1^2}\le \gamma:=\frac{1}{\frac{1}{2}-\frac{2}{\chi_{30}^2\alpha_3\alpha_0}}
\le \frac{1}{\frac{1}{2}-\frac{2}{(\sqrt{5})^2}}=10
\end{equation}
Similarly
\begin{equation}
\label{eq:tricky_bound2}
2\le \frac{S_{23}}{\alpha_2 r_2^2}\le  \gamma:=\frac{1}{\frac{1}{2}-\frac{2}{\chi_{30}^2\alpha_3\alpha_0}}\le 10
\end{equation}
So we get using \eqref{eq:chi_formula},  \eqref{eq:chivalue},\eqref{eq:tricky_bound1} and \eqref{eq:tricky_bound2}
\begin{multline}
\label{eq:Klower}
K_X^2=\frac{S_{01}}{\alpha_0 \alpha_1 r_0^2r_1^2}+\frac{S_{12}}{\alpha_1\alpha_2 r_1^2r_2^2}+\frac{S_{23}}{\alpha_2 \alpha_3 r_2^2r_3^2}+ \frac{S_{30}}{\alpha_3\alpha_0 r_3^2r_0^2}\\
\ge \frac{2}{\alpha_0 r_0^2}+\frac{1}{r_1r_2}+\frac{2}{\alpha_3 r_3^2}+\frac{\chi_{30}}{r_3r_0}
\end{multline}
and
\begin{multline}
\label{eq:Kupper}
K_X^2=\frac{S_{01}}{\alpha_0 \alpha_1 r_0^2r_1^2}+\frac{S_{12}}{\alpha_1\alpha_2 r_1^2r_2^2}+\frac{S_{23}}{\alpha_2 \alpha_3 r_2^2r_3^2}+ \frac{S_{30}}{\alpha_3\alpha_0 r_3^2r_0^2}\\
\le 
\frac{\gamma}{\alpha_0 r_0^2}+\frac{1}{r_1r_2}+ \frac{\gamma}{\alpha_3 r_3^2}
+ \frac{\chi_{30}}{r_3r_0}
\end{multline}
We claim that \eqref{eq:forbidden},  \eqref{eq:Klower}, \eqref{eq:Kupper} and \eqref{eq:r12_bound} have only a finite number of solutions for $r_0,r_1,r_2,r_3$.
From \eqref{eq:Klower} we obtain
\[
\frac{1}{r_1r_2}\le
\begin{cases}
1/2&\text{if $K_X^2=1$}\\
1&\text{otherwise}
\end{cases}
\]
Then from \eqref{eq:Kupper} we obtain
\begin{equation}
\label{eq:sanitized}
 \frac{10}{\alpha_0 r_0^2}+\frac{10}{\alpha_3 r_3^2}
+ \frac{\chi_{30}}{r_3r_0}\ge \frac{1}{2}
\end{equation}
Multipying by $r_0r_3$ yields
\[
 \frac{10r_3}{\alpha_0 r_0}+\frac{10r_0}{\alpha_3 r_3}
+ \chi_{30}\ge \frac{1}{2}r_0r_3
\]
and then using the upper bounds for $r_0/r_3$, $r_3/r_0$ obtained from \eqref{eq:forbidden} we get
\begin{equation}
  \label{eq:r0r3finite}
  r_0r_3\le 22\chi_{30}
\end{equation}
which only has a finite number
of solutions for $r_0$, $r_3$. From \eqref{eq:r12_bound} we then obtain that the number of solutions for $r_0,r_1,r_2,r_3$ is finite. 
\subsection{Five blocks}
\label{sec:5blocks}
In this section our mission is to show that no five block collection can satisfy the conditions of Theorem \ref{thm:minimal_list}.
\subsubsection{Case 1}
Here we assume that the $P$ is like on the left of Figure \ref{fig:red_5_blocks}.
In other words, there are two consecutive non-admissible non-straight vertices,
which means that by Lemma \ref{lem:prelim} after rotation we are in the situation as in Figure \ref{fig:5bl_basic_case_analyzed},
where we have given the origin coordinates $(0,0)$.

\begin{figure}[h!]
\centering
\caption{} \label{fig:5bl_basic_case_analyzed}
\begin{tikzpicture}[scale=0.8,line cap=round,line join=round,>=Stealth]
  \definecolor{myblue}{RGB}{150,180,220}
  \def\r{2.0}

  \coordinate (A) at (0,{2*\r});        
  \coordinate (B) at (0,{\r});          
  \coordinate (C) at ({\r},0);          
  \coordinate (D) at ({2*\r},0);        
  \coordinate (E) at ({3*\r},{4*\r});
  \coordinate (O) at ({\r},{\r});       

  \draw[black,line width=0.95pt,->] (A) -- node[midway,left=3pt] {$m_1$} (B);
  \draw[black,line width=0.95pt,->] (B) -- node[midway,below left=2pt] {$m_2$} (C);
  \draw[black,line width=0.95pt,->] (C) -- node[midway,below=3pt] {$m_3$} (D);
  \draw[black,line width=0.95pt,->] (D) -- node[midway,right=3pt] {$m_4$} (E);
  \draw[black,line width=0.95pt,->] (E) -- node[midway,above=3pt] {$m_0$} (A);

  \draw[myblue,line width=0.70pt] (A)--(D); 
  \draw[myblue,line width=0.70pt] (B)--(O);
  \draw[myblue,line width=0.70pt] (O)--(C);
  \draw[myblue,line width=0.70pt] (O)--(E);

  \node[left=6pt]  at (A) {$(-1,1)$};
  \node[left=6pt]  at (B) {$(-1,0)$};
  \node[below left=1pt] at (C) {$(0,-1)$};
  \node[below right=1pt] at (D) {$(1,-1)$};
  \fill[black] (O) circle (2.1pt);
  \node[below=2pt] at (O) {$o$};
  \node[right=6pt] at (O) {$(0,0)$};
  \node[right=4pt] at (E) {$(x,y)$};

\end{tikzpicture}
\end{figure}

We have
\begin{align*}
  R_0&=x+y\\
  R_1&=1\\
  R_2&=1\\
  R_3&=1\\
  R_4&=x+y
  \end{align*}
and 
\begin{align*}
  S_{01}&=x+1\\
  S_{12}&=1\\
  S_{23}&=1\\
  s_{34}&=y+1\\
  S_{40}&=2(x+y)
\end{align*}
Specialising \eqref{eq:work_horse} to the current setting and combining it with \eqref{eq:Rformula}\eqref{eq:Sformula} yields:
\begin{equation}
  \label{eq:work_horse:current}
\frac{S_{01}}{R_0R_1}+
\frac{S_{12}}{R_1R_2}+
\frac{S_{23}}{R_2R_3}+
\frac{S_{34}}{R_3R_4}+
\frac{S_{40}}{R_4R_0}=K_X^2.
\end{equation}
Substituting what we know yields
\[
  \frac{x+1}{x+y}+\frac{1}{1}+\frac{1}{1}+\frac{y+1}{x+y}+\frac{2(x+y)}{(x+y)^2}=K_X^2,
\]
which is equivalent to
\begin{equation}
  \label{eq:final5case1}
  4=(K^2_X-3)(x+y).
\end{equation}
Now $x,y\in \NN$ and the assumption that $P$ has two narrowing long edges and no parallel edges yields $x\ge 2$, $y\ge 2$.
Then \eqref{eq:final5case1} has a single solution:
\[
  x=y=2, \qquad K_X^2=4
\]
However since $(x,y)$ is not primitive in $L_\ZZ$, this contradicts Theorem \ref{thm:achievement}\eqref{it:thm2_2}.

\subsubsection{Case 2}
Here we assume that the $P$ is like on the right of Figure \ref{fig:red_5_blocks}.
In this case, after a suitable rotation, $P$ looks as in Figure \ref{fig:5block_case2} 

\begin{figure}
\centering
\caption{} \label{fig:5block_case2}

\begin{tikzpicture}[scale=1.0,line cap=round,line join=round]
  \usetikzlibrary{calc,intersections}
  \definecolor{myblue}{RGB}{150,180,220}
  \definecolor{mygreen}{RGB}{50,140,105}

  \coordinate (V40) at (-4.2,0.0);
  \coordinate (V34) at (-2.6,2.05);
  \coordinate (V23) at ( 0.0,3.05);
  \coordinate (V12) at ( 2.6,2.05);
  \coordinate (V01) at ( 4.2,0.0);

  \draw[black,line width=0.95pt] (V40)--(V34)--(V23)--(V12)--(V01)--cycle;

  \node[below left]  at (V40) {$l_{4,0}$};
  \node[left]        at (V34) {$l_{3,4}$};
  \node[above]       at (V23) {$l_{2,3}$};
  \node[right]       at (V12) {$l_{1,2}$};
  \node[below right] at (V01) {$l_{0,1}$};

  \path (V40)--node[below=3pt] {$m_0$} (V01);
  \path (V01)--node[right=2pt] {$m_1$} (V12);
  \path (V12)--node[above right=1pt] {$m_2$} (V23);
  \path (V23)--node[above left=1pt] {$m_3$} (V34);
  \path (V34)--node[left=2pt] {$m_4$} (V40);

  \draw[myblue,line width=0.70pt] (V40)--(V23);
  \draw[myblue,line width=0.70pt] (V01)--(V23);

  \coordinate (Mleft)  at ($(V40)!0.5!(V23)$);
  \coordinate (Mright) at ($(V01)!0.5!(V23)$);

  \draw[myblue,line width=0.70pt] (Mleft)--(Mright);
  \path[name path=aSeg] (Mleft)--(Mright);

  \coordinate (P0) at ($(V40)!0.5!(V01)$);

  \coordinate (dir4) at ($(P0) + 2.2*(V34) - 2.2*(V40)$); 
  \coordinate (dir1) at ($(P0) + 2.2*(V12) - 2.2*(V01)$); 
  \path[name path=t4] (P0)--(dir4);
  \path[name path=t1] (P0)--(dir1);

  \path[name intersections={of=aSeg and t4, by=I4}]; 
  \path[name intersections={of=aSeg and t1, by=I1}]; 

  \draw[myblue,line width=0.70pt] (P0)--(I4);
  \draw[myblue,line width=0.70pt] (P0)--(I1);

  \coordinate (d2) at ($(Mleft)  + 3.0*(V12) - 3.0*(V23)$); 
  \coordinate (d3) at ($(Mright) + 3.0*(V34) - 3.0*(V23)$); 
  \path[name path=s2] (Mleft)--(d2);
  \path[name path=s3] (Mright)--(d3);

  \path[name intersections={of=s2 and s3, by=Q}]; 

  \path[name intersections={of=t1 and s2, by=Jleft}];
  \path[name intersections={of=t4 and s3, by=Jright}];

  \fill[mygreen,opacity=0.60] (I1)--(I4)--(Jright)--(Q)--(Jleft)--cycle;
  \draw[myblue,line width=0.70pt] (I1)--(I4)--(Jright)--(Q)--(Jleft)--cycle;

\coordinate (QdownL) at ($(Q)!0.18!(d2)$); 
\coordinate (QdownR) at ($(Q)!0.18!(d3)$);

\draw[myblue,line width=0.70pt] (Mleft)--(QdownL);
\draw[myblue,line width=0.70pt] (Mright)--(QdownR);

  \coordinate (O) at ($($(I1)!0.55!(I4)$)!0.35!(Q)$);
  \fill[black] (O) circle (2.1pt);
  \node[right=2pt] at (O) {$o$};

\end{tikzpicture}

\end{figure}

The forbidden region has been coloured green.
Note that $l_{12}$, $l_{34}$ are non-admissible.
We will apply Lemma \ref{lem:basic_bounds} 
to the two $4$-vertex polygons obtained by respectively omitting $l_{12}$ and $l_{23}$.
By Lemma \ref{lem:nonadm_bigger} the origin is still in the forbidden
region for these smaller polygons. 

By \eqref{eq:quiver_arrows} and \eqref{eq:bound1}
we get 
\begin{align*}
\chi^2_{12}\alpha_1\alpha_2&\le 4\\
\chi^2_{34}\alpha_3\alpha_4&\le 4
\end{align*}
These inequalities must be strict since, as Figure \ref{fig:5block_case2} shows,  the origin is not on the intervals
$[l_{01},l_{23}]$ and $[l_{23},l_{40}]$.
So we get in fact
\begin{align*}
\chi^2_{12}\alpha_1\alpha_2&\le 3\\
\chi^2_{34}\alpha_3\alpha_4&\le 3
\end{align*}
Hence
\begin{equation}
  \label{eq:chi1234}
\chi_{12}=\chi_{34}=1
\end{equation}
We also have by \eqref{eq:chi_rel1} and \eqref{eq:quiver_arrows}:
\begin{equation}
\label{eq:16eq}
\begin{aligned}
\chi^2_{12}\alpha_1\alpha_2 \frac{\Delta(l_{01},l_{23},l_{40})^2}{\Delta(o,l_{23},l_{40}) \Delta(o,l_{40},l_{01})}&\le 16\\
\chi^2_{34}\alpha_3\alpha_4 \frac{\Delta(l_{01},l_{23},l_{40})^2}{\Delta(o,l_{01},l_{23}) \Delta(o,l_{40},l_{01})}&\le 16
\end{aligned}
\end{equation}

We will need the following lemma.
\begin{lemma}
Assume that $x,y,z\in \RR_{>0}$ satisfy
\begin{equation}
\label{eq:simple_markov}
x^2+y^2+z^2=xyz
\end{equation}
Then $x>2$ and
\begin{equation}
\label{eq:yzbound}
y,z\ge \frac{2x}{\sqrt{x^2-4}}
\end{equation}
\end{lemma}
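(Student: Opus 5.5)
We view \eqref{eq:simple_markov} as a quadratic equation in one variable with the other two fixed, and read off both claims from the requirement that this quadratic have a positive real root. The equation is symmetric in $x,y,z$ except that \eqref{eq:yzbound} singles out $x$, so we treat $x$ as the parameter.

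For $x>2$: regard \eqref{eq:simple_markov} as $y^2-xz\,y+(x^2+z^2)=0$, a quadratic in $y$. A real root requires discriminant $x^2z^2-4(x^2+z^2)\ge 0$, i.e.\ $z^2(x^2-4)\ge 4x^2>0$, so $x^2>4$. Since $x>0$, this gives $x>2$. By symmetry every variable exceeds $2$, though we only need the statement for $x$.

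For \eqref{eq:yzbound}: the same discriminant inequality rearranges, now that $x^2-4>0$, to
\[
z^2\ge \frac{4x^2}{x^2-4},
\]
which is $z\ge 2x/\sqrt{x^2-4}$ because $z>0$. Exchanging the roles of $y$ and $z$, i.e.\ viewing the equation as a quadratic in $z$, gives the same bound for $y$.

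There is no real obstacle. The only point to keep straight is the order: establish $x^2-4>0$ before dividing by it, which is why the claim $x>2$ is proved first from the same discriminant inequality.
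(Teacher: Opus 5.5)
Your proof is correct and is essentially the paper's argument. The paper also treats the equation as a quadratic in $y$ and uses nonnegativity of the discriminant to get $(x^2-4)z^2\ge 4x^2$. From this it deduces $x>2$ and reads off the bound for $z$, with the bound for $y$ following by exchanging $y$ and $z$.
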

\begin{proof}
  By symmetry it is sufficient to prove \eqref{eq:yzbound} only for $z$.
We see that $y$ satisfies the quadratic equation
\[
y^2-(xz)y+(x^2+z^2)=0
\]
Since $y$ is real, this means the discriminant must be non-negative.  Hence
\[
(xz)^2- 4(x^2+z^2)\ge 0
\]
Alternatively,
\begin{equation}
\label{eq:alternatively}
(x^2-4)z^2\ge 4x^2
\end{equation}
This is impossible if $x\le 2$, hence $x>2$. Moreover  \eqref{eq:alternatively} is equivalent to \eqref{eq:yzbound} for~$z$.
\end{proof}
\begin{corollary} \label{cor:simple_markov}
Let $\lambda>2$.
Assume that $x,y,z\in \RR_{>0}$ satisfy
\[
x^2+y^2+z^2=xyz
\]
and $x\le \lambda$. Then
\[
y,z\ge \frac{2\lambda}{\sqrt{\lambda^2-4}}
\]
\end{corollary}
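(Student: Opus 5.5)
The corollary should follow directly from the preceding lemma once we note that the bound in \eqref{eq:yzbound} is monotone in $x$. By that lemma, any positive real solution of $x^2+y^2+z^2=xyz$ has $x>2$ and satisfies $y,z\ge g(x)$, where
\[
g(t):=\frac{2t}{\sqrt{t^2-4}},\qquad t>2 .
\]
The plan is therefore in two steps. First we show that $g$ is decreasing on $(2,\infty)$. Then we use the hypothesis $2<x\le\lambda$ to conclude $g(x)\ge g(\lambda)$, which gives the claim.

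For the monotonicity it is easiest to look at the square:
\[
g(t)^2=\frac{4t^2}{t^2-4}=4+\frac{16}{t^2-4}.
\]
The right-hand side is strictly decreasing for $t>2$. Since $g>0$ there, $g$ itself is strictly decreasing on $(2,\infty)$.

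Chaining the inequalities now gives
\[
y,z\ \ge\ g(x)\ \ge\ g(\lambda)=\frac{2\lambda}{\sqrt{\lambda^2-4}} .
\]
The assumption $\lambda>2$ is exactly what makes the right-hand side well defined. The only point needing care is that $x$ lies in the domain of $g$, and this is supplied by the lemma's conclusion $x>2$. There is no real obstacle here; the argument is just the monotonicity computation above.
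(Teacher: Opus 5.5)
Your proposal is correct and matches the paper's proof: the paper also just combines the preceding lemma (which gives $x>2$ and $y,z\ge 2x/\sqrt{x^2-4}$) with the fact that $2x/\sqrt{x^2-4}$ is decreasing in $x$. Your identity $g(t)^2=4+16/(t^2-4)$ simply makes that monotonicity explicit.
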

\begin{proof} This follows from the fact that the right hand side of \eqref{eq:yzbound} is a decreasing function of $x$.
\end{proof}

From \eqref{eq:16eq} we obtain in particular
\begin{equation}
  \label{eq:naive_bound}
\begin{aligned}
 \frac{\Delta(l_{01},l_{23},l_{40})}{\sqrt{\Delta(o,l_{23},l_{40}) \Delta(o,l_{40},l_{01})}}&\le 4\\
\frac{\Delta(l_{01},l_{23},l_{40})}{\sqrt{\Delta(o,l_{01},l_{23}) \Delta(o,l_{40},l_{01})}}&\le 4
\end{aligned}
\end{equation}
Direct substitution shows that
{\tiny
\[
  (x,y,z):=\left( \frac{\Delta(l_{01},l_{23},l_{40})}{\sqrt{\Delta(o,l_{23},l_{40}) \Delta(o,l_{40},l_{01})}}, \frac{\Delta(l_{01},l_{23},l_{40})}
    {\sqrt{\Delta(o,l_{01},l_{23}) \Delta(o,l_{40},l_{01})}},
 \frac{\Delta(l_{01},l_{23},l_{40})}{\sqrt{\Delta(o,l_{23},l_{40}) \Delta(o,l_{01},l_{23})}}\right)
\]
}
satisfies \eqref{eq:simple_markov}. Hence we get by Corollary \eqref{cor:simple_markov} and \eqref{eq:naive_bound}:
\begin{equation}
\label{eq:bound99}
  \frac{\Delta(l_{01},l_{23},l_{40})}{\sqrt{\Delta(o,l_{23},l_{40}) \Delta(o,l_{40},l_{01})}}\ge \frac{2\times 4}{\sqrt{4^2-4}}=\frac{4}{\sqrt{3}}
\end{equation}
and similarly
\begin{align*}
 \frac{\Delta(l_{01},l_{23},l_{40})}{\sqrt{\Delta(o,l_{01},l_{23}) \Delta(o,l_{40},l_{01})}}&\ge \frac{4}{\sqrt{3}}
\end{align*}
Applying \eqref{eq:inequalities} to the polygon obtained by deleting $l_{34}$, together with Lemma \ref{lem:nonadm_bigger}, yields
\begin{equation}
  \label{eq:a1}
\frac{{\Delta}(l_{12},l_{23},l_{40})}{\Delta(o,l_{12},l_{23})}\le \frac{1}{\frac{1}{2}-2\frac{\Delta(o,l_{23},l_{40}) \Delta(o,l_{40},l_{01})}{{\Delta}(l_{01},l_{23},l_{40})^2}}
\end{equation}
We also observe that
\begin{equation}
  \label{eq:a2}
\Delta(l_{12},l_{23},l_{34})\le \Delta(l_{12},l_{23},l_{40})
\end{equation}
and, since $\omega$ is normalised,  we get from Theorem \ref{thm:achievement}\eqref{it:thm2_3}
\begin{equation}
  \label{eq:a3}
\alpha_2r_2^2=2\Delta(o,l_{12},l_{23}).
\end{equation}
Combining \eqref{eq:a1}, \eqref{eq:a2} and \eqref{eq:a3} yields
\begin{equation}
\label{eq:bound100}
2\frac{{\Delta}(l_{12},l_{23},l_{34})}{\alpha_2 r_2^2}\le \frac{1}{\frac{1}{2}-2\frac{\Delta(o,l_{01},l_{23})\Delta(o,l_{40},l_{01})}{{\Delta}(l_{01},l_{23},l_{40})^2}}.
\end{equation}
Combining \eqref{eq:bound100} with  \eqref{eq:bound99} yields
\[
\frac{S_{23}}{\alpha_2 r_2^2}=2\frac{{\Delta}(l_{12},l_{23},l_{34})}{\alpha_2 r_2^2}\le \frac{1}{\frac{1}{2}-2\frac{3}{16}}=8
\]
and similarly by symmetry
\[
\frac{S_{23}}{\alpha_3 r_3^2}=2\frac{{\Delta}(l_{12},l_{23},l_{34})}{\alpha_3 r_3^2}\le 8.
\]
Multiplying these yields by \eqref{eq:chi_formula}
\[
\chi_{23}^2\alpha_2\alpha_3 \le 8^2,
\]
so that in particular
\[
\chi_{23}\le 8.
\]
The same symmetry argument applied to the polygon obtained by deleting $l_{34}$ gives
\begin{equation}
\label{eq:same_way1}
\frac{S_{01}}{\alpha_1 r_1^2}=\frac{2\Delta(l_{01},l_{12},l_{40})}{\alpha_1 r_1^2}\le  \frac{1}{\frac{1}{2}-2\frac{\Delta(o,l_{23},l_{40}) \Delta(o,l_{40},l_{01})}{{\Delta}(l_{01},l_{23},l_{40})^2}}=8.
\end{equation}
Similarly,  using symmetry for $P$,  we get
\begin{equation}
\label{eq:same_way2}
\frac{S_{40}}{\alpha_1 r_4^2}=\frac{2\Delta(l_{01},l_{34},l_{40})}{\alpha_4 r_4^2}\le 8.
\end{equation}
Now we restate \eqref{eq:work_horse} in a convenient form, using \eqref{eq:chi_formula} (where we have already substituted \eqref{eq:chi1234}):
\begin{equation}
\label{eq:workhorse2}
\frac{1}{r_1r_2}+\frac{\chi_{23}}{r_2r_3}+\frac{1}{r_3r_4}+ \frac{S_{40}}{\alpha_4 r_4^2\alpha_0 r_0^2} +\frac{S_{01}}{\alpha_0 r_0^2 \alpha_1r_1^2}=K_X^2
\end{equation}
So in particular
\begin{equation}
\label{eq:3terms}
\frac{1}{r_1r_2}+\frac{\chi_{23}}{r_2r_3}+\frac{1}{r_3r_4}<K_X^2.
\end{equation}
If we apply Lemma \ref{lem:weird_lemma} to $S_{1}, S_{3}=\{1/n\mid n\ge 1\}$, $S_2=\{\chi_{23}/n\mid n\ge 1\}$
we find 
from \eqref{eq:workhorse2} that there exists $\epsilon>0$ such that
\[
\frac{S_{40}}{\alpha_4 r_4^2\alpha_0 r_0^2} +\frac{S_{01}}{\alpha_0 r_0^2 \alpha_1r_1^2}\ge \epsilon
\]
and hence by \eqref{eq:same_way1} and \eqref{eq:same_way2} we get
\[
\frac{16}{\alpha_0 r_0^2}=\frac{8}{\alpha_0 r_0^2}+\frac{8}{\alpha_0 r_0^2}\ge \frac{S_{40}}{\alpha_4 r_4^2\alpha_0 r_0^2} +\frac{S_{01}}{\alpha_0 r_0^2 \alpha_1r_1^2} \ge \epsilon.
\]
Hence there are only a finite number of possibilities for $r_0$. From \eqref{eq:max_inequality} we obtain
\begin{equation}
\label{eq:lower_r_bound}
r_1,r_2,r_3,r_4\le r_0.
\end{equation}
Hence there are only a finite number of possibilities for $r_0,r_1,r_2,r_3,r_4$. It then follows from \eqref{eq:work_horse}\eqref{eq:full_gram} that there are only a finite
number of Gram matrices.

\medskip

In now suffices to do an exhaustive search to show that there are no Gram matrices that satisfy the conditions of Theorem \ref{thm:minimal_list}.
However while the number of possibilities is finite, it is still very large, so we have to cut it down.

The first optimisation we can do is that once we have a bound on $r_0$, we have a bound on $r_1,r_2,r_3,r_4$.  Hence we can now compute an upper bound for \eqref{eq:3terms}
by brute force
(i.e.\ just looping over those $r_1,r_2,r_3,r_4$ that satisfy \eqref{eq:lower_r_bound}). This allows us to compute a new upper bound for $r_0$ and we can repeat this procedure until it converges. 

A second optimisation is to observe that Lemma \ref{lem:tool4} in fact implies that there exist $1\le i\le j\le 4$ such that
\begin{equation}
  \label{eq:from_tool}
  r_1>r_2>\cdots>r_i=r_{i+1}=\cdots=r_j<r_{j+1}<\cdots<r_4
\end{equation}
which allows us to prune more potential solutions.

A third optimisation is to invoke \eqref{eq:chi_formula},  i.e.
\begin{equation}
\label{eq:chi_formula2}
\chi_{i,i+1}=\frac{S_{i,i+1}}{\alpha_i \alpha_{i+1} r_i r_{i+1}}.
\end{equation}
Then the bound we have used
\[
\frac{S_{40}}{\alpha_4 r_4^2}\le 8
\]
becomes
\begin{equation}
  \label{eq:chi_40bound}
\frac{\chi_{40}\alpha_0 r_0}{ r_4}\le 8.
\end{equation}
Similarly, we get
\[
\frac{\chi_{01}\alpha_0 r_0}{ r_1}\le 8.
\]
These give upper bounds for $\chi_{40}$, $\chi_{01}$, which are very useful.

A fourth optimisation we can do is to note that using $(r_1,r_2,r_3,r_4,\chi_{12}=1, \chi_{23}, \chi_{34}=1)$ we can compute the Gram matrix for 
the subsequence 
\begin{equation}
\label{eq:subsequence}
(E_1^{(0)},\ldots, E_1^{(\alpha_1-1)},\ldots, E_4^{(0)},\ldots, E_4^{(\alpha_4-1)}).
\end{equation}
The fact that this Gram matrix must have integral entries is a strong restriction.
Moreover, we can also compute the Gram matrix of the dual exceptional collection,  which gives the arrows/relations for the corresponding quiver.  In this case there
are no relations (the quiver $Q$, which may be computed using Theorem \ref{thm:achievement}\eqref{it:thm2_5},  has no reverse arrows between the exceptional objects listed in \eqref{eq:subsequence}\footnote{Recall that according to Proposition \ref{prop:quiver_rule} relations in the quiver corresponding to $\EE$ correspond to reverse arrows in $Q$.}).
This allows us to eliminate more possibilities quickly.

\subsection{Six or more blocks}
\label{sec:6ormoreblocks}
We finally show that no six- or more block collection can satisfy the conditions of Theorem \ref{thm:minimal_list}. Lemma \ref{lem:three_adm} combined with
Lemma \ref{lem:prelim} leaves us only a single possibility to examine, namely,  the one shown in Figure \ref{fig:6blocks}, where the forbidden region has been coloured green.

\begin{figure}[h!]
\centering
\caption{}\label{fig:6blocks}

\begin{tikzpicture}[scale=0.7,line cap=round,line join=round]
  \usetikzlibrary{calc,intersections}
  \definecolor{myblue}{RGB}{150,180,220}
  \definecolor{mygreen}{RGB}{50,140,105}
  
  \coordinate (V0) at (0.000,0.000);
  \coordinate (V1) at (-0.650,1.927);
  \coordinate (V2) at (0.614,4.416);
  \coordinate (V3) at (2.478,5.028);
  \coordinate (V4) at (8.763,1.868);
  \coordinate (V5) at (10.000,0.000);

  \draw[black,very thick] (V0)--(V1)--(V2)--(V3)--(V4)--(V5)--cycle;

  \begin{scope}
    \clip (V0)--(V1)--(V2)--(V3)--(V4)--(V5)--cycle;

    \draw[myblue,line width=0.85pt] (V0)--(V3);
    \draw[myblue,line width=0.85pt] (V3)--(V5);

    \coordinate (M1) at ($(V0)!0.5!(V3)$);
    \coordinate (M2) at ($(V3)!0.5!(V5)$);
    \coordinate (Mb) at ($(V0)!0.5!(V5)$);
   
    \draw[myblue,line width=0.75pt] (M1)--(M2);

    \coordinate (MbLfar)  at ($(Mb) + 30*(V1) - 30*(V0)$); 
    \coordinate (MbRfar)  at ($(Mb) + 30*(V4) - 30*(V5)$); 
    \coordinate (M2far)   at ($(M2) + 30*(V2) - 30*(V3)$); 
    \coordinate (M1far)   at ($(M1) + 30*(V4) - 30*(V3)$); 
    \coordinate (M1bfar)  at ($(M1) + 30*(V1) - 30*(V2)$); 

    \path[name path=chord]  (M1)--(M2);
    \path[name path=rayL]   (Mb)--(MbLfar);
    \path[name path=rayR]   (Mb)--(MbRfar);
    \path[name path=rayM2]  (M2)--(M2far);
    \path[name path=rayM1]  (M1)--(M1far);
    \path[name path=rayM1b] (M1)--(M1bfar);

    \path[name intersections={of=chord and rayL,  by=TopR}];
    \path[name intersections={of=chord and rayR,  by=TopL}];
    \path[name intersections={of=rayM2 and rayL,  by=BotR}];
    \path[name intersections={of=rayM2 and rayR,  by=BotL}];

    \path[name intersections={of=rayM1 and rayM2,  by=X12}]; 
    \path[name intersections={of=rayM1b and rayL,  by=XL}];  
 
    \def\eps{0.08} 

    \coordinate (StopL) at ($(TopR)!\eps!(MbLfar)$);
    \coordinate (StopR) at ($(TopL)!\eps!(MbRfar)$);
    \draw[myblue,line width=0.75pt] (Mb)--(StopL);
    \draw[myblue,line width=0.75pt] (Mb)--(StopR);

    \coordinate (StopM2) at ($(BotR)!\eps!(M2far)$);
    \draw[myblue,line width=0.75pt] (M2)--(StopM2);
    
    \coordinate (StopM1) at ($(X12)!\eps!(M1far)$);
    \draw[myblue,line width=0.75pt] (M1)--(StopM1);

    \coordinate (StopM1b) at ($(XL)!\eps!(M1bfar)$);
  
    \fill[mygreen,opacity=0.85] (TopL)--(TopR)--(BotR)--(BotL)--cycle;
    
     \fill[violet] (M1) circle (2.5pt);

  \end{scope}

  \draw[black,very thick] (V0)--(V1)--(V2)--(V3)--(V4)--(V5)--cycle;
\end{tikzpicture}
\end{figure}
 However this case cannot not occur
since according to Lemma \ref{lem:prelim} the origin should be in the purple dot,
which is not in the forbidden region.

\bibliography{bibs}
\bibliographystyle{amsplain}
\end{document}